\documentclass[10.5pt]{article}

\usepackage[top=3cm, bottom=3cm, inner=3cm, outer=3cm, left=2.5cm, right=2.5cm, includehead]{geometry}
\usepackage{fancyhdr}
\usepackage[english]{babel}
\usepackage[authoryear]{natbib}
\usepackage[colorlinks=true, allcolors=blue]{hyperref}
\usepackage{doi}
\usepackage{multibib}
\newcites{SM}{References for the supplementary material}
\usepackage[utf8]{inputenc}
\usepackage[T1]{fontenc}

\usepackage{amsthm}
\usepackage{amsmath}
\usepackage{amsfonts}
\usepackage{amssymb}

\usepackage{graphicx} 
\usepackage{graphbox}
\usepackage{float}
\usepackage{booktabs}
\usepackage{multirow}
\usepackage{rotating}
\usepackage{array}
\usepackage{xcolor}
\definecolor{forestgreen}{rgb}{0.0, 0.43, 0.13}
\usepackage{subcaption}
\usepackage[ruled]{algorithm2e}
\usepackage{comment}

\usepackage{stackrel}

\usepackage{breakcites}

\usepackage{nowidow}

\usepackage{enumitem}

\usepackage{ifplatform}

\usepackage{textcomp}
\usepackage{bbm}

\ifwindows
\fi

\allowdisplaybreaks

\newcommand{\lp}{\left(}
\newcommand{\rp}{\right)}

\newcommand{\G}{\mathbb{G}}
\newcommand{\R}{\mathbb{R}}

\newcommand{\Sp}{\mathbb{S}}

\newcommand{\rd}{\mathrm{d}}

\newcommand{\cY}{\mathcal{Y}}
\newcommand{\cF}{\mathcal{F}}
\newcommand{\bx}{\boldsymbol{x}}
\newcommand{\be}{\boldsymbol{e}}
\newcommand{\bm}{\boldsymbol{m}}

\newcommand{\bq}{\boldsymbol{q}}
\newcommand{\by}{\boldsymbol{y}}
\newcommand{\bX}{\boldsymbol{X}}

\newcommand{\bZ}{\boldsymbol{Z}}
\newcommand{\bJ}{\boldsymbol{J}}
\newcommand{\bU}{\boldsymbol{U}}
\newcommand{\bR}{\boldsymbol{R}}

\newcommand{\bmu}{\boldsymbol\mu}
\newcommand{\bu}{\boldsymbol{u}}
\newcommand{\bv}{\boldsymbol{v}}

\newcommand{\bpsi}{\boldsymbol{\psi}}
\newcommand{\one}{\mathbf{1}}

\newcommand{\bxi}{\boldsymbol\xi}

\newcommand{\bbeta}{\boldsymbol\eta}

\newcommand{\bzeta}{\boldsymbol\zeta}

\newcommand{\btheta}{\boldsymbol\theta}

\newcommand{\bTheta}{\boldsymbol\Theta}

\newcommand{\bSigma}{\boldsymbol\Sigma}

\newcommand{\bomega}{\boldsymbol\omega}

\newcommand{\blambda}{\boldsymbol\lambda}
\newcommand{\bB}{\boldsymbol{B}}

\newcommand{\bA}{\boldsymbol{A}}
\newcommand{\bV}{\boldsymbol{V}}
\newcommand{\bI}{\boldsymbol{I}}

\newcommand{\bS}{\boldsymbol{S}}
\newcommand{\bP}{\boldsymbol{P}}
\newcommand{\bQ}{\boldsymbol{Q}}
\newcommand{\bT}{\boldsymbol{T}}

\newcommand{\bW}{\boldsymbol{W}}

\newcommand{\lrb}[1]{\left\{#1\right\}}

\newcommand{\Cov}[2]{\mathsf{Cov}(#1,#2)}
\newcommand{\supp}[1]{\mathrm{supp}\lp #1\rp}

\DeclareFontFamily{OT1}{pzc}{}
\DeclareFontShape{OT1}{pzc}{m}{it}{<-> s * [1.10] pzcmi7t}{}
\DeclareMathAlphabet{\mathpzc}{OT1}{pzc}{m}{it}

\theoremstyle{plain}
\newtheorem{theorem}{Theorem}[section]
\newtheorem{corollary}{Corollary}[section]
\newtheorem{proposition}{Proposition}[section]
\newtheorem{lemma}{Lemma}[section]

\theoremstyle{definition}

\newtheorem{remark}{Remark}[section]

\newtheorem{example}{Example}[section]

\newcommand{\defin}{:=}

\DeclareMathOperator*{\arccosh}{arccosh}

\newcommand{\convl}{\stackrel{\mathcal{L}}{\rightarrow}}
\newcommand{\probconvarrow}{\mathrel{\mathpalette\probconvarrowaux\relax}}
\newcommand{\probconvarrowaux}[2]{\scalebox{1.15}[1]{\ensuremath{#1\rightarrow}}}
\newcommand{\convprob}[1]{\overset{#1}{\probconvarrow}}
\newcommand{\convp}{\convprob{\mathsf{P}}}
\newcommand{\convpboot}{\stackrel[*]{\mathsf{P}}{\probconvarrow}}
\newcommand{\convas}{\stackrel{a.s.}{\longrightarrow}}

\newcommand{\Prob}[1]{\mathsf{P}\left(#1\right)}
\newcommand{\Probbig}[1]{\mathsf{P}\big(#1\big)}

\newcommand{\E}[1]{\mathsf{E}( #1 )}

\newcommand{\Ebig}[1]{\mathsf{E}\big( #1 \big)}

\newcommand{\Var}[1]{\mathsf{Var}( #1 )}

\newif\ifincludeimages
\includeimagestrue

\title{Goodness-of-fit for distributions on metric spaces}
\author{Diego Serrano\textsuperscript{1}, Eduardo García-Portugués\textsuperscript{1}, and Ingrid Van Keilegom\textsuperscript{2}\\[0.3em]
\small \textsuperscript{1}Department of Statistics, Universidad Carlos III de Madrid\\
\small \textsuperscript{2}ORSTAT, KU Leuven}
\date{}

\begin{document}
\maketitle
\begin{abstract}
We propose a general goodness-of-fit framework for distributions on separable metric spaces. Under suitable identifiability conditions, probability distributions are characterized by distance profiles, which motivates their use in goodness-of-fit testing, for simple and composite null hypotheses. For composite null hypotheses, parameter estimation is incorporated via a Bahadur-type expansion, and the asymptotic distribution of the empirical process for distance profiles is obtained under the null. We define test statistics based on this empirical process and derive their asymptotic null distributions. We further study the behavior of the proposed tests under fixed and local alternatives, establishing consistency results. Multiplier bootstrap procedures are developed, and their conditional asymptotic validity is established under both simple and composite null hypotheses. The methodology is illustrated with simulation studies and real-data applications for data on the sphere, hyperboloid, and simplex.
\end{abstract}

\noindent\textbf{MSC:} Primary 62G10; secondary 62R20.

\noindent\textbf{Keywords:} Hypothesis testing; Distance profiles; Random objects; Directional data; Compositional data; Multiplier bootstrap.

\section{Introduction}

Metric-space data analysis provides a flexible framework that does not require linear, algebraic, or locally Euclidean structure. Data from this general framework are commonly referred to as random objects \citep{Marron2021}, encompassing functional, directional, and compositional data, as well as shapes, networks, distributions, and other complex data types. Our aim is to develop Goodness-Of-Fit (GOF) tests for distributions on a separable metric space $(\Omega,d)$, under both simple and composite null hypotheses.  %

GOF beyond the real line has largely developed one data geometry at a time. In $\R^d$, \cite{Baringhaus1988} and \cite{HenzeZirkler1990} developed tests of multivariate normality based on empirical characteristic functions. Moreover, \cite{Hallin2021} proposed tests based on the Wasserstein distance for simple and composite null hypotheses, applicable to multivariate group families, such as location–scale families and those arising from aﬃne transformations. %
For observations in a separable Hilbert space, random projections reduce GOF to tests on one-dimensional projections, for example for Gaussian models \citep{CuestaAlbertos2007}, whereas empirical characteristic functionals enable direct tests of Gaussianity \citep{Henze2021}. On the hypersphere, \cite{Boente2014} build on the density approach, comparing kernel density estimators against parametric density estimates, which requires the choice of a smoothing bandwidth. \cite{Ebner2024} use a characteristic-function approach, comparing the empirical characteristic functions of the observed sample and a sample generated under the fitted null model. For compositional data, the simplex can be mapped into a Euclidean space through the logratio transformations. This property was used by \cite{Aitchison1986} to assess logistic normality through multivariate Gaussianity of the transformed data. On compact Riemannian manifolds, coordinate-invariant Sobolev tests \citep{Jupp2005} provide a GOF framework. Despite its generality, it requires exploiting the specific geometric structure of each manifold. For simple nulls, kernel mean embedding methods apply on general domains but require a suitable kernel \citep{Balasubramanian2021}, while kernel Stein discrepancies require both a suitable kernel and a Stein operator adapted to the domain and null model \citep{Hagrass2026}. Thus, these procedures use linear, coordinate, volume, reproducing kernel, or differential structure in addition to the metric.

For real-valued observations, classical Kolmogorov--Smirnov (KS) and Cram\'er--von Mises (CM) procedures compare the empirical distribution function with the distribution specified under the null, whose definition relies on the order structure of the real line. This has no canonical analogue in a general metric space. An alternative is to consider instead real-valued random variables defined through the distances $d(\omega,X)$ from each center $\omega\in\Omega$. Following \citet{DubeyChenMuller2024}, we study the \emph{distance profile} at $\omega$, given by
\begin{align*}
F_\omega^\mu(t)\defin\Prob{d(\omega,X)\le t},\qquad t\ge0.
\end{align*}
Equivalently, $F_\omega^\mu(t)$ is the probability assigned by $\mu$, the law of $X$, to the closed ball with center $\omega$ and radius $t$. Under the forthcoming condition \ref{a1} or \ref{a2}, equality between distance profiles implies equality of the underlying probability measures \citep{DubeyChenMuller2024,Chen2025}.

Distance profiles have been applied to several problems in statistical inference. \citet{DubeyChenMuller2024} established a Donsker theorem, used optimal transports between distance profiles to define ranks and quantile sets, and developed a two-sample test. Subsequent uses include change-point detection \citep{Dubey2023}, conformal inference \citep{Zhou2024}, mutual independence testing through joint profiles \citep{Chen2025}, and quantification of association and independence testing \citep{Zhou2026}. A methodology closely related to distance profiles was proposed by \cite{Wang2024} under the name of metric distribution functions, with Glivenko--Cantelli and Donsker results, as well as CM-type procedures for homogeneity and independence. Ball-based discrepancies have also been used to test independence between a random object and categorical variables \citep{Pan2023}.

The work of \citet{Szabados1987} can be seen as a precursor of distance profiles for GOF testing for continuous distributions in a separable metric space $(\Omega, d)$. For each sample center $\omega\in\{X_1,\ldots,X_n\}$, he computed the Kolmogorov distance between the cumulative distribution function (cdf) of $d(\omega,X)$ under the null and the leave-one-out empirical cdf, and then maximized it over the sample centers. When the cdf of $d(\omega,X)$ is continuous for every $\omega \in \Omega$, this statistic has exponential bounds and is consistent against a broad class of fixed alternatives. However, only simple null hypotheses and fixed alternatives were considered, and the asymptotic distribution of the test statistic under the null was not derived.

To the best of our knowledge, a general distance-profile GOF theory that treats simple and composite null hypotheses within a common framework and that accounts for parameter estimation has not been developed. We address this gap by proposing distance-profile-based KS and CM test statistics. Our theoretical contributions are fourfold:
\begin{enumerate}[label=(\roman*)]
\item We derive the asymptotic distribution of the KS and CM statistics for simple and composite hypotheses. The CM integration measure is fully data-driven, and, under suitable conditions, the KS supremum is asymptotically equivalent to a maximum over sample centers and pairwise sample distances.

\item For composite null hypotheses, a Bahadur expansion for the estimators and a first-order Taylor expansion of the distance profiles show that the fitted-profile process is asymptotically an empirical process whose indexing function class is Donsker. The effect of parameter estimation on the weak limit has classical analogues in GOF for fitted distributions \citep{Durbin1973}. We show that exponential families fitted by maximum likelihood verify the differentiability conditions and the Bahadur representation, and derive a simplified covariance formula.

\item We establish consistency against fixed alternatives and derive weak limits under $n^{-1/2}$-local contamination alternatives. Under the local alternatives, for a simple null hypothesis, the limiting Gaussian process that arises under the null is shifted by the difference between the distance profiles under the contaminating distribution and under the null; for a composite null, parameter fitting contributes an additional deterministic shift. Rather than assuming asymptotic linearity of the estimator under the locally misspecified sequence of distributions, we derive a Bahadur-type expansion for the estimator under local regularity and moment conditions.

\item We propose a multiplier-bootstrap approach to compute the $p$-values, and derive the asymptotic null distributions of the bootstrap KS and CM statistics, conditionally on the data. Under a composite null, we consider two different procedures. The first re-estimates the parameter and recomputes the fitted profiles in every replication. The second replaces re-estimation in each bootstrap replication by the first-order effect of parameter estimation evaluated once at the fitted model, and applies the multipliers directly to the resulting empirical process. Both converge conditionally to the same null limits. The second avoids repeated estimation and profile evaluation, but requires calculating the derivatives of the distance profile with respect to the parameter and inverting a suitable Jacobian matrix.
\end{enumerate}

In some geometries, distance profiles can be seen as the cdfs of scalar projections (see Proposition \ref{prop:dp_examples}). This connects our GOF tests with projection-based GOF procedures \citep{Escanciano2006, CuestaAlbertos2007, Garcia-Portugues2020b, Garcia-Portugues2026}. Our tests are also connected to the GOF literature based on the analysis of regression residuals, since, for each fixed center $\omega$, $d(\omega,X)$ plays a role analogous to a residual. \citet{VanKeilegom2008} developed KS and CM tests based on residual cdfs and studied $n^{-1/2}$-local alternatives, obtaining a limiting structure similar to ours. Another connection is the first-order effect of parameter estimation on the empirical cdf process \citep{Loynes1980}.

The remainder of this paper is organized as follows. Section~\ref{sec:basic_concepts} introduces basic notation and results needed for defining the GOF tests. Section~\ref{sec:dp_examples} provides a characterization of distance profiles for a variety of distributions in different metric spaces. Section~\ref{sec:null_asymptotics} introduces the KS and CM test statistics and derives their asymptotic distributions under simple and composite null hypotheses. Section~\ref{sec:non_null_asymptotics} establishes consistency under fixed alternatives and derives the weak limits under local alternatives. Section~\ref{sec:bootstrap} introduces the multiplier-bootstrap procedures and derives the conditional asymptotic distributions of the bootstrap statistics. Sections~\ref{sec:numerical_experiments} and~\ref{sec:real_data} present the numerical experiments and applications, respectively, and Section~\ref{sec:discussion} concludes with a discussion. Proofs are deferred to the Supplementary Material.

\section{Basic concepts and notation}\label{sec:basic_concepts}
Consider a separable metric space $(\Omega, d)$. Let $(S, \mathcal{A}, \mathsf{P})$ be a probability space, where $S$ is a sample space, $\mathcal{A}$ is a $\sigma$-algebra of subsets of $S$, and $\mathsf{P}$ is a probability measure. An $\Omega$-valued random variable is called a random object, which is a measurable map $X: S \rightarrow \Omega$. Suppose that $X$ is distributed according to a Borel probability measure $\mu$. %
For any $t \geq 0$, we define the \emph{distance profile} at $\omega \in \Omega$ of $X$ as
$$
F_\omega^\mu(t) \defin \Prob{d(\omega, X) \leq t}.
$$
To estimate the distance profiles $F_\omega^\mu$ from a sample $X_1, \ldots, X_n$ of independent realizations of $X$, $n\ge1$, we use the empirical distance profile
$$
F_{n,\omega}^\mu(t)\defin\frac{1}{n} \sum_{i=1}^n 1_{\left\{d\left(\omega, X_i\right) \leq t\right\}}, \quad t \geq 0.
$$

Distance profiles characterize the underlying probability measures under suitable conditions.
Consider two Borel probability measures $\mu_1$ and $\mu_2$ on $(\Omega,d)$. \citet[][Proposition 1]{DubeyChenMuller2024} provide a sufficient condition for $\mu_1=\mu_2$ in terms of distance profiles: if there exists $\alpha>0$ such that $(\Omega,d^\alpha)$ is of strong negative type \citep{Lyons2013}, where $d^\alpha(\omega,\omega^\prime):=d(\omega,\omega^\prime)^\alpha$, then 
\begin{align*}
\mu_1=\mu_2
\quad\Longleftrightarrow\quad
F_{\omega}^{\mu_1}(t)=F_{\omega}^{\mu_2}(t)
\ \ \text{for all } \omega\in\Omega,\ t\ge 0.
\end{align*}

\citet{Chen2025} obtain an alternative characterization under a condition imposed on the probability measures.
Let $\mu$ be a Borel probability measure on $(\Omega,d)$. If there exists $L_\mu<\infty$ such that, for every $\omega\in \supp{\mu}$,
\begin{align*}
\limsup_{r\to 0^+}\frac{\mu\big(B(\omega,2r)\big)}{\mu\big(B(\omega,r)\big)}\le L_\mu,
\end{align*}
then $\mu$ is said to satisfy the doubling condition.
If $\mu_1,\mu_2$ satisfy this condition, then Theorem 1 in \cite{Chen2025} shows that
\begin{align*}
\mu_1=\mu_2
\quad\Longleftrightarrow\quad
\mu_1\big(\overline B(\omega,t)\big)=\mu_2\big(\overline B(\omega,t)\big)
\ \ \text{for all } \omega\in \supp{\mu_1},\ t>0,
\end{align*}
i.e., $\mu_1=\mu_2$ whenever $F_\omega^{\mu_1}(t)=F_\omega^{\mu_2}(t)$ for all $\omega\in\supp{\mu_1}$ and $t\ge 0$. The same result holds with $\supp{\mu_1}$ replaced by $\supp{\mu_2}$. 
Here, $\supp{\mu}$ denotes the support of $\mu$, which is the closure of the set of all points to which $\mu$ assigns positive mass in every open neighborhood, i.e.,
$\supp{\mu}=\operatorname{cl}(\{x \in \Omega: \mu(U)>0 \text { for all open sets } U \ni x\}).$

In this manuscript, we assume that at least one of the following two conditions holds:
\begin{enumerate}[label=(\textit{a.\arabic*}), ref=(\textit{a.\arabic*})]
    \item There exists $\alpha>0$ such that $(\Omega,d^\alpha)$ is of strong negative type. \label{a1} %

    \item The probability measures satisfy the doubling condition. \label{a2}
    
\end{enumerate}

Condition \ref{a1} is a condition on the metric space, whereas condition \ref{a2} is a condition on the probability measures.

Under certain conditions, \citet[][Theorem 5.1]{DubeyChenMuller2024} and \citet[][Theorem 3]{Chen2025} establish that distance profiles satisfy the Donsker condition.
Let
\begin{align}\label{eq:def_Y}
\cY \defin \{y_{\omega, t}:(\omega, t)\in\Omega\times\mathbb R_{\ge0}\}, \ \text{ where } y_{\omega, t}(x):=1_{\{d(\omega,x)\le t\}}.
\end{align}
Then, $\cY$ is $\mu$-Donsker.
The conditions under which this property is established for distance profiles differ between \cite{DubeyChenMuller2024} and \cite{Chen2025}. We endow $\Omega\times\mathbb R_{\ge0}$ with the distance
$
d_{\Omega\times\mathbb R_{\ge0}}
\big((\omega,t),(\omega',t')\big)
\defin
d(\omega,\omega')+|t-t'|,
$
which induces the product topology.

\cite{DubeyChenMuller2024} require the following two conditions:
\begin{enumerate}[label=(\textit{b.\arabic*}), ref=(\textit{b.\arabic*})]
\item\label{b1}
Let $N(\varepsilon,\Omega,d)$ be the covering number of $(\Omega,d)$, for $\varepsilon>0$. Then
$$
\varepsilon \log N(\varepsilon,\Omega,d)\ \to\ 0
\text{ as } \varepsilon\to 0^+.
$$
\item\label{b2}
For every $\omega\in\Omega$, $F_\omega^\mu$ is absolutely continuous with continuous density $f_\omega$. Defining
$
\underline\Delta_\omega \defin \inf_{t\in \supp{F_\omega^{\mu}}} f_\omega(t),$ $\bar\Delta_\omega \defin \sup_{t\in\mathbb R} f_\omega(t)$, it holds that $\underline\Delta_\omega>0$ for each $\omega\in\Omega$ and there exists $\bar\Delta<\infty$ such that $\sup_{\omega\in\Omega}\bar\Delta_\omega \le \bar\Delta$.
\end{enumerate}
\begin{remark}
    Condition~\ref{b1} implies that $(\Omega,d)$ is totally bounded. The assumption that the map $t\mapsto F_\omega^\mu(t)$ is continuous on $\R_{\ge0}$ for every $\omega\in\Omega$, which is implied by condition \ref{bp2}, dates back to \citet[p. 383]{Szabados1987}, who replaced the supremum over all $\omega \in \Omega$ in the Kolmogorov--Smirnov test statistic with maximization over a finite sample. To achieve this, \cite{Szabados1987} restricted the analysis to \emph{continuous measures}, i.e., measures $\mu$ such that $\mu(B(\omega,t))$ depends continuously on $t>0$ for any $\omega \in \Omega$.
\end{remark}

\citet[][Assumption 1]{Chen2025} assume the bracketing condition
\begin{align}\label{eq:bracketing_assumption_1}
\varepsilon \log N_{[]}\big(\varepsilon,\cY, L^1(\mu)\big)\ \to\ 0
\quad \text{as } \varepsilon\to 0^+,
\end{align}
where $N_{[]}(\varepsilon,\cY,L^1(\mu))$ denotes the $L^1(\mu)$ bracketing number of $\cY$. 
The condition given in \eqref{eq:bracketing_assumption_1} holds if both condition \ref{b1} and a modification of condition \ref{b2} are satisfied \citep[][Proposition~4]{Chen2025}.
The modification is:
\begin{enumerate}[label=(\textit{b.\arabic*'}), ref=(\textit{b.\arabic*'})]
\setcounter{enumi}{1}
\item\label{bp2}
There exists a constant $K_\mu>0$ such that
$$
\sup_{\omega\in \Omega}\big|F_\omega^\mu(r)-F_\omega^\mu(s)\big| \le K_\mu|r-s|,
$$
for every $r,s\in \R_{\ge0}$.
\end{enumerate}

It is assumed that conditions \ref{b1} and \ref{bp2} hold throughout this manuscript, in addition to either \ref{a1} or \ref{a2}.

\section{Distance profiles on particular metric spaces}\label{sec:dp_examples}

This section aims to provide closed formulas for the distance profile of a random variable $X$ following a specific distribution on a given metric space, with probability measure denoted by $\mu$.

\begin{example}[Multivariate normal distribution]\label{ex:dp_mvn}
Consider $\bX\sim \mathcal{N}_q(\bmu, \bSigma)$ taking values in $(\R^q, d_{\R^q})$, with $\bSigma$ positive definite and $d_{\R^q}$ the Euclidean distance in $\R^q$.
\end{example}

\begin{example}[\texorpdfstring{The Brownian motion in $L^2[0,1]$}{The Brownian motion in L2[0,1]}]\label{ex:dp_brownian}
Consider the Brownian motion $B(t)$, $t \in [0,1]$, as an element of $(L^2[0,1], d_{L^2})$, where $d_{L^2}(B, \omega) = \|B-\omega\|_{L^2}$, for $\omega\in L^2[0,1]$.
\end{example}

\begin{example}[Von Mises--Fisher distribution; vMF]\label{ex:dp_vmf}
Consider the unit sphere $\Sp^{q}\defin\{\bx \in \R^{q+1}: \, \| \bx \|_2 = 1\},$ $q\ge 1$.
Assume $\bX \sim \mathrm{vMF}(\bmu, \kappa)$, for some given $\bmu \in \Sp^{q}$, $\kappa>0$.
For $\bx \in \Sp^{q}$, the density of $\bX$ is given by $f_\mathrm{vMF}(\bx;\bmu,\kappa) = c_{q}^\mathrm{vMF}(\kappa) e^{\kappa\bmu^\top\bx}.$ 
We consider the chordal and geodesic distances, given respectively by $d_{\mathbb R^{q+1}}(\bX,\bomega)^2=2(1-\bomega^\top \bX)$ and $d_{\mathbb S^q}(\bX,\bomega)=\cos^{-1}(\bomega^\top \bX)$.
\end{example}

\begin{example}[Hyperboloid von Mises--Fisher distribution; HvMF]\label{ex:dp_hvmf}
Consider the Minkowski pseudo-inner product $(\bx, \by)\defin -x_{1} y_{1}+\sum_{i=2}^{q+1} x_i y_i$ on the unit hyperboloid $\mathbb{H}^q\defin\{\bx \in \R^{q+1}: \allowbreak (\bx, \bx)=-1,\, x_{1}>0\}$, for $q\ge 1$. In this surface, geodesic distances are calculated as $d_{\mathbb{H}^q}(\bx, \by)=\arccosh \left( -(\bx, \by)\right)$. The analogous distribution to the vMF on $\mathbb{H}^q$ has density $f_\mathrm{HvMF}(\by ; \bmu, \kappa) = c_q^{\mathrm{HvMF}}(\kappa) \exp \left\{\kappa(\by, \bmu)\right\}$, for $\by, \bmu \in \mathbb{H}^q$, and $\kappa>0$, with respect to the invariant measure on $\mathbb{H}^q$ \citep{Barndorff-Nielsen1978a,Jensen1981}.
\end{example}

\begin{example}[Logistic Gaussian distribution; LG]\label{ex:dp_logistic_gaussian}
Consider the simplex $\Delta^{q} \defin \{ \bx \in \R^{q+1} : x_i > 0, \ \sum_{i=1}^{q+1} x_i = 1 \}$, for $q\ge 1$, endowed with the Aitchison distance $d_{\mathrm{A}}(\bx, \bomega) \defin \|\mathrm{ilr}(\bx) - \mathrm{ilr}(\bomega)\|_2$, where $\mathrm{ilr}:\Delta^q\to\R^q$ is the isometric log-ratio transform \citep{Egozcue2003} defined by
$$
\mathrm{ilr}_j(\bx)
\defin
\frac{1}{\sqrt{j(j+1)}}
\bigg(
\sum_{k=1}^j \log x_k
-
j\log x_{j+1}
\bigg),
\qquad j=1,\ldots,q.
$$
Let $\mathbf{Y} \sim \mathcal{N}_{q}(\boldsymbol{\mu}, \boldsymbol{\Sigma})$, with $\boldsymbol{\Sigma}$ positive definite, and consider its inverse ilr transformation $\mathbf{X} \defin \mathrm{ilr}^{-1}(\mathbf{Y}).$ Then, $\mathbf{X}$ follows an ilr-variant of the logistic Gaussian distribution of \citet{Aitchison1980} on $\Delta^{q}$, which we denote by $\mathrm{LG}(\boldsymbol{\mu},\boldsymbol{\Sigma})$.
\end{example}

\begin{proposition}\label{prop:dp_examples}
The distance profiles for the distributions introduced in Examples~\ref{ex:dp_mvn}--\ref{ex:dp_logistic_gaussian} are given as follows:
\begin{enumerate}[label=(\textit{\roman*}), ref=(\textit{\roman*})]
\item\label{prop:dp_examples_normal}
Let $\bX\sim \mathcal{N}_q(\bmu,\bSigma)$. Then, $F_{\bomega}(t)=\Prob{\blambda^\top\bP \le t^2}$, where $P_i\sim \chi_1^2\big(\nu_i^2/\lambda_i\big)$, mutually independent, $i=1,\ldots,q$; $\bSigma=\bU \, \mathrm{diag}(\blambda) \, \bU^\top$ is the spectral decomposition of $\bSigma$; and $\boldsymbol{\nu}\defin \bU^\top(\bmu-\bomega)$. When $\bomega = \bmu$, $P_i \sim \chi_1^2$.

\item\label{prop:dp_examples_brownian}
Let $B$ be a Brownian motion in $L^2[0,1]$. Then, $F_{\omega}(t)=\Prob{\sum_{i=1}^\infty \lambda_i Q_i \le t^2}$, where $Q_i\sim \chi_1^2\big(c_i^2/\lambda_i\big)$, mutually independent; $c_i\defin\langle \omega,e_i\rangle$, where $\{e_i\}_{i=1}^\infty$ is an orthonormal eigenbasis of the covariance kernel of $B$; and $\lambda_i=(i-1/2)^{-2}\pi^{-2}$. When $\omega(t)=0$ for all $t\in[0,1]$, $Q_i\sim\chi_1^2$.

\item\label{prop:dp_examples_vmf}
Let $\bX \sim \mathrm{vMF}(\bmu, \kappa)$. Then, $F_{\bomega}(t)= 1 - F_T\big(1-t^2/2\big)$, with $0\le t\le 2$ (chordal distance), and $F_{\bomega}(t)= 1 - F_T\left(\cos(t)\right)$, with $0\le t\le \pi$ (geodesic distance). Here, $T \defin \bomega^\top \bX$ has density 
\begin{align}\label{eq:density_t_vmf}
    t \mapsto \frac{c_{q}^\mathrm{vMF}(\kappa) e^{\kappa t \bmu^\top\bomega } (1-t^2)^{\frac{q-2}{2}}}{ c_{q-1}^\mathrm{vMF}(\kappa\sqrt{(1-t^2)(1-(\bmu^\top\bomega)^2)})}, \quad  -1 < t < 1.
\end{align}

\item\label{prop:dp_examples_hvmf}
Let $\bX \sim \mathrm{HvMF}(\bmu, \kappa)$. Then, $F_{\bomega}(t)=F_Z(t)$, $t\ge0$. Here, $Z \defin d_{\mathbb{H}^q}(\bX,\bomega)$ has density
\begin{align}\label{eq:density_z_hvmf}
    z \mapsto \frac{c_{q}^\mathrm{HvMF}(\kappa)}{c_{q-1}^\mathrm{vMF}(\kappa\sinh(z)\sinh(\rho))} (\sinh(z))^{q-1} e^{\kappa(\bmu, \bomega) \cosh(z)}, \quad z \ge 0,
\end{align}
where $\rho\defin d_{\mathbb{H}^q}(\bomega,\bmu)$.

\item\label{prop:dp_examples_logistic_gaussian}
Let $\bX \sim \mathrm{LG}(\bmu, \bSigma)$. Then, $F_{\bomega}(t)=\Prob{\boldsymbol{\alpha}^\top \bQ \le t^2}$, where $Q_i\sim \chi_1^2(\delta_i)$, mutually independent, and $\delta_i\defin \alpha_i^{-1}\left(\bu_i^\top\big(\bmu-\mathrm{ilr}(\bomega)\big)\right)^2$, where $\bu_i$ denotes the $i$-th column of $\bU$, with $\bSigma=\bU \, \mathrm{diag}(\boldsymbol{\alpha}) \, \bU^\top$ the spectral decomposition of $\bSigma$.

\end{enumerate}
\end{proposition}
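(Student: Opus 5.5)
The plan is to treat the five parts separately. Each reduces to a change of variables followed by one standard distributional identity.

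For \ref{prop:dp_examples_normal}, write $\|\bX-\bomega\|_2^2=\|\bU^\top(\bX-\bomega)\|_2^2$ by orthogonality of $\bU$. Then $\bW\defin\bU^\top(\bX-\bomega)\sim\mathcal N_q(\boldsymbol\nu,\mathrm{diag}(\blambda))$, so its components $W_i$ are independent. Writing $W_i^2=\lambda_i(W_i/\sqrt{\lambda_i})^2$ gives $(W_i/\sqrt{\lambda_i})^2\sim\chi^2_1(\nu_i^2/\lambda_i)$. Hence $F_\bomega(t)=\Prob{\|\bX-\bomega\|_2\le t}=\Prob{\blambda^\top\bP\le t^2}$, and the noncentralities vanish when $\bomega=\bmu$.

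Part \ref{prop:dp_examples_logistic_gaussian} is identical after using that $\mathrm{ilr}$ is an isometry. Indeed $d_{\mathrm A}(\bX,\bomega)=\|\mathbf Y-\mathrm{ilr}(\bomega)\|_2$ with $\mathbf Y\sim\mathcal N_q(\bmu,\bSigma)$, so one applies \ref{prop:dp_examples_normal} with $\bomega$ replaced by $\mathrm{ilr}(\bomega)$.

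For \ref{prop:dp_examples_brownian}, use the Karhunen--Lo\`eve expansion $B=\sum_i\sqrt{\lambda_i}Z_ie_i$ in $L^2$. Here $Z_i$ are i.i.d.\ $\mathcal N(0,1)$, $e_i(s)=\sqrt2\sin((i-1/2)\pi s)$ and $\lambda_i=((i-1/2)\pi)^{-2}$. By Parseval,
\[
\|B-\omega\|_{L^2}^2=\sum_i(\sqrt{\lambda_i}Z_i-c_i)^2=\sum_i\lambda_i(Z_i-c_i/\sqrt{\lambda_i})^2 .
\]
Each summand is $\lambda_iQ_i$ with $Q_i\sim\chi^2_1(c_i^2/\lambda_i)$, independent across $i$. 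The expansion converges in $L^2$ almost surely and $\sum_ic_i^2<\infty$, so the series is finite almost surely and the identity holds. When $\omega=0$, all $c_i=0$.

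For \ref{prop:dp_examples_vmf}, both distances are decreasing functions of $T=\bomega^\top\bX$. The chordal distance satisfies $d\le t\iff T\ge 1-t^2/2$, and the geodesic distance satisfies $d\le t\iff T\ge\cos t$. Since $T$ has a density, $F_\bomega(t)=1-F_T(\cdot)$ at the corresponding threshold. For the density of $T$, use the tangent–normal decomposition $\bX=T\bomega+\sqrt{1-T^2}\,\bxi$, with $\bxi\in\Sp^{q-1}$ in $\bomega^\perp$. The surface measure then factors as $(1-t^2)^{(q-2)/2}\,\rd t\,\rd\bxi$. Similarly decompose $\bmu=(\bmu^\top\bomega)\bomega+\sqrt{1-(\bmu^\top\bomega)^2}\,\bm$, with $\bm\in\Sp^{q-1}$ in $\bomega^\perp$. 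The exponent becomes
\[
\kappa t\,\bmu^\top\bomega+\kappa\sqrt{(1-t^2)(1-(\bmu^\top\bomega)^2)}\,\bm^\top\bxi .
\]
Integrating $e^{\tilde\kappa\bm^\top\bxi}$ over $\Sp^{q-1}$ gives $1/c^{\mathrm{vMF}}_{q-1}(\tilde\kappa)$, which yields \eqref{eq:density_t_vmf}. This assumes the convention $c_{q-1}^{\mathrm{vMF}}(0)=1/|\Sp^{q-1}|$ when $\tilde\kappa=0$.

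For \ref{prop:dp_examples_hvmf}, the argument is analogous in hyperbolic polar coordinates centred at $\bomega$. Write $\bX=\cosh(z)\bomega+\sinh(z)\bxi$, with $\bxi$ a unit spacelike vector Minkowski-orthogonal to $\bomega$, so $\bxi\in\Sp^{q-1}$ after a Lorentz transformation. The invariant measure is $\sinh(z)^{q-1}\rd z\,\rd\bxi$, and $z=d_{\mathbb H^q}(\bX,\bomega)$. Decomposing $\bmu=\cosh(\rho)\bomega+\sinh(\rho)\bm$ gives
\[
(\bX,\bmu)=-\cosh z\cosh\rho+\sinh z\sinh\rho\,\bm^\top\bxi ,
\]
with $(\bmu,\bomega)=-\cosh\rho$. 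Integrating over $\bxi$ again produces $1/c^{\mathrm{vMF}}_{q-1}(\kappa\sinh z\sinh\rho)$ and hence \eqref{eq:density_z_hvmf}.

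The main obstacle is this last step: verifying the Jacobian of the hyperbolic polar coordinates and the sign conventions in the Minkowski product. The first thing to try is applying a Lorentz boost that sends $\bomega$ to $(1,0,\ldots,0)$, which reduces the computation to standard coordinates.
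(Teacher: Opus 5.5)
Your proposal is correct and follows essentially the same route as the paper: an orthogonal rotation and noncentral $\chi^2_1$ identification for (i) and (v), the Karhunen--Lo\`eve expansion for (ii), the tangent--normal decomposition with the $\Sp^{q-1}$ integral recognized as a vMF normalizing constant for (iii), and a hyperbolic transformation sending $\bomega$ to $\be_1$ followed by integrating out the angular vMF factor for (iv). The differences are minor: you obtain (v) directly from (i) via the ilr isometry where the paper repeats the computation, and the paper states explicitly two things you leave implicit, namely the degenerate cases ($\bomega=\pm\bmu$ and $\rho=0$, which your zero-concentration convention covers) and the case $q=1$, where $\Sp^0$ carries counting measure and $c_0^{\mathrm{vMF}}(\kappa)=(2\cosh\kappa)^{-1}$.
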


\section{Null asymptotics}\label{sec:null_asymptotics}
Consider a random object $X$ taking values in a separable metric space $(\Omega, d)$ and denote by $\mu$ the underlying probability measure of $X$. In this section, we study the asymptotic distribution of the distance-profile empirical processes under the null hypothesis $H_0$, for simple and composite null hypotheses, and use it to prove the asymptotic distribution of the Kolmogorov--Smirnov (KS) and Cramér--von Mises (CM) test statistics.

\subsection{Simple null hypothesis}\label{sec:null_asymptotics_simple}
The goal is to test $\mu = \mu_{0}$, where $\mu_{0}$ is a given probability measure on $(\Omega, d)$.
We assume condition \ref{a2}, so that distance profiles characterize distributions on $(\Omega,d)$. 
Hence, testing $\mu = \mu_{0}$ is equivalent to:
$$
H_0: F_\omega^\mu=F_{\omega}^{\mu_0}, \text{ for every } \omega \in \supp{\mu_{0}}\subset\Omega.
$$
The alternative hypothesis is thus
$$
H_1: \text{ there exists } \omega \in \supp{\mu_{0}} \text{ such that } F_\omega^\mu\neq F_{\omega}^{\mu_0}.
$$
Alternatively, one could assume condition \ref{a1} and replace $\supp{\mu_0}$ by $\Omega$ in the above hypotheses.
Unless otherwise specified, all the results hold under either condition.

Consider a sample $X_1, \ldots, X_n$ of iid random objects taking values in $(\Omega, d)$, and following a common distribution $\mu$. 
Denote by $F_{n,\omega}^{\mu}$ the empirical distance profile associated to this sample. 
Define the random field 
\begin{align*}
    \G_{n,\mu}^{\mu_0}(\omega, t) \defin  \sqrt{n}\left(F_{n,\omega}^{\mu}(t)-F_{\omega}^{\mu_0}(t)\right), \quad (\omega, t) \in \Omega \times \R_{\geq 0}.
\end{align*}
Observe that $\G_{n,\mu}^{\mu_0}(\omega, t)$ is the empirical process indexed by the class of functions $\cY$ given in \eqref{eq:def_Y}.  In other words, we can write
$\G_{n,\mu}^{\mu_0}(\omega, t) = \sqrt n(P_n y_{\omega, t} - \mu_0 y_{\omega, t})$
where $P_n$ is the empirical measure associated to the sample $X_1, \ldots, X_n$, i.e., $P_n = n^{-1} \sum_{i=1}^n \delta_{X_i}$. Thus, for notational convenience, we will also write $\G_{n,\mu}^{\mu_0}(y_{\omega,t})$ to denote $\G_{n,\mu}^{\mu_0}(\omega,t)$ in the proofs.
Define $\ell^{\infty}(\cY)$ as the set of all uniformly bounded real functions on $\cY$. Theorem 3 in \cite{Chen2025} proves that, under $H_0$, the class $\cY$ is $\mu_0$-Donsker, i.e.,
\begin{align}\label{eq:thm_3_chendubey}
\G_{n,\mu}^{\mu_0} \convl \G_{0} \text { in } \ell^{\infty}(\cY), 
\end{align}
as $n\to \infty$, where $\G_{0}$ is a zero mean Gaussian process with covariance given by
\begin{align}\label{eq:cov_simp}
\mathcal{C}_{(\omega_1,t_1), (\omega_2, t_2)} \defin \Prob{d(\omega_1, X) \le t_1, d(\omega_2, X) \le t_2} - F_{\omega_1}^{\mu_0}(t_1)F_{\omega_2}^{\mu_0}(t_2),
\end{align}
for every $(\omega_1, t_1), (\omega_2, t_2) \in \Omega \times \R_{\geq 0}$.

As an illustration, Figure~\ref{fig:s1_limit_process_visualization} gives a visual representation of one realization of the limiting Gaussian process $\G_0$ for the $\mathrm{vMF}((1,0)^\top,2)$ model on $\Sp^1$. %

\ifincludeimages
\begin{figure}[!htbp]
\centering
\begin{subfigure}[htbp]{0.67\textwidth}
    \centering
    \includegraphics[width=\textwidth]{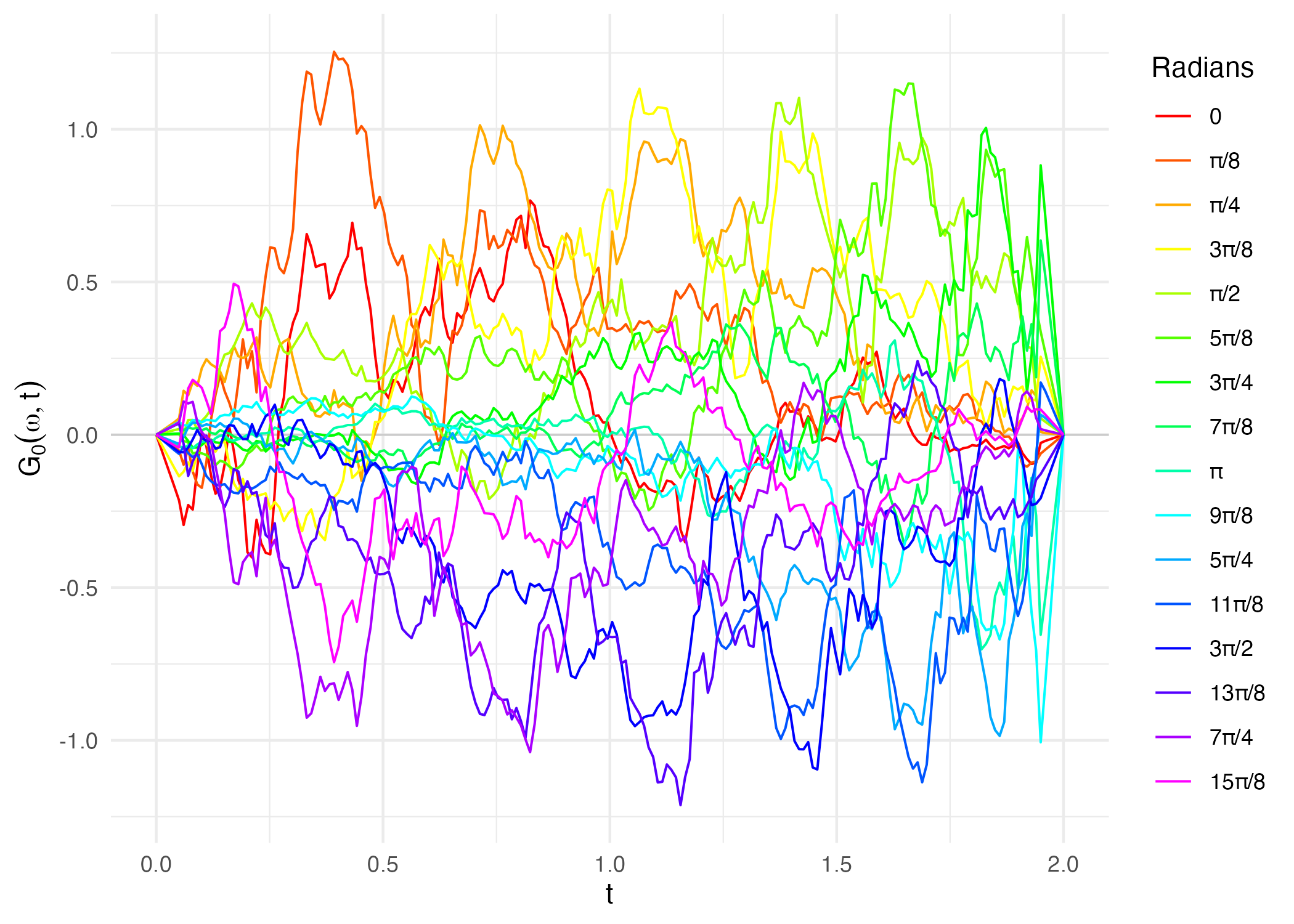}
\end{subfigure}
\hfill
\begin{subfigure}[htbp]{0.32\textwidth}
    \centering
    \includegraphics[width=\textwidth]{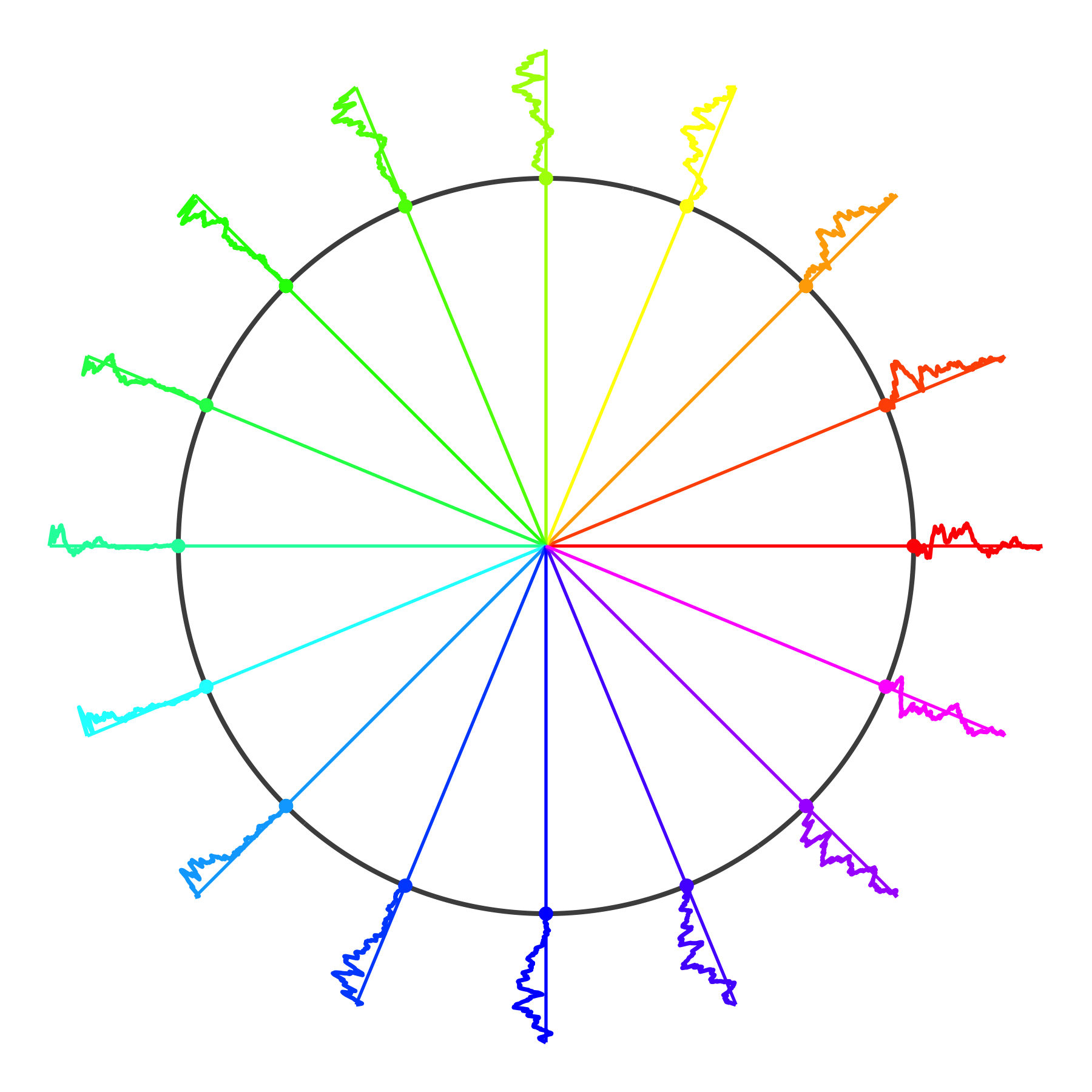}
\end{subfigure}
\caption{One realization of $\G_0$ for a $\mathrm{vMF}((1,0)^\top, 2)$ model on $\Sp^1$. The left panel displays the realization of $\G_0$ as a function of $t\in[0,2]$ along $16$ fixed directions $\bomega\in\Sp^1$. The 16 trajectories are obtained from the same realization of the Gaussian process. The right panel gives a geometric representation of the same trajectories along their corresponding directions.}
\label{fig:s1_limit_process_visualization}
\end{figure}
\fi

\subsection{Composite null hypothesis}
Assume that $\mu$ belongs to a parametric family of probability measures on $(\Omega, d)$, indexed by a parameter $\btheta$ in an open subset $\Theta$ of $\R^p$. 
One may be interested in testing $\mu \in \{\mu_{\btheta} : \btheta \in \Theta\}$. Denote by $\btheta_0\in \Theta$ the true parameter value, i.e., $\mu = \mu_{\btheta_0}$.
In this case, the (composite) null hypothesis can be expressed as
$$
H_0: F_\omega^\mu=F_{\omega}^{\mu_{\btheta_0}}, \text{ for every } \omega \in \mathrm{supp}(\mu_{\btheta_0})\subset\Omega, \ \btheta_0\in\Theta\subset\R^p \text{ unknown}.
$$

The construction of test statistics for the composite null hypothesis requires estimating the unknown parameter $\btheta_0$. \citet[Chapter 5, Section 5]{Shorack2009} provide an overview of GOF tests with estimated parameters. 

Let $\hat{\btheta}$ be a measurable estimator of $\btheta_0$ based on the sample $X_1, \ldots, X_n$. The quantity $\hat{\btheta} - \btheta_0$ will play a central role in deriving the asymptotic distributions. For estimators based on $n$ replications of an experiment, $\hat{\btheta}-\btheta_0$ is often of order $n^{-1/2}$ \citep[][p. 51]{vandervaart1998}, and its asymptotic distribution is usually normal. 
Under certain regularity conditions, an estimator $\hat{\btheta}$ of $\btheta_0$ admits the so-called Bahadur representation for asymptotic linearity:
\begin{align}\label{eq:bahadur}
\sqrt{n}\big(\hat{\btheta}-\btheta_0\big)= - \bV_{\btheta_0}^{-1} \frac{1}{\sqrt{n}}\sum_{i=1}^n \bpsi_{\btheta_0}(X_i)+o_{\mathsf{P}}(1).
\end{align}
In particular, the sequence $\sqrt{n}(\hat{\btheta}-\btheta_0)$ is asymptotically normal with mean zero and covariance matrix $\bV_{\btheta_0}^{-1} \Ebig{\bpsi_{\btheta_0}(X) \bpsi_{\btheta_0}(X)^\top}(\bV_{\btheta_0}^{-1})^\top$. See Theorem 5.21 in \cite{vandervaart1998} for this result.
A set of regularity conditions under which \eqref{eq:bahadur} holds is:
\begin{enumerate}[label=($c$\textit{.\arabic*}), ref=($c$\textit{.\arabic*})]
    \item For each $\btheta$ in an open subset of Euclidean space, the mapping $x \mapsto \bpsi_{\btheta}(x)$ is a vector-valued function such that, for every $\btheta_1$ and $\btheta_2$ in a neighborhood of $\btheta_0$ and a measurable function $L$ with $\mathsf{E}(L(X)^2) < \infty$, it holds that $\| \bpsi_{\btheta_1}(x) - \bpsi_{\btheta_2}(x)\| \le L(x) \|\btheta_1 - \btheta_2 \|.$ \label{cp1}
    \item $\Ebig{\|\bpsi_{\btheta_0}(X)\|^2}<\infty$. \label{cp2}
    \item The map $\btheta \mapsto \Ebig{\bpsi_{\btheta}(X)}$ is differentiable at a zero $\btheta_0$, with non-singular derivative matrix $\bV_{\btheta_0}$.\label{cp3}
    \item $\hat{\btheta} \convp \btheta_0$. \label{cp4}
    \item $n^{-1} \sum_{i=1}^n \bpsi_{\hat{\btheta}}(X_i) = o_{\mathsf{P}}(n^{-1/2})$. \label{cp5}
\end{enumerate}
These conditions replace the classical conditions, which are sometimes too stringent and require the existence of third derivatives \citep[][Section 5.6]{vandervaart1998}.

We next illustrate the Bahadur representation \eqref{eq:bahadur} for exponential families on $(\Omega, d)$ in their natural parametrization, whose densities with respect to a common dominating measure $\nu$ are $p_{\boldsymbol{\eta}}(x) = h(x)\exp\{\boldsymbol{\eta}^{\top}\bT(x) \allowbreak -A(\boldsymbol{\eta})\},$ $\boldsymbol{\eta} \in \mathcal{H}$, where $\mathcal{H}$ is the natural parameter space. It is assumed that $\mathcal{H}$ is open, $\bT:\Omega\to\mathbb{R}^p$ is measurable, $A:\mathcal{H}\to\mathbb{R}$, and the Fisher information matrix $\mathcal I_{\boldsymbol{\eta}}=\partial^2 A(\boldsymbol{\eta})/(\partial\boldsymbol{\eta}\,\partial\boldsymbol{\eta}^{\top})$ is non-singular. This class includes the multivariate normal and logistic Gaussian families, as well as the vMF and HvMF distributions under the canonical parametrization $\bxi = \kappa \bmu$ (see Section~\ref{sec:dp_examples}). %

\begin{proposition}[A Bahadur representation for exponential families]
\label{prop:bahadur_exponential_family}
Let $X_1,\ldots,X_n$ be iid from an exponential family with true parameter
$\boldsymbol{\eta}_0\in \mathcal{H}$, and let $\hat{\boldsymbol{\eta}}$ be the maximum likelihood estimator of $\boldsymbol{\eta}_0$.
Then,
\begin{align*}
\sqrt{n}\big(\hat{\boldsymbol{\eta}}-\boldsymbol{\eta}_0\big)
=
\mathcal I_{\boldsymbol{\eta}_0}^{-1}
\frac{1}{\sqrt{n}}
\sum_{i=1}^n
\Big(
\bT(X_i)- \frac{\partial}{\partial\boldsymbol{\eta}} A(\boldsymbol{\eta}_0)
\Big)
+
o_{\mathsf P}(1).
\end{align*}
\end{proposition}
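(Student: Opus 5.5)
We will obtain the representation by checking conditions \ref{cp1}--\ref{cp5} for the score function of the exponential family and then invoking the Bahadur representation \eqref{eq:bahadur}. The log-likelihood of one observation is $\log h(x)+\boldsymbol{\eta}^\top\bT(x)-A(\boldsymbol{\eta})$. We therefore take
\[
\bpsi_{\boldsymbol{\eta}}(x)\defin \frac{\partial}{\partial\boldsymbol{\eta}}A(\boldsymbol{\eta})-\bT(x),
\]
which is minus the score. Standard exponential-family theory says that $A$ is infinitely differentiable on the open set $\mathcal H$, with $\mathsf{E}_{\boldsymbol{\eta}}(\bT(X))=\partial A(\boldsymbol{\eta})/\partial\boldsymbol{\eta}$ and $\mathsf{Var}_{\boldsymbol{\eta}}(\bT(X))=\mathcal I_{\boldsymbol{\eta}}$. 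In particular all moments of $\bT(X)$ are finite, because the moment generating function exists near $\boldsymbol{\eta}_0$.

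With this choice, conditions \ref{cp1}--\ref{cp3} are routine.
\begin{itemize}
\item For \ref{cp1}, we have $\bpsi_{\boldsymbol{\eta}_1}(x)-\bpsi_{\boldsymbol{\eta}_2}(x)=\nabla A(\boldsymbol{\eta}_1)-\nabla A(\boldsymbol{\eta}_2)$, which does not depend on $x$. The Hessian $\mathcal I_{\boldsymbol{\eta}}$ is continuous, hence bounded on a compact ball around $\boldsymbol{\eta}_0$, so the mean value theorem gives the Lipschitz bound with a constant function $L$.
\item For \ref{cp2}, $\mathsf{E}\|\bpsi_{\boldsymbol{\eta}_0}(X)\|^2=\operatorname{tr}\mathcal I_{\boldsymbol{\eta}_0}<\infty$.
\item For \ref{cp3}, $\boldsymbol{\eta}\mapsto\mathsf{E}_{\boldsymbol{\eta}_0}\bpsi_{\boldsymbol{\eta}}(X)=\nabla A(\boldsymbol{\eta})-\nabla A(\boldsymbol{\eta}_0)$ vanishes at $\boldsymbol{\eta}_0$. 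Its derivative there is $\bV_{\boldsymbol{\eta}_0}=\mathcal I_{\boldsymbol{\eta}_0}$, which is non-singular by assumption.
\end{itemize}

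The main obstacle is \ref{cp4} and \ref{cp5}: we must show that the MLE exists with probability tending to one, is consistent, and solves the score equation. The log-likelihood is strictly concave in $\boldsymbol{\eta}$, since $A$ is strictly convex because $\mathcal I_{\boldsymbol{\eta}}$ is positive definite. So the MLE, when it exists in the open set $\mathcal H$, is the unique solution of $\nabla A(\boldsymbol{\eta})=\bar{\bT}_n\defin n^{-1}\sum_{i=1}^n\bT(X_i)$.

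The plan is to use the inverse function theorem.
\begin{itemize}
\item $\nabla A$ is a $C^1$ map with non-singular derivative $\mathcal I_{\boldsymbol{\eta}_0}$ at $\boldsymbol{\eta}_0$. It is therefore a diffeomorphism from a neighborhood $U\subset\mathcal H$ of $\boldsymbol{\eta}_0$ onto a neighborhood $W$ of $\nabla A(\boldsymbol{\eta}_0)$.
\item By the weak law of large numbers, $\bar{\bT}_n\convp\nabla A(\boldsymbol{\eta}_0)$, so $\mathsf{P}(\bar{\bT}_n\in W)\to1$.
\item On that event, $\hat{\boldsymbol{\eta}}=(\nabla A)^{-1}(\bar{\bT}_n)$. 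This is the global maximizer by concavity, and it is unique.
\item Continuity of $(\nabla A)^{-1}$ then gives $\hat{\boldsymbol{\eta}}\convp\boldsymbol{\eta}_0$, which is \ref{cp4}.
\item On the same event $n^{-1}\sum_{i=1}^n\bpsi_{\hat{\boldsymbol{\eta}}}(X_i)=0$ exactly. Hence this average is $o_{\mathsf P}(n^{-1/2})$, which is \ref{cp5}.
\end{itemize}
Off this event we may define $\hat{\boldsymbol{\eta}}$ arbitrarily (but measurably), since the event has vanishing probability.

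Applying \eqref{eq:bahadur} with $\bV_{\boldsymbol{\eta}_0}=\mathcal I_{\boldsymbol{\eta}_0}$ gives
\[
\sqrt n(\hat{\boldsymbol{\eta}}-\boldsymbol{\eta}_0)=-\mathcal I_{\boldsymbol{\eta}_0}^{-1}n^{-1/2}\sum_{i=1}^n\big(\nabla A(\boldsymbol{\eta}_0)-\bT(X_i)\big)+o_{\mathsf P}(1),
\]
which is the claimed expansion.

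Alternatively, the result follows directly without citing \eqref{eq:bahadur}. A first-order Taylor expansion of $\nabla A$ at $\boldsymbol{\eta}_0$, using $\hat{\boldsymbol{\eta}}\convp\boldsymbol{\eta}_0$ and continuity of $\mathcal I_{\boldsymbol{\eta}}$, gives $\mathcal I_{\tilde{\boldsymbol{\eta}}}(\hat{\boldsymbol{\eta}}-\boldsymbol{\eta}_0)=\bar{\bT}_n-\nabla A(\boldsymbol{\eta}_0)$ for some intermediate $\tilde{\boldsymbol{\eta}}$. Inverting $\mathcal I_{\tilde{\boldsymbol{\eta}}}$ and applying the central limit theorem to $\bar{\bT}_n$ gives the same statement.
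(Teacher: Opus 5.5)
Your proposal is correct and has the same skeleton as the paper's proof. Both verify \ref{cp1}--\ref{cp5} for the exponential-family estimating function and then invoke \eqref{eq:bahadur}. Your checks of \ref{cp1}--\ref{cp3} coincide with the paper's: a constant Lipschitz function from the mean value theorem on a compact ball, $\mathsf E\|\bpsi_{\boldsymbol\eta_0}(X)\|^2=\operatorname{tr}\mathcal I_{\boldsymbol\eta_0}$, and derivative $\pm\mathcal I_{\boldsymbol\eta_0}$ at $\boldsymbol\eta_0$.

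The genuine difference is in \ref{cp4}--\ref{cp5}. The paper cites the general Cram\'er-type theory of Lehmann and Casella (Theorem~5.1 via Example~6.3): a consistent root of the likelihood equations exists with probability tending to one, and strict concavity identifies it with the MLE. You instead use that the likelihood equation is exactly $\nabla A(\boldsymbol\eta)=\bar{\bT}_n$, and combine the inverse function theorem with the weak law of large numbers. Your route is self-contained and gives more. On an event of probability tending to one, $\hat{\boldsymbol\eta}$ equals the explicit smooth function $(\nabla A)^{-1}(\bar{\bT}_n)$, so the representation follows from the delta method and the central limit theorem for $\bar{\bT}_n$, and the general Bahadur theorem is not even needed. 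The paper's route is shorter on the page and keeps the argument inside the $(c)$-framework used elsewhere. Two facts you use only implicitly are true but worth stating:
\begin{itemize}
\item The natural parameter space is convex (by H\"older), so a stationary point of the concave log-likelihood is the global maximizer.
\item $\nabla A$ is globally injective by strict convexity, so your local inverse agrees with the global one.
\end{itemize}

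Two minor remarks. First, you took $\bpsi_{\boldsymbol\eta}$ to be minus the score, so your $\bV_{\boldsymbol\eta_0}=\mathcal I_{\boldsymbol\eta_0}$. The expansion is unaffected, because $\bV^{-1}\bpsi$ is invariant under this sign flip. However, the paper's later results (Propositions~\ref{prop:verification_assumptions_exponential_family} and~\ref{prop:covariance_exponential_family}) use the convention $\bpsi=$ score and $\bV=-\mathcal I$. Second, in your alternative direct argument there is generally no single intermediate point $\tilde{\boldsymbol\eta}$ with $\mathcal I_{\tilde{\boldsymbol\eta}}(\hat{\boldsymbol\eta}-\boldsymbol\eta_0)=\bar{\bT}_n-\nabla A(\boldsymbol\eta_0)$, since the mean value theorem fails for vector-valued maps. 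Either use $\int_0^1\mathcal I_{\boldsymbol\eta_0+s(\hat{\boldsymbol\eta}-\boldsymbol\eta_0)}\,\rd s\convp\mathcal I_{\boldsymbol\eta_0}$, or apply the delta method to $(\nabla A)^{-1}$, whose differential at $\nabla A(\boldsymbol\eta_0)$ is $\mathcal I_{\boldsymbol\eta_0}^{-1}$.
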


To propose GOF tests for composite null hypotheses, consider $\mu_{\hat{\btheta}}$, the probability measure corresponding to $\hat{\btheta}$, and define $F_{\omega}^{\mu_{\hat{\btheta}}}$ as the distance profile of $\mu_{\hat{\btheta}}$ at $\omega$.
We can then define the random field
\begin{align*}
    \G_{n,\mu}^{\mu_{\hat{\btheta}}}(\omega, t) \defin \sqrt{n}\left(F_{n,\omega}^{\mu}(t)-F_{\omega}^{\mu_{\hat{\btheta}}}(t)\right), \quad (\omega, t) \in \Omega \times \R_{\geq 0}.
\end{align*}

The process $\G_{n,\mu}^{\mu_{\hat{\btheta}}}$ is a natural extension of $\G_{n,\mu}^{\mu_0}$ to the case of composite null hypotheses, plugging in the estimated parameter $\hat{\btheta}$ instead of the true parameter $\btheta_0$. The asymptotic distribution of $\G_{n,\mu}^{\mu_{\hat{\btheta}}}$ follows by proving that the class of functions 
\begin{align}\label{eq:def_F}
\cF \defin \{f_{\omega,t} : \omega \in \Omega, t \in \R_{\geq 0}\}, \quad f_{\omega,t}(x) \defin y_{\omega, t}(x) - F_{\omega}^{\mu_{\btheta_0}}(t) + \dot{F}_{\omega}^{\btheta_0}(t)^\top \bV_{\btheta_0}^{-1} \bpsi_{\btheta_0}(x),
\end{align}
is Donsker. However, $\G_{n,\mu}^{\mu_{\hat{\btheta}}}$ is not the empirical process indexed by $\cF$. In practice, this makes its computation inefficient for bootstrapping, since it requires the recomputation of the distance profile $F_{\omega}^{\mu_{\hat{\btheta}}}$. For this reason, we give an alternative process that is more efficient to compute, and that is asymptotically equivalent to $\G_{n,\mu}^{\mu_{\hat{\btheta}}}$ under some regularity conditions.

Let $\widehat{\bV}$ be a measurable estimator of $\bV_{\btheta_0}$ such that $\widehat{\bV}\convp\bV_{\btheta_0}$ and $\widehat{\bV}$ is invertible with probability tending to one. This leads to the following (stochastic) class of functions:
\begin{align*}
\widehat{\cF}
\defin
\big\{
\hat{f}_{\omega,t}:(\omega,t)\in\Omega\times\R_{\ge0}
\big\},
\quad
\hat{f}_{\omega,t}(x)
\defin
y_{\omega,t}(x)-F_\omega^{\mu_{\hat{\btheta}}}(t)
+
\dot F_\omega^{\hat{\btheta}}(t)^\top\widehat{\bV}^{-1}\bpsi_{\hat{\btheta}}(x),
\end{align*}
whenever the derivative exists.  In other words, the plug-in is done in the class of functions. From this, one can define the empirical process 
$$
\widehat{\G}_{n,\mu}(\omega,t) \defin \sqrt{n}\,P_n\hat{f}_{\omega,t}.
$$ 
If $\hat{\btheta}$ satisfies the equation $P_n\bpsi_{\hat{\btheta}}=0$, then %
$\G_{n,\mu}^{\mu_{\hat{\btheta}}}(\omega,t) = \widehat{\G}_{n,\mu}(\omega,t).$ 

Even though $\G_{n,\mu}^{\mu_{\hat{\btheta}}}$ is slower to compute, its asymptotic distribution is easier to derive, and the asymptotics of $\widehat{\G}_{n,\mu}$ can be deduced from those of $\G_{n,\mu}^{\mu_{\hat{\btheta}}}$. Additionally, it does not require the computation of the derivative $\dot F_\omega^{\hat{\btheta}}(t)$, nor the estimation of the matrix $\bV_{\btheta_0}$, which may not exist or be difficult to estimate, making this approach more robust in complex scenarios.

We study the asymptotic behavior of $\G_{n,\mu}^{\mu_{\hat{\btheta}}}$ and $\widehat{\G}_{n,\mu}$ under the null hypothesis $H_0$. We will use the notation $\dot{F}_{\omega}^{\btheta_0}(t) \defin \left.\partial F_{\omega}^{\mu_{\btheta}}(t)/\partial \btheta\right|_{\btheta=\btheta_0}$.
The following differentiability condition is required in the asymptotic analyses:
\begin{enumerate}[label=(\textit{d}), ref=(\textit{d})]
    \item \label{d} The map $\boldsymbol{\theta}\mapsto F^{\mu_{\boldsymbol{\theta}}}$ is differentiable at $\boldsymbol{\theta}_0$ as a map from $\Theta$ into $\ell^\infty(\Omega\times\mathbb{R}_{\ge0})$. Equivalently, it admits the following uniform first-order expansion:
    \begin{align*}
    \sup_{\omega\in\Omega, t\ge0}
    \left|
    F_{\omega}^{\mu_{\boldsymbol{\theta}}}(t)
    -
    F_{\omega}^{\mu_{\boldsymbol{\theta}_0}}(t)
    -
    \dot F_{\omega}^{\boldsymbol{\theta}_0}(t)^\top
    (\boldsymbol{\theta}-\boldsymbol{\theta}_0)
    \right|
    =
    o(\|\boldsymbol{\theta}-\boldsymbol{\theta}_0\|)
    \quad
    \text{as }
    \boldsymbol{\theta}\to\boldsymbol{\theta}_0.
    \end{align*}
\end{enumerate}
In particular, for every $(\omega,t)\in\Omega\times\mathbb R_{\ge0}$, the map $\boldsymbol{\theta}\mapsto F_{\omega}^{\mu_{\boldsymbol{\theta}}}(t)$ is differentiable at $\boldsymbol{\theta}_0$, and $\sup_{\omega\in\Omega, t\ge0}\|\dot F_{\omega}^{\boldsymbol{\theta}_0}(t)\|<\infty$. For the asymptotics of $\widehat{\G}_{n,\mu}$, we also require:
\begin{enumerate}[label=(\textit{e}), ref=(\textit{e})]
\item $ \sup_{\omega\in\Omega,\, t\ge0} \|\dot F_\omega^{\hat{\btheta}}(t)\|\,\|\widehat{\bV}^{-1}\| = O_{\mathsf P}(1).$ \label{e}
\end{enumerate}
\begin{remark}\label{rem:sufficient_condition_d}
A sufficient condition for condition~\ref{d} is that
$\dot F_{\omega}^{\boldsymbol{\theta}}(t)$ exists for
$\boldsymbol{\theta}$ in a neighborhood of $\boldsymbol{\theta}_0$ and
$
\sup_{\omega\in\Omega,\,t\ge0}
\big\|
\dot F_{\omega}^{\boldsymbol{\theta}}(t)
-
\dot F_{\omega}^{\boldsymbol{\theta}_0}(t)
\big\|
\to 0
$ as 
$
\boldsymbol{\theta}\to\boldsymbol{\theta}_0.
$
\end{remark}

\begin{proposition}
\label{prop:verification_assumptions_exponential_family}
Condition~\ref{d} holds for exponential families at every $\boldsymbol{\eta}_0\in\mathcal H$. Moreover,
$\dot F_{\omega}^{\boldsymbol{\eta}_0}(t)=\E{y_{\omega,t}(X)\bpsi_{\boldsymbol{\eta}_0}(X)},$ for each
$\omega\in\Omega, t\ge0.$
\end{proposition}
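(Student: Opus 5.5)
Write $F_\omega^{\mu_{\boldsymbol\eta}}(t)=\int y_{\omega,t}(x)\,p_{\boldsymbol\eta}(x)\,\nu(\rd x)$, with $\bpsi_{\boldsymbol\eta}(x)=\bT(x)-\partial A(\boldsymbol\eta)/\partial\boldsymbol\eta$ the score. The plan is to verify the sufficient condition of Remark~\ref{rem:sufficient_condition_d}. The key point is that the indicators $y_{\omega,t}$ are bounded by one uniformly in $(\omega,t)$. Every estimate will therefore reduce to integrals of the form $\int g(x)\,p_{\boldsymbol\eta}(x)\,\nu(\rd x)$, in which the dependence on $(\omega,t)$ has disappeared.

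\textbf{Step 1 (derivative formula).} Fix $\boldsymbol\eta$ in a closed ball $\bar B(\boldsymbol\eta_0,r)\subset\mathcal H$, which exists because $\mathcal H$ is open. Then
\[
\partial p_{\boldsymbol\eta}(x)/\partial\boldsymbol\eta = p_{\boldsymbol\eta}(x)\,\bpsi_{\boldsymbol\eta}(x).
\]
To differentiate under the integral, I need a dominating function for this derivative over the ball. Using $\|\bT\|\le\sum_j (e^{\delta T_j}+e^{-\delta T_j})/\delta$ for small $\delta>0$, together with $\boldsymbol\eta\pm\delta\be_j$ remaining in a slightly larger ball inside $\mathcal H$, the quantity $\sup_{\boldsymbol\eta\in\bar B}\|\bT(x)\|\,p_{\boldsymbol\eta}(x)$ is bounded by a finite sum of terms $C\,h(x)e^{\boldsymbol\eta'^\top\bT(x)}$. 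Here $\boldsymbol\eta'$ ranges over the finitely many vertices of a cube, after bounding $e^{\boldsymbol\eta^\top \bT}$ by the sum of the vertex exponentials (convexity). Each such term is $\nu$-integrable because $\boldsymbol\eta'\in\mathcal H$.

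Dominated convergence, applied with the bounded factor $y_{\omega,t}$, then gives
\[
\dot F_\omega^{\boldsymbol\eta}(t)=\int y_{\omega,t}\,\bpsi_{\boldsymbol\eta}\,p_{\boldsymbol\eta}\,\rd\nu=\E{y_{\omega,t}(X)\bpsi_{\boldsymbol\eta}(X)}
\]
for every $\boldsymbol\eta$ near $\boldsymbol\eta_0$. At $\boldsymbol\eta_0$ this is the stated formula.

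\textbf{Step 2 (uniform continuity of the derivative).} Since $|y_{\omega,t}|\le1$,
\[
\sup_{\omega,t}\big\|\dot F_\omega^{\boldsymbol\eta}(t)-\dot F_\omega^{\boldsymbol\eta_0}(t)\big\|\le \int \big\|\bpsi_{\boldsymbol\eta}p_{\boldsymbol\eta}-\bpsi_{\boldsymbol\eta_0}p_{\boldsymbol\eta_0}\big\|\,\rd\nu .
\]
The integrand tends to zero pointwise as $\boldsymbol\eta\to\boldsymbol\eta_0$, because $A$ and $\partial A/\partial\boldsymbol\eta$ are continuous on $\mathcal H$, the former being analytic there. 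It is dominated by the integrable envelope from Step 1 plus $\sup_{\bar B}\|\partial A/\partial\boldsymbol\eta\|\,\sup_{\bar B}p_{\boldsymbol\eta}$, which is integrable by the same vertex argument. Dominated convergence therefore sends the right-hand side to zero. Remark~\ref{rem:sufficient_condition_d} then yields condition~\ref{d} at every $\boldsymbol\eta_0\in\mathcal H$.

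Alternatively, condition~\ref{d} can be checked directly via the mean value theorem. One bounds $|F^{\boldsymbol\eta}-F^{\boldsymbol\eta_0}-\dot F^{\boldsymbol\eta_0\top}(\boldsymbol\eta-\boldsymbol\eta_0)|$ by $\|\boldsymbol\eta-\boldsymbol\eta_0\|\sup_{\tilde{\boldsymbol\eta}}\|\dot F^{\tilde{\boldsymbol\eta}}-\dot F^{\boldsymbol\eta_0}\|$, which is uniform in $(\omega,t)$ by Step 2.

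\textbf{Main obstacle.} The main obstacle is the integrable envelope in Step 1. The convexity trick of bounding $e^{\boldsymbol\eta^\top\bT}$ by vertex exponentials of a cube contained in $\mathcal H$ is what I would use first. This is the standard argument behind finiteness of moments and analyticity of the Laplace transform in the interior of the natural parameter space.
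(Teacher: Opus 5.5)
Your proof is correct, but it takes a different route from the paper's. The paper builds no envelope. It notes that $0\le y_{\omega,t}\le 1$ gives $\int y_{\omega,t}\,h\,e^{\boldsymbol\eta^\top\bT}\,\rd\nu\le e^{A(\boldsymbol\eta)}<\infty$, and then cites Theorem~5.8 of Lehmann and Casella to differentiate under the integral sign to all orders. This yields $\dot F_\omega^{\boldsymbol\eta}(t)=\mathsf{E}_{\boldsymbol\eta}\{y_{\omega,t}(X)\bpsi_{\boldsymbol\eta}(X)\}$ and $\ddot F_\omega^{\boldsymbol\eta}(t)=\mathsf{E}_{\boldsymbol\eta}\{y_{\omega,t}(X)\bpsi_{\boldsymbol\eta}(X)\bpsi_{\boldsymbol\eta}(X)^\top\}-F_\omega^{\boldsymbol\eta}(t)\,\mathcal I_{\boldsymbol\eta}$. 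The paper then bounds $\|\ddot F_\omega^{\boldsymbol\eta}(t)\|$ by $\operatorname{tr}(\mathcal I_{\boldsymbol\eta})+\|\mathcal I_{\boldsymbol\eta}\|$, uniformly in $(\omega,t)$ and in $\boldsymbol\eta$ on a closed ball, and a second-order Taylor expansion gives a uniform remainder of order $O(\|\boldsymbol\eta-\boldsymbol\eta_0\|^2)$.

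Your Step~1 essentially reproves the cited theorem; the vertex-exponential envelope is its standard proof. Your Step~2 stays at first order: you get uniform continuity of $\dot F$ from the $L^1(\nu)$-continuity of $\boldsymbol\eta\mapsto\bpsi_{\boldsymbol\eta}p_{\boldsymbol\eta}$, and then invoke Remark~\ref{rem:sufficient_condition_d}.

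The paper's route is shorter and quantitative. It gives a quadratic remainder and a Lipschitz modulus, expressed through the Fisher information, for $\boldsymbol\eta\mapsto\dot F^{\boldsymbol\eta}_\omega(t)$ uniformly in $(\omega,t)$. Yours is self-contained and never needs second derivatives. Its dominated-convergence argument also extends to other regular dominated families once a locally integrable envelope for $\|\bpsi_{\boldsymbol\theta}\|p_{\boldsymbol\theta}$ is available. The cost is that you obtain only $o(\|\boldsymbol\eta-\boldsymbol\eta_0\|)$, with no rate.

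One bookkeeping point: the Step~1 envelope must dominate $\|\bpsi_{\boldsymbol\eta}\|p_{\boldsymbol\eta}$, not only $\|\bT\|p_{\boldsymbol\eta}$. Include the $\|\partial A/\partial\boldsymbol\eta\|\,p_{\boldsymbol\eta}$ term there, handled by the same vertex bound, rather than only in Step~2.
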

The following result characterizes the asymptotic behavior of $\G_{n,\mu}^{\mu_{\hat{\btheta}}}$.

\begin{theorem}[Asymptotic null distributions of $\G_{n,\mu}^{\mu_{\hat{\btheta}}}$ and $\widehat{\G}_{n,\mu}$]\label{thm:convergence_G_n_composite}
Consider an estimator $\hat{\btheta}$ satisfying conditions \ref{cp1}--\ref{cp5} and condition \ref{d}. 
Then, under $H_0$, the class of functions $\cF$ defined in \eqref{eq:def_F} is Donsker. Therefore,
$$
\G_{n,\mu}^{\mu_{\hat{\btheta}}} \convl \tilde{\G}_0 \text { in } \ell^{\infty}(\mathcal{F}),
$$
where $\tilde{\G}_0$ is a zero mean Gaussian process with covariance given by
\begin{equation}\label{eq:cov_comp}
\begin{aligned}
\tilde{\mathcal{C}}_{(\omega_1,t_1), (\omega_2, t_2)}
 & = \mathcal{C}_{(\omega_1,t_1), (\omega_2, t_2)} + \dot{F}_{\omega_2}^{\btheta_0}(t_2)^\top \bV_{\btheta_0}^{-1} \mathsf{E}\left(y_{\omega_1, t_1}(X) \bpsi_{\btheta_0}(X)\right)  
 \\
& \quad + \dot{F}_{\omega_1}^{\btheta_0}(t_1)^\top \bV_{\btheta_0}^{-1} \mathsf{E}\left(y_{\omega_2, t_2}(X) \bpsi_{\btheta_0}(X)\right)
\\
 & \quad + \dot{F}_{\omega_1}^{\btheta_0}(t_1)^\top  \bV_{\btheta_0}^{-1}\Ebig{\bpsi_{\btheta_0}(X) \bpsi_{\btheta_0}(X)^\top}  \big(\bV_{\btheta_0}^{-1}\big)^{\top}\dot{F}_{\omega_2}^{\btheta_0}(t_2) ,
\end{aligned}
\end{equation}
with $\mathcal{C}_{(\omega_1,t_1), (\omega_2, t_2)}$ the covariance process under the simple null, given in \eqref{eq:cov_simp}. 

If, in addition, condition \ref{e} holds, then  
\begin{align*}
\sup_{\omega\in\Omega,\, t\ge0}
\big|
\widehat{\G}_{n,\mu}(\omega,t)
-
\G_{n,\mu}^{\mu_{\hat{\btheta}}}(\omega,t)
\big|
=
o_{\mathsf P}(1).
\end{align*}
Consequently, $\widehat{\G}_{n,\mu}$ has the same weak limit $\tilde{\G}_0$ as $\G_{n,\mu}^{\mu_{\hat{\btheta}}}$ under the composite null hypothesis. 
\end{theorem}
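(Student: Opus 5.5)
We decompose the fitted-profile process and reduce it to an empirical process indexed by $\cF$. Under $H_0$, $\mu=\mu_{\btheta_0}$, so
\begin{align*}
\G_{n,\mu}^{\mu_{\hat{\btheta}}}(\omega,t)
=\G_{n,\mu}^{\mu_{\btheta_0}}(\omega,t)-\sqrt{n}\big(F_\omega^{\mu_{\hat{\btheta}}}(t)-F_\omega^{\mu_{\btheta_0}}(t)\big).
\end{align*}
Conditions \ref{cp1}--\ref{cp5} give the Bahadur representation \eqref{eq:bahadur}, hence $\sqrt n(\hat\btheta-\btheta_0)=O_{\mathsf P}(1)$. Condition \ref{d}, applied on the event $\|\hat\btheta-\btheta_0\|<\delta$ (whose probability tends to one), yields uniformly in $(\omega,t)$
\begin{align*}
\sqrt{n}\big(F_\omega^{\mu_{\hat{\btheta}}}(t)-F_\omega^{\mu_{\btheta_0}}(t)\big)=\dot F_\omega^{\btheta_0}(t)^\top\sqrt n(\hat\btheta-\btheta_0)+o_{\mathsf P}(1).
\end{align*}
Substituting \eqref{eq:bahadur} and using $\sup_{\omega,t}\|\dot F_\omega^{\btheta_0}(t)\|<\infty$ to keep the $o_{\mathsf P}(1)$ uniform, we get
\begin{align*}
\sup_{\omega,t}\big|\G_{n,\mu}^{\mu_{\hat{\btheta}}}(\omega,t)-\sqrt n(P_n-\mu_{\btheta_0})f_{\omega,t}\big|=o_{\mathsf P}(1),
\end{align*}
where $\mu_{\btheta_0}\bpsi_{\btheta_0}=0$ by \ref{cp3}, so that $\mu_{\btheta_0}f_{\omega,t}=0$.

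Next we show that $\cF$ is Donsker, which is the main step. Each $f_{\omega,t}$ is the sum of $y_{\omega,t}$ (from the Donsker class $\cY$, by Theorem 3 of \cite{Chen2025} and \eqref{eq:thm_3_chendubey}), a constant $-F_\omega^{\mu_{\btheta_0}}(t)\in[0,1]$, and $\sum_{j=1}^p a_j(\omega,t)\,g_j(x)$ with $g_j=(\bV_{\btheta_0}^{-1}\bpsi_{\btheta_0})_j\in L^2$ by \ref{cp2}, where the coefficients $a_j$ are uniformly bounded by \ref{d}. The class $\{\sum_j a_jg_j:\|\boldsymbol a\|\le M\}$ is contained in a bounded subset of a finite-dimensional linear span of square-integrable functions, so it is Donsker. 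Bounded constants form a Donsker class. Sums of Donsker classes are Donsker (van der Vaart and Wellner, Example 2.10.7, or via Lipschitz transformations of the joint process). Hence $\cF$, as a subclass of $\cY+[-1,0]+\{\boldsymbol a^\top\bV_{\btheta_0}^{-1}\bpsi_{\btheta_0}\}$, is Donsker. The point to watch is that the weak limit must be indexed consistently by $(\omega,t)$. We handle this by viewing the limit as the composition of the Donsker limit of the triple $(\cY,\text{constants},\text{linear span})$ with the fixed map $(\omega,t)\mapsto f_{\omega,t}$, so the limit process is well defined in $\ell^\infty(\cF)$. By Slutsky, $\G_{n,\mu}^{\mu_{\hat\btheta}}\convl\tilde\G_0$. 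The covariance \eqref{eq:cov_comp} follows by expanding $\mathsf E(f_{\omega_1,t_1}f_{\omega_2,t_2})$, using $\mathsf E\bpsi_{\btheta_0}(X)=0$. Note that the sign of the cross terms follows the convention in $f_{\omega,t}$.

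For the second claim, write
\begin{align*}
\widehat\G_{n,\mu}-\G_{n,\mu}^{\mu_{\hat\btheta}}=\dot F_\omega^{\hat\btheta}(t)^\top\widehat\bV^{-1}\sqrt n P_n\bpsi_{\hat\btheta},
\end{align*}
since $\sqrt n P_n(y_{\omega,t}-F_\omega^{\mu_{\hat\btheta}}(t))=\G_{n,\mu}^{\mu_{\hat\btheta}}(\omega,t)$. By \ref{cp5}, $\sqrt nP_n\bpsi_{\hat\btheta}=o_{\mathsf P}(1)$, and by \ref{e} the supremum of the prefactor is $O_{\mathsf P}(1)$. Hence the difference is $o_{\mathsf P}(1)$ uniformly in $(\omega,t)$, and Slutsky's lemma transfers the weak limit $\tilde\G_0$ to $\widehat\G_{n,\mu}$.
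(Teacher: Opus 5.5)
Your proposal is correct and follows the paper's proof: you split off $\G_{n,\mu}^{\mu_{\btheta_0}}$, combine the uniform expansion from condition~\ref{d} with the Bahadur representation, show that $\cF$ is Donsker by placing it inside the sum of $\cY$ and a finite-dimensional class with $L^2$ envelope, and control $\widehat{\G}_{n,\mu}-\G_{n,\mu}^{\mu_{\hat{\btheta}}}$ through \ref{e} and \ref{cp5}. The differences are cosmetic: you treat the constants $-F_\omega^{\mu_{\btheta_0}}(t)$ as their own class where the paper folds them into a centered $\cY$, and you explicitly address the fact that the process is indexed by $(\omega,t)$ rather than by $\cF$, which the paper passes over.
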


\begin{proposition}[Covariance process for exponential families]
\label{prop:covariance_exponential_family}
For an exponential family with true parameter $\boldsymbol{\eta}_0\in\mathcal H$, when $\boldsymbol{\eta}_0$ is estimated by maximum likelihood, the covariance process in \eqref{eq:cov_comp} simplifies to
$
\widetilde{\mathcal C}_{(\omega_1,t_1),(\omega_2,t_2)}
=
\mathcal C_{(\omega_1,t_1),(\omega_2,t_2)}
-
\dot F_{\omega_1}^{\boldsymbol{\eta}_0}(t_1)^\top
\mathcal I_{\boldsymbol{\eta}_0}^{-1}
\dot F_{\omega_2}^{\boldsymbol{\eta}_0}(t_2),
$ for every $(\omega_1,t_1),(\omega_2,t_2)\in\Omega\times\mathbb R_{\ge0}$.
\end{proposition}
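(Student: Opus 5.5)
The plan is to plug the exponential-family quantities into the general covariance formula \eqref{eq:cov_comp} of Theorem~\ref{thm:convergence_G_n_composite}. The tool is Proposition~\ref{prop:bahadur_exponential_family}, which identifies the Bahadur ingredients, together with Proposition~\ref{prop:verification_assumptions_exponential_family}, which expresses the derivative of the profile as a covariance with the score.

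\textbf{Step 1: identify $\bpsi$ and $\bV$.} Comparing Proposition~\ref{prop:bahadur_exponential_family} with \eqref{eq:bahadur}, I take $\bpsi_{\boldsymbol{\eta}}(x)=\bT(x)-\partial A(\boldsymbol{\eta})/\partial\boldsymbol{\eta}$, the score. Then $\bV_{\boldsymbol{\eta}_0}$ is the derivative of $\boldsymbol{\eta}\mapsto\mathsf{E}_{\boldsymbol{\eta}_0}(\bpsi_{\boldsymbol{\eta}}(X))=\partial A(\boldsymbol{\eta}_0)/\partial\boldsymbol{\eta}-\partial A(\boldsymbol{\eta})/\partial\boldsymbol{\eta}$ at $\boldsymbol{\eta}_0$, which gives $\bV_{\boldsymbol{\eta}_0}=-\mathcal I_{\boldsymbol{\eta}_0}$. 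This is a symmetric matrix. The standard identities $\mathsf{E}(\bpsi_{\boldsymbol{\eta}_0}(X))=0$ and $\mathsf{E}(\bpsi_{\boldsymbol{\eta}_0}(X)\bpsi_{\boldsymbol{\eta}_0}(X)^\top)=\operatorname{Cov}(\bT(X))=\mathcal I_{\boldsymbol{\eta}_0}$ follow from differentiating the normalizing constant $A$.

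\textbf{Step 2: rewrite the cross terms.} By Proposition~\ref{prop:verification_assumptions_exponential_family}, $\mathsf{E}(y_{\omega,t}(X)\bpsi_{\boldsymbol{\eta}_0}(X))=\dot F_{\omega}^{\boldsymbol{\eta}_0}(t)$. In \eqref{eq:cov_comp}, the second and third terms therefore each become
$$-\dot F_{\omega_2}^{\boldsymbol{\eta}_0}(t_2)^\top\mathcal I_{\boldsymbol{\eta}_0}^{-1}\dot F_{\omega_1}^{\boldsymbol{\eta}_0}(t_1).$$
Since $\mathcal I^{-1}_{\boldsymbol{\eta}_0}$ is symmetric, this scalar is the same as $-\dot F_{\omega_1}^\top\mathcal I^{-1}\dot F_{\omega_2}$.

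\textbf{Step 3: the quadratic term.} The fourth term is
$$\dot F_{\omega_1}^\top(-\mathcal I^{-1})\,\mathcal I\,(-\mathcal I^{-1})\dot F_{\omega_2}=\dot F_{\omega_1}^\top\mathcal I^{-1}\dot F_{\omega_2}.$$
Adding everything, $-2+1=-1$ copies remain, which yields the claimed formula.

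\textbf{Main obstacle.} The only real care point is Step 1. The sign convention for $\bV$ in \eqref{eq:bahadur} must be consistent with Proposition~\ref{prop:bahadur_exponential_family}: the minus sign in \eqref{eq:bahadur} combines with $\bV=-\mathcal I$ to give the positive $\mathcal I^{-1}$ in that proposition. One also has to make sure the $\bpsi$ used in Proposition~\ref{prop:verification_assumptions_exponential_family} is this same centered score. If it were not centered, I would subtract $F_\omega(t)\,\mathsf{E}(\bpsi)=0$ anyway, so nothing changes.

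Finally, the conditions of Theorem~\ref{thm:convergence_G_n_composite} must hold. The relevant ones are the Bahadur representation and condition~\ref{d}, which the two preceding propositions supply.
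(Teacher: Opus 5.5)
Your proposal is correct and follows essentially the same route as the paper's proof. Both identify $\bV_{\boldsymbol\eta_0}=-\mathcal I_{\boldsymbol\eta_0}$, $\mathsf E\big(\bpsi_{\boldsymbol\eta_0}(X)\bpsi_{\boldsymbol\eta_0}(X)^\top\big)=\mathcal I_{\boldsymbol\eta_0}$ and $\mathsf E\big(y_{\omega,t}(X)\bpsi_{\boldsymbol\eta_0}(X)\big)=\dot F_\omega^{\boldsymbol\eta_0}(t)$, substitute them into \eqref{eq:cov_comp}, and use the symmetry of $\mathcal I_{\boldsymbol\eta_0}$ to combine the $-2+1$ copies of the quadratic form.
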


\subsection{Test statistics}

Given a sample $X_1, \dots, X_n$, define the empirical probability distribution $P_n$ as the discrete measure that assigns mass $1/n$ to each of the points $X_1,\dots,X_n$.
Consider the Kolmogorov--Smirnov and Cramér--von Mises test statistics:
\begin{alignat}{2}
T_n^{\mathrm{KS}}
&\defin \sup_{\omega \in \Omega,t\ge0}\big|\G_{n,\mu}^{\mu_0}(\omega, t)\big|,
&\qquad
T_n^{\mathrm{CM}}
&\defin \int_{\Omega\times\R_{\ge0}} \G_{n,\mu}^{\mu_0}(\omega, t)^2
\, \rd F_{n,\omega}^{\mu}(t) \, \rd P_n(\omega),
\label{eq:ks_test_statistic}\\
\tilde{T}_n^{\mathrm{KS}}
&\defin \sup_{\omega \in \Omega,t\ge0}\big|\G_{n,\mu}^{\mu_{\hat{\btheta}}}(\omega, t)\big|,
&\qquad
\tilde{T}_n^{\mathrm{CM}}
&\defin \int_{\Omega\times\R_{\ge0}} \G_{n,\mu}^{\mu_{\hat{\btheta}}}(\omega, t)^2
\, \rd F_{n,\omega}^{\mu}(t) \, \rd P_n(\omega),
\label{eq:test_statistic_CM_tilde}\\
\widehat T_n^{\mathrm{KS}}
&\defin \sup_{\omega \in \Omega,t\ge0}
\big|\widehat{\G}_{n,\mu}(\omega,t)\big|,
&\qquad
\widehat T_n^{\mathrm{CM}}
&\defin \int_{\Omega\times\R_{\ge0}}
\big(\widehat{\G}_{n,\mu}(\omega,t)\big)^2
\, \rd F_{n,\omega}^{\mu}(t)\,\rd P_n(\omega).
\label{eq:test_statistic_CM_hat}
\end{alignat}
$T_n^{\mathrm{KS}}$ and $T_n^{\mathrm{CM}}$ apply to the simple null hypothesis, whereas $\tilde{T}_n^{\mathrm{KS}}$, $\tilde{T}_n^{\mathrm{CM}}$, $\widehat T_n^{\mathrm{KS}}$, and $\widehat T_n^{\mathrm{CM}}$ apply to composite null hypotheses. The CM statistics use the empirical integration measure $\rd F_{n,\omega}^{\mu} \, \rd P_n$ because $\rd F_{\omega}^{\mu_0} \, \rd \mu_0$ depends on the simple null distribution, and under the composite null, $\rd F_{\omega}^{\mu_{\btheta_0}} \, \rd \mu_{\btheta_0}$ depends on the unknown parameter $\btheta_0$. In this way, the integration measure is model-free and fully data-driven.

Theorem \ref{thm:test_statistics_simp} proves the asymptotic distributions under the simple null hypothesis of the KS and CM test statistics given in \eqref{eq:ks_test_statistic}.
\begin{theorem}[Asymptotic null distributions of the test statistics, simple null]\label{thm:test_statistics_simp}
As $n$ diverges to infinity, and under the simple null hypothesis, the following statements hold:
\begin{align*}
T_n^{\mathrm{KS}} \convl \sup_{\omega \in \Omega, t\ge0}\big|\G_{0}(\omega, t)\big| \quad \text{and} \quad
T_n^{\mathrm{CM}}\convl \int_{\Omega\times\R_{\ge0}} \G_{0}(\omega, t)^2 \, \rd F_{\omega}^{\mu_0}(t) \, \rd \mu_0(\omega).
\end{align*}
\end{theorem}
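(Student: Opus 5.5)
The KS part follows from the weak convergence \eqref{eq:thm_3_chendubey} of \citet[Theorem 3]{Chen2025} and the continuous mapping theorem. The functional $z\mapsto\sup_{(\omega,t)}|z(\omega,t)|$ is Lipschitz, hence continuous, on $\ell^\infty(\cY)\cong\ell^\infty(\Omega\times\R_{\ge0})$. Therefore $T_n^{\mathrm{KS}}=\|\G_{n,\mu}^{\mu_0}\|_\infty\convl\|\G_0\|_\infty$.

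For the CM statistic I will write $Q_n\defin \rd F_{n,\omega}^{\mu}(t)\,\rd P_n(\omega)$, a random measure on $\Omega\times\R_{\ge0}$, and $Q\defin \rd F_\omega^{\mu_0}(t)\,\rd\mu_0(\omega)$. The measure $Q_n$ is the empirical law of the pairs $(X_j,d(X_j,X_i))$, $1\le i,j\le n$, and $Q$ is the law of $(X',d(X',X))$ with $X,X'$ iid $\mu_0$. I then decompose
\begin{align*}
T_n^{\mathrm{CM}}=\int \G_0^2\,\rd Q+\int\big(\G_{n,\mu}^{\mu_0\,2}-\G_0^2\big)\rd Q+\int \G_{n,\mu}^{\mu_0\,2}\,\rd(Q_n-Q).
\end{align*}
In practice I will use a Skorokhod/almost-sure representation, or equivalently the extended continuous mapping theorem \citep[Theorem 18.11]{vandervaart1998}. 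The map $(z,\nu)\mapsto\int z^2\,\rd\nu$ is continuous at pairs $(z,Q)$ where $z$ is a continuous bounded function and $\nu\to Q$ weakly. Two ingredients are needed.

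The first ingredient is that $\G_0$ has almost surely bounded, uniformly continuous sample paths with respect to the $L^2(\mu_0)$ semimetric $\rho$ on $\cY$. This is part of the Donsker property. Condition \ref{bp2} gives $\rho((\omega,t),(\omega,s))^2\le K|t-s|$. In addition, $\rho((\omega,t),(\omega',t))^2\le \mu_0(\text{annulus})\le F_\omega(t+d(\omega,\omega'))-F_\omega(t-d(\omega,\omega'))\le 2Kd(\omega,\omega')$ by the triangle inequality. Hence $\rho$ is dominated by a power of $d_{\Omega\times\R_{\ge0}}$, so the paths of $\G_0$ are continuous in the product topology.

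The second ingredient is that $Q_n\to Q$ weakly almost surely. For bounded Lipschitz $g$ on $\Omega\times\R_{\ge0}$, the integral $\int g\,\rd Q_n=n^{-2}\sum_{i,j}g(X_j,d(X_j,X_i))$ is a V-statistic with bounded kernel. By the strong law for U-statistics (the diagonal terms are $O(1/n)$), it converges a.s. to $\mathsf E\,g(X',d(X',X))=\int g\,\rd Q$. Separability of $\Omega$, which is totally bounded by \ref{b1}, lets me pick a countable convergence-determining class of such $g$, giving a.s. weak convergence.

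With these ingredients, the three terms are handled as follows. The second term is bounded by $\|\G_{n,\mu}^{\mu_0}-\G_0\|_\infty\,(\|\G_{n,\mu}^{\mu_0}\|_\infty+\|\G_0\|_\infty)$, which tends to $0$ in the a.s. representation. The third term is the main obstacle, because $\G_{n,\mu}^{\mu_0}$ is a step function and is not continuous. I would bound it by $\int|\G_{n,\mu}^{\mu_0\,2}-\G_0^2|\,\rd Q_n+|\int\G_0^2\,\rd(Q_n-Q)|$. The first piece is at most the sup-norm bound already used for the second term, since $Q_n$ is a probability measure. The second piece tends to $0$ because $\G_0^2$ is bounded and continuous and $Q_n\to Q$ weakly. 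The only delicate point is that the a.s. representation must couple the data-driven $Q_n$ with $\G_{n,\mu}^{\mu_0}$ jointly. I will handle this by noting that $Q_n\to Q$ in probability in the bounded-Lipschitz metric, so by Slutsky the pair $(\G_{n,\mu}^{\mu_0},Q_n)\convl(\G_0,Q)$ with a deterministic second component. Continuity of $(z,\nu)\mapsto\int z^2\rd\nu$ on the support of the limit then yields the claim.
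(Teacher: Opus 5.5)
Your proposal is correct, and its overall architecture matches the paper's. In both, the KS part follows from the continuous mapping theorem. The CM part uses a Slutsky-plus-Skorokhod (or continuous-mapping) coupling of the process with the random integrating measure. The error then splits into a sup-norm term controlled by $\|(\G_{n,\mu}^{\mu_0})^2-\G_0^2\|_\infty$ and a term $\int\G_0^2\,\rd(Q_n-Q)$. That second term vanishes because $\G_0$ has bounded, uniformly continuous paths. Your first ingredient is exactly the paper's bound $\rho_{2,\G_0}\le\rho_{2,y}\le\sqrt{2K_\mu\, d_{\Omega\times\R_{\ge0}}}$ combined with its path-continuity lemma. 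Your three-term decomposition collapses to the paper's two-term split. The bound you state for the third term omits $-\int\big((\G_{n,\mu}^{\mu_0})^2-\G_0^2\big)\,\rd Q$, but this is harmless since the same sup-norm bound controls it.

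The genuine difference is how you show that the data-driven integration measure converges.

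\emph{The paper's route.} It couples $(\G_{n,\mu}^{\mu_0},P_n)$ and proves a separate lemma for bounded uniformly continuous $g$. It works with the inner integrals $r_n(\omega)=\int g(\omega,t)\,\rd F_{n,\omega}(t)$ and shows $\sup_\omega|r_n-r|\to0$. This uses a Lipschitz-in-$t$ inf-convolution regularization of $g$, integration by parts, and the Glivenko--Cantelli property of $\cY$. It then gets continuity of $r$ by dominated convergence and weak convergence of $P_n$ from Varadarajan's theorem.

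\emph{Your route.} You read $Q_n$ directly as the empirical law of the $n^2$ pairs $(X_j,d(X_j,X_i))$. You get $\int g\,\rd Q_n\to\int g\,\rd Q$ almost surely for each bounded $g$ from the strong law for $U$-statistics, which is the same device the paper later uses in its consistency proof. You then upgrade to almost sure weak convergence through a countable convergence-determining class.

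\emph{Comparison.} Your route is shorter and needs only separability of $\Omega\times\R_{\ge0}$ for this step, with no use of \ref{bp2} or Glivenko--Cantelli. It also handles any bounded continuous integrand.

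The paper's route is more quantitative. Its error bound depends only on $\|g\|_\infty$, the modulus of continuity of $g$, and $\sup_{\omega,t}|F_{n,\omega}-F_\omega|$. This is what allows it to be made uniform over the class $\mathcal G_{B,w}$ and reused in the triangular-array setting of local alternatives, where an iid $U$-statistic strong law is not directly available.

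A small technical remark on your continuous-mapping version: $(z,\nu)\mapsto\int z^2\,\rd\nu$ is only defined for measurable $z$. You should therefore take its domain to be bounded Borel functions rather than all of $\ell^\infty$. The paper sidesteps this by arguing pathwise after the Skorokhod construction.
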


\begin{remark}\label{rem:separability_G_0}
    We know that the class $\cY$ is Donsker, thus $\G_{0}(\omega, t)$ is tight in $\ell^\infty(\cY)$. Tightness is equivalent to there being a $\sigma$-compact set that has probability one (see, e.g., p. 15 in \citealp{VanderVaartWellner2023}, and p. 105 in \citealp{Kosorok2008}). Every $\sigma$-compact set is Lindelöf, and a metric space is Lindelöf if and only if it is separable \citep[see, e.g., Theorem 16.11 of][]{Willard2004}. Since there is a measurable and separable set with probability one, then $\G_{0}(\omega, t)$ is separable.  
\end{remark}
\begin{remark}
    The random variable $\sup_{\omega, t} \G_0(\omega, t)$ may fail to be measurable, since $\Omega \times \R_{\ge 0}$ is uncountable \citep[see, e.g., p. 15 in][]{Adler2007}. Separability solves the measurability issues, since it guarantees the existence of a countable subset $D \subset \Omega \times \R_{\ge 0}$ such that $\sup_{(\omega,t) \in \Omega \times \R_{\ge 0}} \G_0(\omega, t) = \sup_{(\omega,t) \in D} \G_0(\omega, t)$ almost surely, and the supremum of a countable set of measurable random variables is measurable.
    Thus, it is sensible to speak of weak convergence of $\sup_{\omega, t} \G_{n,\mu}^{\mu_0}(\omega, t)$ to $\sup_{\omega, t} \G_0(\omega, t)$, since the limiting process is required to be separable. Observe that this also implies that $\sup_{\omega, t} \G_{n,\mu}^{\mu_0}(\omega, t)$ is asymptotically measurable by Lemma 1.3.8 in \cite{VanderVaartWellner2023}.
\end{remark}

Theorem \ref{thm:test_statistics_comp} is the corresponding result for the composite null hypothesis.
\begin{theorem}[Asymptotic null distributions of the test statistics, composite null]\label{thm:test_statistics_comp}
Assume the conditions of Theorem~\ref{thm:convergence_G_n_composite}. The following statements hold under the composite null hypothesis:
\begin{enumerate}[label=(\textit{\roman*}), ref=(\textit{\roman*})]
\item As $n$ diverges to infinity, \label{thm:test_statistics_comp_i}
\begin{align*}
\tilde{T}_n^{\mathrm{KS}} \convl \sup_{\omega \in \Omega, t\ge0}\big|\tilde{\G}_{0}(\omega, t)\big|.
\end{align*}
The same result holds for $\widehat{T}_n^{\mathrm{KS}}$ under \ref{e}.

\item If the map $(\omega,t)\mapsto \dot F_{\omega}^{\btheta_0}(t)$ is uniformly continuous with respect to
$d_{\Omega\times\R_{\ge0}}$, then
$$
\tilde{T}_n^{\mathrm{CM}}\convl \int_{\Omega\times\R_{\ge0}} \tilde{\G}_0(\omega, t)^2 \, \rd F_{\omega}^{\mu_{\btheta_0}}(t) \, \rd \mu_{\btheta_0}(\omega)
$$
as $n$ diverges to infinity. The same result holds for $\widehat{T}_n^{\mathrm{CM}}$ under \ref{e}.
\label{thm:test_statistics_comp_ii}
\end{enumerate}
\end{theorem}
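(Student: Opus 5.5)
Part (i) follows from the continuous mapping theorem. By Theorem~\ref{thm:convergence_G_n_composite}, $\G_{n,\mu}^{\mu_{\hat{\btheta}}}\convl\tilde{\G}_0$ in $\ell^\infty(\Omega\times\R_{\ge0})$. Since $z\mapsto\sup_{\omega,t}|z(\omega,t)|$ is a norm, it is Lipschitz on this space, so $\tilde T_n^{\mathrm{KS}}\convl\sup|\tilde{\G}_0|$. As in Remark~\ref{rem:separability_G_0}, tightness of $\tilde{\G}_0$ (it is the limit of a Donsker class $\cF$) gives separability, so the supremum is measurable. For $\widehat T_n^{\mathrm{KS}}$ we use the second statement of Theorem~\ref{thm:convergence_G_n_composite}. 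Under \ref{e}, $\|\widehat{\G}_{n,\mu}-\G_{n,\mu}^{\mu_{\hat{\btheta}}}\|_\infty=o_{\mathsf P}(1)$, hence $|\widehat T_n^{\mathrm{KS}}-\tilde T_n^{\mathrm{KS}}|=o_{\mathsf P}(1)$, and Slutsky's lemma concludes.

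For part (ii), write $Q_n\defin \rd F_{n,\omega}^\mu(t)\,\rd P_n(\omega)$ and $Q\defin\rd F_\omega^{\mu_{\btheta_0}}(t)\,\rd\mu_{\btheta_0}(\omega)$. The measure $Q_n$ is the empirical measure of the $n^2$ pairs $(X_j,d(X_j,X_i))$, and $Q$ is the law of $(X',d(X',X))$ with $X,X'$ iid. We then decompose
\begin{align*}
\tilde T_n^{\mathrm{CM}}=\int (\G_{n,\mu}^{\mu_{\hat{\btheta}}})^2\,\rd Q+\int (\G_{n,\mu}^{\mu_{\hat{\btheta}}})^2\,\rd(Q_n-Q).
\end{align*}
The functional $z\mapsto\int z^2\,\rd Q$ is continuous on $\ell^\infty$, so the first term converges to $\int\tilde{\G}_0^2\,\rd Q$. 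It remains to show that the second term is $o_{\mathsf P}(1)$.

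This second term is the main obstacle, because $Q_n$ depends on the same data as the random integrand. The plan is to approximate the integrand by a fixed continuous function. By Skorokhod/almost-sure representation, or more simply by tightness plus asymptotic equicontinuity, we get the following. For each $\varepsilon>0$ there is a fixed bounded function $g$ on $\Omega\times\R_{\ge0}$ with $\Prob{\|\G_{n,\mu}^{\mu_{\hat{\btheta}}}-g\|_\infty>\varepsilon}$ small along a subsequence. Here $g$ is uniformly continuous for $d_{\Omega\times\R_{\ge0}}$. To see this, note that under \ref{b1} the space $\Omega\times[0,\mathrm{diam}\,\Omega]$ is totally bounded, and the sample paths of $\tilde{\G}_0$ are uniformly continuous there. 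This uses two facts: the $L^2(\mu_{\btheta_0})$ semimetric of $\cY$ is dominated by $d_{\Omega\times\R_{\ge0}}$ via \ref{bp2}, and the drift term $\dot F_\omega^{\btheta_0}(t)^\top\bV_{\btheta_0}^{-1}\bpsi$ is uniformly continuous by the hypothesis of (ii). Since the integrand is bounded in probability, it then suffices to show $\int h\,\rd(Q_n-Q)\convp0$ for bounded uniformly continuous $h$, for example $h=g^2$.

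The quantity $\int h\,\rd Q_n=n^{-2}\sum_{i,j}h(X_j,d(X_j,X_i))$ is a V-statistic with bounded kernel. The diagonal terms contribute $O(n^{-1})$, and the U-statistic law of large numbers gives convergence to $\mathsf E\,h(X',d(X',X))=\int h\,\rd Q$. Working through subsequences, or using a bounded-Lipschitz metric argument, yields $\tilde T_n^{\mathrm{CM}}\convl\int\tilde{\G}_0^2\,\rd Q$.

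Finally, for $\widehat T_n^{\mathrm{CM}}$ we have
\[
|\widehat T_n^{\mathrm{CM}}-\tilde T_n^{\mathrm{CM}}|\le\|\widehat{\G}_{n,\mu}-\G_{n,\mu}^{\mu_{\hat{\btheta}}}\|_\infty\big(\|\widehat{\G}_{n,\mu}\|_\infty+\|\G_{n,\mu}^{\mu_{\hat{\btheta}}}\|_\infty\big),
\]
because $Q_n$ is a probability measure. Under \ref{e} this is $o_{\mathsf P}(1)\cdot O_{\mathsf P}(1)$, and Slutsky's lemma gives the claim.
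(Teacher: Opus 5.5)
Part (i) and the reductions of $\widehat T_n^{\mathrm{KS}}$, $\widehat T_n^{\mathrm{CM}}$ to $\tilde T_n^{\mathrm{KS}}$, $\tilde T_n^{\mathrm{CM}}$ coincide with the paper's proof. Part (ii) takes a different route, and one step in it is misstated.

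\textbf{The misstated step.} No single deterministic $g$ can make $\mathsf P(\|\G_{n,\mu}^{\mu_{\hat{\btheta}}}-g\|_\infty>\varepsilon)$ small, because the limit $\tilde{\G}_0$ is a non-degenerate Gaussian process. What tightness gives is a finite family of approximants:
\begin{enumerate}
\item Take a compact set $K$ of $d_{\Omega\times\R_{\ge0}}$-uniformly continuous functions with $\mathsf P(\tilde{\G}_0\in K)\ge1-\eta$. This is where Lemma~\ref{ap:lem_relation_L2_prod_composite}, and hence the hypothesis on $\dot F^{\btheta_0}$, is used.
\item Cover $K$ by open $\varepsilon$-balls centred at $g_1,\dots,g_N$.
\item Apply the portmanteau theorem to the closed set $\{z:\min_k\|z-g_k\|_\infty\ge2\varepsilon\}$. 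This gives $\limsup_n\mathsf P^*(\min_k\|\G_{n,\mu}^{\mu_{\hat{\btheta}}}-g_k\|_\infty\ge2\varepsilon)\le\eta$.
\end{enumerate}
On the complementary event, with your $Q_n$ and $Q$,
\[
\Big|\int(\G_{n,\mu}^{\mu_{\hat{\btheta}}})^2\,\rd(Q_n-Q)\Big|\le\max_{1\le k\le N}\Big|\int g_k^2\,\rd(Q_n-Q)\Big|+8\varepsilon\big(\|\G_{n,\mu}^{\mu_{\hat{\btheta}}}\|_\infty+\varepsilon\big).
\]
The V-statistic law of large numbers, applied to the finitely many bounded kernels $(x,x')\mapsto g_k^2(x',d(x',x))$, handles the first term. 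Together with $\|\G_{n,\mu}^{\mu_{\hat{\btheta}}}\|_\infty=O_{\mathsf P}(1)$, this makes the fluctuation term $o_{\mathsf P}(1)$. With this correction your argument is complete.

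\textbf{Comparison with the paper.} The paper proves (ii) by repeating the simple-null argument:
\begin{itemize}
\item a Skorohod construction for the pair $(\G_{n,\mu}^{\mu_{\hat{\btheta}}},P_n)$, giving almost sure uniform convergence on a new probability space;
\item then Lemma~\ref{ap:lem_DCT_P_hat} (Lipschitz regularisation in $t$, integration by parts, uniform Glivenko--Cantelli for the profiles, continuity of $r$, weak convergence of $P_n$), applied to the random, bounded, uniformly continuous integrand $(\tilde{\G}_0^\star)^2$.
\end{itemize}
Your Slutsky split into a continuous-mapping term and a fluctuation term has three advantages:
\begin{itemize}
\item It stays on the original probability space.
\item It avoids jointly coupling a process in the non-separable $\ell^\infty$ with a random measure.
\item It replaces Lemmas~\ref{ap:lem_bound_int_g_epsilon}--\ref{ap:lem_DCT_P_hat} by the $U$-statistic law of large numbers that the paper itself uses in proving Theorem~\ref{thm:fixed_alternative_divergence_simple}.
\end{itemize}
That law needs only bounded measurable kernels. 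So in your route, continuity serves only to place the limit and the net centres in a separable space of measurable functions. The paper's route, in exchange, gives an almost sure statement on a coupled space. It also provides a template that the paper reuses, with minor changes, for the CM statistics under local alternatives and for the bootstrap.
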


\begin{remark}\label{ap:rem_uniform_continuity_Fdot}
The uniform continuity assumption on $(\omega,t)\mapsto\dot F_{\omega}^{\boldsymbol{\theta}_0}(t)$ holds whenever
$\dot F_{\omega}^{\boldsymbol{\theta}_0}(t)=\E{y_{\omega,t}(X)\bpsi_{\boldsymbol{\theta}_0}(X)}$ for every $(\omega,t)\in\Omega\times\R_{\ge0}.$ This identity holds for exponential families by Proposition~\ref{prop:verification_assumptions_exponential_family}, and, more generally, for parametric families admitting densities with respect to a common dominating measure, provided that $\bpsi_{\boldsymbol{\theta}}$ is the score and differentiation under the integral sign is justified. If $\dot F_{\omega}^{\boldsymbol{\theta}_0}(t)=\E{y_{\omega,t}(X)\bpsi_{\boldsymbol{\theta}_0}(X)}$ holds, then for any $(\omega,t),(\omega^\prime,t^\prime)\in\Omega\times\R_{\ge0}$ and some constant $C_1>0$,
$$
\begin{aligned}
\big\|\dot F_{\omega}^{\boldsymbol{\theta}_0}(t)-\dot F_{\omega^\prime}^{\boldsymbol{\theta}_0}(t^\prime)\big\|
&\le \left(\E{\|\bpsi_{\boldsymbol{\theta}_0}(X)\|^2}\right)^{1/2}
\big(\Ebig{(y_{\omega,t}(X)-y_{\omega^\prime,t^\prime}(X))^2}\big)^{1/2}\\
&\le C_1\left(\E{\|\bpsi_{\boldsymbol{\theta}_0}(X)\|^2}\right)^{1/2}
d_{\Omega\times\R_{\ge0}}\big((\omega,t),(\omega^\prime,t^\prime)\big)^{1/2},
\end{aligned}
$$
by condition~\ref{cp2}, Cauchy--Schwarz applied componentwise, and Lemma~\ref{ap:lem_relation_L2_prod} in the Supplementary Material. Hence, $(\omega,t)\mapsto\dot F_{\omega}^{\boldsymbol{\theta}_0}(t)$ is uniformly continuous.
\end{remark}

Taking the supremum over all values of $\omega \in \Omega$ is impractical, especially in high dimensions or complex metric spaces. For this reason, we provide conditions under which the supremum over the whole metric space can be replaced with maximization over the sample elements.
                          
\begin{proposition}\label{prop:sup_max_ks}
    The following results hold, with $\mathsf{P}^*$ denoting outer probability:
    \begin{enumerate}[label=(\textit{\roman*}), ref=(\textit{\roman*})]
    \item Assume that $\Omega \subset \supp{\mu_0}$. Under the simple null hypothesis, it holds that \label{prop:sup_max_ks_i}
    \begin{align}\label{eq:sup_max_ks}
    \sup_{\omega\in \Omega, t\ge0} \left|\G_{n,\mu}^{\mu_0}(\omega, t)\right|
    =
    \max_{1\le i,j \le n}\left|\G_{n,\mu}^{\mu_0}(X_i,d(X_i,X_j))\right|
    + o_{\mathsf{P}^*}(1).
    \end{align}
\item Assume also that the conditions of Theorem~\ref{thm:convergence_G_n_composite} hold, $\Omega\subset\supp{\mu_{\btheta_0}}$, and that the map $(\omega,t)\mapsto  \dot{F}_{\omega}^{\btheta_0}(t)$ is uniformly continuous with respect to $d_{\Omega\times\R_{\ge0}}\big((\omega,t),(\omega^\prime,t^\prime)\big)\defin d(\omega,\omega^\prime)+|t-t^\prime|$. Then, under the composite null hypothesis, it holds that \label{prop:sup_max_ks_ii}
\begin{align}\label{eq:sup_max_ks_comp}
    \sup_{\omega\in \Omega, t\ge0} \left|\G_{n,\mu}^{\mu_{\hat{\btheta}}}(\omega, t)\right| = \max_{1\le i,j \le n} \left| \G_{n,\mu}^{\mu_{\hat{\btheta}}}(X_i, d(X_i,X_j)) \right| + o_{\mathsf{P}^*}(1).
\end{align}
\end{enumerate}
\end{proposition}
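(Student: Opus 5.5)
The argument rests on asymptotic equicontinuity of the empirical process, together with the fact that the sample points become dense in $\Omega$ and the pairwise distances become dense in the range of $t$. Write $\G_n\defin\G_{n,\mu}^{\mu_0}$. The maximum over the grid $\{(X_i,d(X_i,X_j))\}$ is trivially at most the supremum. It remains to show that the supremum exceeds the maximum by at most $o_{\mathsf P^*}(1)$.

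Since $\cY$ is $\mu_0$-Donsker, the process is asymptotically uniformly equicontinuous in probability with respect to the $L^2(\mu_0)$ semimetric $\rho$. By Lemma~\ref{ap:lem_relation_L2_prod}, which rests on \ref{bp2}, we have $\rho\big((\omega,t),(\omega',t')\big)\le C_1\, d_{\Omega\times\R_{\ge0}}\big((\omega,t),(\omega',t')\big)^{1/2}$. Hence, for every $\varepsilon,\eta>0$ there is $\delta>0$ with
\[
\limsup_n \mathsf P^*\Big(\sup_{d_{\Omega\times\R_{\ge0}}((\omega,t),(\omega',t'))<\delta}|\G_n(\omega,t)-\G_n(\omega',t')|>\varepsilon\Big)<\eta .
\]
The same modulus applies to the centering: $|F^{\mu_0}_\omega(t)-F^{\mu_0}_{\omega'}(t')|$ is small whenever the distance is small, by \ref{bp2} and the triangle inequality for balls. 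So it suffices to show that, with probability tending to one, every $(\omega,t)$ with $t$ in the relevant range lies within $\delta$ of some grid point $(X_i,d(X_i,X_j))$.

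For the centers: by \ref{b1}, $\Omega$ is totally bounded, so it is covered by finitely many balls $B(\omega_k,\delta/4)$. Because $\Omega\subset\supp{\mu_0}$, each such ball has positive $\mu_0$-mass. With probability tending to one, each ball therefore contains a sample point, and every $\omega$ has some $X_i$ with $d(\omega,X_i)<\delta/2$.

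For the radii, fix that $X_i$. The values $d(X_i,X_j)$, $j\ne i$, form an iid sample from $F^{\mu_0}_{X_i}$ conditionally on $X_i$. I need their gaps to be uniformly small on the support of $F^{\mu_0}_{X_i}$, uniformly over $i$ (or over the finitely many $\omega_k$). I expect this to be the main obstacle. My approach is to argue through the finite cover rather than through each $X_i$. Divide the bounded range $[0,\mathrm{diam}\,\Omega]$ into finitely many intervals of length $\delta/4$, and consider, for each pair $(k,\ell)$, the annulus $\{x: d(\omega_k,x)\in I_\ell\}$, which has positive mass whenever it meets the support. The sets with positive mass are finitely many, so with probability tending to one each contains a sample point. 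Since $|d(X_i,X_j)-d(\omega_k,X_j)|<\delta/4$ for the $X_i$ near $\omega_k$, this gives radii $\delta$-dense over the support of the distances from $\omega$.

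Values of $t$ outside the support need separate handling. If $t$ is below every distance or above every distance, then $F_{n,\omega}(t)$ is constant (equal to $0$ or $1$), and $F^{\mu_0}_\omega(t)$ is within $\varepsilon$ of the endpoint value at the nearest grid radius, again by \ref{bp2}. Combining these pieces gives $\sup-\max\le 2\varepsilon$ with outer probability at least $1-2\eta$ eventually, which proves~\eqref{eq:sup_max_ks}.

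For part \ref{prop:sup_max_ks_ii}, I apply the same argument to $\G_{n,\mu}^{\mu_{\hat\btheta}}$. By Theorem~\ref{thm:convergence_G_n_composite} and condition~\ref{d}, we have $\G_{n,\mu}^{\mu_{\hat\btheta}}=\sqrt n\,(P_n-\mu_{\btheta_0})f_{\omega,t}+o_{\mathsf P}(1)$ uniformly, and $\cF$ is Donsker. Its $L^2$ semimetric is bounded by the $\cY$ semimetric plus $\|\dot F^{\btheta_0}_\omega(t)-\dot F^{\btheta_0}_{\omega'}(t')\|$ times a constant. The assumed uniform continuity of $\dot F$ therefore again yields a modulus in terms of $d_{\Omega\times\R_{\ge0}}$. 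The covering argument above is then repeated verbatim, with $\mu_{\btheta_0}$ in place of $\mu_0$.
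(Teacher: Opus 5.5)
Your route differs from the paper's, and its core is sound, but the off-support step is incomplete.

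\textbf{How the two routes differ.} The paper reduces in two stages.
\begin{itemize}
\item It uses asymptotic equicontinuity only in the $\omega$-direction at fixed $t$, via $\rho_{2,y}(y_{\omega,t},y_{\omega',t})\le\sqrt{2K_{\mu_0}d(\omega,\omega')}$ uniformly in $t$. Together with the a.s.\ density of the sample (Lemma~\ref{lem:sample-delta-net}), this replaces $\sup_\omega$ by $\max_i\sup_t$.
\item At each fixed center $X_i$ it applies the exact Kolmogorov--Smirnov step-function argument. $F^{\mu}_{n,X_i}$ jumps only at the observed distances, $F^{\mu_0}_{X_i}$ is continuous, and the distances are a.s.\ distinct, so $\sup_t$ and the maximum over $t=d(X_i,X_j)$ differ by $O(n^{-1/2})$.
\item For (ii) it must also show that the jumps of $F^{\mu_{\hat{\btheta}}}_\omega$ are $o_{\mathsf P}(n^{-1/2})$, because \ref{bp2} is not assumed for $\mu_{\hat{\btheta}}$.
\end{itemize}
You instead use joint equicontinuity in $(\omega,t)$ and show that the grid $\{(X_i,d(X_i,X_j))\}$ is $O(\delta)$-dense, via a finite cover by balls and annuli around centers $\omega_k$. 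This bypasses the discrete KS argument and the distinctness of distances. In (ii) it also makes the jump control for the fitted profile unnecessary: regularity in $t$ is inherited from the $\cF$-indexed process through Lemma~\ref{ap:lem_relation_L2_prod_composite} and the uniform $o_{\mathsf P}(1)$ remainder. The price is the annulus geometry and a separate treatment of $t$ off the support. Take the annuli open, so that a nonempty one has positive mass because $\Omega\subset\mathrm{supp}(\mu_0)$.

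\textbf{The hole.} The complement of $\mathrm{supp}(F^{\mu_0}_\omega)$ need not consist only of a lower and an upper piece. Take $\Omega=[0,1]\cup[3,4]\subset\R$ with $\mu_0$ uniform; this satisfies \ref{b1} and \ref{bp2}. For $\omega=0$, the value $t=2$ is neither in the support nor below or above all distances, and no grid radius is near it, so your write-up does not cover it. Your appeal to \ref{bp2} for the edge pieces is also at the wrong scale, since $\G_{n,\mu}^{\mu_0}$ carries a factor $\sqrt n$. In (ii), \ref{bp2} is not available for $\mu_{\hat{\btheta}}$ at all.

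\textbf{The fix.} Use exact constancy instead. Let $(a,b)$ be a maximal open interval disjoint from the closure $D_\omega$ of $\{d(\omega,x):x\in\Omega\}$. Every measure involved lives on $\Omega$, so $F^{\mu}_{n,\omega}$, $F^{\mu_0}_\omega$ and $F^{\mu_{\hat{\btheta}}}_\omega$ are all constant on $[a,b)$. Hence the process at $(\omega,t)$ equals its value at $(\omega,a)$. Since $a\in D_\omega$, your density step covers $(\omega,a)$. The lower edge is empty because $d(\omega,\omega)=0$, and the upper edge is just the case $b=\infty$. With this replacement your argument proves both parts.
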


\section{Non-null asymptotics}\label{sec:non_null_asymptotics}

\subsection{Fixed alternatives}\label{sec:fixed_alternatives}

Suppose that $X_1, \ldots, X_n$ are generated from a distribution $\mu$ that is different from $\mu_0$, the distribution under the null hypothesis.
In other words, we are interested in the behavior of the test statistics under alternatives of the form
\begin{align*}
H_1: \mu \neq \mu_0 \text{ (simple null)}, \quad \text{and} \quad H_1: \mu \notin \{\mu_{\btheta}:\btheta\in\bTheta\} \text{ (composite null)}.
\end{align*}

\subsubsection{Simple null hypothesis}\label{sec:fixed_alternatives_simple}

The following theorem establishes the consistency of both the KS and CM tests against fixed alternatives. In terms of distance profiles, the alternative $H_1: \mu \neq \mu_0$ is equivalent to the existence of a pair $(\omega, t) \in \Omega \times \R_{\ge0}$ such that $F_{\omega}^{\mu}(t) \neq F_{\omega}^{\mu_0}(t)$. For the CM statistic, the separation at a single point $(\omega,t)$ is extended to a neighborhood where it is uniform. This neighborhood has positive probability under the alternative, so that the CM statistic can detect the separation between the distance profiles under the null and the alternative. A Glivenko--Cantelli argument then allows us to replace the empirical ingredients in $T_n^{\mathrm{CM}}$ by their population counterparts, and the law of large numbers for $U$-statistics yields the desired result.

\begin{theorem}[Consistency under fixed alternatives, simple null]\label{thm:fixed_alternative_divergence_simple}
Under $H_1$, $T_n^{\mathrm{KS}}$ diverges in outer probability, while $T_n^{\mathrm{CM}}$ also diverges in outer probability if condition \ref{a2} holds, or if condition \ref{a1} holds and $\supp{\mu_0}\subset\supp{\mu}$.
\end{theorem}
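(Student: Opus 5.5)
Under $H_1$, the characterization of Section~\ref{sec:basic_concepts} gives a pair $(\omega_0,t_0)$ with $\delta\defin|F_{\omega_0}^{\mu}(t_0)-F_{\omega_0}^{\mu_0}(t_0)|>0$, and under \ref{a2} we may take $\omega_0\in\supp{\mu_0}$. For the KS statistic I would use the decomposition
\begin{align*}
\G_{n,\mu}^{\mu_0}(\omega,t)=\sqrt n\big(F_{n,\omega}^{\mu}(t)-F_\omega^{\mu}(t)\big)+\sqrt n\big(F_\omega^{\mu}(t)-F_\omega^{\mu_0}(t)\big).
\end{align*}
The first term is $O_{\mathsf P}(1)$ at the single point $(\omega_0,t_0)$ by the CLT, and in fact uniformly over $\Omega\times\R_{\ge0}$ because $\cY$ is $\mu$-Donsker. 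Hence
\begin{align*}
T_n^{\mathrm{KS}}\ge|\G_{n,\mu}^{\mu_0}(\omega_0,t_0)|\ge \sqrt n\,\delta-O_{\mathsf P}(1),
\end{align*}
which diverges in outer probability.

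For the CM statistic I would first enlarge the separation to a neighborhood. Condition \ref{bp2} (for both $\mu$ and $\mu_0$) makes $t\mapsto F_\omega^{\mu}(t)-F_\omega^{\mu_0}(t)$ Lipschitz uniformly in $\omega$. Continuity in $\omega$ would come from $|F_\omega^\nu(t)-F_{\omega'}^\nu(t)|\le F_\omega^\nu(t+d(\omega,\omega'))-F_\omega^\nu(t-d(\omega,\omega'))\le 2K_\nu d(\omega,\omega')$, using the triangle inequality on balls. So there is $\eta>0$ with $|F_\omega^\mu(t)-F_\omega^{\mu_0}(t)|\ge\delta/2$ on $B(\omega_0,\eta)\times(t_0-\eta,t_0+\eta)$. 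The integration measure is $\rd F_{n,\omega}^\mu\,\rd P_n$, whose population version is $\rd F_\omega^\mu\,\rd\mu$. We therefore need $\mu(B(\omega_0,\eta))>0$, and this is exactly where the support conditions enter. Under \ref{a1} with $\supp{\mu_0}\subset\supp{\mu}$, I would choose $\omega_0$ in $\supp{\mu}$: if the profiles agreed for all $\omega\in\supp{\mu}$, then by the Chen--Dubey characterization $\mu=\mu_0$, a contradiction. Under \ref{a2}, Theorem 1 of \cite{Chen2025} with $\supp{\mu}$ in place of $\supp{\mu_1}$ directly gives such an $\omega_0$. I also need $F_{\omega}^{\mu}(t_0+\eta)-F_\omega^{\mu}(t_0-\eta)$ bounded below on the ball. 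If it is not, I would shift $t_0$ to a nearby point of increase of $F^\mu_{\omega_0}$. Because the difference of profiles is constant on intervals where both are flat, the separation persists; if $F_{\omega_0}^{\mu_0}$ moves there instead, I would argue that the separation also holds at the endpoint of the flat stretch of $F^\mu_{\omega_0}$.

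The CM statistic then satisfies
\begin{align*}
T_n^{\mathrm{CM}}/n=\int (F_{n,\omega}^\mu(t)-F_\omega^{\mu_0}(t))^2\,\rd F_{n,\omega}^\mu(t)\,\rd P_n(\omega).
\end{align*}
By Glivenko--Cantelli for $\cY$, I replace the integrand by $(F_\omega^\mu(t)-F_\omega^{\mu_0}(t))^2$ at uniform cost $o_{\mathsf P^*}(1)$. The remaining quantity equals
\begin{align*}
n^{-2}\sum_{i,j}g(X_i,X_j),\qquad g(x,y)\defin\big(F_x^\mu(d(x,y))-F_x^{\mu_0}(d(x,y))\big)^2,
\end{align*}
which is a bounded V-statistic. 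By the SLLN for $U$-statistics it converges to $\int g\,\rd\mu\,\rd\mu\ge(\delta/2)^2\,\mu\otimes\mu(\{(x,y):x\in B(\omega_0,\eta),\,d(x,y)\in(t_0-\eta,t_0+\eta)\})>0$. Hence $T_n^{\mathrm{CM}}\ge cn$ with probability tending to one.

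The main obstacle is this positivity step. It requires simultaneously that the center lies in $\supp{\mu}$ and that the radius window carries $\mu$-mass uniformly near $\omega_0$. I expect to handle it by the point-of-increase adjustment combined with lower semicontinuity of $\omega\mapsto\mu(\{y:d(\omega,y)\in(a,b)\})$ for open intervals.
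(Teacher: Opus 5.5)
Your proposal is correct and follows essentially the paper's proof. You use the same split into an empirical fluctuation plus the deterministic gap $\Delta=F^\mu-F^{\mu_0}$, the Lipschitz enlargement of the gap to $B(\omega_0,\eta)\times(t_0-\eta,t_0+\eta)$, and the flat-stretch endpoint argument placing $t_0$ in $\supp{F^\mu_{\omega_0}}$ (Lemma~\ref{lem:F_H_differ_at_t}). You also use positivity of the $\mu\otimes\mu$-mass of $\{x\in B(\omega_0,\eta),\ d(x,y)\in(t_0-\eta,t_0+\eta)\}$ and the SLLN for $U$-statistics. The differences are cosmetic: a pointwise CLT bound for KS rather than $T_n^{\mathrm{KS}}/\sqrt n\to\sup|\Delta|$, and the kernel $\Delta(x,d(x,y))^2$ rather than an indicator, which even identifies the limit of $T_n^{\mathrm{CM}}/n$.

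The one step you, like the paper, only assert is that under \ref{a1} agreement of the profiles on $\supp{\mu}$ forces $\mu=\mu_0$. This is not literally the stated characterization, which needs all $\omega\in\Omega$. It does follow: using $\supp{\mu_0}\subset\supp{\mu}$, equal profiles give $\int d^\alpha(\omega,\cdot)\,\rd\mu=\int d^\alpha(\omega,\cdot)\,\rd\mu_0$ for $\mu$- and $\mu_0$-a.e.\ $\omega$. Hence $\int\!\!\int d^\alpha\,\rd(\mu-\mu_0)\,\rd(\mu-\mu_0)=0$, and strong negative type yields $\mu=\mu_0$.
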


\begin{remark} 
Condition~\ref{a2} guarantees the existence of a point $(\omega,t)$ at which
$F_{\omega}^{\mu}(t)\neq F_{\omega}^{\mu_0}(t)$, and such that $\omega\in\operatorname{supp}(\mu)$. In contrast, condition~\ref{a1} alone only ensures
the existence of such a point with $\omega\in\supp{\mu}\cup\supp{\mu_0}$, which need not belong to
$\operatorname{supp}(\mu)$. The extra assumption $\supp{\mu_0} \subset \supp{\mu}$ guarantees that this point can be chosen with $\omega\in\operatorname{supp}(\mu)$. 
This ensures that the differences between $F_\omega^{\mu}(t)$ and $F_\omega^{\mu_0}(t)$ are detected by the CM statistic.
\end{remark}

\subsubsection{Composite null hypothesis}\label{sec:fixed_alternatives_composite}

In terms of distance profiles, the alternative hypothesis means that for every $\btheta\in\bTheta$ there exists at least one pair
$(\omega,t)\in\Omega\times\R_{\ge 0}$ such that $F^\mu_{\omega}(t)\neq F^{\mu_{\btheta}}_{\omega}(t)$. 
We work with a value $\btheta_1$, which is the limit to which $\hat{\btheta}$ converges in probability under $H_1$, and is $\btheta_0$ under $H_0$.

\begin{theorem}[Consistency under fixed alternatives, composite null]
\label{thm:fixed_alt_divergence_composite}
Consider an estimator $\hat{\btheta}$, and assume that there exists $\btheta_1\in\bTheta$ such that $\hat{\btheta}\convp\btheta_1$.
Assume also that $\btheta\mapsto F^{\mu_{\btheta}}$ is continuous at $\btheta_1$ as a map from $\Theta$ into $\ell^\infty(\Omega\times\mathbb R_{\ge0})$; that is,
\begin{align}\label{eq:theta_star_profile_continuity}
\sup_{\omega\in\Omega,t\ge0}
\left|
F_\omega^{\mu_{\btheta}}(t)
-
F_\omega^{\mu_{\btheta_1}}(t)
\right|
\to 0
\quad\text{as }\btheta\to\btheta_1.
\end{align}
Then, under $H_1$, $\tilde{T}_n^{\mathrm{KS}}$ diverges in outer probability, i.e.,
$\tilde{T}_n^{\mathrm{KS}}\convprob{\mathsf P^*}\infty$.
If, in addition, condition \ref{e} holds and $P_n\bpsi_{\hat{\btheta}}=o_{\mathsf P}(1)$, then
$\widehat{T}_n^{\mathrm{KS}}\convprob{\mathsf P^*}\infty$ under $H_1$.

If condition \ref{a2} holds, or if condition \ref{a1} holds and $\supp{\mu_{\btheta_1}}\subset\supp{\mu}$, $\tilde{T}_n^{\mathrm{CM}}$ also diverges in outer probability. If, in addition, condition \ref{e} holds and
$P_n\bpsi_{\hat{\btheta}}=o_{\mathsf P}(1)$, then $\widehat{T}_n^{\mathrm{CM}}\convprob{\mathsf P^*}\infty$ under $H_1$.
\end{theorem}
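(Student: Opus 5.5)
The strategy is to reduce the composite case to the simple-alternative argument of Theorem~\ref{thm:fixed_alternative_divergence_simple}, with $\mu_{\btheta_1}$ playing the role of $\mu_0$. The plug-in error is controlled by \eqref{eq:theta_star_profile_continuity}.

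\textbf{KS statistic.} Decompose
\[
n^{-1/2}\G_{n,\mu}^{\mu_{\hat\btheta}}(\omega,t)=\big(F_{n,\omega}^{\mu}(t)-F_\omega^{\mu}(t)\big)+\big(F_\omega^{\mu}(t)-F_\omega^{\mu_{\btheta_1}}(t)\big)+\big(F_\omega^{\mu_{\btheta_1}}(t)-F_\omega^{\mu_{\hat\btheta}}(t)\big).
\]
The first term is $o_{\mathsf P^*}(1)$ uniformly in $(\omega,t)$, because $\cY$ is $\mu$-Glivenko--Cantelli: it is Donsker under \ref{b1} and \ref{bp2}, or one can use the bracketing bound directly. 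The third term is $o_{\mathsf P}(1)$ uniformly, since $\hat\btheta\convp\btheta_1$ and \eqref{eq:theta_star_profile_continuity} hold, via the continuous mapping theorem.

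Under $H_1$, $\mu\ne\mu_{\btheta_1}$. By the characterization under \ref{a1} or \ref{a2}, there is $(\omega^*,t^*)$ with $\delta:=|F_{\omega^*}^{\mu}(t^*)-F_{\omega^*}^{\mu_{\btheta_1}}(t^*)|>0$. Therefore $n^{-1/2}\tilde T_n^{\mathrm{KS}}\ge \delta-o_{\mathsf P^*}(1)$, so $\tilde T_n^{\mathrm{KS}}\convprob{\mathsf P^*}\infty$.

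For $\widehat T_n^{\mathrm{KS}}$, observe that
\[
\widehat\G_{n,\mu}-\G_{n,\mu}^{\mu_{\hat\btheta}}=\sqrt n\,\dot F_\omega^{\hat\btheta}(t)^\top\widehat\bV^{-1}P_n\bpsi_{\hat\btheta}.
\]
By \ref{e} and $P_n\bpsi_{\hat\btheta}=o_{\mathsf P}(1)$, this difference is $\sqrt n\cdot o_{\mathsf P}(1)$ uniformly in $(\omega,t)$. Hence $n^{-1/2}\widehat T_n^{\mathrm{KS}}\ge\delta-o_{\mathsf P^*}(1)$ as well. Here I rely on \ref{e} holding with $\widehat\bV$ invertible with probability tending to one, even off the null.

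\textbf{CM statistic.} First choose $(\omega^*,t^*)$ with $\omega^*\in\supp{\mu}$. Under \ref{a2}, this is possible by the Chen--Dubey characterization, applied with support taken with respect to $\mu$. Under \ref{a1}, the characterization only gives a point in $\Omega$. Since closed-ball probabilities are determined on the support, the inclusion $\supp{\mu_{\btheta_1}}\subset\supp{\mu}$ lets us take $\omega^*\in\supp{\mu}$, as in the remark following Theorem~\ref{thm:fixed_alternative_divergence_simple}.

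Next, \ref{bp2} applied to $\mu$ and to $\mu_{\btheta_1}$ gives Lipschitz continuity in $t$. Continuity in $\omega$ comes from $|F_\omega(t)-F_{\omega'}(t)|\le F_\omega(t+d(\omega,\omega'))-F_\omega(t-d(\omega,\omega'))$, which combined with \ref{bp2} yields a neighborhood $U\times[t^*-\eta,t^*+\eta]$ on which $|F^\mu-F^{\mu_{\btheta_1}}|\ge\delta/2$. I also need $t^*$ to be placed so that the interval carries $F_\omega^\mu$-mass for $\omega\in U$. If $F_\omega^\mu$ is flat near $t^*$, I would move $t^*$ to the nearest point of growth; the difference remains positive there, since $F^\mu$ is constant while $F^{\mu_{\btheta_1}}$ can only be closer by continuity. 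This is the main technical obstacle, and I would handle it exactly as in the simple-null proof.

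Then
\[
n^{-1}\tilde T_n^{\mathrm{CM}}\ge \int \big(\text{difference}\big)^2\,\rd F_{n,\omega}^\mu\,\rd P_n - o_{\mathsf P^*}(1),
\]
by the uniform bounds established above, since the integrating measure has mass one. The remaining integral equals the $V$-statistic $n^{-2}\sum_{i,j}g(X_i,X_j)$ with $g(x,y)=(F_x^\mu(d(x,y))-F_x^{\mu_{\btheta_1}}(d(x,y)))^2$. By the law of large numbers for $U$-statistics, it converges to $\iint g\,\rd\mu\,\rd\mu\ge (\delta/2)^2\,\mu(U)\,\inf_{\omega\in U}\big(F^\mu_\omega(t^*+\eta)-F^\mu_\omega(t^*-\eta)\big)>0$. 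Hence $\tilde T_n^{\mathrm{CM}}\to\infty$.

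The same argument applies to $\widehat T_n^{\mathrm{CM}}$, because its integrand differs from that of $\tilde T_n^{\mathrm{CM}}$ by $o_{\mathsf P}(\sqrt n)$ uniformly. Using $(a+b)^2\ge a^2/2-b^2$ then gives divergence.
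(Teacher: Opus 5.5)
Your proof follows the paper's structure. For KS you use the same bound $|n^{-1/2}\tilde T_n^{\mathrm{KS}}-\delta^\star|\le e_n+r_n$, and for the $\widehat T_n$ statistics the same $o_{\mathsf P}(\sqrt n)$ comparison. For CM you take a separation neighbourhood around a point with $\omega^*\in\supp{\mu}$ and then apply the law of large numbers for a $V$-statistic. Two of your choices differ from the paper's, and both are fine:
\begin{itemize}
\item you use the kernel $(x,y)\mapsto \Delta^\star(x,d(x,y))^2$ with $|a^2-b^2|\le 2|a-b|$, where the paper uses the indicator kernel $1_{\{x\in B_1,\,d(x,y)\in I_1\}}$ on the event $\{e_n+r_n<c_1/2\}$;
\item you use $(a+b)^2\ge a^2/2-b^2$ for $\widehat T_n^{\mathrm{CM}}$, where the paper uses a product bound.
\end{itemize}

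The step that fails as written is the one you flag as the main obstacle. Moving $t^*$ to ``the nearest point of growth'' of $F:=F^\mu_{\omega^*}$ need not keep the difference nonzero. Let $H:=F^{\mu_{\btheta_1}}_{\omega^*}$ and suppose $t^*$ lies in a maximal interval $(a,b)$ of $[0,\infty)\setminus\supp{F}$. One can have $H(a)=F(a)$ and $H(t^*)>F(t^*)$ with $t^*$ closer to $a$ than to $b$. Then the difference vanishes at the nearest support point.

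The relocation has to be driven by monotonicity of $H$, not by continuity:
\begin{itemize}
\item if $H(t^*)>F(t^*)$, then $H(b)\ge H(t^*)>F(t^*)=F(b)$, so you must move to the right endpoint;
\item if $H(t^*)<F(t^*)$, then $H(a)\le H(t^*)<F(t^*)=F(a)$, so you move to the left endpoint.
\end{itemize}
Unbounded flat pieces are handled the same way, because there the sign is forced: $F=0$ below $\min\supp{F}$ and $F=1$ above $\max\supp{F}$. This is exactly Lemma~\ref{lem:F_H_differ_at_t}.

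Once $t^*\in\supp{F}$ is secured, your claim $\inf_{\omega\in U}\{F^\mu_\omega(t^*+\eta)-F^\mu_\omega(t^*-\eta)\}>0$ still requires shrinking $U$ to radius below $\eta/2$. For such $\omega$, ball inclusions give the lower bound $F(t^*+\eta/2)-F(t^*-\eta/2)>0$. The paper obtains the same positivity through Lemma~\ref{lem:positive_mass_region}. With these two repairs your proof goes through.
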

Condition \eqref{eq:theta_star_profile_continuity} is a weaker analogue of condition~\ref{d} at $\btheta_1$, requiring continuity rather than differentiability.

\begin{remark}[Sufficient conditions for \eqref{eq:theta_star_profile_continuity}]\label{rem:examples_local_derivative_bound}

A sufficient condition for \eqref{eq:theta_star_profile_continuity} is that there exist $\rho>0$ and $L<\infty$ such that
\begin{equation}\label{eq:local_derivative_bound}
\sup_{\btheta\in B(\btheta_1,\rho)} \sup_{\omega\in\Omega, t\ge 0}
\bigl\|\dot{F}^{\btheta}_{\omega}(t)\bigr\|
\le L,
\end{equation}
where the differentiability of $\btheta\mapsto F^{\mu_{\btheta}}_\omega(t)$ in $B(\btheta_1,\rho)$, for every $(\omega,t)\in\Omega\times\R_{\ge0}$, is implicitly assumed. Indeed, if \eqref{eq:local_derivative_bound} holds, then the mean value theorem gives, for every $\btheta\in B(\btheta_1,\rho)$, that
$
\sup_{\omega,t}
\bigl|F^{\mu_{\btheta}}_\omega(t)-F^{\mu_{\btheta_1}}_\omega(t)\bigr|
\le L\|\btheta-\btheta_1\|,
$
which implies \eqref{eq:theta_star_profile_continuity}.

For exponential families, \eqref{eq:local_derivative_bound} holds at every $\boldsymbol{\eta}_1\in\mathcal H$. Indeed, choose $\rho>0$ such that $\overline{B}(\boldsymbol{\eta}_1,\rho)\subset\mathcal H$. By Proposition~\ref{prop:verification_assumptions_exponential_family},
$$
\bigl\|\dot F_{\omega}^{\boldsymbol{\eta}}(t)\bigr\|
\le
\mathsf{E}_{\boldsymbol{\eta}}\bigl\|\bpsi_{\boldsymbol{\eta}}(X)\bigr\|
\le
\big(\mathsf{E}_{\boldsymbol{\eta}}\bigl\|\bpsi_{\boldsymbol{\eta}}(X)\bigr\|^2\big)^{1/2}
=
\operatorname{tr}(\mathcal I_{\boldsymbol{\eta}})^{1/2}.
$$
Therefore,
$$
\sup_{\boldsymbol{\eta}\in B(\boldsymbol{\eta}_1,\rho)}
\sup_{\omega\in\Omega,t\ge0}
\bigl\|\dot F_{\omega}^{\boldsymbol{\eta}}(t)\bigr\|
\le
\sup_{\boldsymbol{\eta}\in\overline{B}(\boldsymbol{\eta}_1,\rho)}
\operatorname{tr}(\mathcal I_{\boldsymbol{\eta}})^{1/2}
<\infty,
$$
where boundedness follows from the continuity of $\boldsymbol{\eta}\mapsto\mathcal I_{\boldsymbol{\eta}}$ and compactness of $\overline{B}(\boldsymbol{\eta}_1,\rho)$.
\end{remark}

\subsection{Local alternatives}\label{sec:local_alternatives}

We study the asymptotic behavior of the KS and CM statistics under local alternatives, for simple and composite null hypotheses.
Fix a probability measure $G$ on $(\Omega,\mathcal{B}(\Omega))$ that satisfies condition \ref{bp2}, with Lipschitz constant $K_G$.
Define the sequence of local alternatives converging to $H_0$ at rate $n^{-1/2}$ by
\begin{equation}\label{eq:local_alt_Qn_def}
Q_n \defin (1-n^{-1/2}) P + n^{-1/2}G,
\quad n\in\mathbb{N},
\end{equation}
where $P$ is the true distribution under the null ($P=\mu_0$ for a simple null and $P=\mu_{\btheta_0}$ for a composite null).
For $(\omega,t)\in\Omega\times \R_{\ge0}$, this implies
$F^{Q_n}_\omega(t)
\defin
F^{P}_\omega(t)+n^{-1/2}h(\omega,t),$
where 
$h(\omega,t)\defin F^G_\omega(t)-F^{P}_\omega(t).$
Hence,
\begin{equation}\label{eq:local_alt_process_decomp}
\sqrt{n}\big(F_{n,\omega}^{Q_n}(t)-F^{P}_\omega(t)\big)
=
\G_{n,Q_n}^{Q_n}(\omega,t) + h(\omega,t),
\end{equation}
where 
$\G_{n,Q_n}^{Q_n}(\omega,t)
\defin
\sqrt{n}\big(F_{n,\omega}^{Q_n}(t)-F^{Q_n}_\omega(t)\big)
=
\sqrt{n}(P_n-Q_n)y_{\omega,t}.$

Observe that $\G_{n,Q_n}^{Q_n}$ is a triangular-array empirical process, since the underlying law $Q_n$ varies with $n$. The following result shows that $\G_{n,Q_n}^{Q_n}$ 
admits a coupling with $\G_{n,P}^{P}$ such that their difference converges in probability to zero in $\ell^\infty(\cF)$.
\begin{proposition}\label{prop:bernoulli}
Let $\cF$ be a $P$- and $G$-Donsker class of measurable real-valued functions. Assume that
$\sup_{f\in\cF}\big|(G-P)f\big|<\infty$.
Then, under the local alternative \eqref{eq:local_alt_Qn_def}, $\G_{n,Q_n}^{Q_n}$ admits a coupling with $\G_{n,P}^{P}$ such that
\begin{equation}\label{eq:sup_bernoulli_prob}
\sup_{f \in \cF}
\Big|
\G_{n,Q_n}^{Q_n}(f)
-
\G_{n,P}^{P}(f)\Big|
\convp 0. 
\end{equation}
\end{proposition}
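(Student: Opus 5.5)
We construct the coupling explicitly via Bernoulli thinning. On a common probability space, take independent sequences $U_1,U_2,\ldots$ iid uniform on $[0,1]$, $Z_1,Z_2,\ldots$ iid $P$, and $W_1,W_2,\ldots$ iid $G$, all mutually independent. Set $B_{n,i}\defin 1_{\{U_i\le n^{-1/2}\}}$ and $X_{n,i}\defin (1-B_{n,i})Z_i+B_{n,i}W_i$. Then $X_{n,1},\ldots,X_{n,n}$ are iid $Q_n$, and we define $\G_{n,Q_n}^{Q_n}$ from them and $\G_{n,P}^{P}$ from $Z_1,\ldots,Z_n$. Let $N_n\defin\sum_{i=1}^n B_{n,i}\sim\mathrm{Bin}(n,n^{-1/2})$, so $N_n/\sqrt n\convp 1$ and $N_n\to\infty$ in probability.

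For $f\in\cF$, we write
$$
\G_{n,Q_n}^{Q_n}(f)-\G_{n,P}^{P}(f)=\frac{1}{\sqrt n}\sum_{i=1}^n B_{n,i}\big(f(W_i)-f(Z_i)\big)-\sqrt n(Q_n-P)f .
$$
Since $\sqrt n(Q_n-P)f=(G-P)f$, the right-hand side equals
$$
\frac{1}{\sqrt n}\sum_{i=1}^n B_{n,i}\big\{(f(W_i)-Gf)-(f(Z_i)-Pf)\big\}+\Big(\frac{N_n}{\sqrt n}-1\Big)(G-P)f .
$$
The last term is bounded uniformly in $f$ by $|N_n/\sqrt n-1|\sup_{f\in\cF}|(G-P)f|$, which is $o_{\mathsf P}(1)$ by the finiteness assumption.

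For the first term, we condition on $(B_{n,i})_i$, which is independent of $(W_i,Z_i)_i$. Given the set of indices with $B_{n,i}=1$, of cardinality $N_n$, the sum $\sum_i B_{n,i}(f(W_i)-Gf)$ has the law of $\sqrt{N_n}\,\G_{N_n,G}^{G}(f)$, which is an empirical process of $N_n$ iid draws from $G$. Hence the $W$-part is distributed as $(N_n/n)^{1/2}\,\G^G_{N_n}(f)$. Because $\cF$ is $G$-Donsker, $\sup_f|\G^G_m(f)|=O_{\mathsf P^*}(1)$ uniformly as $m\to\infty$, and for small $m$ it is at most $2\sqrt m\,\|F\|$-type bounded. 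The factor $(N_n/n)^{1/2}\asymp n^{-1/4}\to0$ then gives $o_{\mathsf P^*}(1)$ uniformly over $\cF$. The same argument with $P$-Donsker handles the $Z$-part.

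\textbf{Main obstacle.} The delicate step is making the conditioning rigorous with a random sample size and outer probabilities. I would bound $\mathsf P^*\big(\sup_f|\cdot|>\varepsilon\big)\le \mathsf P(N_n>2\sqrt n)+\max_{m\le 2\sqrt n}\mathsf P^*\big(\sqrt{m/n}\,\sup_f|\G_m(f)|>\varepsilon\big)$. For this I need $\sup_m \mathsf P^*(\sup_f|\G_m(f)|>M)\to0$ as $M\to\infty$, which follows from asymptotic tightness of a Donsker class for large $m$. For the finitely many small $m$ it follows from $\|(G-P)f\|$ boundedness, or more simply from each $\sup_f|\G_m(f)|$ being a.s. finite. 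In our application $\cF$ is uniformly bounded, since the $y_{\omega,t}$ are indicators, so this is immediate.
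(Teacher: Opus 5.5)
Your proposal is correct and follows essentially the same route as the paper: the same Bernoulli-thinning coupling, and the same split into two $B_{n,i}$-weighted centered sums plus the drift $(N_n/\sqrt n-1)(G-P)f$, which vanishes since $\mathrm{Var}(N_n)/n\to0$. Each weighted sum is then represented as $\sqrt{N_n/n}$ times an empirical process with an independent random sample size. The only cosmetic difference is in the random-index step: the paper bounds by $\mathsf P(N_{n,1}\le m_0)+\sup_{k\ge m_0}\mathsf P(\sup_f|\G^{G}_{k,G}(f)|>M)$ and so never needs control at small sample sizes, whereas your restriction to $N_n\le 2\sqrt n$ requires tightness uniformly over all $m$, which you justify correctly.
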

\begin{corollary}[Asymptotic distribution of $\G_{n,Q_n}^{Q_n}$ under the local alternative]\label{cor:bernoulli}
Under the assumptions of Proposition \ref{prop:bernoulli}, $\G_{n,Q_n}^{Q_n}$ has the same weak limit as $\G_{n,P}^{P}$  under the local alternative \eqref{eq:local_alt_Qn_def}. %
\end{corollary}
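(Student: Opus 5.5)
The corollary follows by combining the coupling of Proposition~\ref{prop:bernoulli} with the weak limit of $\G_{n,P}^{P}$ and a Slutsky-type argument in $\ell^\infty(\cF)$.

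First, identify the limit. Since $\cF$ is $P$-Donsker by assumption, $\G_{n,P}^{P}\convl \G_P$ in $\ell^\infty(\cF)$, where $\G_P$ is a tight, mean-zero, $P$-Brownian bridge. For $\cF=\cY$ this is $\G_0$ in \eqref{eq:thm_3_chendubey}. For the class in \eqref{eq:def_F} under the composite null, this is the Gaussian limit of Theorem~\ref{thm:convergence_G_n_composite}.

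Second, use the coupling. By Proposition~\ref{prop:bernoulli}, on a common probability space there are versions of $\G_{n,Q_n}^{Q_n}$ and $\G_{n,P}^{P}$ whose difference satisfies $\|\G_{n,Q_n}^{Q_n}-\G_{n,P}^{P}\|_{\cF}\convp 0$. Weak convergence in the Hoffmann-J{\o}rgensen sense concerns only the law of each process. Replacing $\G_{n,P}^{P}$ by its coupled version therefore preserves $\G_{n,P}^{P}\convl\G_P$, and replacing $\G_{n,Q_n}^{Q_n}$ by its coupled version leaves the claimed convergence unchanged. Here we need the coupling to preserve the marginal law of each process, which is how such couplings are built: the $Q_n$-sample is obtained by mixing $P$- and $G$-draws using Bernoulli$(n^{-1/2})$ indicators.

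Third, apply the Slutsky lemma for outer-probability convergence \citep[Lemma~1.10.2 in][]{VanderVaartWellner2023}. If $\bX_n\convl \bX$ with $\bX$ separable and $\|\bY_n-\bX_n\|\convprob{\mathsf P^*}0$, then $\bY_n\convl\bX$. Taking $\bX_n=\G_{n,P}^{P}$ and $\bY_n=\G_{n,Q_n}^{Q_n}$ gives $\G_{n,Q_n}^{Q_n}\convl\G_P$ in $\ell^\infty(\cF)$, which is the same limit as $\G_{n,P}^{P}$.

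The only delicate point is measurability: the suprema may fail to be measurable. This is handled by working with outer expectations and using the tightness, hence separability, of $\G_P$ as in Remark~\ref{rem:separability_G_0}. If the coupling in Proposition~\ref{prop:bernoulli} gives only convergence in probability rather than outer probability, I would note that the statement there already treats the supremum as a random variable, or else pass to measurable majorants.
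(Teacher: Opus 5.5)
Your proposal is correct and follows essentially the same route as the paper: take the coupled versions from Proposition~\ref{prop:bernoulli} so that $\sup_{f\in\cF}|\G_{n,Q_n}^{Q_n}(f)-\G_{n,P}^{P}(f)|\convp 0$, use the $P$-Donsker property to get weak convergence of $\G_{n,P}^{P}$, and conclude with Slutsky's lemma. Your extra remarks, that the coupling preserves the marginal laws and that convergence should be handled in outer probability, are refinements the paper leaves implicit rather than a different argument.
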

\subsubsection{Simple null hypothesis}\label{sec:local_alternatives_simple}

The following theorem gives the weak limit under the local alternative of the KS and CM statistics, $T_n^{\mathrm{KS}}$ and $T_n^{\mathrm{CM}}$.
\begin{theorem}[Asymptotic distribution of the test statistics under the local alternative, simple null]\label{thm:local_alt_simple}
Under the local alternative \eqref{eq:local_alt_Qn_def}, it holds that
\begin{equation*}
T_n^{\mathrm{KS}}
\convl
\sup_{\omega\in\Omega,\,t\ge0}
|\G_0(\omega,t)+h(\omega,t)|,
\quad
T_n^{\mathrm{CM}}
\convl
\int_{\Omega\times\R_{\ge0}}
(\G_0(\omega,t)+h(\omega,t))^2
\,\rd F_\omega^{\mu_0}(t)\,\rd\mu_0(\omega).
\end{equation*}
\end{theorem}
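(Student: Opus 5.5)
We work with the decomposition \eqref{eq:local_alt_process_decomp} applied with $P=\mu_0$. Under \eqref{eq:local_alt_Qn_def} the sample is drawn from $Q_n$, so the statistic process is $\G_{n,Q_n}^{\mu_0}(\omega,t)=\sqrt n(F^{Q_n}_{n,\omega}(t)-F^{\mu_0}_\omega(t))=\G_{n,Q_n}^{Q_n}(\omega,t)+h(\omega,t)$. Here $h$ is deterministic and bounded, since $|h|\le 1$.

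\textbf{Step 1 (weak limit of the process).} We apply Proposition~\ref{prop:bernoulli} and Corollary~\ref{cor:bernoulli} with the class $\cY$. This class is $\mu_0$-Donsker by \eqref{eq:thm_3_chendubey}. It is also $G$-Donsker, because $G$ satisfies \ref{bp2} and \ref{b1} holds, so Proposition~4 of \citet{Chen2025} gives the bracketing condition \eqref{eq:bracketing_assumption_1} and Theorem~3 there applies. Finally, $\sup_{f\in\cY}|(G-\mu_0)f|\le1$. Hence $\G_{n,Q_n}^{Q_n}\convl\G_0$ in $\ell^\infty(\cY)$.

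Adding the fixed element $h\in\ell^\infty(\cY)$ is a continuous map, so $\G_{n,Q_n}^{\mu_0}\convl\G_0+h$. The KS result then follows from the continuous mapping theorem, since $z\mapsto\sup|z|$ is Lipschitz on $\ell^\infty$. Measurability of the limit is handled by Remark~\ref{rem:separability_G_0}, because $\G_0+h$ is tight when $\G_0$ is.

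\textbf{Step 2 (CM statistic).} The integration measure $\rd F^{Q_n}_{n,\omega}(t)\,\rd P_n(\omega)$ is random and depends on $n$. We write
\[
T_n^{\mathrm{CM}}=\frac1{n^2}\sum_{i,j}\G_{n,Q_n}^{\mu_0}(X_i,d(X_i,X_j))^2 .
\]
This is compared in three steps.

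First, by Skorokhod/almost sure representation, or by Slutsky-type arguments, we replace the process by $Z_n:=\G_{n,Q_n}^{\mu_0}$. Since $Z_n$ is uniformly tight, it is asymptotically uniformly $d_{\Omega\times\R_{\ge0}}$-equicontinuous in probability. To see this, note that $\G_0$ has continuous paths with respect to the $L^2(\mu_0)$ semimetric, and Lemma~\ref{ap:lem_relation_L2_prod} together with \ref{bp2} bounds that semimetric by $d_{\Omega\times\R_{\ge0}}^{1/2}$. The function $h$ is uniformly continuous in $(\omega,t)$ by \ref{bp2} for both $G$ and $\mu_0$, again combined with that lemma.

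Second, we show that $\int z^2\,\rd\nu_n\to\int z^2\,\rd\nu$ uniformly over uniformly equicontinuous bounded $z$. Here $\nu_n$ is the empirical measure of the pairs $(X_i,d(X_i,X_j))$, and $\nu$ is the law of $(X,d(X,X'))$ under $\mu_0\otimes\mu_0$. Because $Q_n$ differs from $\mu_0$ by $O(n^{-1/2})$ in total variation, expectations of bounded kernels under $Q_n^{\otimes2}$ and $\mu_0^{\otimes2}$ differ by $O(n^{-1/2})$. The weak law for $U$/$V$-statistics under $Q_n$ then gives weak convergence $\nu_n\to\nu$ in probability. The diagonal terms $i=j$ contribute $O_{\mathsf P}(n^{-1})\sup z^2$.

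Third, the map $z\mapsto\int z^2\,\rd\nu$ is continuous on $\ell^\infty$. So the CM limit is $\int(\G_0+h)^2\,\rd F^{\mu_0}_\omega\,\rd\mu_0$, the same argument as in Theorem~\ref{thm:test_statistics_simp} but under the triangular array.

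\textbf{Main obstacle.} The delicate part is Step 2: controlling the random integration measure uniformly. We plan to go via the almost sure representation of $Z_n\to\G_0+h$ and bound
\[
\Big|\int Z_n^2\,\rd\nu_n-\int Z^2\,\rd\nu\Big|\le 2\|Z_n-Z\|_\infty\sup|Z_n+Z|+\Big|\int Z^2\,\rd(\nu_n-\nu)\Big| .
\]
The last term is handled by approximating $Z^2$ by a bounded Lipschitz function of $(\omega,t)$ and using a bounded-Lipschitz Glivenko--Cantelli argument for $\nu_n$ under $Q_n$. This requires separability of $\Omega\times\R_{\ge0}$ and the total boundedness implied by \ref{b1}.
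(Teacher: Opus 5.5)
Your proposal is correct and follows the paper's skeleton. For KS, it uses the Bernoulli coupling (Proposition~\ref{prop:bernoulli}, Corollary~\ref{cor:bernoulli}) on $\cY$ plus continuous mapping. For CM, it uses a representation argument in which $\int(Z_n^2-Z^2)\,\rd\nu_n$ is bounded in sup-norm. The remaining term $\int Z^2\,\rd(\nu_n-\nu)$ needs the random integrating measure to converge uniformly over uniformly bounded integrands with a common modulus, because the limiting integrand may depend on the data.

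Where you genuinely differ is in how you prove that uniform convergence. The paper (Lemma~\ref{ap:lem_DCT_P_hat_local}) splits it into two terms. The inner term is handled profile by profile via the regularization $g_\varepsilon$, integration by parts, and $\sup_{\omega,t}|F_{n,\omega}^{Q_n}(t)-F_\omega^{\mu_0}(t)|\convp 0$. The outer term is handled via equicontinuity of $r_g(\omega)=\int g(\omega,t)\,\rd F^{\mu_0}_\omega(t)$, a finite net and Chebyshev.

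You instead treat $\int g\,\rd F^{Q_n}_{n,\omega}\,\rd P_n=n^{-2}\sum_{i,j}g(X_i,d(X_i,X_j))$ directly as a $V$-statistic under $Q_n$, the representation the paper uses for fixed alternatives in Theorem~\ref{thm:fixed_alternative_divergence_simple}. Its mean is within $O(\|g\|_\infty n^{-1/2})$ of $\int g\,\rd\nu$, because $Q_n^{\otimes2}$ and $\mu_0^{\otimes2}$ are $O(n^{-1/2})$-close in total variation. Its variance is $O(\|g\|_\infty^2/n)$ uniformly in the underlying law. Uniformity then comes from sup-norm total boundedness of the integrand class on the totally bounded set $\Omega\times[0,M]$.

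Your route is shorter and more elementary: it needs no regularization lemma, no continuity of $r_g$, and no uniform profile convergence under $Q_n$. The paper's route instead reuses its null-case lemmas almost verbatim.

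When you write it up, make two points explicit:
\begin{itemize}
\item On the Slutsky route, $Z_n$ is only asymptotically equicontinuous (it has jumps), so the uniform statement must allow moduli of the form $w(\cdot)+\epsilon$. The same net argument covers this, with piecewise-constant net centres.
\item On the Skorokhod route, take the representation in the form $Z_n\circ\phi_n$ with perfect maps (van der Vaart--Wellner, Theorem~1.10.4), so that $\nu_n$ is carried to the new probability space consistently. The paper's own proof of this theorem leaves this point implicit.
\end{itemize}
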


\subsubsection{Composite null hypothesis}\label{sec:local_alternatives_composite}

Under the local alternative, there are two relevant parameters: the estimator $\hat{\btheta}$ based on $P_n$, and the true population parameter under the local alternative, $\tilde{\btheta}_n$, defined as any solution to $\mathsf{E}_{Q_n}\big(\bpsi_{\btheta}(X)\big) = 0$. We first give sufficient local conditions under which $\hat{\btheta}$ and $\tilde{\btheta}_n$ both converge ($\hat{\btheta}$ in probability) to $\btheta_0$ under the local alternative \eqref{eq:local_alt_Qn_def}.

\begin{proposition}[Sufficient conditions for the convergence of $\hat{\btheta}$ and $\tilde{\btheta}_n$ to $\btheta_0$]\label{prop:local_conv_roots_Qn}
Assume conditions \ref{cp2} and \ref{cp3}, and $\mathsf E_G\|\bpsi_{\btheta_0}(X)\|^2<\infty$.
Consider a sufficiently small bounded neighborhood $U$ of $\btheta_0$. Assume that there exists a measurable function $L$ such that $\mathsf E_{\mu_{\btheta_0}}\big(L(X)^2\big)<\infty,$ $\mathsf E_G\big(L(X)^2\big)<\infty,$ and
\begin{equation}\label{eq:local_cp1_prop_local_conv}
\begin{aligned}
\|\bpsi_{\btheta_1}(x)-\bpsi_{\btheta_2}(x)\|
\le
L(x)\|\btheta_1-\btheta_2\|
\quad\text{for all }\btheta_1,\btheta_2\in U\text{ and all }x.
\end{aligned}
\end{equation}
The following results hold under the local alternative \eqref{eq:local_alt_Qn_def}:
\begin{enumerate}[label=(\textit{\roman*}), ref=(\textit{\roman*})]
\item Let $\tilde\btheta_n\in\Theta$ be such that $\mathsf{E}_{Q_n}\big(\bpsi_{\tilde\btheta_n}(X)\big)=0$ and $\tilde\btheta_n\in U$ for all $n$ sufficiently large. Then, $\tilde\btheta_n\to\btheta_0$ as $n\to\infty$. \label{prop:local_conv_roots_Qn_i}
\item Let $\hat\btheta\in\Theta$ be such that $\mathsf{E}_{P_n}\big(\bpsi_{\hat\btheta}(X)\big)=0$ and $\mathsf P(\hat\btheta\in U)\to1$. Then, $\hat\btheta\convp\btheta_0$ as $n\to\infty$. \label{prop:local_conv_roots_Qn_ii}
\end{enumerate}
\end{proposition}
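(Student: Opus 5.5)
The argument is a deterministic identifiability bound plus a uniform law of large numbers for the triangular array. Write $P=\mu_{\btheta_0}$, $\Psi(\btheta)\defin\mathsf E_P\bpsi_{\btheta}(X)$ and $\Psi_n(\btheta)\defin\mathsf E_{Q_n}\bpsi_{\btheta}(X)$. By \eqref{eq:local_alt_Qn_def},
$$
\Psi_n(\btheta)=\Psi(\btheta)+n^{-1/2}\big(\mathsf E_G\bpsi_{\btheta}(X)-\mathsf E_P\bpsi_{\btheta}(X)\big).
$$
The plan has three steps:
\begin{enumerate}[label=(\arabic*)]
\item show that $\Psi$ separates $\btheta_0$ on $U$ linearly;
\item show that $\Psi_n$ and the empirical map $\btheta\mapsto P_n\bpsi_{\btheta}$ are uniformly close to $\Psi$ on $U$;
\item evaluate at the roots.
\end{enumerate}

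\textbf{Step 1 (identifiability).} By \ref{cp3}, $\Psi(\btheta_0)=0$ and $\Psi(\btheta)=\bV_{\btheta_0}(\btheta-\btheta_0)+r(\btheta)$ with $\|r(\btheta)\|=o(\|\btheta-\btheta_0\|)$. Let $\sigma>0$ be the smallest singular value of $\bV_{\btheta_0}$. Take $U$ to be a ball around $\btheta_0$ small enough that $\|r(\btheta)\|\le(\sigma/2)\|\btheta-\btheta_0\|$ on $U$. This is how I read ``sufficiently small''; it also makes $U$ bounded, say of diameter $D$. Then
$$
\|\Psi(\btheta)\|\ge c\|\btheta-\btheta_0\|,\qquad \btheta\in U,\quad c\defin\sigma/2>0 .
$$

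\textbf{Step 2 (uniform approximation).} By \eqref{eq:local_cp1_prop_local_conv}, for $\btheta\in U$ and $R\in\{P,G\}$,
$$
\mathsf E_R\|\bpsi_{\btheta}(X)\|\le \mathsf E_R\|\bpsi_{\btheta_0}(X)\|+D\,\mathsf E_R L(X).
$$
The right-hand side is finite by \ref{cp2}, by $\mathsf E_G\|\bpsi_{\btheta_0}(X)\|^2<\infty$, and by the moment assumptions on $L$. Hence $\sup_{\btheta\in U}\|\Psi_n(\btheta)-\Psi(\btheta)\|=O(n^{-1/2})\to0$.

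For the empirical map I need a uniform law of large numbers under the triangular array $Q_n$; I expect this to be the main obstacle, since Glivenko--Cantelli under a fixed law does not apply directly. I would handle it by a finite bracketing argument using the Lipschitz envelope:
\begin{itemize}
\item Fix $\varepsilon>0$ and cover the bounded set $U$ by finitely many points $\btheta_1,\dots,\btheta_m$ with mesh at most $\varepsilon$.
\item For $\btheta\in U$ with $\|\btheta-\btheta_j\|\le\varepsilon$,
$$
\|(P_n-Q_n)(\bpsi_{\btheta}-\bpsi_{\btheta_j})\|\le\varepsilon\,(P_nL+Q_nL).
$$
\item $Q_nL\le \mathsf E_PL+\mathsf E_GL$. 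Moreover, $\mathsf{Var}_{Q_n}(L)\le \mathsf E_PL^2+\mathsf E_GL^2$, so Chebyshev gives $P_nL=Q_nL+o_{\mathsf P}(1)$, which is $O_{\mathsf P}(1)$.
\item For each fixed $j$, the second moment of $\bpsi_{\btheta_j}$ under $Q_n$ is bounded uniformly in $n$. This uses $\|\bpsi_{\btheta_j}\|\le\|\bpsi_{\btheta_0}\|+DL$ together with the square-integrability under both $P$ and $G$. Chebyshev then gives $\max_{j\le m}\|(P_n-Q_n)\bpsi_{\btheta_j}\|=o_{\mathsf P}(1)$.
\end{itemize}
Letting $\varepsilon\downarrow0$ yields $\sup_{\btheta\in U}\|P_n\bpsi_{\btheta}-Q_n\bpsi_{\btheta}\|\convp0$. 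Combined with the deterministic bound, $\sup_{\btheta\in U}\|P_n\bpsi_{\btheta}-\Psi(\btheta)\|\convp0$.

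\textbf{Step 3 (conclusion).} For \ref{prop:local_conv_roots_Qn_i}: for $n$ large, $\tilde\btheta_n\in U$ and $\Psi_n(\tilde\btheta_n)=0$. Step 1 then gives
$$
c\|\tilde\btheta_n-\btheta_0\|\le\|\Psi(\tilde\btheta_n)-\Psi_n(\tilde\btheta_n)\|\le\sup_U\|\Psi-\Psi_n\|\to0.
$$
For \ref{prop:local_conv_roots_Qn_ii}: on the event $\{\hat\btheta\in U\}$, whose probability tends to one, $P_n\bpsi_{\hat\btheta}=0$. Therefore
$$
c\|\hat\btheta-\btheta_0\|\le\sup_{\btheta\in U}\|P_n\bpsi_{\btheta}-\Psi(\btheta)\|=o_{\mathsf P}(1),
$$
so $\hat\btheta\convp\btheta_0$.
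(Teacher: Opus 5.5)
Your proof is correct. Part (i) and the overall structure match the paper: separation on $U$, then uniform approximation, then evaluation at the root. The paper derives the same separation $\|\mu_{\btheta_0}\bpsi_{\btheta}\|\ge c\|\btheta-\btheta_0\|$ on $U$ from \ref{cp3}, with $c=\|\bV_{\btheta_0}^{-1}\|_2^{-1}/2$ (your $\sigma/2$). It bounds $\sup_{\btheta\in U}\|G\bpsi_{\btheta}\|$ through the Lipschitz envelope and concludes from $Q_n\bpsi_{\tilde\btheta_n}=0$ exactly as you do.

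The genuine difference is how you control $\sup_{\btheta\in U}\|(P_n-Q_n)\bpsi_{\btheta}\|$ in part (ii).
\begin{itemize}
\item The paper argues that each coordinate class $\{\psi_{\btheta,j}:\btheta\in U\}$ is Donsker under both $\mu_{\btheta_0}$ and $G$, being a parametric Lipschitz class (Example 19.7 of van der Vaart, 1998). It then invokes the Bernoulli coupling of Corollary~\ref{cor:bernoulli} to obtain $\sup_{\btheta\in U}\|(P_n-Q_n)\bpsi_{\btheta}\|=O_{\mathsf P}(n^{-1/2})$.
\item You prove a uniform weak law directly for the triangular array. You use a finite $\varepsilon$-net of $U$, the bracket $\varepsilon(P_nL+Q_nL)$, and Chebyshev at the net points. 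This works because second moments under $Q_n$ are bounded uniformly in $n$ by those under $\mu_{\btheta_0}$ and $G$.
\end{itemize}

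Your route is self-contained and elementary, with no empirical-process theory or coupling. It yields only $o_{\mathsf P}(1)$, which is exactly what consistency needs. The paper's route gives the sharper $\sqrt n$-rate and reuses Donsker-plus-coupling machinery it needs anyway in Lemmas~\ref{lem:aux_P_n_Q_n_tilde_hat} and~\ref{lem:aux_P_n_Q_n_tilde`}. There, asymptotic equicontinuity, not just a uniform law of large numbers, drives the Bahadur expansion.

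One small point of care: choose the net points $\btheta_1,\ldots,\btheta_m$ inside $U$, which is possible because $U$ is bounded. Then \eqref{eq:local_cp1_prop_local_conv} applies between $\btheta$ and $\btheta_j$.
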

Observe that condition \eqref{eq:local_cp1_prop_local_conv} is a local version of condition \ref{cp1}.

Obtaining the asymptotic distribution of the test statistics under the local alternative requires a Bahadur-type asymptotic linear expansion of $\hat{\btheta}$ and $\tilde{\btheta}_n$ as in \eqref{eq:bahadur}, under the local alternative. Since the parametric model is misspecified under $Q_n$, it is important to provide sufficient conditions under which such expansions hold. This is given in Theorem \ref{thm:local_alt_conv_process_correction}. To make the following result more general, we only require that $\tilde\btheta_n$ and $\hat\btheta$ converge to $\btheta_0$, instead of assuming the conditions of Proposition \ref{prop:local_conv_roots_Qn} directly.

\begin{theorem}[A Bahadur expansion of $\hat{\btheta}$ and $\tilde{\btheta}_n$ under the local alternative]\label{thm:local_alt_conv_process_correction}
Assume the conditions \ref{cp1}--\ref{cp3}, $\mathsf E_G\big(L(X)^2\big)<\infty$, and $\mathsf E_G\|\bpsi_{\btheta_0}(X)\|^2<\infty$.
Under the local alternative, the following results hold:
\begin{enumerate}[label=(\textit{\roman*}), ref=(\textit{\roman*})]
\item Let $\tilde\btheta_n\in\Theta$ be such that $\mathsf E_{Q_n}\big(\bpsi_{\tilde\btheta_n}(X)\big)=0$ 
and $\tilde\btheta_n\to\btheta_0$. Then, \label{thm:local_alt_conv_process_correction_i}
\begin{align}\label{eq:thm_local_alt_conv_process_correction_i}
\tilde{\btheta}_n - \btheta_0 = - \frac{1}{\sqrt{n}} \bV_{\btheta_0}^{-1} \mathsf E_G\big(\bpsi_{\btheta_0}(X)\big) + o(n^{-1/2}),
\end{align}
\item Let $\hat\btheta\in\Theta$ be such that $\mathsf E_{P_n}\big(\bpsi_{\hat\btheta}(X)\big)=o_{\mathsf{P}}(n^{-1/2})$ and $\hat\btheta\convp\btheta_0$. Then,\label{thm:local_alt_conv_process_correction_ii} 
\begin{align}\label{eq:thm_local_alt_conv_process_correction_ii}
\hat{\btheta} - \btheta_0 = - \frac{1}{n} \bV_{\btheta_0}^{-1} \sum_{i=1}^{n} \bpsi_{\tilde{\btheta}_n}(X_i) -  \frac{1}{\sqrt{n}} \bV_{\btheta_0}^{-1} \mathsf E_G\big(\bpsi_{\btheta_0}(X)\big) + o_{\mathsf{P}}(n^{-1/2}),
\end{align}
\end{enumerate}
\end{theorem}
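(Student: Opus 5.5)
Part (i) will follow from a first-order expansion of the population estimating equation. Write $\Psi_Q(\btheta)\defin\mathsf E_Q(\bpsi_\btheta(X))$, so that
\begin{align*}
\Psi_{Q_n}(\btheta)=(1-n^{-1/2})\Psi_P(\btheta)+n^{-1/2}\Psi_G(\btheta).
\end{align*}
Since $\Psi_{Q_n}(\tilde\btheta_n)=0$ and $\Psi_P(\btheta_0)=0$, condition \ref{cp3} gives
\begin{align*}
\Psi_P(\tilde\btheta_n)=\bV_{\btheta_0}(\tilde\btheta_n-\btheta_0)+o(\|\tilde\btheta_n-\btheta_0\|).
\end{align*}
For the $G$-part, the Lipschitz condition \ref{cp1} and $\mathsf E_G(L(X)^2)<\infty$ give
\begin{align*}
\|\Psi_G(\tilde\btheta_n)-\Psi_G(\btheta_0)\|\le \mathsf E_G L(X)\,\|\tilde\btheta_n-\btheta_0\|\to0,
\end{align*}
and $\Psi_G(\btheta_0)$ is finite by the second-moment assumption. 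Plugging both into $0=\Psi_{Q_n}(\tilde\btheta_n)$ yields
\begin{align*}
0=(1-n^{-1/2})\big\{\bV_{\btheta_0}(\tilde\btheta_n-\btheta_0)+o(\|\tilde\btheta_n-\btheta_0\|)\big\}+n^{-1/2}\Psi_G(\btheta_0)+o(n^{-1/2}).
\end{align*}
Invertibility of $\bV_{\btheta_0}$ then gives $\|\tilde\btheta_n-\btheta_0\|(c-o(1))\le O(n^{-1/2})$, hence $\tilde\btheta_n-\btheta_0=O(n^{-1/2})$. Feeding this rate back in, the $o(\|\tilde\btheta_n-\btheta_0\|)$ remainder becomes $o(n^{-1/2})$, and solving gives \eqref{eq:thm_local_alt_conv_process_correction_i}.

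For (ii) the plan mimics the proof of Theorem 5.21 in \cite{vandervaart1998}, but for triangular arrays. Under $Q_n$, the key step is the stochastic equicontinuity statement
\begin{align*}
\sqrt n(P_n-Q_n)\big(\bpsi_{\hat\btheta}-\bpsi_{\tilde\btheta_n}\big)=o_{\mathsf P}(1).
\end{align*}
I would prove it by applying Lemma 19.24-type arguments to the class $\{\bpsi_\btheta:\btheta\in U\}$. This class is Lipschitz in the parameter with envelope $L$, so its bracketing numbers are polynomial, with brackets of $L^2(Q_n)$-size controlled by
\begin{align*}
\mathsf E_{Q_n}L^2\le \mathsf E_PL^2+\mathsf E_GL^2,
\end{align*}
which is uniformly bounded in $n$. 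The bracketing CLT for triangular arrays (van der Vaart--Wellner, Theorem 2.11.23) then gives asymptotic equicontinuity uniformly in $n$. It also requires
\begin{align*}
\mathsf E_{Q_n}\|\bpsi_{\hat\btheta}-\bpsi_{\tilde\btheta_n}\|^2\le(\mathsf E_PL^2+\mathsf E_GL^2)\|\hat\btheta-\tilde\btheta_n\|^2\to0
\end{align*}
in probability, which follows from $\hat\btheta\convp\btheta_0$ and part (i). Alternatively, the coupling of Proposition \ref{prop:bernoulli} could transfer equicontinuity from $P$ and $G$, since the class is $P$- and $G$-Donsker. I expect this equicontinuity step to be the main obstacle.

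Granting it, write
\begin{align*}
o_{\mathsf P}(n^{-1/2})=P_n\bpsi_{\hat\btheta}=P_n\bpsi_{\tilde\btheta_n}+Q_n(\bpsi_{\hat\btheta}-\bpsi_{\tilde\btheta_n})+o_{\mathsf P}(n^{-1/2}),
\end{align*}
where $Q_n\bpsi_{\tilde\btheta_n}=0$ was used. Next expand $Q_n\bpsi_{\hat\btheta}-Q_n\bpsi_{\tilde\btheta_n}$. The $P$-part equals
\begin{align*}
\bV_{\btheta_0}(\hat\btheta-\tilde\btheta_n)+o_{\mathsf P}\big(\|\hat\btheta-\btheta_0\|+\|\tilde\btheta_n-\btheta_0\|\big)
\end{align*}
by differentiability at $\btheta_0$, with both terms expanded around $\btheta_0$. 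The $G$-part carries the factor $n^{-1/2}$ and is bounded by $n^{-1/2}\mathsf E_GL\,\|\hat\btheta-\tilde\btheta_n\|=o_{\mathsf P}(n^{-1/2})$. Also $P_n\bpsi_{\tilde\btheta_n}=O_{\mathsf P}(n^{-1/2})$, because its variance under $Q_n$ is bounded and its mean is zero.

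The same rate bootstrap as in (i) then gives $\hat\btheta-\btheta_0=O_{\mathsf P}(n^{-1/2})$, which turns all remainders into $o_{\mathsf P}(n^{-1/2})$. This yields
\begin{align*}
\hat\btheta-\tilde\btheta_n=-\bV_{\btheta_0}^{-1}P_n\bpsi_{\tilde\btheta_n}+o_{\mathsf P}(n^{-1/2}).
\end{align*}
Adding \eqref{eq:thm_local_alt_conv_process_correction_i} gives \eqref{eq:thm_local_alt_conv_process_correction_ii}.
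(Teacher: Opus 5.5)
Your argument is correct, and its structure matches the paper's. Part (i) is the same expansion of $Q_n\bpsi_{\tilde\btheta_n}=0$ around $\btheta_0$ followed by a rate bootstrap. Part (ii) uses the same decomposition
$P_n\bpsi_{\hat\btheta}=(P_n-Q_n)\bpsi_{\tilde\btheta_n}+(P_n-Q_n)(\bpsi_{\hat\btheta}-\bpsi_{\tilde\btheta_n})+Q_n(\bpsi_{\hat\btheta}-\bpsi_{\tilde\btheta_n})$,
the same expansion of the $\mu_{\btheta_0}$- and $G$-parts, and the same absorption step for the $O_{\mathsf P}(n^{-1/2})$ rate.

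You differ from the paper in how the two empirical-process inputs are justified.

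For $(P_n-Q_n)\bpsi_{\tilde\btheta_n}=O_{\mathsf P}(n^{-1/2})$, you use Chebyshev. This works because $\tilde\btheta_n$ is deterministic, $Q_n\bpsi_{\tilde\btheta_n}=0$, and $Q_n\|\bpsi_{\tilde\btheta_n}\|^2$ is bounded. The paper (Lemma~\ref{lem:aux_P_n_Q_n_tilde}) instead bounds a supremum over a ball via Corollary~\ref{cor:bernoulli}, which is more than is needed, so your route is simpler.

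For the equicontinuity step $\sqrt n(P_n-Q_n)(\bpsi_{\hat\btheta}-\bpsi_{\tilde\btheta_n})=o_{\mathsf P}(1)$, your primary route is a bracketing CLT applied directly to the triangular array under $Q_n$. The paper (Lemma~\ref{lem:aux_P_n_Q_n_tilde_hat}) instead transfers asymptotic equicontinuity from $\mu_{\btheta_0}$ to $Q_n$ through the Bernoulli coupling of Proposition~\ref{prop:bernoulli}, which is the alternative you mention. The trade-off is as follows:
\begin{itemize}
\item Your route is self-contained and exploits the parametric Lipschitz structure: polynomial bracketing numbers and $Q_nL^2\le\mu_{\btheta_0}L^2+GL^2$.
\item The paper's route only needs the class to be $\mu_{\btheta_0}$- and $G$-Donsker, and it reuses a coupling the paper needs anyway for Theorem~\ref{thm:local_alt_Gn_composite}.
\end{itemize}

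If you take the bracketing route, check two things.
\begin{itemize}
\item The theorem you invoke must allow the sampling law to depend on $n$. You want the Lindeberg-type bracketing CLT for triangular arrays of independent processes, not a statement about classes changing with $n$ under a fixed $P$.
\item You must verify its Lindeberg condition. The envelope is $F=\|\bpsi_{\btheta_0}\|+L\sup_{\btheta\in U}\|\btheta-\btheta_0\|$, not $L$ itself, which only controls bracket sizes. Since $Q_n\le\mu_{\btheta_0}+G$ as measures,
$Q_nF^2 1_{\{F>\eta\sqrt n\}}\le\mu_{\btheta_0}F^2 1_{\{F>\eta\sqrt n\}}+GF^2 1_{\{F>\eta\sqrt n\}}\to0$
by dominated convergence.
\end{itemize}
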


The following result gives the asymptotic distribution of $\G_{n,Q_n}^{\mu_{\hat{\btheta}}} \defin \sqrt{n}\bigl(F_{n,\omega}^{Q_n}(t)-F^{\mu_{\hat{\btheta}}}_\omega(t)\bigr)$ under the local alternative \eqref{eq:local_alt_Qn_def} in the composite null case.

\begin{theorem}[Asymptotic distribution of $\G_{n,Q_n}^{\mu_{\hat{\btheta}}}$ under the local alternative]\label{thm:local_alt_Gn_composite}
Assume the conditions in Theorem \ref{thm:local_alt_conv_process_correction} and condition \ref{d}. Let $\tilde{\btheta}_n\in\Theta$ be such that $\mathsf E_{Q_n}(\bpsi_{\tilde{\btheta}_n}(X))=0$ and $\tilde{\btheta}_n\to\btheta_0$. Let also $\hat{\btheta}\in\Theta$ be such that $\mathsf E_{P_n}\big(\bpsi_{\hat{\btheta}}(X)\big)=o_{\mathsf{P}}(n^{-1/2})$ and $\hat{\btheta}\convp\btheta_0$ under the local alternative. Then, it holds that
\begin{equation}\label{eq:local_alt_composite_consistency}
\G_{n,Q_n}^{\mu_{\hat{\btheta}}}
\convl
\tilde{\G}_{0}(\omega,t)
+
h(\omega,t)
+
\dot F^{\btheta_0}_\omega(t)^\top \bV_{\btheta_0}^{-1}\mathsf E_G\big(\bpsi_{\btheta_0}(X)\big)
\quad\text{in }\ell^\infty(\cF)
\end{equation}
under the local alternative.
\end{theorem}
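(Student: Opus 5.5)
The plan is to decompose the process into a centered triangular-array empirical process, the deterministic shift $h$, and the parameter-estimation effect. I would then linearize the last piece using condition~\ref{d} and Theorem~\ref{thm:local_alt_conv_process_correction}, and conclude with Corollary~\ref{cor:bernoulli} applied to the class $\cF$ of \eqref{eq:def_F}.

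\textbf{Step 1 (decomposition).} Write $P=\mu_{\btheta_0}$. By \eqref{eq:local_alt_process_decomp},
\begin{align*}
\G_{n,Q_n}^{\mu_{\hat\btheta}}(\omega,t)=\G_{n,Q_n}^{Q_n}(\omega,t)+h(\omega,t)-\sqrt n\big(F_\omega^{\mu_{\hat\btheta}}(t)-F_\omega^{\mu_{\btheta_0}}(t)\big).
\end{align*}
Theorem~\ref{thm:local_alt_conv_process_correction}\ref{thm:local_alt_conv_process_correction_ii} gives $\hat\btheta-\btheta_0=O_{\mathsf P}(n^{-1/2})$. The middle term there is $O_{\mathsf P}(n^{-1/2})$ by a second-moment bound under $Q_n$, which uses \ref{cp1}, \ref{cp2}, and the $G$-moments. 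Condition~\ref{d} then yields, uniformly in $(\omega,t)$,
\begin{align*}
\sqrt n\big(F_\omega^{\mu_{\hat\btheta}}(t)-F_\omega^{\mu_{\btheta_0}}(t)\big)=\dot F_\omega^{\btheta_0}(t)^\top\sqrt n(\hat\btheta-\btheta_0)+o_{\mathsf P}(1).
\end{align*}
Here I use $\sup_{\omega,t}\|\dot F^{\btheta_0}_\omega(t)\|<\infty$.

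\textbf{Step 2 (recentering the estimator).} Since $\mathsf E_{Q_n}\bpsi_{\tilde\btheta_n}=0$, we have $n^{-1/2}\sum_i\bpsi_{\tilde\btheta_n}(X_i)=\sqrt n(P_n-Q_n)\bpsi_{\tilde\btheta_n}$. Its difference from $\sqrt n(P_n-Q_n)\bpsi_{\btheta_0}$ is a centered triangular-array sum. Its second moment is bounded by $\mathsf E_{Q_n}\big(L(X)^2\big)\|\tilde\btheta_n-\btheta_0\|^2$, which tends to $0$ because $\mathsf E_{Q_n}L^2\le \mathsf E_{P}L^2+\mathsf E_G L^2$ and $\tilde\btheta_n\to\btheta_0$. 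Plugging this into \eqref{eq:thm_local_alt_conv_process_correction_ii} gives
\begin{align*}
\G_{n,Q_n}^{\mu_{\hat\btheta}}(\omega,t)=\sqrt n(P_n-Q_n)f_{\omega,t}+h(\omega,t)+\dot F^{\btheta_0}_\omega(t)^\top\bV_{\btheta_0}^{-1}\mathsf E_G\big(\bpsi_{\btheta_0}(X)\big)+o_{\mathsf P}(1)
\end{align*}
uniformly over $\Omega\times\R_{\ge0}$. The constant $-F^{\mu_{\btheta_0}}_\omega(t)$ inside $f_{\omega,t}$ is annihilated by $P_n-Q_n$, so it does no harm.

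\textbf{Step 3 (weak limit).} I would verify the hypotheses of Proposition~\ref{prop:bernoulli} for $\cF$.
\begin{itemize}
\item $P$-Donsker: this is given by Theorem~\ref{thm:convergence_G_n_composite}.
\item $G$-Donsker: $\cY$ is $G$-Donsker because $G$ satisfies \ref{bp2} together with \ref{b1}, by \citet[Theorem 3]{Chen2025}. The term $\{\dot F_\omega^{\btheta_0}(t)^\top\bV_{\btheta_0}^{-1}\bpsi_{\btheta_0}\}$ lies in $\{\boldsymbol a^\top\bpsi_{\btheta_0}:\|\boldsymbol a\|\le C\}$. This finite-dimensional class is $G$-Donsker because $\mathsf E_G\|\bpsi_{\btheta_0}\|^2<\infty$. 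Sums of Donsker classes, plus bounded constants, remain Donsker.
\item Boundedness: $\sup_f|(G-P)f|\le 1+C(\mathsf E_G\|\bpsi_{\btheta_0}\|+\mathsf E_P\|\bpsi_{\btheta_0}\|)<\infty$.
\end{itemize}
Corollary~\ref{cor:bernoulli} then shows that $\sqrt n(P_n-Q_n)f_{\omega,t}$ has the same limit $\tilde\G_0$ as under $P$. Adding the bounded deterministic shifts and the $o_{\mathsf P}(1)$ remainder, Slutsky's lemma in $\ell^\infty(\cF)$ gives \eqref{eq:local_alt_composite_consistency}.

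The main obstacle I expect is Step~2 together with the Donsker verification under $G$. Controlling the triangular-array remainder requires care, because the law $Q_n$ changes with $n$ and the Lipschitz bound must be used under both $P$ and $G$. Establishing the $G$-Donsker property of the composite class also needs the stability of Donsker classes under sums.
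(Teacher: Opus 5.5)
Your proposal is correct and follows essentially the same route as the paper. The shared steps are:
\begin{itemize}
\item the decomposition via \eqref{eq:local_alt_process_decomp};
\item the uniform linearization from condition~\ref{d} combined with Theorem~\ref{thm:local_alt_conv_process_correction};
\item the recentering of $\bpsi_{\tilde\btheta_n}$ to $\bpsi_{\btheta_0}$ through the second-moment bound $Q_nL^2\|\tilde\btheta_n-\btheta_0\|^2\to0$;
\item the conclusion via Corollary~\ref{cor:bernoulli}, after checking that $\cF$ is $\mu_{\btheta_0}$- and $G$-Donsker with $\sup_{f\in\cF}|(G-\mu_{\btheta_0})f|<\infty$.
\end{itemize}
The only differences are cosmetic. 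You bound $\sqrt n(\hat\btheta-\btheta_0)$ before rather than after the recentering. You also cite Theorem~\ref{thm:convergence_G_n_composite} for the $\mu_{\btheta_0}$-Donsker property; this is harmless, since that part of its proof uses only \ref{cp2}, \ref{cp3} and \ref{d}, not the estimator conditions.
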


The next theorem uses this convergence result to derive the asymptotic behavior of the KS and CM statistics under local alternatives for the composite null hypothesis.

\begin{theorem}[Asymptotic distribution of the test statistics under the local alternative, composite null]\label{thm:consistency_local_composite}
Assuming the conditions of Theorem~\ref{thm:local_alt_Gn_composite}, it holds under the local alternative that
$$
\tilde{T}_n^{\mathrm{KS}}
=
\sup_{\omega\in\Omega, t \ge0}
\big|
\G_{n,Q_n}^{\mu_{\hat{\btheta}}}(\omega,t)
\big|
 \convl 
\sup_{\omega\in\Omega, t \ge0}
\big|
\tilde{\G}_{0}(\omega,t)
+
h(\omega,t)
+
\dot F^{\btheta_0}_\omega(t)^\top \bV_{\btheta_0}^{-1}\mathsf{E}_G(\bpsi_{ \btheta_0}(X))
\big|.
$$
Under condition \ref{e}, the same result holds for $\widehat{T}_n^{\mathrm{KS}}$.

Assume also that $(\omega,t)\mapsto \dot F^{\btheta_0}_\omega(t)$ is uniformly continuous with respect to $d_{\Omega\times\mathbb R_{\ge0}}$.
Then, under the local alternative, it holds that
$$
\tilde{T}_n^{\mathrm{CM}}
\convl
\int_{\Omega\times\mathbb R_{\ge0}}
\Big(
\tilde{\G}_{0}(\omega,t)
+
h(\omega,t)
+
\dot F^{\btheta_0}_\omega(t)^\top \bV_{\btheta_0}^{-1}\mathsf{E}_G(\bpsi_{\btheta_0}(X))
\Big)^2\,
\mathrm{d} F^{\mu_{\btheta_0}}_\omega(t)\, \mathrm{d}\mu_{\btheta_0}(\omega).
$$
Under condition \ref{e}, the same result holds for $\widehat{T}_n^{\mathrm{CM}}$.
\end{theorem}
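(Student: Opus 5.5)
The proof reduces both statistics to continuous functionals of the shifted process in Theorem~\ref{thm:local_alt_Gn_composite}. Write $\Delta(\omega,t)\defin h(\omega,t)+\dot F^{\btheta_0}_\omega(t)^\top \bV_{\btheta_0}^{-1}\mathsf{E}_G(\bpsi_{\btheta_0}(X))$. This is a deterministic, bounded element of $\ell^\infty(\Omega\times\R_{\ge0})$: $h$ is bounded by $1$ in absolute value, and $\sup_{\omega,t}\|\dot F^{\btheta_0}_\omega(t)\|<\infty$ by condition~\ref{d}.

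\textbf{KS statistic.} The map $z\mapsto\sup|z|$ is Lipschitz from $\ell^\infty$ to $\R$. By Theorem~\ref{thm:local_alt_Gn_composite} and the continuous mapping theorem, $\tilde T_n^{\mathrm{KS}}\convl\sup|\tilde\G_0+\Delta|$. The supremum in the limit is measurable because $\tilde\G_0$ is tight and separable, as in Remark~\ref{rem:separability_G_0}, and $\Delta$ is deterministic.

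For $\widehat T_n^{\mathrm{KS}}$ I would rerun the algebra behind the second part of Theorem~\ref{thm:convergence_G_n_composite}. The difference $\widehat\G_{n,Q_n}-\G^{\mu_{\hat\btheta}}_{n,Q_n}$ equals $\dot F^{\hat\btheta}_\omega(t)^\top\widehat\bV^{-1}\sqrt n P_n\bpsi_{\hat\btheta}$. By condition~\ref{e} and the hypothesis $P_n\bpsi_{\hat\btheta}=o_{\mathsf P}(n^{-1/2})$, this is $o_{\mathsf P}(1)$ uniformly in $(\omega,t)$. Slutsky's lemma then gives the same limit.

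\textbf{CM statistic: setup.} The integrating measure $\rd F^{Q_n}_{n,\omega}(t)\,\rd P_n(\omega)$ is random and also depends on $n$. The main obstacle is to show
$$\iint Z_n^2\,\rd F_{n,\omega}(t)\,\rd P_n(\omega)\convl \iint Z^2\,\rd F^{\mu_{\btheta_0}}_\omega(t)\,\rd\mu_{\btheta_0}(\omega),$$
where $Z_n=\G^{\mu_{\hat\btheta}}_{n,Q_n}$ and $Z=\tilde\G_0+\Delta$.

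The integral is the V-statistic-type average $n^{-2}\sum_{i,j}Z_n(X_i,d(X_i,X_j))^2$. By Skorokhod/almost-sure representation (or a subsequence argument), I would assume $\sup|Z_n-Z|\to0$. Then, because the measure has mass one and $Z_n$ is uniformly tight in sup norm, replacing $Z_n$ by $Z$ costs $o_{\mathsf P}(1)$. It remains to show $n^{-2}\sum_{i,j}g(X_i,X_j)\to\mathsf E\, g(X,X')$ for the fixed bounded function $g(x,x')=Z(x,d(x,x'))^2$, with $X,X'$ iid $\mu_{\btheta_0}$.

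\textbf{CM statistic: continuity of $Z$.} To apply the law of large numbers, $g$ must be (uniformly) continuous. The process $\tilde\G_0$ has uniformly continuous paths with respect to its intrinsic $L^2$ semimetric. By Lemma~\ref{ap:lem_relation_L2_prod} and the stated uniform continuity of $\dot F^{\btheta_0}$, that semimetric is dominated by a power of $d_{\Omega\times\R_{\ge0}}$, so the paths are uniformly continuous in $d_{\Omega\times\R_{\ge0}}$. The shift $h$ is uniformly continuous as well: $F^G_\omega(t)$ and $F^P_\omega(t)$ are Lipschitz in $t$ by \ref{bp2}, and continuous in $\omega$ in the same $L^1$ sense, using \ref{bp2} for $P$ and $G$. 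Hence $g$ is bounded and uniformly continuous.

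\textbf{CM statistic: change of law.} The sample comes from $Q_n$, not $\mu_{\btheta_0}$. The U-statistic LLN under a triangular array is handled through the coupling of Proposition~\ref{prop:bernoulli}: with probability $1-n^{-1/2}$ each observation is drawn from $P$, so only $O_{\mathsf P}(n^{1/2})$ of the $n$ observations come from $G$. These contribute $O_{\mathsf P}(n^{-1/2})$ to the normalized double sum, since $g$ is bounded. The remaining diagonal-free average converges to $\mathsf E\, g(X,X')$ almost surely by the strong LLN for U-statistics, applied conditionally on the path of $Z$, which is independent of the sample in the representation.

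\textbf{Conclusion.} Combining these steps gives the CM limit for $\tilde T_n^{\mathrm{CM}}$. For $\widehat T_n^{\mathrm{CM}}$, the uniform $o_{\mathsf P}(1)$ difference from the KS step together with $|a^2-b^2|\le|a-b|(|a|+|b|)$ transfers the result.
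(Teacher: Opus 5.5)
Your KS part matches the paper and is fine: continuous mapping, plus the identity $\widehat{\G}_{n,\mu}(\omega,t)-\G^{\mu_{\hat\btheta}}_{n,Q_n}(\omega,t)=\sqrt n\,\dot F^{\hat\btheta}_\omega(t)^\top\widehat{\bV}^{-1}P_n\bpsi_{\hat\btheta}$ together with condition~\ref{e} and $P_n\bpsi_{\hat\btheta}=o_{\mathsf P}(n^{-1/2})$. The final transfer to $\widehat T_n^{\mathrm{CM}}$ is also fine. The gap is in the CM step where you replace the empirical integrating measure $\nu_n\defin n^{-2}\sum_{i,j}\delta_{(X_i,d(X_i,X_j))}$ by $\nu\defin\rd F^{\mu_{\btheta_0}}_\omega(t)\,\rd\mu_{\btheta_0}(\omega)$. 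You treat $g(x,x')=Z(x,d(x,x'))^2$ as a fixed function and apply the U-statistic SLLN ``conditionally on the path of $Z$, which is independent of the sample in the representation''. That independence is false. To form the V-statistic on the Skorokhod space you must represent the sample jointly with $Z_n$. Then $Z$ is the almost-sure limit of $Z_n=\G^{\mu_{\hat\btheta}}_{n,Q_n}$, which is a functional of that very sample. If $Z$ were independent of $X_1,\dots,X_n$ for every $n$, the characteristic function of $Z_n(\omega,t)-Z(\omega,t)$ would converge to $|\mathsf E\exp\{\mathrm{i}uZ(\omega,t)\}|^2$, and $Z_n-Z\to0$ would then force $\tilde\G_0$ to be degenerate. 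Conditioning on $Z$ therefore destroys the iid structure. A law of large numbers for one fixed integrand does not control $\int Z^2\,\rd(\nu_n-\nu)$; note also that it never uses the continuity of $g$ you worked to establish.

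What is needed is a law of large numbers for $\nu_n$ that holds simultaneously for every integrand in a class containing $Z^2$ with high probability. The paper gets this from Lemma~\ref{ap:lem_DCT_P_hat_local}, whose convergence is uniform over the class $\mathcal G_{B,w}$ of functions bounded by $B$ with common modulus $w$. That lemma avoids U-statistics: it writes the double integral as $\int r_{n,g}\,\rd P_n$, controls $\sup_\omega|r_{n,g}-r_g|$ via the Lipschitz regularization in $t$ and uniform convergence of the profiles, and handles $\int r_g\,\rd(P_n-\mu_{\btheta_0})$ with a finite net for the equicontinuous family $\{r_g\}$. A deterministic modulus built from $W(\delta_m)$ then shows that $(\tilde\G_0+\Delta)^2\in\mathcal G_{B,w}$ on an event of probability at least $1-\eta$. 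Because the uniform statement involves only the law of the $Q_n$-sample, the dependence between $Z$ and the sample becomes irrelevant, and you never need to merge the Bernoulli coupling with the Skorokhod construction.

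Within your own framework, an equivalent repair is to prove that $\nu_n\to\nu$ weakly in probability. Run your fixed-$g$ argument on a countable sup-norm dense family of bounded uniformly continuous functions on the totally bounded space $\Omega\times[0,M]$. Extract almost surely convergent sub-subsequences by a diagonal argument, and then apply weak convergence pathwise to the random bounded continuous integrand $Z^2$; only at this point does the continuity of $Z$ enter. A minor point: with $\dot F^{\btheta_0}$ only uniformly continuous, the intrinsic semimetric of $\tilde\G_0$ is bounded by a modulus $\phi(d_{\Omega\times\R_{\ge0}})$ as in Lemma~\ref{ap:lem_relation_L2_prod_composite}, not by a power of $d_{\Omega\times\R_{\ge0}}$.
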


\section{Multiplier bootstrap}\label{sec:bootstrap}

We propose a multiplier bootstrap approach and prove its consistency under some general conditions, in order to effectively emit test decisions in practice.

Some basic concepts and notation are required. Let $\mathcal H=\{h_{\omega,t}:\omega\in\Omega,\ t\ge0\}$ denote the indexing class, with $\mathcal H=\cY$ (simple null) or $\mathcal H=\cF$ (composite null), with $\cY$ and $\cF$ as defined in \eqref{eq:def_Y} and \eqref{eq:def_F}, respectively. Let $\bzeta = (\zeta_1, \ldots, \zeta_n)$ be nonnegative iid random variables, independent of $X_1,\ldots, X_n$, with mean $0<m<\infty$ and variance $0<\tau^2<\infty$. Assume $\|\zeta\|_{2,1} < \infty$ where $\|\zeta\|_{2,1} = \int_0^\infty \sqrt{\mathsf{P}(|\zeta| > x)} \, \rd x$. Let $\mathsf{E}_{*}$ and $\mathsf{P}_{*}$ denote expectation and probability with respect to the multipliers $\bzeta$, conditionally on $X_1,\ldots,X_n$. %

For a sequence of random elements $(\mathbb{Z}_n)_{n\ge1}$ and a random element $\mathbb{Z}$ in $\ell^\infty(\mathcal H)$, $\mathbb{Z}_n \convl \mathbb{Z}$ if and only if
\begin{align}\label{eq:BL_convergence}
\sup_{\varphi\in \mathrm{BL}_1(\ell^\infty(\mathcal H))}
\left| \mathsf{E}^*\varphi(\mathbb{Z}_n) - \mathsf{E}\varphi(\mathbb{Z}) \right| \to 0,
\end{align}
where $\mathsf{E}^*$ denotes outer expectation \citep[Equation 2.8]{Kosorok2008}. Here, $\mathrm{BL}_1(\ell^\infty(\mathcal H))$ is the class of bounded Lipschitz functions $\varphi: \ell^\infty(\mathcal H) \to \R$ satisfying $\|\varphi\|_{\infty} \le 1$ and $|\varphi(z)-\varphi(z^\prime)| \le \|z-z^\prime\|_{\mathcal H}$ for all $z,z^\prime\in\ell^\infty(\mathcal H)$. This result is used to define the convergence of the bootstrap distributions.

For a tight random element $\mathbb{Z}$ in $\ell^\infty(\mathcal H)$, the convergence $\mathbb{Z}_n^{*}\convpboot{}\mathbb{Z}$ \citep[p. 20]{Kosorok2008} means that, conditionally on $X_1,\ldots,X_n$,
\begin{align}\label{eq:BL_convergence_bootstrap}
\sup_{\varphi\in \mathrm{BL}_1(\ell^\infty(\mathcal H))}
\left| \mathsf{E}_{*} \varphi(\mathbb{Z}_n^{*}) - \E{\varphi(\mathbb{Z})} \right| \convp 0,
\end{align}
and for all $\varphi\in \mathrm{BL}_1(\ell^\infty(\mathcal H))$, $\mathsf{E}_{*} \overline{\varphi(\mathbb{Z}_n^{*})} - \mathsf{E}_{*} \underline{\varphi(\mathbb{Z}_n^{*})} \convp 0$. The quantities $\overline{\varphi(\mathbb{Z}_n^{*})}$ and $\underline{\varphi(\mathbb{Z}_n^{*})}$ are measurable majorants and minorants, respectively, with respect to the
joint randomness in $(X_1,\ldots,X_n,\zeta_1,\ldots,\zeta_n)$. Contrary to \eqref{eq:BL_convergence}, the convergence in \eqref{eq:BL_convergence_bootstrap} is in probability because the conditional expectations depend on the sample.

Let $P_n^{*}$ denote the (multiplier) bootstrapped empirical measure, given by
$$
P_n^{*} f = \frac{1}{n} \sum_{i=1}^n \frac{\zeta_i}{\bar{\zeta}_n} f(X_i),
$$
where $\bar{\zeta}_n = n^{-1} \sum_{i=1}^n \zeta_i$ and $P_n^{*} \equiv 0$ if $\bar{\zeta}_n=0$. Define
$F_{n,\omega}^{*}(t)\defin P_n^{*}y_{\omega,t}
=n^{-1}\sum_{i=1}^n(\zeta_i/\bar\zeta_n)\allowbreak y_{\omega,t}(X_i)$,
and set $F_{n,\omega}^{*}(t)=0$ if $\bar\zeta_n=0$.

We consider the following definition of the multiplier bootstrap processes $\G_n^{*}$ and $\tilde{\G}_n^{*}$, which are the bootstrap analogues of $\G_{n,\mu}^{\mu_0}$ and $\G_{n,\mu}^{\mu_{\hat{\btheta}}}$, respectively.
Under a simple null hypothesis, set
$$
\G_n^{*}(\omega,t)
\defin
\frac{m}{\tau}\sqrt{n}
\left[
\left\{F_{n,\omega}^{*}(t)-F_{\omega}^{\mu_0}(t)\right\}
-
\left\{F_{n,\omega}^{\mu}(t)-F_{\omega}^{\mu_0}(t)\right\}
\right]
=
\frac{m}{\tau}\sqrt{n}\left(F_{n,\omega}^{*}(t)-F_{n,\omega}^{\mu}(t)\right).
$$
This definition connects with the bootstrapped empirical process, since $\G_n^{*}(\omega,t)=(m/\tau)\sqrt{n}\allowbreak(P_n^{*}-P_n)y_{\omega,t}.$

For a composite null hypothesis, let $\hat{\btheta}^{*}$ be the (multiplier) bootstrap version of $\hat{\btheta}$, obtained by replacing $P_n$ with $P_n^{*}$ in the estimating procedure. We assume that $\hat{\btheta}^{*}$ is conditionally measurable given the sample.
Define
\begin{align}\label{eq:slow_bootstrap_process}
\tilde{\G}_n^{*}(\omega,t)
\defin
\frac{m}{\tau}\sqrt{n}
\left[
\left\{F_{n,\omega}^{*}(t)-F_{\omega}^{\mu_{\hat{\btheta}^{*}}}(t)\right\}
-
\left\{F_{n,\omega}^{\mu}(t)-F_{\omega}^{\mu_{\hat{\btheta}}}(t)\right\}
\right].
\end{align}
This is the bootstrap version of $\G_{n,\mu}^{\mu_{\hat{\btheta}}}$. Each bootstrap replication requires recomputing $\hat{\btheta}^{*}$ and evaluating $F_{\omega}^{\mu_{\hat{\btheta}^{*}}}(t)$. To avoid this, define
\begin{align}\label{eq:fast_bootstrap_process}
\widehat{\G}_{n,\mu}^{*}(\omega,t)
\defin
\frac{m}{\tau}\sqrt n(P_n^{*}-P_n)\hat{f}_{\omega,t}
=
\frac{m}{\tau}\sqrt n(P_n^{*}-P_n)
\left(
y_{\omega,t}
+
\dot F_\omega^{\hat{\btheta}}(t)^\top
\widehat{\bV}^{-1}
\bpsi_{\hat{\btheta}}
\right),
\end{align}
which is the bootstrap version of $\widehat{\G}_{n,\mu}$. Calculating $\widehat{\G}_{n,\mu}^{*}$ requires computing $
y_{\omega,t}(X_i)
+
\dot F_\omega^{\hat{\btheta}}(t)^\top
\widehat{\bV}^{-1}
\bpsi_{\hat{\btheta}}(X_i)$
only once, for each $i=1,\ldots,n$, and then multiplying by the bootstrap weights $\zeta_i/\bar\zeta_n$.
For the composite case, the connection between $\tilde{\G}_n^{*}$ and the bootstrapped empirical process is given next. %

\begin{theorem}[Asymptotic distributions of the multiplier bootstrap processes]\label{thm:bootstrap_processes}
The following statements hold:
\begin{enumerate}[label=(\textit{\roman*}), ref=(\textit{\roman*})]
\item Under the simple null hypothesis, \label{thm:bootstrap_processes_i}
$\G_n^{*}\convpboot{}\G_0$ in $\ell^\infty(\cY).$

\item Assume the conditions of Theorem~\ref{thm:convergence_G_n_composite} and 
the bootstrap analogue of the Bahadur representation \eqref{eq:bahadur}:
\begin{align}\label{eq:weighted_bahadur_bootstrap}
\frac{m}{\tau}\sqrt{n}\big(\hat{\btheta}^{*}-\hat{\btheta}\big)
=
-\bV_{\btheta_0}^{-1}
\frac{m}{\tau}\frac{1}{\sqrt n}\sum_{i=1}^n
\left(\frac{\zeta_i}{\bar\zeta_n}-1\right)\bpsi_{\btheta_0}(X_i)
+o_{P_{*}}(1).
\end{align}
Then, under the composite null hypothesis, \label{thm:bootstrap_processes_ii}
$\tilde{\G}_n^{*}\convpboot{}\tilde{\G}_0$ in $\ell^\infty(\cF).$

If, in addition, the derivative $\dot F_\omega^{\btheta}(t)$ exists for $\btheta$ in a neighborhood of $\btheta_0$, and
$
\sup_{\omega, t}
\|
\dot F_\omega^{\hat{\btheta}}(t)-\dot F_\omega^{\btheta_0}(t)
\|
\convp
0,
$
then 
$\widehat{\G}_{n,\mu}^{*}\convpboot{}\tilde{\G}_0$ in $\ell^\infty(\cF).$
\end{enumerate}
\end{theorem}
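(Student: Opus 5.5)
For part (i) I will invoke the conditional multiplier central limit theorem for Donsker classes in the form of Kosorok (2008, Theorem 10.1 and Corollary 10.3). That result covers the normalized weights $\zeta_i/\bar\zeta_n$ with nonnegative multipliers of mean $m$, variance $\tau^2$ and $\|\zeta\|_{2,1}<\infty$. Since $\cY$ is $\mu_0$-Donsker by \eqref{eq:thm_3_chendubey}, and has the constant envelope $1$, the theorem gives $(m/\tau)\sqrt n(P_n^*-P_n)\convpboot{}\G_0$ in $\ell^\infty(\cY)$ directly. It also supplies the asymptotic measurability requirement in \eqref{eq:BL_convergence_bootstrap}. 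Because $\G_n^{*}(\omega,t)=(m/\tau)\sqrt n(P_n^*-P_n)y_{\omega,t}$, part (i) follows at once.

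For part (ii), the first step is to apply the same multiplier theorem to the class $\cF$, which Theorem~\ref{thm:convergence_G_n_composite} shows is Donsker. This gives $\mathbb W_n^*\defin (m/\tau)\sqrt n(P_n^*-P_n)f_{\omega,t}\convpboot{}\tilde\G_0$ in $\ell^\infty(\cF)$. The constant $F_\omega^{\mu_{\btheta_0}}(t)$ cancels in $P_n^*-P_n$, so
$$
\mathbb W_n^*(\omega,t)=\frac m\tau\sqrt n(P_n^*-P_n)y_{\omega,t}+\dot F_\omega^{\btheta_0}(t)^\top\bV_{\btheta_0}^{-1}\frac m\tau\frac1{\sqrt n}\sum_{i=1}^n\Big(\frac{\zeta_i}{\bar\zeta_n}-1\Big)\bpsi_{\btheta_0}(X_i).
$$
The second step is to decompose
$$
\tilde\G_n^*(\omega,t)=\frac m\tau\sqrt n(P_n^*-P_n)y_{\omega,t}-\frac m\tau\sqrt n\big(F_\omega^{\mu_{\hat\btheta^*}}(t)-F_\omega^{\mu_{\hat\btheta}}(t)\big).
$$
To handle the second term, I will apply condition~\ref{d} twice, at $\hat\btheta^*$ and at $\hat\btheta$, each time around $\btheta_0$, and subtract. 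This gives
$$
F_\omega^{\mu_{\hat\btheta^*}}(t)-F_\omega^{\mu_{\hat\btheta}}(t)=\dot F_\omega^{\btheta_0}(t)^\top(\hat\btheta^*-\hat\btheta)+o(\|\hat\btheta^*-\btheta_0\|)+o(\|\hat\btheta-\btheta_0\|),
$$
with both remainders uniform in $(\omega,t)$. Multiplying by $\sqrt n$, the $\hat\btheta$ remainder is $o_{\mathsf P}(1)$ by \eqref{eq:bahadur}. For $\hat\btheta^*$ I need $\sqrt n\|\hat\btheta^*-\btheta_0\|=O_{\mathsf P}(1)$ jointly, which follows from \eqref{eq:weighted_bahadur_bootstrap} together with \eqref{eq:bahadur}. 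The sum in \eqref{eq:weighted_bahadur_bootstrap} is tight by the multiplier CLT applied to the finite-dimensional class $\{\bpsi_{\btheta_0}\}$, using \ref{cp2}.

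Substituting \eqref{eq:weighted_bahadur_bootstrap} and using $\sup_{\omega,t}\|\dot F_\omega^{\btheta_0}(t)\|<\infty$, I expect
$$
\sup_{\omega,t}|\tilde\G_n^*-\mathbb W_n^*|=o_{P_*}(1)
$$
in outer probability. A bounded-Lipschitz argument then transfers the conditional limit from $\mathbb W_n^*$ to $\tilde\G_n^*$: for $\varphi\in\mathrm{BL}_1$, $|\varphi(z)-\varphi(z')|\le\min(2,\|z-z'\|)$. I expect the main obstacle to be this step, namely converting unconditional $o_{\mathsf P}$ remainders into conditional $o_{P_*}$ statements in outer probability. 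The remainder from condition~\ref{d} is deterministic given $\hat\btheta^*$, so I would control $\mathsf E_*\min(2,\sup|\cdot|)$ through Markov's inequality and Fubini-type arguments for outer expectations, as in Kosorok (2008, Section 10.1).

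For $\widehat\G_{n,\mu}^*$, the same multiplier representation holds with $\dot F_\omega^{\hat\btheta}(t)^\top\widehat\bV^{-1}$ and $\bpsi_{\hat\btheta}$ in place of the true quantities. The difference from $\mathbb W_n^*$ is bounded by
$$
\sup_{\omega,t}\|\dot F^{\hat\btheta}_\omega(t)^\top\widehat\bV^{-1}-\dot F^{\btheta_0}_\omega(t)^\top\bV_{\btheta_0}^{-1}\|\cdot\Big\|\frac m\tau\frac1{\sqrt n}\sum_{i=1}^n\Big(\frac{\zeta_i}{\bar\zeta_n}-1\Big)\bpsi_{\btheta_0}(X_i)\Big\|
$$
plus a term involving $\bpsi_{\hat\btheta}-\bpsi_{\btheta_0}$. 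The first factor of the product tends to $0$ in probability by the extra uniform-derivative assumption and $\widehat\bV\to\bV_{\btheta_0}$, and the second factor is $O_{P_*}(1)$, so the product vanishes. For the $\bpsi_{\hat\btheta}-\bpsi_{\btheta_0}$ term, condition~\ref{cp1} bounds the multiplier sum by
$$
\|\hat\btheta-\btheta_0\|\,\frac1{\sqrt n}\sum_{i=1}^n\Big|\frac{\zeta_i}{\bar\zeta_n}-1\Big|L(X_i).
$$
The sum is only $O_{P_*}(\sqrt n)$, while $\|\hat\btheta-\btheta_0\|=O_{\mathsf P}(n^{-1/2})$, so the product is merely $O_{P_*}(1)$ and not yet negligible. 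To sharpen this I would center by $|\zeta_i/\bar\zeta_n-1|$ and exploit that the centered multiplier process over the class $\{\bpsi_{\btheta}:\|\btheta-\btheta_0\|\le\delta\}$ is asymptotically equicontinuous. That class is Lipschitz in $\btheta$ with square-integrable envelope $L$, hence Donsker, so the multiplier CLT applies. The argument then concludes as before.
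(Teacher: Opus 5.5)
Your treatment of (i) and of $\tilde{\G}_n^{*}$ is the same as the paper's. Both use the multiplier CLT for the Donsker classes $\cY$ and $\cF$, the two expansions from condition~\ref{d} around $\btheta_0$, and the bootstrap Bahadur representation. Both then transfer the limit with the bound $|\varphi(z)-\varphi(z')|\le\min\{2,\|z-z'\|\}$; the paper uses it for the bounded-Lipschitz convergence and also, explicitly, for the asymptotic measurability requirement. For $\widehat{\G}_{n,\mu}^{*}$ your split is the paper's $A_nB_n+C_nD_n$. The genuine difference is how you show $B_n=\|(m/\tau)\sqrt n(P_n^{*}-P_n)(\bpsi_{\hat\btheta}-\bpsi_{\btheta_0})\|=o_{P_*}(1)$.

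You are right that the absolute-value bound only gives $O_{P_*}(1)$. Your fix, asymptotic equicontinuity of the multiplier process over the local class $\{\bpsi_{\btheta}:\|\btheta-\btheta_0\|\le\delta\}$, works. That class is Donsker by \ref{cp1}--\ref{cp2} and a bracketing argument. Also $\|\bpsi_{\hat\btheta}-\bpsi_{\btheta_0}\|_{L^2(P)}\le\|L\|_{L^2(P)}\|\hat\btheta-\btheta_0\|\convp 0$, and a Markov-inequality transfer from unconditional to conditional statements finishes the step.

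The paper's route is more elementary. Since $\hat\btheta$ depends only on the data, $h=\bpsi_{\hat\btheta}-\bpsi_{\btheta_0}$ is a fixed function under $\mathsf P_*$, and $\sum_i(\zeta_i-\bar\zeta_n)h(X_i)=\sum_i(\zeta_i-m)(h(X_i)-P_nh)$. On $E_n=\{\bar\zeta_n\ge m/2\}$, where $\mathsf P_*(E_n^c)\le 4\tau^2/(m^2n)$, a direct conditional variance computation gives $\mathsf E_*(1_{E_n}B_n^2)\le 4P_n\|h\|^2\le 4\|\hat\btheta-\btheta_0\|^2P_nL^2=o_{\mathsf P}(1)$. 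This uses only \ref{cp1}, \ref{cp4} and the law of large numbers, needs no entropy bound for $\{\bpsi_\btheta\}$, and yields the conditional statement directly. Your equicontinuity route costs more machinery. In exchange, it would still work if the index also depended on the multipliers (e.g.\ $\hat\btheta^{*}$ in place of $\hat\btheta$), where the exact variance computation is no longer available.
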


\begin{remark}
The sufficient condition for condition~\ref{d} given in Remark~\ref{rem:sufficient_condition_d}, together with condition~\ref{cp4}, implies
$
\sup_{\omega\in\Omega,\,t\ge0}
\|
\dot F_\omega^{\hat{\btheta}}(t)
-
\dot F_\omega^{\btheta_0}(t)
\|
\convp 0.
$
\end{remark}

The bootstrap versions of the test statistics are then
\begin{alignat*}{2}
T_n^{*\mathrm{KS}}
&\defin \sup_{\omega \in \Omega, t \ge 0}\big|\G_n^{*}(\omega, t)\big|,
&\qquad
T_n^{*\mathrm{CM}}
&\defin \int_{\Omega\times\R_{\ge0}} \G_n^{*}(\omega, t)^2
\,\rd F_{n,\omega}^{\mu}(t)\,\rd P_n(\omega),\\
\tilde{T}_n^{*\mathrm{KS}}
&\defin \sup_{\omega \in \Omega, t \ge 0}\big|\tilde{\G}_n^{*}(\omega, t)\big|,
&\qquad
\tilde{T}_n^{*\mathrm{CM}}
&\defin \int_{\Omega\times\R_{\ge0}} \tilde{\G}_n^{*}(\omega, t)^2
\,\rd F_{n,\omega}^{\mu}(t)\,\rd P_n(\omega),\\
\widehat{T}_n^{*\mathrm{KS}}
&\defin \sup_{\omega \in \Omega, t \ge 0}\big|\widehat{\G}_{n,\mu}^{*}(\omega, t)\big|,
&\qquad
\widehat{T}_n^{*\mathrm{CM}}
&\defin \int_{\Omega\times\R_{\ge0}} \widehat{\G}_{n,\mu}^{*}(\omega, t)^2
\,\rd F_{n,\omega}^{\mu}(t)\,\rd P_n(\omega).
\end{alignat*}
Theorem \ref{thm:bootstrap_test_statistics} gives the asymptotic distribution of these statistics under the null hypothesis.

\begin{theorem}[Asymptotic distributions of the bootstrap test statistics]\label{thm:bootstrap_test_statistics}
The following statements hold:
\begin{enumerate}[label=(\textit{\roman*}), ref=(\textit{\roman*})]
\item Under the simple null hypothesis, \label{thm:bootstrap_test_statistics_i}
$$
T_n^{*\mathrm{KS}}\convpboot{}\sup_{\omega\in\Omega, t\ge0}|\G_0(\omega,t)|,
\quad
T_n^{*\mathrm{CM}}\convpboot{}
\int_{\Omega\times\R_{\ge0}}\G_0(\omega,t)^2\,\rd F_{\omega}^{\mu_0}(t)\,\rd\mu_0(\omega).
$$

\item Under the composite null hypothesis, assuming the conditions of Theorem~\ref{thm:bootstrap_processes}, \label{thm:bootstrap_test_statistics_ii}
$$
\tilde{T}_n^{*\mathrm{KS}}\convpboot{}\sup_{\omega\in\Omega, t\ge0}|\tilde{\G}_0(\omega,t)|, \quad \widehat{T}_n^{*\mathrm{KS}}\convpboot{}\sup_{\omega\in\Omega, t\ge0}|\tilde{\G}_0(\omega,t)|.
$$
If, in addition, the map $(\omega,t)\mapsto\dot F_{\omega}^{\btheta_0}(t)$ is uniformly continuous with respect to $d_{\Omega\times\R_{\ge0}}$ and, for every $(\omega,t)\in\Omega\times\R_{\ge0}$, the map $\btheta\mapsto F_\omega^{\mu_{\btheta}}(t)$ is Borel measurable, then
$$
\tilde{T}_n^{*\mathrm{CM}}\convpboot{}
\int_{\Omega\times\R_{\ge0}}\tilde{\G}_0(\omega,t)^2\,\rd F_{\omega}^{\mu_{\btheta_0}}(t)\,\rd\mu_{\btheta_0}(\omega), 
\quad
\widehat{T}_n^{*\mathrm{CM}}\convpboot{}
\int_{\Omega\times\R_{\ge0}}\tilde{\G}_0(\omega,t)^2\,\rd F_{\omega}^{\mu_{\btheta_0}}(t)\,\rd\mu_{\btheta_0}(\omega).
$$
\end{enumerate}
\end{theorem}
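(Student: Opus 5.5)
The proof rests on Theorem~\ref{thm:bootstrap_processes} combined with a conditional continuous mapping argument. For the KS statistics this argument is direct. The map $z\mapsto\sup_{\omega,t}|z(\omega,t)|$ is $1$-Lipschitz from $\ell^\infty(\mathcal H)$ into $\R$. Hence, for any $\varphi\in\mathrm{BL}_1(\R)$, the composition $\varphi\circ\sup|\cdot|$ belongs to $\mathrm{BL}_1(\ell^\infty(\mathcal H))$. The bootstrap convergences $\G_n^{*}\convpboot{}\G_0$, $\tilde\G_n^{*}\convpboot{}\tilde\G_0$ and $\widehat\G_{n,\mu}^{*}\convpboot{}\tilde\G_0$ then transfer immediately to $T_n^{*\mathrm{KS}}$, $\tilde T_n^{*\mathrm{KS}}$ and $\widehat T_n^{*\mathrm{KS}}$. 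The same holds for the asymptotic measurability part of the definition, since measurable majorants and minorants are preserved under composition with Lipschitz maps. The supremum of the limit is measurable by Remark~\ref{rem:separability_G_0}.

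For the CM statistics the integrating measure $\rd F_{n,\omega}^{\mu}(t)\,\rd P_n(\omega)$ is random, so the functional is not fixed. I would write
$$
T_n^{*\mathrm{CM}}=\frac{1}{n^2}\sum_{i,j=1}^n \G_n^{*}(X_i,d(X_i,X_j))^2 .
$$
I would compare this with $\Phi(\G_n^{*})$, where $\Phi(z):=\int z^2\,\rd F_\omega^{\mu_0}\,\rd\mu_0$. The difference splits into two terms.

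The first term is $\int (\G_n^{*})^2\,\rd(P_n^{(2)}-P^{(2)})$, where $P_n^{(2)}$ denotes the law of $(\omega,t)$ under the empirical integrating measure. I would handle it by a uniform continuity argument. Given $\varepsilon>0$, the bootstrap process is, with conditional probability close to one and outside an event of small probability, within $\varepsilon$ in sup norm of a uniformly equicontinuous set with respect to $d_{\Omega\times\R_{\ge0}}$. This uses asymptotic equicontinuity of $\G_n^{*}$, available because $\cY$ (respectively $\cF$) is Donsker. The $L^2$-metric on $\cY$ is controlled by $d_{\Omega\times\R_{\ge0}}$ through \ref{bp2} and Lemma~\ref{ap:lem_relation_L2_prod}. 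For $\cF$ one additionally needs the uniform continuity of $\dot F^{\btheta_0}_\omega(t)$, which is exactly why it is assumed. Integrals of bounded uniformly continuous functions against $P_n^{(2)}$ converge to those against $P^{(2)}$ uniformly over such a class. This follows from a V-statistic law of large numbers combined with total boundedness under \ref{b1}, or alternatively from Glivenko--Cantelli for $\cY$ with the Lipschitz profile.

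The second term is the continuity of $\Phi$. The functional $\Phi$ is Lipschitz on bounded sets of $\ell^\infty$, so the conditional continuous mapping theorem (e.g., Theorem 10.8 of \citealp{Kosorok2008}) gives $\Phi(\G_n^{*})\convpboot{}\Phi(\G_0)$. This covers the simple null; the composite versions follow identically with $\mu_{\btheta_0}$ in place of $\mu_0$.

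The main obstacle is the measurability bookkeeping in the composite CM case. The statistic $\tilde T_n^{*\mathrm{CM}}$ involves $F_\omega^{\mu_{\hat\btheta^{*}}}$ evaluated at sample points. Borel measurability of $\btheta\mapsto F^{\mu_\btheta}_\omega(t)$, together with conditional measurability of $\hat\btheta^{*}$, makes the finite double sum a measurable function of the multipliers given the data. That in turn lets the in-probability replacement of the random measure be carried out under $\mathsf P_*$. I would first prove a lemma: if $Z_n^{*}\convpboot{}Z$ and $A_n^{*}-B_n^{*}\to0$ in $\mathsf P_*$-probability, in probability, then both have the same conditional limit. I would then apply this lemma with $A_n^{*}$ the CM statistic and $B_n^{*}=\Phi(\cdot)$ of the bootstrap process, using the decomposition above.
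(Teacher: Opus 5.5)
Your proposal is correct. The KS part coincides with the paper's argument, but for the CM statistics you take a different route.

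The paper uses a subsequence argument. Every subsequence has a further one along which the conditional bounded-Lipschitz distance vanishes for almost every data sequence. After fixing such a data realization, the integrating measure $\rd F^{\mu}_{n,\omega}\,\rd P_n$ is a deterministic sequence converging as in Lemma~\ref{ap:lem_DCT_P_hat}. Conditional weak convergence then becomes ordinary weak convergence, and the Skorohod proof of Theorem~\ref{thm:test_statistics_simp} is rerun essentially verbatim. Measurability is handled exactly as you propose.

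You instead compare the CM statistic with the fixed functional $\Phi(\G_n^{*})$ and obtain $\Phi(\G_n^{*})\convpboot{}\Phi(\G_0)$ from the conditional continuous mapping theorem. You then control the random-measure error by combining two ingredients:
\begin{enumerate}
\item equicontinuity of the bootstrap process in $d_{\Omega\times\R_{\ge0}}$, via Lemma~\ref{ap:lem_relation_L2_prod} or Lemma~\ref{ap:lem_relation_L2_prod_composite};
\item a law of large numbers that is uniform over a bounded equicontinuous class, which is precisely the argument of Lemma~\ref{ap:lem_DCT_P_hat_local} with $Q_n=\mu_0$.
\end{enumerate}

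The paper's route reuses the unconditional proof with no new lemma. However, it implicitly relies on the exceptional null set of Lemma~\ref{ap:lem_DCT_P_hat} being independent of $g$, since the lemma is applied to the random integrand $(\G_0^{\star})^2$. That is true here, because the proof only uses Glivenko--Cantelli and almost sure weak convergence of $P_n$. Your route makes this uniformity explicit and avoids both conditioning on a fixed data path and Skorohod's construction. The cost is the extra equicontinuity step. You can obtain that equicontinuity either from the unconditional multiplier CLT, or directly from $\G_n^{*}\convpboot{}\G_0$ together with tightness of $\G_0$ on the closed subspace of uniformly continuous paths.

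Finally, your auxiliary lemma is garbled as written. It should read: if $B_n^{*}\convpboot{}Z$ and $A_n^{*}-B_n^{*}\to0$ in $\mathsf P_*$-probability, in probability, then $A_n^{*}\convpboot{}Z$. This is the bounded-Lipschitz comparison already carried out in the proof of Theorem~\ref{thm:bootstrap_processes}.
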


\section{Numerical experiments}\label{sec:numerical_experiments}

We assessed the calibration under the null and the power of the KS and CM tests in finite samples through simulations. In each experiment, $M=1,000$ independent samples of size $n$ were generated. Every $p$-value is computed using the multiplier bootstrap procedure with $B=1,000$ replicates and independent standard exponential weights. All simulation scenarios considered in this section involve composite null hypotheses. The numerical results are obtained using the test statistics constructed with $\widehat{\G}_{n,\mu}^{*}$, unless stated otherwise. The empirical rejection rates are computed as the proportion of test rejections at level $\alpha=5\%$. For the bootstrap KS statistics, we used the same finite set of points as in Proposition~\ref{prop:sup_max_ks}, based on sample centers and pairwise sample distances.

Denote by $\mu_{\beta}$ the distribution of the data generating process, with $\beta \in \{0,0.5,1\}$ controlling the departure from the null hypothesis, corresponding to $\beta=0$. For a dimension $d$ and a sample size $n$, the simulation scenarios considered are collected in Table~\ref{tab:simulation_scenarios}. We describe the additional notation employed as follows:

\begin{itemize}
\item For the Euclidean space, the null model is
$\mathcal P_d^{\mathrm{sp}} \defin 
\{
\mathcal N_d(
\btheta,
\bI_d+\lambda
\btheta\btheta^\top / \|\btheta\|^2
):
\btheta\neq\boldsymbol 0,\ \lambda>0
\}.
$
We set
$\btheta_d=\one_d/\sqrt d$,
$\bq_d=(\be_1-\be_2)/\sqrt 2$, and
$\bSigma_d=\bI_d+2\btheta_d\btheta_d^\top$. $T_d$ denotes the $d$-variate standardized Student distribution with $d+1$ degrees of freedom. 

\item For the simplex, write $D_d\sim\mathrm{Dirichlet}(1.5,\ldots,1.5)$ and $[\bR_d(\rho)]_{jk}=\rho^{|j-k|}$. $S_d$ is the distribution induced on $\Delta^d$ by applying the inverse ilr transform to a $d$-variate standardized $t_4$ distribution with covariance $\bR_d(0.5)$.

\item For the sphere, $Q_d$ denotes the distribution of $\bZ/\|\bZ\|$, where $\bZ\sim\mathcal N_{d+1}(2\sqrt d\,\be_1,\bI_{d+1})$. 

\item For the hyperboloid, define $\bmu_0=(\sqrt{2},\be_1^\top)^\top$ and $\bmu_1=(\cosh(\operatorname{asinh}(1)+\sqrt{2/d}), \sinh(\operatorname{asinh}(1)+\sqrt{2/d})\be_1^\top)^\top$. $R$ is defined by modifying the conditional distribution of the angular component of a HvMF distribution. If $X\sim\mathrm{HvMF}(\bmu_0,2\sqrt{d})$, writing $X=(\cosh U,\sinh U\,\bV^\top)^\top$, then $\bV\mid U=u$ follows a $\mathrm{vMF}(\be_1,2\sqrt d\sinh(u))$ distribution on $\Sp^{d-1}$. For $R$, we set
\[
\begin{aligned}
\bV\mid U=u
&\sim
\frac12\mathrm{vMF}\big(
\cos(\pi/5)\be_1-\sin(\pi/5)\be_2,
2\sqrt d\sinh(u)
\big)
\\
&\quad+
\frac12\mathrm{vMF}\big(
\cos(\pi/5)\be_1+\sin(\pi/5)\be_2,
2\sqrt d\sinh(u)
\big).
\end{aligned}
\]
\end{itemize}
\begin{table}[!htbp]
\centering
\setlength{\tabcolsep}{3.5pt}
\caption{Simulation scenarios considered in the numerical experiments.} \label{tab:simulation_scenarios}
\begin{tabular}{lllll} 
$\Omega$ & $H_0$ & Scen. & $\mu_\beta$ & Description \\
\hline
\noalign{\vskip 2pt}

$\mathbb{R}^d$
& $\mathcal P_d^{\mathrm{sp}}$
& 1
& $(1-\beta)\mathcal N_d(\btheta_d,\bSigma_d)
+\beta\mathcal N_d(\btheta_d,\bSigma_d+2\bq_d\bq_d^\top)$
& Orthogonal covariance spike
\\
 & $\mathcal N_d(\bmu,\sigma^2\bI_d)$
& 2
& $(1-\beta)\mathcal N_d(\boldsymbol 0,\bI_d)+\beta T_d$
& Student alternative
\\[0.1cm]

$\Delta^d$
& $\mathrm{LG}(\bmu,\bI_d)$
& 3
& $(1-\beta)\mathrm{LG}(\boldsymbol 0,\bI_d)+\beta D_d$
& Dirichlet alternative
\\
 & $\mathrm{LG}(\bmu,\bR_d(\rho))$
& 4
& $(1-\beta)\mathrm{LG}(\boldsymbol 0,\bR_d(0.5))+\beta S_d$
& Student alternative
\\[0.1cm]

$\Sp^d$
& $\mathrm{vMF}(\bmu,2)$
& 5
& $(1-\beta/2)\mathrm{vMF}(\be_1,2)
+(\beta/2)\mathrm{vMF}(\be_2,2)$
& Orthogonal vMF mixture
\\
 & $\mathrm{vMF}(\bmu,\kappa)$
& 6
& $(1-\beta/2)\mathrm{vMF}(\be_1,2d)+(\beta/2)Q_d$
& Projected-normal alternative
\\[0.1cm]

$\mathbb H^d$
& $\mathrm{HvMF}(\bmu,\kappa)$
& 7
& $(1-\beta/2)\mathrm{HvMF}(\bmu_0,d)
+(\beta/2)\mathrm{HvMF}(\bmu_1,d)$
& Radial-location mixture
\\
 & $\mathrm{HvMF}(\bmu,\kappa)$
& 8
& $(1-\beta)\mathrm{HvMF}(\bmu_0,2\sqrt d)+\beta R$
& Conditional angular mixture
\\
\hline
\end{tabular}
\end{table}

Table~\ref{tab:empirical_null_calibration} reports the rejection percentages for the simulation scenarios. The power increases with the sample size and the departure from the null. Under the null, most empirical rejection rates lie within the $95\%$ prediction interval for the nominal $5\%$ level. Departures from this interval occur both above and below its bounds, with no uniform direction and with a slight deterioration for $d=5$. Under the alternatives, the CM test tends to have higher empirical power than the KS test, although neither test dominates uniformly across scenarios.

\begin{table}[!htbp]
\centering
\caption{Rejection percentages at level $5\%$. Underlined values for $\beta=0$ indicate empirical sizes outside the equal-tail $95\%$ prediction interval for $\mathrm{Bin}(M,0.05)\times100/M$.}
\label{tab:empirical_null_calibration}
\scriptsize
\renewcommand{\arraystretch}{0.88}
\resizebox{\textwidth}{!}{%
\begin{tabular}{lcc*{10}{r}}
\toprule
& & & \multicolumn{2}{c}{$n=50$} & \multicolumn{2}{c}{$n=100$} & \multicolumn{2}{c}{$n=200$} & \multicolumn{2}{c}{$n=400$} & \multicolumn{2}{c}{$n=800$} \\
\cmidrule(lr){4-5}\cmidrule(lr){6-7}\cmidrule(lr){8-9}\cmidrule(lr){10-11}\cmidrule(lr){12-13}
Scen. & $d$ & $\beta$ & {\color{black}KS} & {\color{black}CM} & {\color{black}KS} & {\color{black}CM} & {\color{black}KS} & {\color{black}CM} & {\color{black}KS} & {\color{black}CM} & KS & CM \\
\midrule
1 & 2 & 0.0 & 5.8 & 5.8 & 5.8 & 5.6 & \underline{6.9} & 5.4 & 5.1 & 4.6 & 5.3 & 5.2 \\
& & 0.5 & 19.2 & 21.8 & 45.1 & 58.8 & 81.6 & 93.7 & 99.3 & 99.9 & 100.0 & 100.0 \\
& & 1.0 & 66.8 & 79.1 & 97.1 & 99.6 & 100.0 & 100.0 & 100.0 & 100.0 & 100.0 & 100.0 \\
\cmidrule(lr){3-13}
& 5 & 0.0 & 5.8 & 6.3 & 5.5 & 4.7 & 5.7 & 5.4 & 5.5 & 5.2 & 6.1 & 5.3 \\
& & 0.5 & 13.0 & 12.3 & 27.0 & 36.7 & 60.0 & 81.5 & 92.5 & 99.6 & 100.0 & 100.0 \\
& & 1.0 & 48.4 & 60.3 & 85.9 & 96.2 & 99.3 & 100.0 & 100.0 & 100.0 & 100.0 & 100.0 \\
\cmidrule(lr){3-13}
2 & 2 & 0.0 & 5.1 & 4.8 & 5.7 & 5.0 & 5.5 & 4.1 & 5.5 & 4.2 & 4.2 & 4.7 \\
& & 0.5 & 28.4 & 32.0 & 49.6 & 53.1 & 81.1 & 82.0 & 94.2 & 94.1 & 96.6 & 96.2 \\
& & 1.0 & 69.0 & 74.0 & 90.3 & 91.6 & 97.8 & 98.4 & 99.5 & 99.5 & 99.8 & 99.9 \\
\cmidrule(lr){3-13}
& 5 & 0.0 & 4.7 & \underline{1.6} & 4.3 & \underline{1.7} & 3.7 & \underline{1.3} & 4.5 & \underline{3.0} & \underline{3.5} & 4.0 \\
& & 0.5 & 17.1 & 20.6 & 36.6 & 48.1 & 70.0 & 82.9 & 96.8 & 98.5 & 99.9 & 99.7 \\
& & 1.0 & 51.9 & 65.2 & 87.1 & 93.7 & 99.5 & 99.4 & 99.9 & 99.9 & 99.9 & 99.9 \\
\midrule
3 & 2 & 0.0 & 6.1 & \underline{7.1} & 4.0 & 4.5 & 5.5 & 6.0 & \underline{6.8} & \underline{6.6} & 5.1 & 5.4 \\
& & 0.5 & 10.5 & 15.3 & 12.1 & 16.3 & 17.6 & 20.3 & 31.1 & 31.2 & 52.6 & 54.1 \\
& & 1.0 & 20.7 & 28.5 & 34.4 & 38.5 & 53.9 & 56.0 & 80.1 & 81.5 & 97.6 & 98.0 \\
\cmidrule(lr){3-13}
& 5 & 0.0 & \underline{7.1} & \underline{7.8} & 5.5 & 6.0 & 5.5 & 4.8 & 5.7 & 5.3 & \underline{6.8} & 5.8 \\
& & 0.5 & 16.9 & 20.7 & 21.9 & 24.8 & 31.9 & 32.3 & 52.4 & 51.0 & 80.3 & 77.2 \\
& & 1.0 & 34.0 & 37.8 & 50.9 & 53.2 & 80.9 & 78.8 & 97.1 & 96.2 & 100.0 & 100.0 \\
\cmidrule(lr){3-13}
4 & 2 & 0.0 & 6.4 & \underline{7.3} & 5.1 & \underline{6.7} & 5.5 & 6.1 & 4.3 & 4.2 & 6.0 & 4.9 \\
& & 0.5 & 24.9 & 29.2 & 33.4 & 36.7 & 62.2 & 63.8 & 87.1 & 86.1 & 99.1 & 99.0 \\
& & 1.0 & 61.5 & 67.9 & 84.0 & 86.0 & 99.0 & 99.2 & 99.9 & 100.0 & 99.9 & 100.0 \\
\cmidrule(lr){3-13}
& 5 & 0.0 & 5.6 & \underline{7.6} & \underline{3.2} & 5.1 & 5.1 & 5.4 & \underline{6.8} & 5.6 & 4.2 & \underline{3.6} \\
& & 0.5 & 42.5 & 46.4 & 66.6 & 65.7 & 89.7 & 88.7 & 99.7 & 99.6 & 100.0 & 100.0 \\
& & 1.0 & 91.0 & 91.1 & 99.4 & 99.3 & 100.0 & 100.0 & 99.9 & 100.0 & 100.0 & 100.0 \\
\midrule
5 & 2 & 0.0 & \underline{6.5} & \underline{8.2} & 5.0 & 5.5 & 4.5 & 5.7 & 4.9 & 5.6 & 6.0 & 5.2 \\
& & 0.5 & 25.5 & 25.4 & 52.5 & 58.6 & 87.4 & 92.9 & 99.7 & 100.0 & 100.0 & 100.0 \\
& & 1.0 & 51.4 & 56.2 & 89.5 & 92.9 & 100.0 & 100.0 & 100.0 & 100.0 & 100.0 & 100.0 \\
\cmidrule(lr){3-13}
& 5 & 0.0 & \underline{6.8} & \underline{7.7} & \underline{6.5} & \underline{6.5} & 5.7 & 6.4 & 4.9 & 5.4 & \underline{6.5} & 6.4 \\
& & 0.5 & 6.7 & 7.0 & 19.6 & 21.1 & 46.4 & 54.9 & 84.5 & 88.5 & 99.4 & 100.0 \\
& & 1.0 & 11.3 & 11.3 & 35.8 & 42.6 & 81.3 & 87.9 & 99.7 & 99.9 & 100.0 & 100.0 \\
\cmidrule(lr){3-13}
6 & 2 & 0.0 & 6.1 & 5.2 & 5.6 & 4.6 & 5.2 & 5.1 & 3.9 & 4.6 & 5.6 & 4.9 \\
& & 0.5 & 8.6 & 8.2 & 9.1 & 9.0 & 13.7 & 15.2 & 22.0 & 24.5 & 40.8 & 45.9 \\
& & 1.0 & 11.3 & 12.2 & 14.7 & 17.1 & 28.7 & 32.3 & 52.1 & 59.4 & 79.2 & 86.6 \\
\cmidrule(lr){3-13}
& 5 & 0.0 & 5.3 & \underline{1.4} & 5.8 & \underline{1.5} & 4.7 & \underline{1.8} & 3.7 & \underline{1.8} & 5.1 & 4.6 \\
& & 0.5 & 8.2 & 6.4 & 15.2 & 15.6 & 30.1 & 36.3 & 48.3 & 67.3 & 84.8 & 95.8 \\
& & 1.0 & 16.8 & 15.0 & 28.6 & 36.3 & 57.5 & 74.9 & 90.5 & 96.9 & 99.9 & 100.0 \\
\midrule
7 & 2 & 0.0 & 6.2 & 5.5 & 5.9 & 4.4 & 3.9 & 4.2 & 5.8 & 4.7 & 4.1 & 4.1 \\
& & 0.5 & 14.2 & 12.6 & 21.4 & 23.1 & 43.5 & 45.2 & 73.7 & 80.8 & 98.7 & 99.6 \\
& & 1.0 & 16.2 & 12.8 & 24.0 & 22.4 & 44.9 & 52.7 & 84.3 & 91.8 & 99.7 & 100.0 \\
\cmidrule(lr){3-13}
& 5 & 0.0 & 5.5 & \underline{1.8} & 6.1 & \underline{2.6} & 5.2 & \underline{2.9} & 3.9 & \underline{2.5} & 4.4 & 4.6 \\
& & 0.5 & 11.6 & 8.6 & 17.6 & 11.6 & 39.4 & 33.5 & 73.3 & 71.6 & 98.2 & 98.3 \\
& & 1.0 & 14.4 & 9.4 & 27.6 & 19.9 & 59.1 & 51.7 & 95.2 & 94.2 & 100.0 & 100.0 \\
\cmidrule(lr){3-13}
8 & 2 & 0.0 & 5.3 & 5.0 & 5.8 & 5.0 & 4.4 & 4.0 & 5.0 & 4.3 & 4.7 & 4.6 \\
& & 0.5 & 9.4 & 7.8 & 12.6 & 11.2 & 21.9 & 22.9 & 44.1 & 50.3 & 83.6 & 91.1 \\
& & 1.0 & 25.3 & 28.0 & 51.9 & 54.7 & 89.0 & 93.9 & 100.0 & 100.0 & 100.0 & 100.0 \\
\cmidrule(lr){3-13}
& 5 & 0.0 & 5.8 & \underline{1.9} & \underline{6.6} & \underline{2.2} & 5.4 & \underline{3.1} & \underline{3.5} & \underline{3.1} & 4.5 & 4.0 \\
& & 0.5 & 23.6 & 18.2 & 48.2 & 44.2 & 87.8 & 85.3 & 99.9 & 99.8 & 100.0 & 100.0 \\
& & 1.0 & 87.6 & 82.9 & 99.8 & 99.7 & 100.0 & 100.0 & 100.0 & 100.0 & 100.0 & 100.0 \\
\bottomrule
\end{tabular}}
\end{table}

Table \ref{tab:bootstrap_procedure_comparison} compares the two multiplier-bootstrap procedures defined in \eqref{eq:slow_bootstrap_process} and \eqref{eq:fast_bootstrap_process}, for every scenario in Table~\ref{tab:simulation_scenarios}, with $n=100$ and $M=1,000$. The methodology given in \eqref{eq:fast_bootstrap_process}, which does not require re-estimation, achieved an average speed-up of $438\times$. The largest absolute difference in rejection rates was $1.1\%$. However, the differences were mostly positive, suggesting a slightly more conservative finite-sample behavior when \eqref{eq:fast_bootstrap_process} is employed. Accordingly, we use the multiplier-bootstrap procedure in \eqref{eq:fast_bootstrap_process} when $\widehat{\bV}$ is invertible and its estimated condition number is smaller than $10^{12}$; otherwise, we use the reestimated multiplier bootstrap in \eqref{eq:slow_bootstrap_process}, which avoids the explicit inversion of $\widehat{\bV}$. Timings were obtained on the UC3M C3 cluster using AMD EPYC 7713 processors with 16 CPUs per job. Both procedures were applied to the same simulated datasets.

\begin{table}[!htbp]
\centering
\caption{Comparison of the multiplier-bootstrap procedures given in \eqref{eq:slow_bootstrap_process} and \eqref{eq:fast_bootstrap_process}, for $n=100$. Here, $\Delta$ is the empirical rejection rate obtained with \eqref{eq:slow_bootstrap_process} minus that obtained with \eqref{eq:fast_bootstrap_process}, in percentage points, and speed-up is the ratio of the median elapsed time of \eqref{eq:slow_bootstrap_process} to that of \eqref{eq:fast_bootstrap_process}. For each method, elapsed time is reported as the median elapsed time per replication, in seconds, over \(M=1,000\) samples.}
\label{tab:bootstrap_procedure_comparison}
\begin{tabular}{lc*{8}{r}}
\toprule
& $d$ & \multicolumn{8}{c}{Scenario} \\
& & $1$ & $2$ & $3$ & $4$ & $5$ & $6$ & $7$ & $8$ \\
\midrule
$\Delta_{\mathrm{KS}}$
& $2$ & $0.0$  & $0.5$ & $0.3$ & $0.4$ & $0.0$  & $0.5$ & $-0.1$  & $0.6$ \\
& $5$ & $1.1$ & $0.3$ & $0.6$ & $0.4$ & $-1.0$  & $0.3$ & $0.7$ & $0.9$ \\
\addlinespace
$\Delta_{\mathrm{CM}}$
& $2$ & $0.4$ & $0.8$ & $0.2$ & $0.4$ & $0.3$ & $0.7$ & $0.4$ & $0.6$ \\
& $5$ & $0.7$ & $0.0$  & $0.4$ & $0.3$ & $-0.6$  & $0.4$ & $0.4$ & $0.3$ \\
\addlinespace
Elapsed time \eqref{eq:fast_bootstrap_process}
& $2$ & 1.4 & 0.8 & 0.7 & 1.4 & 1.1 & 1.2 & 1.7 & 1.7 \\
& $5$ & 1.6 & 1.0 & 1.0 & 1.6 & 3.1 & 1.6 & 2.5 & 2.5 \\
\addlinespace

Elapsed time \eqref{eq:slow_bootstrap_process}
& $2$ & 846.4 & 198.1 & 198.5 & 793.4 & 544.4 & 553.9 & 575.0 & 583.7 \\
& $5$ & 858.0 & 220.9 & 219.7 & 810.5 & 2490.1 & 733.7 & 1149.4 & 1154.3 \\
\addlinespace
Speed-up
& $2$ & 613 & 261 & 278 & 576 & 478 & 466 & 345 & 353 \\
& $5$ & 532 & 216 & 226 & 500 & 801 & 466 & 466 & 468 \\
\bottomrule
\end{tabular}
\end{table}

\section{Real data applications}\label{sec:real_data}

\subsection{Sunspot occurrences on \texorpdfstring{$\mathbb{S}^2 \times \mathbb{R}$}{S² x R}}\label{sec:sunspots}

Sunspots are dark, cooler regions of the solar photosphere. %
During the approximately 11-year solar cycle, sunspot groups tend to appear in two activity belts, one in each hemisphere, whose central latitudes progressively move towards the equator \citep[Sp\"orer's law;][]{Babcock1961}.

A time-varying model is proposed for the positions of sunspot groups on the Sun, motivated by Spörer’s law. Specifically, we analyzed the birth positions of sunspot groups from solar cycle 23, using the \texttt{sunspots\_\allowbreak births} dataset in the \texttt{rotasym} package \citep{Garcia-Portugues2020e}. %
The sample contains $5373$ sunspot groups with birth dates from August 6, 1996 to November 27, 2008.
Since sunspot births were recorded only once per day in the dataset, many groups artificially share the same birth time. We added to each recorded time an independent uniform perturbation between $-1/2$ and $1/2$ days and rescaled the resulting times to $(0,1)$. 

Let $U$ denote the birth time in $(0,1)$. We modelled $U$ through a two-component beta mixture with density $g_{\bbeta}$, where $\bbeta=(w,\alpha_1,\beta_1,\alpha_2,\beta_2)^\top$, $w\in(0,1)$ is the mixture weight, and $\alpha_j,\beta_j>0$ are the shape parameters, for $j=1,2$. Conditional on $U=u$, the position $\bX\in\mathbb S^2$ is modelled through a mixture of two small-circle distributions \citep{Bingham1978}. We write $\mathrm{SC}(\bmu,\kappa,\nu)$ for the small-circle distribution with axis $\bmu\in\Sp^2$, concentration parameter $\kappa\ge0$, and $\nu\in(-1,1)$ controlling the location of the circle on the sphere. For $\nu(u)=a+bu$ and $\mathbf e_3 =(0,0,1)^\top$,
\begin{align*}
 \bX \mid U=u
 &\sim
 \frac{1}{2}\,\mathrm{SC}\bigl(\mathbf e_3,\kappa,\nu(u)\bigr)
 +
 \frac{1}{2}\,\mathrm{SC}\bigl(\mathbf e_3,\kappa,-\nu(u)\bigr).
\end{align*}
The parameters satisfy $b<0$, $\kappa>0$, and
$\nu(u)\in(0,1)$ for $u\in[0,1]$. Thus, the model incorporates Spörer’s law by imposing that the sunspots on each hemisphere follow the same latitudinal path towards the equator. Write $\boldsymbol{\gamma}=(a,b,\kappa)^\top$ and
$\btheta=(\bbeta^\top,\boldsymbol{\gamma}^\top)^\top$. %
Maximum Likelihood Estimation (MLE) was done in two stages, first on the time and then on the conditional model.

The space $\mathbb S^2\times(0,1)$ was endowed with the metric
$
d((\bx,u),(\by,v))
=
\arccos(\bx^\top \by)/(2\pi)
+
|u-v|/2.
$
For $s\in(0,1)$, define %
$
F_{\bomega,s}^{\boldsymbol{\gamma}}(q)
\defin
\Prob{
d_{\mathbb S^2}(\bomega,\bX)\leq q
\mid U=s
}.
$
The distance profile of $(\bX,U)$ at $(\bomega,u)\in\mathbb S^2\times(0,1)$ can be written as
\begin{align*}
F_{(\bomega,u)}(r)
=
\int_0^1
F_{\bomega,s}^{\boldsymbol{\gamma}}
\left(
\pi
\min\left\{
1,
\left(2r-|u-s|\right)_+
\right\}
\right)
g_{\bbeta}(s)\,\rd s,
\end{align*}
where $x_+\defin\max\{x,0\}$.

Table~\ref{tab:sunspots_cycle23_temporal_models} reports the estimated parameters and the $p$-values of the KS and CM tests for the proposed model on $\Sp^2 \times (0,1)$. Neither test rejected the posited model at the $5\%$ level. Figure~\ref{fig:sunspots_cycle23_spatial_windows} illustrates the evolution of the birth distribution across solar cycle 23, showing a good empirical fit between the data and the contours of the model. %
\begin{table}[htbp]
\centering
\caption{MLE estimates and GOF test results for the sunspot-birth model. In bold, $p$-values larger than $5\%$.}
\label{tab:sunspots_cycle23_temporal_models}
\begin{tabular}{ccccc ccc cc}
\toprule
\multicolumn{5}{c}{Temporal model}
&
\multicolumn{3}{c}{Conditional model}
&
\multicolumn{2}{c}{GOF} \\
\cmidrule(lr){1-5}
\cmidrule(lr){6-8}
\cmidrule(lr){9-10}
$\widehat w$
& $\widehat\alpha_1$
& $\widehat\beta_1$
& $\widehat\alpha_2$
& $\widehat\beta_2$
& $\widehat a$
& $\widehat b$
& $\widehat\kappa$
& KS
& CM \\
\midrule
0.62
& 3.24
& 5.38
& 1.38
& 1.35
& 0.40
& $-0.31$
& 29.90
& $\mathbf{0.0989}$
& $\mathbf{0.1618}$ \\
\bottomrule
\end{tabular}
\end{table}

\ifincludeimages
\begin{figure}[htbp]
\centering
\begin{subfigure}[htbp]{0.19\textwidth}
\centering
\includegraphics[width=\textwidth]{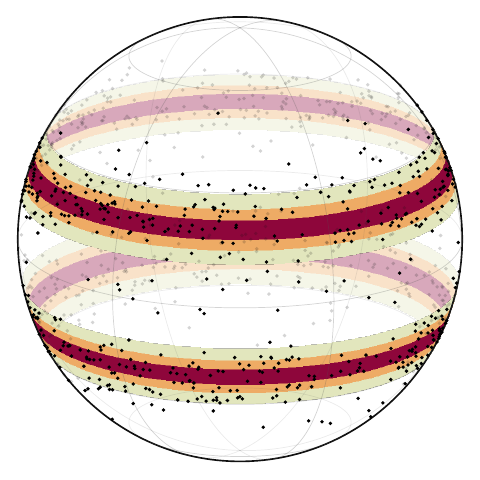}
\caption{\tiny $0$--$20\%$}
\end{subfigure}\hfill
\begin{subfigure}[htbp]{0.19\textwidth}
\centering
\includegraphics[width=\textwidth]{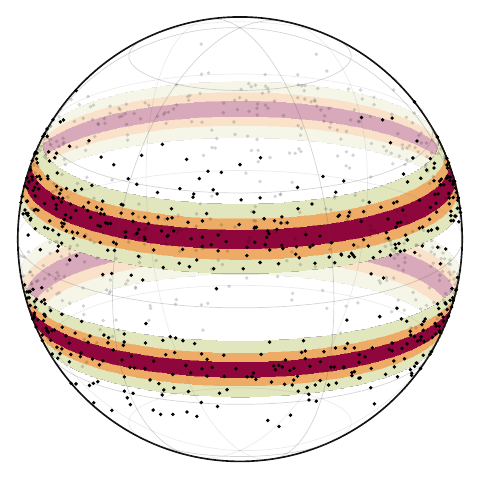}
\caption{\tiny $20$--$40\%$}
\end{subfigure}\hfill
\begin{subfigure}[htbp]{0.19\textwidth}
\centering
\includegraphics[width=\textwidth]{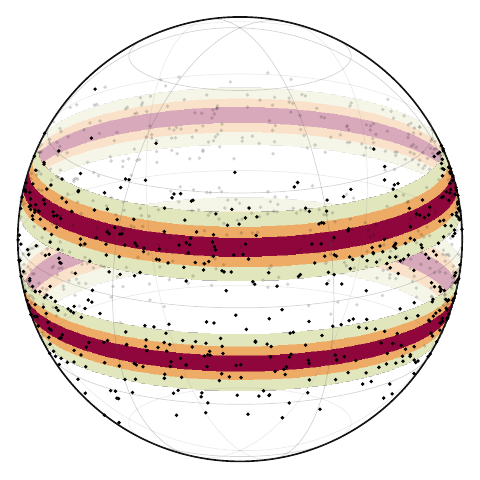}
\caption{\tiny $40$--$60\%$}
\end{subfigure}\hfill
\begin{subfigure}[htbp]{0.19\textwidth}
\centering
\includegraphics[width=\textwidth]{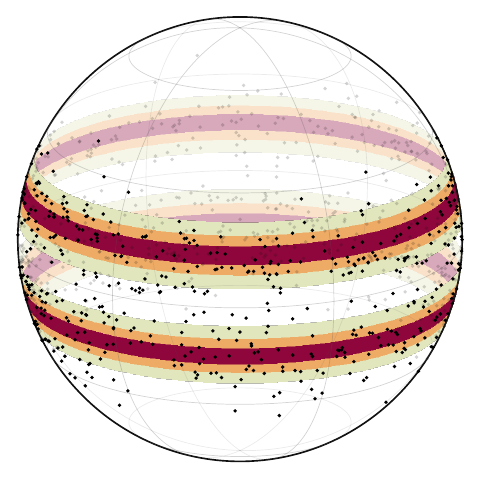}
\caption{\tiny $60$--$80\%$}
\end{subfigure}\hfill
\begin{subfigure}[htbp]{0.19\textwidth}
\centering
\includegraphics[width=\textwidth]{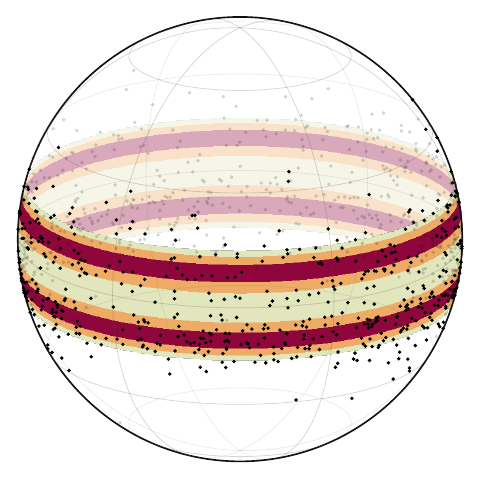}
\caption{\tiny $80$--$100\%$}
\end{subfigure}
\caption{Fitted small-circle densities across five temporal windows of solar cycle 23. The $25\%$, $50\%$, and $75\%$ highest-density regions are shown in colour, and the points represent the observed birth positions within each window. The model parameters were estimated in the full cycle, and the fitted conditional density was calculated at the midpoint of each window.}
\label{fig:sunspots_cycle23_spatial_windows}
\end{figure}
\fi

\subsection{Comets on \texorpdfstring{$\Sp^2$}{S²}}

We consider the orbital-pole data obtained from the \texttt{comets} dataset available in the R package \texttt{sphunif} \citep{Garcia-Portugues2020c}. Each comet was mapped to the unit normal vector of its orbital plane, resulting in data on $\Sp^2$. Fragmented comets were removed, as well as those with hyperbolic or parabolic orbits. The sample was split into long-period comets (hypothesized to originate in the Oort cloud), with orbital period over $200$ years ($n=610$), and short-period comets, with orbital period under $200$ ($n=784$).

Several models were fitted to the data. First, the spherical cardioid model $\mathrm{C}_k(\bmu,\rho)$, where $\bmu\in\Sp^2$ is the symmetry axis, $\rho\in[-1,1]$ controls the concentration, and $k$ is the model order \citep{Garcia-Portugues2026}. A GOF test for the $\mathrm{C}_2$ model was applied to the long-period comets in ibid., with no rejection of the null hypothesis at the $5\%$ level. Second, the $\mathrm{SC}(\bmu,\kappa,\nu)$ distribution was considered. The case $\nu=0$ corresponds to the Watson (W) distribution \citep{Dimroth1962,Watson1965}, which is locally approximated by $\mathrm{C}_2$ at low concentration \citep{Garcia-Portugues2026}. Additionally, we considered a tailored model with density
$$
f_{\bX}(\bx)
=
\frac{1}{4\pi}
\left[
w+(1-w)\frac{y(\bx)^{\alpha-1}(1-y(\bx))^{\beta-1}}{\mathrm{B}(\alpha,\beta)}
\right],
\qquad
y(\bx)=\frac{1+\bmu^\top\bx}{2},
$$
where $\mathrm{B}(\alpha,\beta)$ is the beta function, $\bmu\in\Sp^2$, $w\in(0,1)$, and $\alpha,\beta>0$. This model is a mixture of the uniform distribution on $\Sp^2$ and a rotationally symmetric component for which $y(\bX)$ has a $\mathrm{Beta}(\alpha,\beta)$ distribution. We refer to this model as the uniform--beta mixture, denoted by $\mathrm{UB}(\bmu,w,\alpha,\beta)$. For $\alpha=\beta=2$, the UB model reduces to $\mathrm{C}_2$ with $\rho=w-1$, and hence contains its $-1<\rho<0$ subfamily.

For each parametric family $\mathcal M\in \{\mathrm{C}_2,\mathrm{SC},\mathrm{UB}\}$, we tested the composite null hypothesis $H_0:\mu\in\mathcal M$. Table \ref{tab:comets-gof} reports the $p$-values of the hypothesis tests. The $\mathrm{C}_2$ model was not rejected for the long-period comets, with parameters $\hat{\bmu}=(0.0804,-0.0067,0.9967)^\top$ and $\hat\rho=0.4727$, analogously to \cite{Garcia-Portugues2026}. The SC model was also not rejected for the long-period sample. Due to its flexibility, the UB model was not rejected in either sample, unlike the $\mathrm{C}_2$ and small-circle models, rejected for the short-period comets. The estimated parameters were $\hat{\bmu}=(0.0411,0.0027,0.9991)^\top$, $\hat w=0.1204$, $\hat\alpha=39.2691$, and $\hat\beta=0.7683$ (short-period), and $\hat{\bmu}=(0.0127,0.1849,0.9827)^\top$, $\hat w=0.3217$, $\hat\alpha=0.6850$, and $\hat\beta=0.7365$ (long-period).
In both samples, $\hat \bV$ was ill-conditioned. Thus, we resorted to the reestimated multiplier bootstrap \eqref{eq:slow_bootstrap_process}, which avoids the direct use of $\hat{\bV}^{-1}$.
\begin{table}[htbp]
\centering
\caption{AIC, BIC, and GOF test results for the long- and short-period comet samples. In bold, $p$-values larger than $5\%$.}
\label{tab:comets-gof}
\begin{tabular}{l c c c c c c c c}
\toprule
Model & \multicolumn{4}{c}{Long period} & \multicolumn{4}{c}{Short period} \\
\cmidrule(lr){2-5}\cmidrule(lr){6-9}
 & AIC & BIC & KS & CM & AIC & BIC & KS & CM \\
\midrule
$\mathrm{C}_2$ & 3065.7 & 3078.9 & \textbf{0.0819} & \textbf{0.1648} & 3098.8 & 3112.8 & 0.0010 & 0.0010 \\
SC & 3073.1 & 3090.8 & \textbf{0.0679} & \textbf{0.0969} & 2976.2 & 2994.8 & 0.0010 & 0.0010 \\
UB & 3051.2 & 3073.3 & \textbf{0.0929} & \textbf{0.1928} & 347.7 & 371.0 & \textbf{0.5445} & \textbf{0.5085} \\
\bottomrule
\end{tabular}
\end{table}

\subsection{Wind speed and direction on \texorpdfstring{$\mathbb{H}^2$}{H²}}

We considered an application to wind data, which consists of observations of wind speeds and directions from the Ris{\o} meteorological mast. \cite{Jensen1981} proposed fitting an HvMF distribution to this data after transforming each speed--direction pair $(s,\omega)$ into a point on $\mathbb{H}^2$ by first rescaling the speed by the mean speed, $\bar s$, that is,
$\bx=(\cosh(u),\sinh(u)\cos(\omega),\sinh(u)\sin(\omega))^\top,$ where $u = s/\bar s.$

We used modern open data also from Ris{\o}, distributed through the DTU dataset \texttt{Risoe\_m\_all.nc} \citep{Hansen2021}. The archive contains 10-minute averaged meteorological measurements, including wind speeds and directions at $77\mathrm{m}$ above ground level. We retained the cleaned records from 1996--2004, after availability and quality filters, and recorded the timestamp closest to $12{:}00$ on days $4,8,12,16,20,24,$ and $28$ of each month.

\cite{Jensen1981} considered November--December observations sampled every four days, and claimed that the HvMF model provided a good fit to the data, through graphical inspection. We analyzed the period November--December--January, and additionally considered the May--June--July observations. As shown in Table~\ref{tab:risoe-wind-gof}, the tests did not reject the HvMF model for November--December--January data, consistently with \citet{Jensen1981}, while both tests rejected the HvMF model for May--June--July.

\begin{table}[htbp]
\centering
\caption{MLE estimates and GOF test results for the HvMF distribution fitted to the Ris{\o} wind data. In bold, $p$-values larger than $5\%$.}
\label{tab:risoe-wind-gof}
\begin{tabular}{lcccccc}
\toprule
Months & $n$ & $\widehat{\bmu}$ & $\widehat{\kappa}$ & KS & CM \\
\midrule
Nov--Dec--Jan & 169 & $(1.04,-0.22,-0.16)^\top$ & 1.5 & \textbf{0.4605} & \textbf{0.4975} \\
May--Jun--Jul & 188 & $(1.04, -0.04, -0.28)^\top$ & 1.5 & 0.0010 & 0.0010 \\
\bottomrule
\end{tabular}
\end{table}

Figure~\ref{fig:risoe-wind-densities} compares the fitted parametric densities with circular--linear kernel density estimates %
\citep{Garcia-Portugues2013b}. %
The fitted HvMF density was multiplied by the hyperbolic Jacobian $\sinh(u)$ to compare it with the nonparametric estimate. %
The nonparametric estimate for May--June--July exhibits two distinct modes, which is incompatible with the unimodal HvMF model. For November--December--January, the reported parametric and nonparametric estimates are similar. This is consistent with the decision of the GOF tests in Table~\ref{tab:risoe-wind-gof}.

\ifincludeimages
\begin{figure}[htbp]
\centering
\begin{subfigure}[t]{0.2025\textwidth}
\centering
\includegraphics[width=\textwidth]{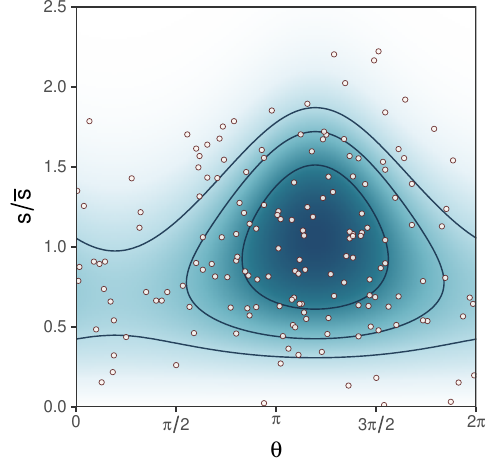}
\caption{Nov--Dec--Jan, \\ parametric fit.}
\end{subfigure}
\hfill
\begin{subfigure}[t]{0.2775\textwidth}
\centering
\includegraphics[width=\textwidth]{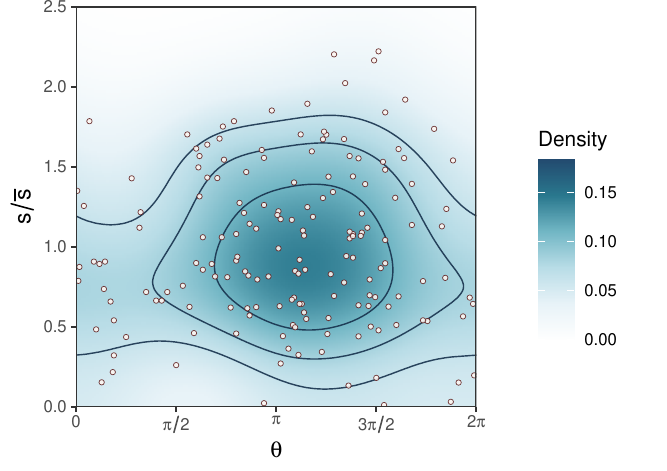}
\caption{Nov--Dec--Jan, \\ nonparametric fit.}
\end{subfigure}
\hfill
\begin{subfigure}[t]{0.2025\textwidth}
\centering
\includegraphics[width=\textwidth]{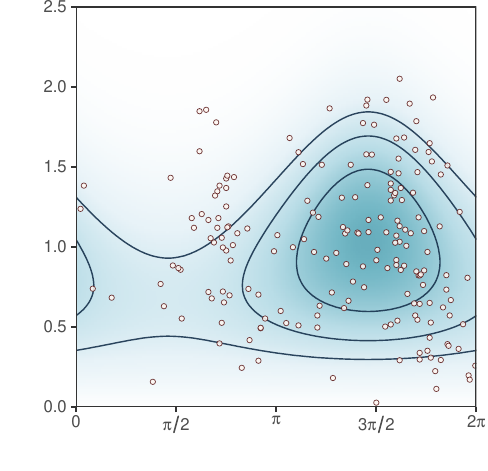}
\caption{May--Jun--Jul, \\ parametric fit.}
\end{subfigure}
\hfill
\begin{subfigure}[t]{0.2775\textwidth}
\centering
\includegraphics[width=\textwidth]{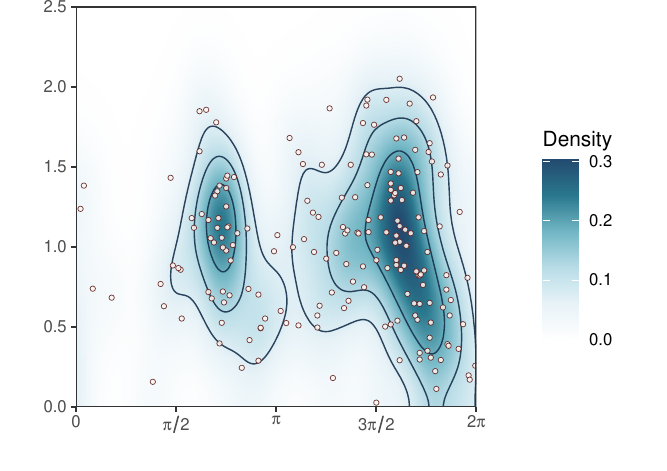}
\caption{May--Jun--Jul, \\ nonparametric fit.}
\end{subfigure}
\caption{Fitted HvMF and nonparametric density estimates for the wind data. The curves delimit highest-density regions with probability contents $25\%,50\%,$ and $75\%$.}
\label{fig:risoe-wind-densities}
\end{figure}
\fi

\subsection{Compositional datasets on \texorpdfstring{$\Delta^d$}{Δᵈ}} \label{subsec:real_data_compositional}

We considered compositional data on the simplex endowed with the Aitchison distance, and tested for logistic normality.
Specifically, we considered several datasets in the R package \texttt{compositions} \citep{vandenBoogaart2025}, as shown in Table \ref{tab:logistic-gaussian-real-data}. %

The datasets \texttt{ArcticLake} and \texttt{Sediments} collect compositions of sand, silt and clay. \citet{Aitchison1982} discussed the goodness-of-fit for logistic normality of \texttt{ArcticLake}, and concluded that the raw sediment compositions show significant departure from logistic normality. This is consistent with the results of our KS and CM tests. \citet{Aitchison1980} fitted logistic normal distributions to the two sediment types in \texttt{Sediments}, for predictive purposes. Our tests showed no significant evidence against the assumption of logistic normality in \texttt{Sediments}. Figure \ref{fig:logistic-gaussian-simplex-d3} shows contour density plots of the fitted logistic Gaussian distribution for all the datasets with $d=2$.

Table \ref{tab:logistic-gaussian-real-data} reports the results of the KS and CM tests, together with the Baringhaus--Henze--Epps--Pulley (BHEP) test of multivariate normality \citep{HenzeWagner1997}, applied after ilr-transforming each composition. Its tuning parameter was chosen by the Henze--Zirkler rule \citep{HenzeZirkler1990}. Overall, the conclusions of our tests are similar to those of BHEP: at the $5\%$ level, all three tests agree for seven of the eight datasets.

\begin{table}[htbp]
\centering
\caption{GOF test results for logistic Gaussianity in the compositional datasets. In bold, $p$-values larger than $5\%$.}
\label{tab:logistic-gaussian-real-data}
\begingroup
\begin{tabular}{lccccc}
\toprule
Dataset & $n$ & $d$ & KS & CM & BHEP \\
\midrule
\texttt{Activity31} & 20 & 5 & \textbf{0.4326} & \textbf{0.3067} & \textbf{0.2817} \\
\texttt{ArcticLake} & 39 & 2 & 0.0020 &0.0010 & 0.0001 \\
\texttt{ClamEast} & 20 & 5 & \textbf{0.4585} & \textbf{0.5165} & \textbf{0.3283} \\
\texttt{HouseholdExp} & 40 & 3 & \textbf{0.1189} & \textbf{0.3337} & \textbf{0.2173} \\
\texttt{PogoJump} & 28 & 2 & \textbf{0.6923} & \textbf{0.7203} & \textbf{0.3453} \\
\texttt{Sediments} & 21 & 2 & \textbf{0.5265} & \textbf{0.4326} & \textbf{0.6516} \\
\texttt{WhiteCells\_microscopic} & 30 & 2 & \textbf{0.0589} & 0.0490 & 0.0015 \\
\texttt{Yatquat\_panel} & 40 & 2 & \textbf{0.5564} & \textbf{0.5514} & \textbf{0.7943} \\
\bottomrule
\end{tabular}
\endgroup
\end{table}

\ifincludeimages
\begin{figure}[htbp]
\centering
\begin{subfigure}[htbp]{0.19\textwidth}
\centering
\includegraphics[width=\textwidth]{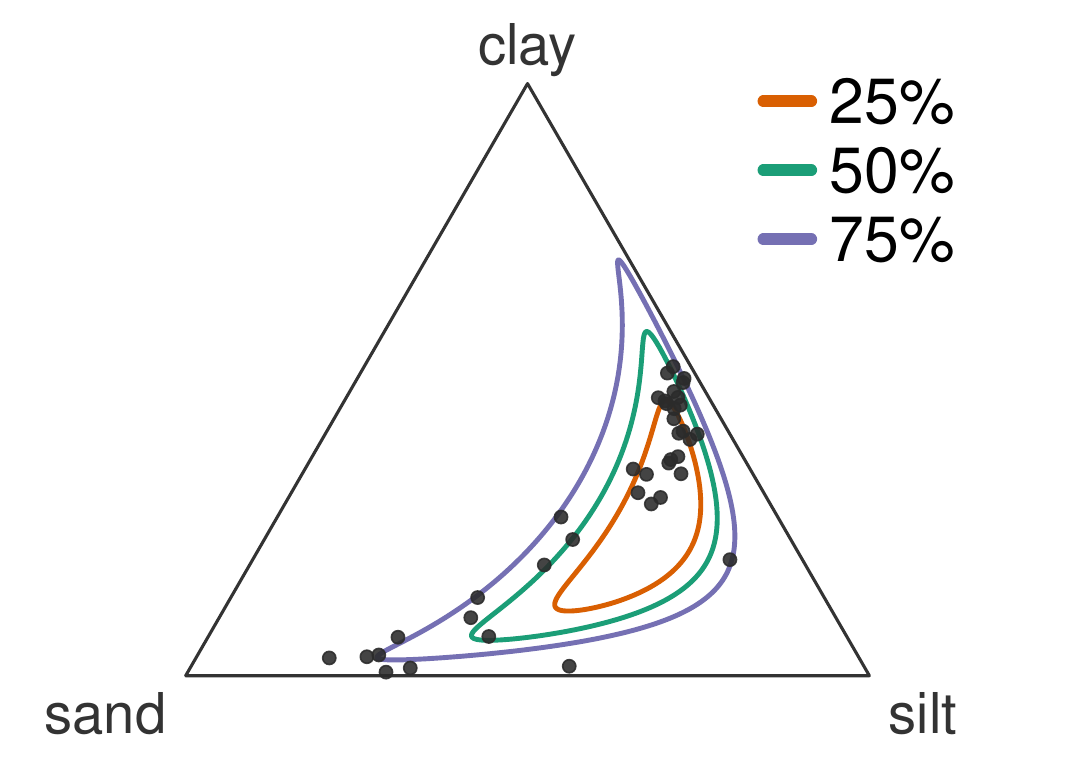}
\caption{\tiny\texttt{ArcticLake}}
\end{subfigure}\hfill
\begin{subfigure}[htbp]{0.19\textwidth}
\centering
\includegraphics[width=\textwidth]{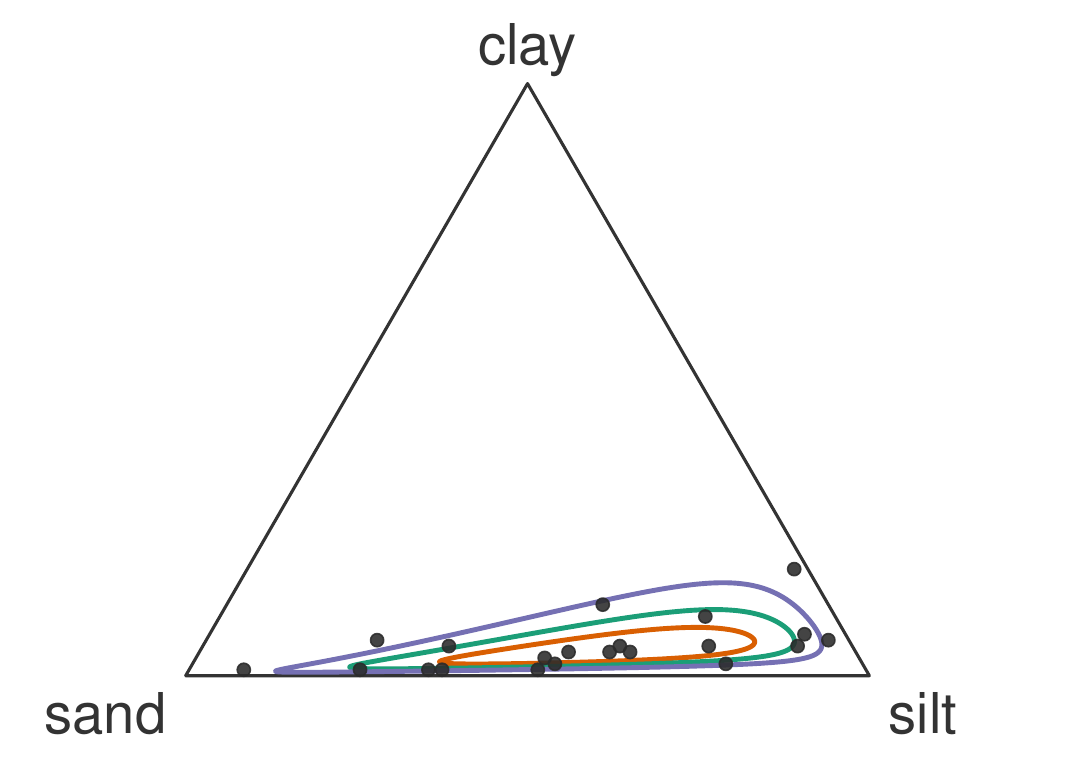}
\caption{\tiny\texttt{Sediments}}
\end{subfigure}\hfill
\begin{subfigure}[htbp]{0.19\textwidth}
\centering
\includegraphics[width=\textwidth]{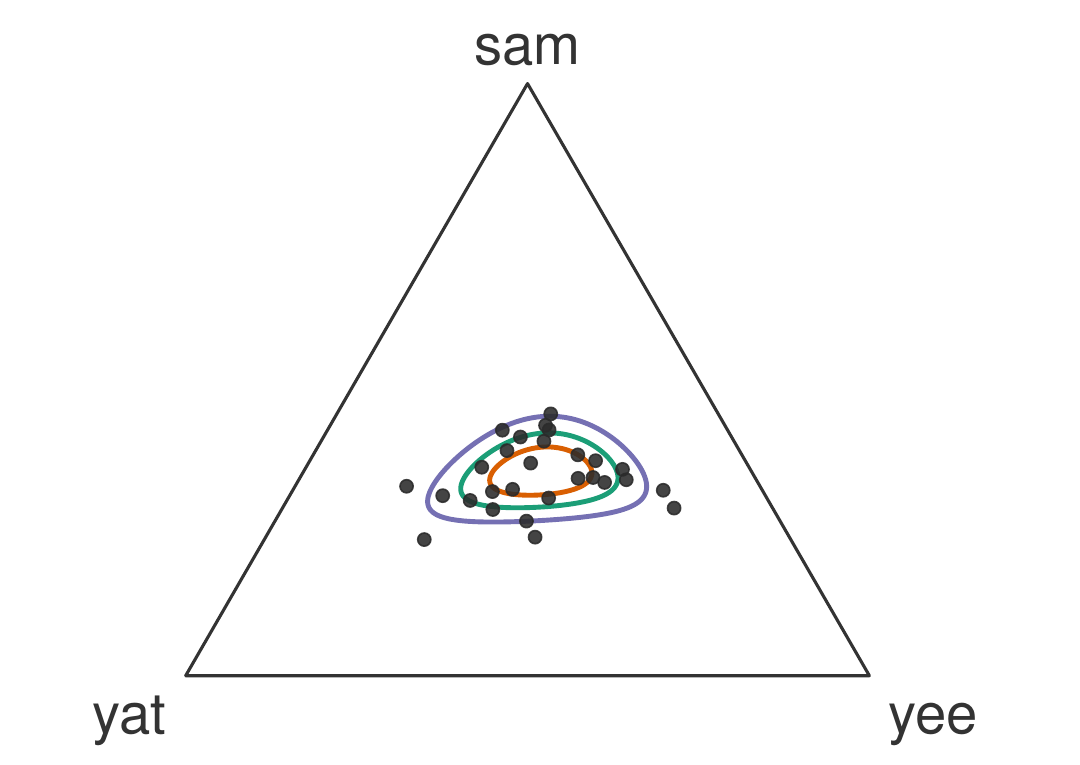}
\caption{\tiny\texttt{PogoJump}}
\end{subfigure}\hfill
\begin{subfigure}[htbp]{0.19\textwidth}
\centering
\includegraphics[width=\textwidth]{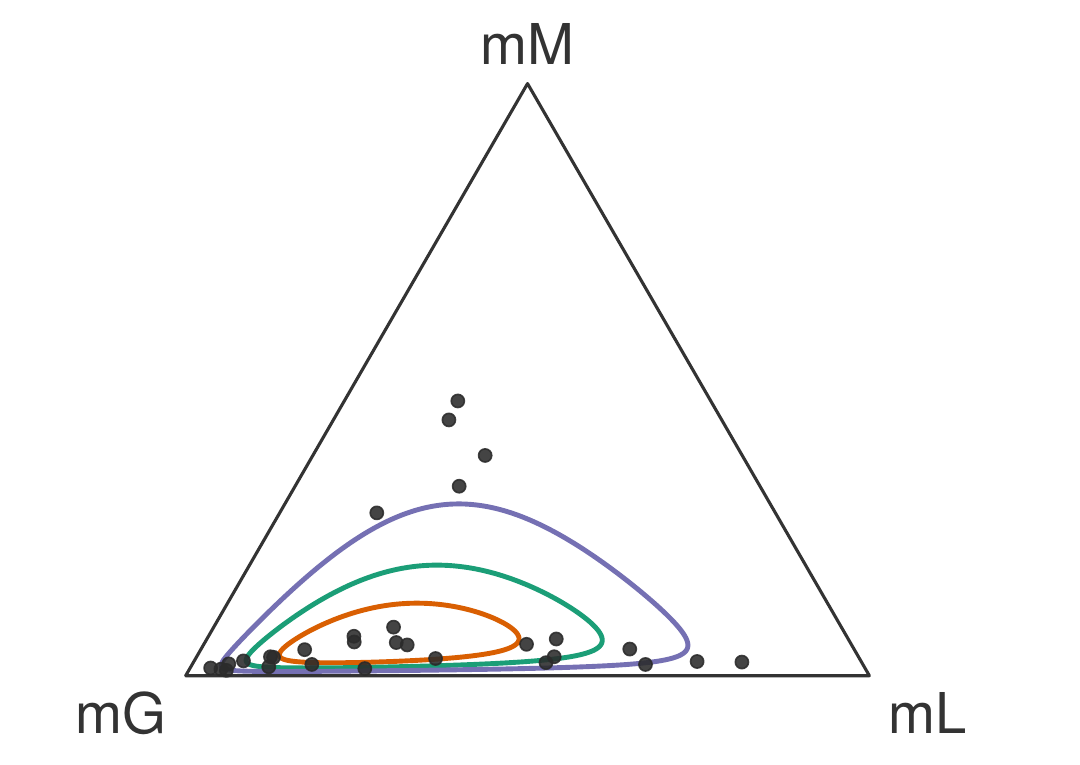}
\caption{\tiny\texttt{WhiteCells\_micro.}}
\end{subfigure}\hfill
\begin{subfigure}[htbp]{0.19\textwidth}
\centering
\includegraphics[width=\textwidth]{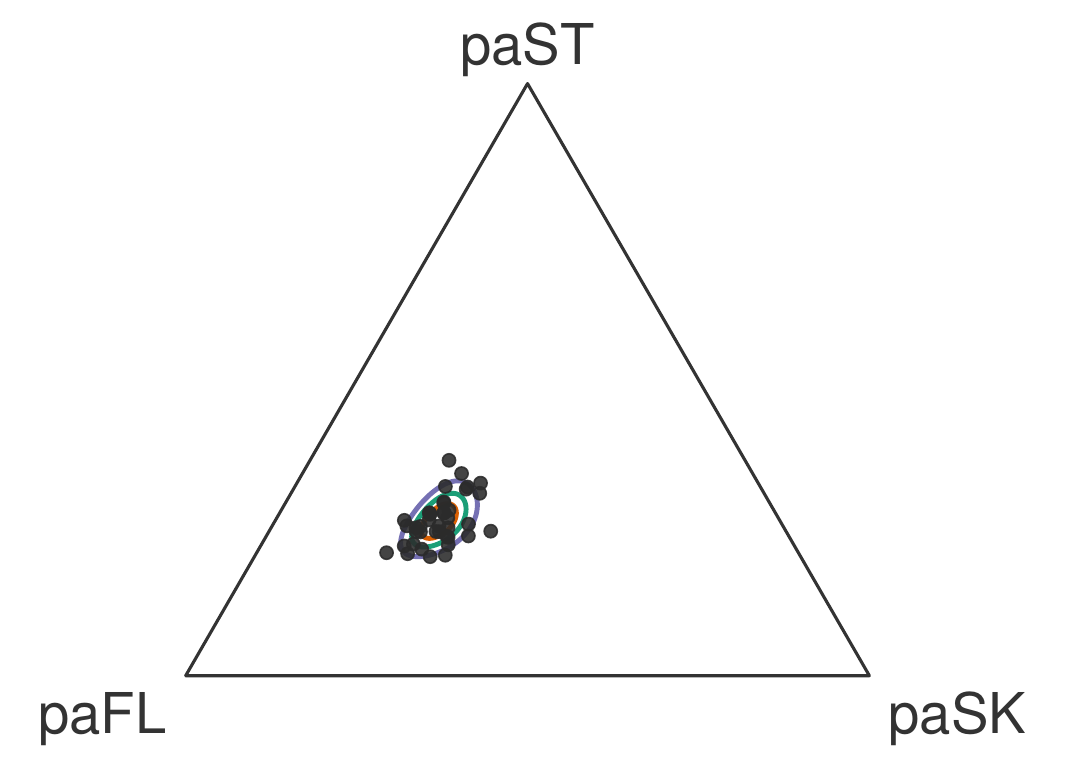}
\caption{\tiny\texttt{Yatquat\_panel}}
\end{subfigure}
\caption{Density contour plots on $\Delta^2$ for the datasets of Table \ref{tab:logistic-gaussian-real-data}.}
\label{fig:logistic-gaussian-simplex-d3}
\end{figure}
\fi

\section{Discussion}\label{sec:discussion}

We have proposed KS and CM tests for simple and composite null hypotheses on separable metric spaces, based on distance-profile empirical processes. The asymptotic theory accounts for parameter estimation and describes the behavior of the tests under the null and under fixed and local alternatives. The multiplier bootstrap provides practical calibration, with an alternative implementation under composite null hypotheses that avoids repeated estimation and evaluation of the fitted profiles. The numerical experiments and applications illustrate the use of the testing framework across different metric spaces.

The work opens several future research directions. First, it would be interesting to investigate whether conditions \ref{b1} and \ref{bp2} can be replaced by the bracketing condition \eqref{eq:bracketing_assumption_1} of \citet{Chen2025}. Such a replacement would shift the entropy assumption from a covering-number condition on the metric space to an $L^1(P)$ bracketing condition on the class of ball indicators, allowing for sub-Gaussian and sub-exponential models with unbounded support. Future work could extend our null and non-null asymptotic results, together with the bootstrap theory, under this condition.

Second, the CM test was conservative in some of the higher-dimensional settings considered in Table~\ref{tab:empirical_null_calibration}. Additional exploratory experiments indicate that its finite-sample behavior can depend on the distribution of the multipliers. It would be of interest to compare the performance of our tests across different dimensions when the multiplier weights follow distributions other than exponential, such as Poisson and two-point distributions.

Third, Proposition~\ref{prop:sup_max_ks} gives conditions under which the KS supremum can be replaced by a maximum over sample centers and pairwise sample distances under the null hypothesis. It remains of interest to determine conditions under which this replacement also preserves consistency against fixed alternatives and the weak limits under $n^{-1/2}$-local alternatives. 

Finally, a broader direction for future work is to extend our GOF methodology to regression models with responses taking values in metric spaces. The residual-based GOF tests for parametric regression models of \citet{Stute1998} and the projection-based approach of \citet{Escanciano2006} provide natural starting points for such an extension.

\section*{Acknowledgments}
The authors gratefully acknowledge the C3 computer resources provided by ``Centro para el Análisis y Modelado de Sistemas Complejos en Ingeniería y Biomedicina'', a center funded by Programa Estatal para Impulsar la Investigación Científico-Técnica y su Transferencia (ref. EQC2021-007184-P) and by the European Union NextGenerationEU/PRTR program.

\bibliographystyle{apalike-custom}
\bibliography{bibliography_main}

\newpage
\section*{Supplement to ``Goodness-of-fit for distributions on metric spaces''.}
The supplementary material contains the proofs of the theoretical results; computational details with model-specific theoretical results used in the code; and numerical experiments illustrating how the KS statistics approach their null limits in simple and composite settings, when the data follow a vMF distribution on $\Sp^2$.

\appendix

\section{\texorpdfstring{Proofs of Section \ref{sec:dp_examples}}{Proofs of Section \ref{sec:dp_examples}}}\label{ap:proof_prop_dp_examples}

\begin{proof}[Proof of Proposition~\ref{prop:dp_examples}]
\emph{Proof of \ref{prop:dp_examples_normal}.} The distance profile at $\bomega \in \R^q$ is
$
F_{\bomega}(t) = \Prob{d_{\R^q}(\bomega, \bX) \le t} = \Prob{\bZ^\top \bZ \le t^2},
$
where $\bZ = \bX - \bomega$ and $\|\cdot\|_2$ is the Euclidean norm in $\R^q$. Thus, $\bZ \sim \mathcal{N}\left(\bmu - \bomega, \bSigma \right)$. We are interested in the distribution of $\bZ^\top \bZ$. Consider $\boldsymbol{\eta} = \bU^\top \bZ$ and define $P_i\defin\eta_i^2/\lambda_i$, $i=1,\ldots,q$. Thus, $\boldsymbol{\eta} \sim \mathcal{N}_q(\boldsymbol{\nu},\mathrm{diag}(\blambda))$. We conclude that
\begin{align*}
\bZ^\top \bZ =
\boldsymbol{\eta}^\top \boldsymbol{\eta} = \sum_{i=1}^q \eta_i^2 =\blambda^\top\bP,
\end{align*}
with $\eta_i$ independent, $\eta_i \sim \mathcal{N}(\nu_i, \lambda_i)$, and thus $P_i$ are also independent and follow a noncentral chi-squared distribution, $P_i \sim \chi_1^2(\nu_i^2/\lambda_i)$. In the special case when $\bomega = \bmu$, we have that $\bZ^\top \bZ = \sum_{i=1}^q \lambda_i P_i,$
where $P_i$ are independent, $P_i \sim \chi_1^2$. If additionally, $\bSigma=\bI_q$, where $\bI_q$ is the $q\times q$ identity matrix, then $\bZ^\top \bZ$ follows a $\chi_q^2$ distribution.

\emph{Proof of \ref{prop:dp_examples_brownian}.} The distance profile of $B$ at a point $\omega \in L^2[0,1]$ is given by $F_{\omega}(t) = \Prob{ \|B - \omega\|_{L^2} \le t}.$
In order to analyze the distribution of $ \|B - \omega\|_{L^2}$, express $B$ and $\omega$ in the Karhunen-Loève expansion. Let $\{e_k\}_{k=1}^\infty$ denote the orthonormal eigenfunctions of the covariance kernel of $B$, given by $K_B(s,t) = \Cov{B(s)}{B(t)} = \min\{s, t\}$. Then, $B$ and $\omega$ can be expressed as
$$
B(t) = \sum_{i=1}^\infty Z_i e_i(t) \text{ and } \omega(t) = \sum_{i=1}^\infty c_i e_i(t).
$$
Here, $Z_i$ are independent random variables with $Z_i \sim \mathcal{N}(0, \lambda_i)$, with $\lambda_i=\left(i-1/2\right)^{-2} \pi^{-2}$, and the coefficients $c_i$ are given by $c_i = \langle \omega, e_i \rangle = \int_{0}^1\omega(t)e_i(t) \, \rd t$. Hence,
$
B(t) - \omega(t) = \sum_{i=1}^\infty ( Z_i- c_i )e_i(t).
$
Define $Q_i\defin(Z_i-c_i)^2/\lambda_i$. Then, $Q_i$ are mutually independent with $Q_i\sim\chi_1^2(c_i^2/\lambda_i)$.
Thus,
$$
\|B - \omega\|_{L^2}^2 = \sum_{i=1}^\infty ( Z_i- c_i )^2 = \sum_{i=1}^\infty\lambda_i Q_i.
$$
Hence, $F_\omega(t)=\Prob{\sum_{i=1}^\infty\lambda_i Q_i\le t^2}$ for $t\ge0$. As a special case, if $\omega(t)=0$ for all $t \in [0,1]$, then $\|B - \omega\|_{L^2}^2 =  \sum_{i=1}^\infty \lambda_i Q_i$, where $Q_i$ are iid and $Q_i \sim \chi_1^2$. Observe the similarity to the distance profile for the multivariate normal when $\bomega = \bmu$.

\emph{Proof of \ref{prop:dp_examples_vmf}.}
Assume first that $|\bmu^\top\bomega|<1$. To characterize the distance profile, the distribution of $T = \bomega^\top \bX$ plays a central role. To simplify notation, let $\bmu_\perp \defin \bmu - (\bmu^\top \bomega)\bomega$, where $\| \bmu_\perp\| = \sqrt{1 - (\bmu^\top\bomega)^2}$. Define also $\Tilde{\bmu} = \bmu_\perp/\| \bmu_\perp \|$.
The tangent-normal decomposition allows expressing any $\bx \in \Sp^{q}$ as $\bx = t\bomega + \sqrt{1-t^2}\bB_{\bomega}\boldsymbol{\xi}$, where $\bB_{\bomega}\in\R^{(q+1)\times q}$ is semi-orthogonal, $\bB_{\bomega}^\top\bB_{\bomega}=\bI_q$, $\bB_{\bomega}\bB_{\bomega}^\top=\bI_{q+1}-\bomega\bomega^\top$, and $\boldsymbol{\xi}\in\Sp^{q-1}$. We have that
\begin{align*}
\bmu^\top \bx = t\bmu^\top \bomega + \sqrt{1-t^2} \bmu^\top \bB_{\bomega}\boldsymbol{\xi} &= t \bmu^\top\bomega +  \sqrt{1-t^2} \bmu_\perp^\top \bB_{\bomega}\boldsymbol{\xi}
\\
&= t \bmu^\top\bomega + \sqrt{(1-t^2)(1-(\bmu^\top\bomega)^2)}\Tilde{\bmu}^\top \bB_{\bomega}\boldsymbol{\xi}.
\end{align*}
Hence,
$$
f_\mathrm{vMF}(\bx;\bmu,\kappa) = c_{q}^\mathrm{vMF}(\kappa)e^{ \kappa\bmu^\top\bomega t} e^{\kappa \sqrt{(1-t^2)(1-(\bmu^\top\bomega)^2)} \Tilde{\bmu}^\top \bB_{\bomega}\boldsymbol{\xi}}.
$$
We know that $\rd\sigma_{q}(\bx) = (1-t^2)^{(q-2)/2} \rd t\,\rd\sigma_{q-1}(\boldsymbol{\xi})$, where $\rd\sigma_{q}$ is the Lebesgue measure on the $q$-dimensional unit sphere. So
\begin{align}\label{integral_form_density}
  f_{T}(t) = c_{q}^\mathrm{vMF}(\kappa)e^{ \kappa\bmu^\top\bomega t} (1-t^2)^{(q-2)/2}
\int_{\Sp^{q-1}} e^{\kappa \sqrt{(1-t^2)(1-(\bmu^\top\bomega)^2)} \Tilde{\bmu}^\top \bB_{\bomega}\boldsymbol{\xi}} \, \rd \sigma_{q-1}(\boldsymbol{\xi}).
\end{align}
Since $\bB_{\bomega}^\top\Tilde{\bmu}\in\Sp^{q-1}$, the integrand in \eqref{integral_form_density} corresponds to the density of a $\mathrm{vMF}\big(\bB_{\bomega}^\top\Tilde{\bmu}, \allowbreak \kappa \sqrt{(1-t^2)(1-(\bmu^\top\bomega)^2)}\big)$ (up to the normalization constant). For $q=1$, we endow $\Sp^0$ with counting measure and set $c_0^\mathrm{vMF}(\kappa)\defin(2\cosh(\kappa))^{-1}$. We conclude that \eqref{eq:density_t_vmf} holds. If $\bomega=\pm\bmu$, then the tangent-normal decomposition gives $\bmu^\top \bx=t\,\bmu^\top\bomega$. Integrating over $\Sp^{q-1}$ yields \eqref{eq:density_t_vmf}. Hence, \eqref{eq:density_t_vmf} holds for all $\bomega\in\Sp^q$.
Obtaining $F_T$, the cdf of $T$, from \eqref{eq:density_t_vmf} is not direct.

For the chordal distance, if $0\le t\le 2$, then $F_{\bomega}(t) = \Probbig{2(1-\bX^\top \bomega)\le t^2} = 1 - F_T\big(1-t^2/2\big), $
whereas $F_{\bomega}(t)=1$ for $t>2$. Under the geodesic distance, if $0\le t\le \pi$, then
$F_{\bomega}(t) = \Probbig{\cos^{-1}(\bX^\top \bomega)\le t} = 1 - F_T(\cos(t)),$
whereas $F_{\bomega}(t)=1$ for $t>\pi$.

\emph{Proof of \ref{prop:dp_examples_hvmf}.} For $\bx\in\mathbb H^q$, define the hyperbolic-spherical coordinates by $\bx=(\cosh(x_r), \allowbreak \sinh(x_r) \bx_s^\top)^\top$, with $x_r \ge 0$ and $\bx_s\in\mathbb S^{q-1}$.
Denoting by $\mu_q$ the invariant measure on $\mathbb{H}^q$ from \citetSM{Jensen1981SM}, and $\sigma_{q-1}$ the Lebesgue measure on $\Sp^{q-1}$ (with $\sigma_0$ understood as counting measure), it holds that $\mu_q(\rd \bx)= \sinh^{q-1}(x_r)\,\rd x_r \,\sigma_{q-1}(\rd \bx_s)$.

Take a hyperbolic transformation $\bA$ such that $\bA\bomega=\be_1=(1,0,\ldots,0)^\top$ \citepSM[see Lemma 2(ii) of][]{Jensen1981SM}. Since hyperbolic transformations preserve $d_{\mathbb H^q}$, the distance profile for $\bX\sim\mathrm{HvMF}(\bmu,\kappa)$ can be written as
\begin{align*}
F_{\bomega}(t)
= \Prob{d_{\mathbb{H}^q}(\bX, \bomega) \le t}
= \Prob{d_{\mathbb{H}^q}(\bA\bX, \be_1) \le t} 
= \Prob{(\bA\bX)_1 \le \cosh(t)},
\end{align*}
for $t\ge0$. Writing $\bA\bX=(\cosh(Z),\sinh(Z)\boldsymbol{Z}_s^\top)^\top$, with
$Z\ge0$ and $\boldsymbol{Z}_s\in\mathbb S^{q-1}$, this gives
$
F_{\bomega}(t)=\Prob{Z\le t}.
$

The case $\rho=0$ is immediate, so assume $\rho>0$.
Since
$d_{\mathbb H^q}(\be_1,\bA\bmu)
=
d_{\mathbb H^q}(\bA\bomega,\bA\bmu)
=
d_{\mathbb H^q}(\bomega,\bmu)
=
\rho$,
writing $\bA\bmu = (\cosh(\rho), \sinh(\rho) \bxi)^\top$, with $(\rho, \bxi) \in \R_{\ge0} \times \Sp^{q-1}$, and since $\bA\bX\sim\mathrm{HvMF}(\bA\bmu,\kappa)$, integrating out the angular component gives
\begin{align*}
f_{Z}(z)
=
\frac{c_{q}^\mathrm{HvMF}(\kappa)}
{c_{q-1}^\mathrm{vMF}(\kappa\sinh(z)\sinh(\rho))}
(\sinh(z))^{q-1}
e^{-\kappa\cosh(\rho)\cosh(z)},
\quad z \ge 0.
\end{align*}
See the proof of Proposition D.1 in \citetSM{Serrano2026SM}. Finally, since
$
(\bomega,\bmu)
=
(\be_1,\bA\bmu)
=
-\cosh(\rho),
$
one has
\begin{align*}
f_{Z}(z)
=
\frac{c_{q}^\mathrm{HvMF}(\kappa)}
{c_{q-1}^\mathrm{vMF}(\kappa\sinh(z)\sinh(\rho))}
(\sinh(z))^{q-1}
e^{\kappa(\bmu,\bomega)\cosh(z)},
\quad z \ge 0.
\end{align*}

\emph{Proof of \ref{prop:dp_examples_logistic_gaussian}.} With the notation of Example~\ref{ex:dp_logistic_gaussian}, let
$$
\bZ \defin \mathrm{ilr}(\bX) - \mathrm{ilr}(\bomega), \qquad
\bW \defin \bU^\top \bZ, \qquad
\bm \defin \bmu-\mathrm{ilr}(\bomega), \qquad
\bv \defin \bU^\top \bm.
$$
One has that $\bZ\sim \mathcal{N}_q(\bm,\bSigma)$ and $\bW\sim \mathcal{N}_q(\bv,\mathrm{diag}(\boldsymbol{\alpha}))$. Since $\bU$ is orthogonal,
$$
d_{\mathrm{A}}(\bX, \bomega)^2 = \|\mathrm{ilr}(\bX)-\mathrm{ilr}(\bomega)\|_2^2 = \bZ^\top \bZ = \bW^\top \bW = \sum_{i=1}^{q} W_i^2,
$$
where
$W_i \sim \mathcal{N}(v_i,\alpha_i),$
for $i=1,\dots,q$. Hence, we have that $W_i = v_i + \sqrt{\alpha_i}\,\varepsilon_i$ where $\varepsilon_i$ are independent and identically distributed, $\varepsilon_i \sim \mathcal{N}(0,1)$. Hence,
$W_i^2 = \alpha_i\left(\varepsilon_i + v_i/\sqrt{\alpha_i}\right)^2.$
By definition, $\left(\varepsilon_i + v_i/\sqrt{\alpha_i}\right)^2$ is a noncentral $\chi^2_1$ random variable with one degree of freedom and noncentrality parameter
$\delta_i = v_i^2/\alpha_i = (\bu_i^\top\bm)^2/\alpha_i$. Therefore, setting $Q_i \defin W_i^2/\alpha_i$, it holds that $Q_i \sim \chi_1^2(\delta_i)$, mutually independent, and
\begin{align*}
d_{\mathrm{A}}(\bX,\bomega)^2
&=\sum_{i=1}^q W_i^2
=\sum_{i=1}^q\alpha_i Q_i
=\boldsymbol{\alpha}^\top\bQ.
\end{align*}
Consequently, $F_{\bomega}(t)=\Prob{\boldsymbol{\alpha}^\top\bQ\le t^2}$ for $t\ge0$.
\end{proof}

\section{Proofs of Section \ref{sec:null_asymptotics}}\label{ap:sec_proofs_null_asymptotics}

\subsection{Proof of Theorem \ref{thm:convergence_G_n_composite}}\label{ap:sec_proofs_null_asymptotics_thm_convergence_G_n_composite}
\begin{proof}[Proof of Theorem \ref{thm:convergence_G_n_composite}]
First, observe that
\begin{align}\label{eq:split_G_n_mu^mu_hat_theta}
\G_{n,\mu}^{\mu_{\hat{\btheta}}}(\omega, t) =
\G_{n,\mu}^{\mu_{\btheta_0}}(\omega, t) 
- \sqrt{n} \left( 
F_{\omega}^{\mu_{\hat{\btheta}}}(t) - F_{\omega}^{\mu_{\btheta_0}}(t)
\right).
\end{align}
By Theorem 3 of \citeSM{Chen2025SM}, $\G_{n,\mu}^{\mu_{\btheta_0}}(\omega, t)$ converges weakly to a Gaussian process:
\begin{align*}
\G_{n,\mu}^{\mu_{\btheta_0}}(\omega, t)  \convl \G_{\btheta_0}(\omega, t) \text { in } \ell^{\infty}(\mathcal{Y}),
\end{align*}
where $\G_{\btheta_0}(\omega, t)$ is a zero-mean Gaussian process with covariance given by 
\begin{align}\label{eq:cov_G_theta_0}
\Cov{y_{\omega_1, t_1}}{y_{\omega_2, t_2}} = \Prob{d(\omega_1, X) \le t_1, d(\omega_2, X) \le t_2} - F_{\omega_1}^{\mu_{\btheta_0}}(t_1)F_{\omega_2}^{\mu_{\btheta_0}}(t_2),
\end{align}
for $\omega_1, \omega_2 \in \Omega$ and $t_1, t_2 \in \R_{\ge0}$.

Now, focus on $F_{\omega}^{\mu_{\hat{\btheta}}}(t) - F_{\omega}^{\mu_{\btheta_0}}(t)$. By differentiability of $\btheta \mapsto F_{\omega}^{\mu_{\btheta}}(t)$ at $\btheta_0$, it holds that
\begin{align}\label{eq:taylor_F_omega_t}
F_{\omega}^{\mu_{\hat{\btheta}}}(t) - F_{\omega}^{\mu_{\btheta_0}}(t) = 
\dot{F}_{\omega}^{\btheta_0}(t)^\top \big(\hat{\btheta} - \btheta_0\big) + o_{\mathsf{P}}\big(\|\hat{\btheta} - \btheta_0 \| \big).
\end{align}
From \eqref{eq:taylor_F_omega_t} and the Bahadur representation \eqref{eq:bahadur}, one obtains that
\begin{align}\label{eq:taylor_bahadur}
\sqrt{n} \left(F_{\omega}^{\mu_{\hat{\btheta}}}(t) - F_{\omega}^{\mu_{\btheta_0}}(t)\right) =
-\dot{F}_{\omega}^{\btheta_0}(t)^\top \bV_{\btheta_0}^{-1} \frac{1}{\sqrt{n}} \sum_{i=1}^n \bpsi_{\btheta_0}(X_i) \ + o_{\mathsf{P}}(1) + \sqrt{n} o_{\mathsf{P}}\left(\|\hat{\btheta} - \btheta_0 \| \right).
\end{align}
Observe that $\dot{F}_{\omega}^{\btheta_0}(t)^\top o_{\mathsf{P}}(1) = o_{\mathsf{P}}(1)$ by condition \ref{d}.
By the Bahadur representation \eqref{eq:bahadur}, $\sqrt{n}\|\hat{\btheta} - \btheta_0\| = O_{\mathsf{P}}(1)$, so $\sqrt{n} o_{\mathsf{P}}\big(\|\hat{\btheta} - \btheta_0 \|\big) = o_{\mathsf{P}}(1)$. 
Hence, by condition \ref{d}, the remainders in \eqref{eq:taylor_bahadur} are $o_{\mathsf{P}}(1)$ uniformly over $(\omega,t)\in\Omega\times\R_{\ge0}$.
Therefore, \eqref{eq:split_G_n_mu^mu_hat_theta} and \eqref{eq:taylor_bahadur} yield that
\begin{align}\label{eq:G_n_Thm_4.1}
\G_{n,\mu}^{\mu_{\hat{\btheta}}}(\omega, t)
& =
\frac{1}{\sqrt{n}}\sum_{i=1}^n
\big( y_{\omega,t}(X_i) - F_{\omega}^{\mu_{\btheta_0}}(t) + g_{\omega,t}(X_i)\big)
+ o_{\mathsf{P}}(1) \notag 
\\
& =
\frac{1}{\sqrt{n}} \sum_{i=1}^n f_{\omega, t}(X_i) + o_{\mathsf{P}}(1),
\end{align}
where 
\begin{align*}
g_{\omega,t}(x) \defin \dot{F}_{\omega}^{\btheta_0}(t)^\top \bV_{\btheta_0}^{-1} \bpsi_{\btheta_0}(x), \quad \text{and } \quad f_{\omega,t}(x) \defin y_{\omega, t}(x) - F_{\omega}^{\mu_{\btheta_0}}(t) + g_{\omega, t}(x).
\end{align*}
Note $\mathsf{E}(f_{\omega, t}(X)) = 0$, so $\G_{n,\mu}^{\mu_{\hat{\btheta}}}$
is, up to an $o_{\mathsf{P}}(1)$ remainder term, the empirical process indexed by the class 
\begin{align*}
\cF \defin \{f_{\omega, t}: \ \omega \in \Omega, \ t \ge0\}.
\end{align*}
Thus, proving that $\cF$ is Donsker concludes the proof.

To prove the Donsker property, write $\cF \subset \cY + \mathcal{G}$, where $\cY \defin \{y_{\omega, t} - F_{\omega}^{\mu_{\btheta_0}}(t): \ \omega \in \Omega, \ t \ge 0\}$ and $\mathcal{G} \defin \{g_{\omega, t}: \ \omega\in \Omega, \ t \ge 0\}$. 
If $\cY$ and $\mathcal{G}$ are Donsker and $\sup_{h\in\cY \cup \mathcal{G}} {|\mathsf{E}(h(X))|} < \infty$, then  $\cY + \mathcal{G}$ is also Donsker \citepSM[see, e.g., Example 2.10.9 in][]{VanderVaartWellner2023SM}, and since $\cF$ is a subset of a Donsker class, it is Donsker.

The class $\cY$ is Donsker by \eqref{eq:thm_3_chendubey}. Corollary 16.1 in \citeSM{DasGupta2011SM} shows that a sufficient condition for $\mathcal{G}$ to be Donsker is that $\mathcal{G}$ is finite-dimensional, and
its envelope function is in $L_2(\mathsf{P})$. The fact that $\mathcal{G}$ is finite-dimensional can be seen by writing
$$
g_{\omega,t}(x) = \dot{F}_{\omega}^{\btheta_0}(t)^\top \bV_{\btheta_0}^{-1}\bpsi_{\btheta_0}(x) = \sum_{i=1}^p \alpha_i(\omega, t) \psi_{\btheta_0, i}(x),
$$
where $\boldsymbol{\alpha}(\omega, t)^\top \defin (\dot{F}_{\omega}^{\btheta_0}(t))^{\top}\bV_{\btheta_0}^{-1} \in \R^p$ (coefficients), and $\psi_{\btheta_0,i}$ represents the $i$-th component of $\bpsi_{\btheta_0}$, $i=1, \ldots,p$. Therefore, one has $\mathcal{G} \subset \mathrm{span}\{\psi_{\btheta_0, 1}, \ldots, \psi_{\btheta_0, p}\}$, a linear subspace of dimension at most $p$. Condition \ref{d} implies that $\sup_{\omega, t} \|\dot{F}_{\omega}^{\btheta_0}(t) \| < \infty $, hence condition \ref{cp2} guarantees that the envelope function of $\mathcal{G}$ is in $L_2(\mathsf{P})$.  Therefore, $\mathcal{G}$ is Donsker.

It is only left to show that $\sup_{h\in\cY \cup \mathcal{G}} {\mathsf{E}|h(X)|} < \infty$. 
This is trivial for class $\cY$. For $\mathcal G$, the $L_2(P)$ envelope is also in $L_1(P)$, since $P$ is a probability measure. Therefore, $\cF$ is Donsker. 
This implies that $\G_{n,\mu}^{\mu_{\hat{\btheta}}}$ converges weakly to a Gaussian process $\tilde{\G}_0$, with mean zero and covariance $\tilde{\mathcal{C}}_{(\omega_1,t_1), (\omega_2, t_2)} \defin %
\mathsf{E}(f_{\omega_1, t_1}(X) f_{\omega_2, t_2}(X))$, which is given in \eqref{eq:cov_comp}.

It remains to derive the asymptotics of $\widehat{\G}_{n,\mu}$. By the definition of
$\hat{f}_{\omega,t}$,
\begin{align*}
\widehat{\G}_{n,\mu}(\omega,t)
-
\G_{n,\mu}^{\mu_{\hat{\btheta}}}(\omega,t)
=
\sqrt n\,P_n\hat{f}_{\omega,t}
-
\sqrt n\left(P_n y_{\omega,t}-F_\omega^{\mu_{\hat{\btheta}}}(t)\right)
=
\sqrt n\,
\dot F_\omega^{\hat{\btheta}}(t)^\top
\widehat{\bV}^{-1}
P_n\bpsi_{\hat{\btheta}}.
\end{align*}
Hence, by conditions~\ref{e} and~\ref{cp5},
\begin{align*}
\sup_{\omega\in\Omega,\,t\ge0}
\left|
\widehat{\G}_{n,\mu}(\omega,t)
-
\G_{n,\mu}^{\mu_{\hat{\btheta}}}(\omega,t)
\right|
\le
\sup_{\omega\in\Omega,\,t\ge0}
\big\|
\dot F_\omega^{\hat{\btheta}}(t)
\big\|
\big\|
\widehat{\bV}^{-1}
\big\|
\sqrt n
\big\|
P_n\bpsi_{\hat{\btheta}}
\big\|
=
o_{\mathsf P}(1).
\end{align*}
This completes the proof.
\end{proof}

\subsection{Proof of Theorem \ref{thm:test_statistics_simp}}\label{ap:sec_proofs_null_asymptotics_thm_test_statistics_simp}

We first state the auxiliary lemmas used in the proof of Theorem \ref{thm:test_statistics_simp}. The proofs of all lemmas in the supplementary material are collected in Section~\ref{ap:sec_proofs_lemmas}. We use the product distance on $(\Omega, d) \times (\R_{\ge 0}, |\cdot|)$, that is, $d_{\Omega \times \R_{\ge 0}}\big((\omega,t), (\omega^\prime, t^\prime)\big) = d(\omega, \omega^\prime) + |t - t^\prime|. $

Lemma \ref{ap:lem_bound_int_g_epsilon} is an auxiliary result in the proof of Lemma \ref{ap:lem_sup_r_n_r}. 

\begin{lemma}\label{ap:lem_bound_int_g_epsilon}
    Fix $\varepsilon > 0$. Let $g: \Omega \times \R_{\ge0} \to \R$ be a bounded, uniformly continuous function on $\Omega \times \R_{\ge0}$. For each $\omega \in \Omega$, define the following regularization of $g$:
\begin{equation}\label{ap:eq_g_epsilon}
        g_\varepsilon(\omega, t) \defin \inf_{s \in [0, M]} \left\{ g(\omega, s) + \frac{2\|g\|_\infty}{\delta} |t - s| \right\}.
\end{equation}    
    Here, $M \defin \mathrm{diam}(\Omega) = \sup_{\omega, \omega^\prime \in \Omega} d(\omega, \omega^\prime)$, and $\delta(\varepsilon) > 0$ is such that for every $(\omega, t), (\omega^\prime, t^\prime) \in \Omega \times [0, M]$ with $d_{\Omega \times \R_{\ge 0}}\big((\omega, t), (\omega^\prime, t^\prime)\big) < \delta$, it holds that $|g(\omega, t) - g(\omega^\prime, t^\prime)| < \varepsilon$. 
    For every fixed $\omega\in\Omega$, the map $t\mapsto g_\varepsilon(\omega,t)$ is Lipschitz on $[0,M]$, with Lipschitz constant $\mathrm{Lip}(g_{\varepsilon}) = 2\|g\|_\infty/\delta$, and for every $\omega \in \Omega$ and $t \in [0, M]$, it holds a.s. that
    $$
    \left| \int_{0}^{M} g_\varepsilon(\omega, t) \, \rd (F_{n,\omega}^{\mu_0} - F_{\omega}^{\mu_0})(t) \right|
    \le
    \frac{2 M \|g\|_\infty}{\delta} \sup_{t \in [0, M]} |F_{n,\omega}^{\mu_0}(t) - F_{\omega}^{\mu_0}(t)|.
    $$
\end{lemma}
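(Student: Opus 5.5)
The proof has two parts: the Lipschitz property, which comes from the inf-convolution structure, and the integral bound, which comes from integration by parts.

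\emph{Lipschitz property.} Write $L\defin 2\|g\|_\infty/\delta$ and fix $\omega$. For each $s\in[0,M]$, the map $t\mapsto g(\omega,s)+L|t-s|$ is $L$-Lipschitz in $t$. The map $t\mapsto g_\varepsilon(\omega,t)$ is a pointwise infimum of this family. It is finite, since it is bounded below by $-\|g\|_\infty$ and above by $g(\omega,t)$ for $t\in[0,M]$. A finite pointwise infimum of $L$-Lipschitz functions is $L$-Lipschitz. Explicitly, for $t,t'$ and any $s$,
\[
g_\varepsilon(\omega,t)\le g(\omega,s)+L|t'-s|+L|t-t'| .
\]
Taking the infimum over $s$ and then exchanging the roles of $t$ and $t'$ gives $|g_\varepsilon(\omega,t)-g_\varepsilon(\omega,t')|\le L|t-t'|$. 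The same argument also gives $|g_\varepsilon|\le\|g\|_\infty$ on $[0,M]$. This is not needed for the bound, but it is useful later, when Lemma~\ref{ap:lem_sup_r_n_r} compares $g_\varepsilon$ with $g$ via uniform continuity at scale $\delta$.

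\emph{Integral bound.} Set $D_n(t)\defin F_{n,\omega}^{\mu_0}(t)-F_{\omega}^{\mu_0}(t)$. Both profiles are distribution functions supported in $[0,M]$, because distances never exceed the diameter. Hence $D_n(M)=0$ almost surely, and $D_n(0^-)=0$. The function $t\mapsto g_\varepsilon(\omega,t)$ is Lipschitz, hence absolutely continuous with derivative a.e. bounded by $L$. $D_n$ is of bounded variation and right-continuous. Integration by parts for a Lebesgue--Stieltjes integral against a continuous BV integrand therefore gives
\[
\int_{[0,M]} g_\varepsilon(\omega,t)\,\rd D_n(t)=\big[g_\varepsilon(\omega,t)D_n(t)\big]_{0^-}^{M}-\int_0^M D_n(t)\,\partial_t g_\varepsilon(\omega,t)\,\rd t .
\]
The boundary terms vanish. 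The continuity of $g_\varepsilon$ ensures there is no jump-correction term, even though $F_{n,\omega}^{\mu_0}$ has atoms, including a possible atom at $0$. This leaves
\[
\Big|\int_0^M g_\varepsilon\,\rd D_n\Big|\le \int_0^M |D_n(t)|\,L\,\rd t\le M L\sup_{t\in[0,M]}|D_n(t)| ,
\]
which is the claimed bound $2M\|g\|_\infty\delta^{-1}\sup_t|D_n(t)|$.

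\emph{Main obstacle.} The main care point is justifying the integration by parts with a discontinuous integrator and being precise about endpoints. I would handle it directly. Write $g_\varepsilon(\omega,t)=g_\varepsilon(\omega,M)-\int_t^M \partial_s g_\varepsilon(\omega,s)\,\rd s$ and apply Fubini to $\int\int 1_{\{t\le s\}}\,\partial_s g_\varepsilon\,\rd s\,\rd D_n(t)$. This produces $-\int_0^M \partial_s g_\varepsilon(\omega,s)\,D_n(s)\,\rd s$, plus the term $g_\varepsilon(\omega,M)\,D_n(M)$, which is zero. Fubini applies because the integrand is bounded and $|D_n|$ has finite total variation, at most $2$.

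The bound holds surely, and a.s. in particular. The only genuine assumption used is $M<\infty$, which holds by total boundedness under condition~\ref{b1}.
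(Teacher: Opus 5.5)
Your proof is correct and follows the paper's argument. You use the same inf-convolution estimate (for each $s$, bound by the value at $t'$ plus $\tfrac{2\|g\|_\infty}{\delta}|t-t'|$, take the infimum, then symmetrize) for the Lipschitz property. You then integrate by parts against $F_{n,\omega}^{\mu_0}-F_{\omega}^{\mu_0}$, using $|\partial_t g_\varepsilon|\le 2\|g\|_\infty/\delta$ a.e. Your treatment of the vanishing boundary terms ($D_n(M)=0$, $D_n(0^-)=0$) and the Fubini justification for the discontinuous integrator spell out what the paper compresses into ``an integration by parts argument gives''.
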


Lemma \ref{ap:lem_sup_r_n_r} is used to prove convergence to zero of a term in Lemma \ref{ap:lem_DCT_P_hat}.
\begin{lemma}\label{ap:lem_sup_r_n_r}
    Let $g: \Omega \times \R_{\ge0} \to \R$ be a bounded, uniformly continuous function on $\Omega \times \R_{\ge0}$.
    For each $\omega \in \Omega$, define
    \begin{align}\label{eq:def_r_r_n}
    r(\omega) \defin \int_{\R_{\ge0}} g(\omega, t) \, \rd F_{\omega}^{\mu_0}(t), \quad r_n(\omega) \defin  \int_{\R_{\ge0}} g(\omega, t) \, \rd F_{n,\omega}^{\mu_0}(t).
    \end{align}
    Under $H_0$, it holds that $\sup_{\omega \in \Omega} |r_n(\omega) - r(\omega)| \convas 0$ as $n \to \infty$.
\end{lemma}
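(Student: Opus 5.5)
The plan is to approximate $g$ by the regularization $g_\varepsilon$ of Lemma~\ref{ap:lem_bound_int_g_epsilon} and split the difference into three pieces. Condition~\ref{b1} makes $\Omega$ totally bounded, so $M=\mathrm{diam}(\Omega)<\infty$. Hence, for every $\omega$, both $F_{n,\omega}^{\mu_0}$ and $F_\omega^{\mu_0}$ put all their mass on $[0,M]$, and the integrals in \eqref{eq:def_r_r_n} can be taken over $[0,M]$. Fix $\varepsilon>0$, choose $\delta=\delta(\varepsilon)$ from the uniform continuity of $g$, and write
\begin{align*}
r_n(\omega)-r(\omega)
&=\int_0^M (g-g_\varepsilon)(\omega,t)\,\rd F_{n,\omega}^{\mu_0}(t)
+\int_0^M g_\varepsilon(\omega,t)\,\rd\big(F_{n,\omega}^{\mu_0}-F_\omega^{\mu_0}\big)(t)\\
&\quad+\int_0^M (g_\varepsilon-g)(\omega,t)\,\rd F_\omega^{\mu_0}(t).
\end{align*}

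For the first and third terms, I will show that $\sup_{\omega\in\Omega,\,t\in[0,M]}|g_\varepsilon(\omega,t)-g(\omega,t)|\le\varepsilon$. Each of these terms is then bounded by $\varepsilon$ uniformly in $\omega$, because both measures are probability measures on $[0,M]$.

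\emph{Upper bound.} Taking $s=t$ in \eqref{ap:eq_g_epsilon} gives $g_\varepsilon\le g$.

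\emph{Lower bound.} Fix $s\in[0,M]$ and split on the distance from $t$:
\begin{itemize}
\item If $|t-s|<\delta$, then $|g(\omega,s)-g(\omega,t)|<\varepsilon$ by the choice of $\delta$. So $g(\omega,s)+(2\|g\|_\infty/\delta)|t-s|\ge g(\omega,t)-\varepsilon$.
\item If $|t-s|\ge\delta$, then $(2\|g\|_\infty/\delta)|t-s|\ge 2\|g\|_\infty$. So $g(\omega,s)+(2\|g\|_\infty/\delta)|t-s|\ge -\|g\|_\infty+2\|g\|_\infty\ge g(\omega,t)$.
\end{itemize}
Taking the infimum over $s$ gives $g_\varepsilon(\omega,t)\ge g(\omega,t)-\varepsilon$, uniformly in $(\omega,t)$.

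For the middle term, Lemma~\ref{ap:lem_bound_int_g_epsilon} bounds it by $(2M\|g\|_\infty/\delta)\sup_{\omega,t}|F_{n,\omega}^{\mu_0}(t)-F_\omega^{\mu_0}(t)|$. This bound is uniform in $\omega$; if the lemma's proof (integration by parts with a Lipschitz integrand) is only stated pointwise, I will note that it applies for every $\omega$ simultaneously. The supremum tends to $0$ almost surely because $\cY$ is $\mu_0$-Glivenko--Cantelli. I expect this to be the main point needing care. It follows from the bracketing condition \eqref{eq:bracketing_assumption_1}, which holds under \ref{b1} and \ref{bp2}: finite $L^1(\mu_0)$ bracketing numbers imply the Glivenko--Cantelli property with almost sure convergence of the outer supremum.

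Combining the three bounds gives $\limsup_n\sup_\omega|r_n(\omega)-r(\omega)|\le 2\varepsilon$ almost surely. Letting $\varepsilon\downarrow0$ along a countable sequence yields the claim. If measurability of the supremum is an issue, the statement is read with outer almost sure convergence.
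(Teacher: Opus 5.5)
Your proposal is correct and follows the paper's proof essentially step by step. It uses the same regularization $g_\varepsilon$, the same two-case argument for $|g_\varepsilon-g|\le\varepsilon$, and the same use of Lemma~\ref{ap:lem_bound_int_g_epsilon} together with a uniform almost sure Glivenko--Cantelli bound for the middle term. The paper bounds the $g-g_\varepsilon$ piece against the signed measure $F_{n,\omega}^{\mu_0}-F_\omega^{\mu_0}$ using total variation at most $2$, which is equivalent to your split into two probability-measure terms.

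The only real difference is the justification of the almost sure Glivenko--Cantelli property. You derive it from the finite $L^1(\mu_0)$ bracketing numbers implied by \ref{b1} and \ref{bp2}, whereas the paper appeals to the Donsker property of $\cY$. Your route is, if anything, the more direct one.
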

    
Lemma \ref{ap:lem_continuity_r} is used in the proof of Lemma \ref{ap:lem_DCT_P_hat}.
\begin{lemma}\label{ap:lem_continuity_r}
    The function $r$ defined in \eqref{eq:def_r_r_n} is continuous on $\Omega$.
\end{lemma}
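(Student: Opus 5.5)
We must prove Lemma \ref{ap:lem_continuity_r}: the map $r(\omega)=\int g(\omega,t)\,\rd F_\omega^{\mu_0}(t)$ is continuous on $\Omega$, where $g$ is bounded and uniformly continuous on $\Omega\times\R_{\ge0}$. The plan is to write $r(\omega)=\mathsf{E}_{\mu_0}\big(g(\omega,d(\omega,X))\big)$, which is the change-of-variables identity for the law of $d(\omega,X)$ under $\mu_0$. Then continuity can be read off directly from the integrand.

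Fix $\omega\in\Omega$ and a sequence $\omega_k\to\omega$; since $\Omega$ is a metric space, sequential continuity suffices. Bound
\begin{align*}
|r(\omega_k)-r(\omega)|\le \mathsf{E}_{\mu_0}\big|g(\omega_k,d(\omega_k,X))-g(\omega,d(\omega,X))\big|.
\end{align*}
By the triangle inequality, $|d(\omega_k,x)-d(\omega,x)|\le d(\omega_k,\omega)$ for every $x$. Hence the product distance between $(\omega_k,d(\omega_k,x))$ and $(\omega,d(\omega,x))$ is at most $2d(\omega_k,\omega)$, uniformly in $x$.

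Given $\varepsilon>0$, uniform continuity of $g$ with respect to $d_{\Omega\times\R_{\ge0}}$ yields $\delta>0$ such that points at product distance less than $\delta$ have $g$-values within $\varepsilon$. Once $2d(\omega_k,\omega)<\delta$, the integrand is below $\varepsilon$ pointwise, so $|r(\omega_k)-r(\omega)|\le\varepsilon$. This in fact gives uniform continuity of $r$, not merely continuity. Boundedness of $g$ is used only so that the integrals are finite; alternatively, dominated convergence would give the same conclusion pointwise.

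The only step needing care is the change of variables $\int g(\omega,t)\,\rd F_\omega^{\mu_0}(t)=\mathsf{E}\,g(\omega,d(\omega,X))$. It holds because $F_\omega^{\mu_0}$ is the cdf of the pushforward of $\mu_0$ under $x\mapsto d(\omega,x)$, and $t\mapsto g(\omega,t)$ is a bounded Borel function. This avoids any need for continuity of $F_\omega$ in $\omega$, which would otherwise be the main obstacle. If the integral were instead read as a Riemann–Stieltjes integral over $\R_{\ge0}$, the two definitions agree here because $g(\omega,\cdot)$ is continuous and bounded and $F_\omega^{\mu_0}$ is continuous by \ref{bp2}.
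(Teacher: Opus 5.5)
Your proof is correct and follows the paper's approach: both rewrite $r(\omega)=\mathsf{E}\,g(\omega,d(\omega,X))$. The paper then passes to the limit along $\omega_k\to\omega_0$ using pointwise continuity and boundedness of $g$ with dominated convergence. Your bound $d_{\Omega\times\R_{\ge0}}\big((\omega_k,d(\omega_k,x)),(\omega,d(\omega,x))\big)\le 2d(\omega_k,\omega)$ instead yields the stronger uniform continuity of $r$; the paper uses this same estimate later, in the proof of Lemma~\ref{ap:lem_DCT_P_hat_local}.
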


Lemma \ref{ap:lem_DCT_P_hat} is used to prove convergence of a term in the proof of Theorem \ref{thm:test_statistics_simp}.
\begin{lemma}\label{ap:lem_DCT_P_hat}
    Let $g: \Omega \times \R_{\ge0} \to \R$ be a bounded, uniformly continuous function on $\Omega \times \R_{\ge0}$. It holds under $H_0$ that
    $$
    \int_{\Omega \times \R_{\ge0}} g(\omega, t) \, \rd F_{n,\omega}^{\mu_0}(t) \, \rd P_n(\omega) \convas \int_{\Omega \times \R_{\ge0}} g(\omega, t) \, \rd F_{\omega}^{\mu_0}(t) \, \rd \mu_0(\omega) \text{ as } n \to \infty.
    $$
\end{lemma}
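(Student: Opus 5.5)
We prove Lemma~\ref{ap:lem_DCT_P_hat} by splitting the difference into an inner-integral error and an outer-integral error. With $r$ and $r_n$ as in \eqref{eq:def_r_r_n}, write
\begin{align*}
\int g \,\rd F_{n,\omega}^{\mu_0}\,\rd P_n - \int g\,\rd F_\omega^{\mu_0}\,\rd\mu_0
= \frac1n\sum_{i=1}^n\big(r_n(X_i)-r(X_i)\big) + \Big(\frac1n\sum_{i=1}^n r(X_i) - \int_\Omega r\,\rd\mu_0\Big).
\end{align*}
Here the inner integral $\int g(\omega,t)\,\rd F_{n,\omega}^{\mu_0}(t)$ is simply $n^{-1}\sum_j g(\omega,d(\omega,X_j))$, which is measurable in $\omega$. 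Under $H_0$ the sample is drawn from $\mu_0$, so $F_{n,\omega}^{\mu_0}$ is the empirical distance profile of an iid $\mu_0$-sample.

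For the first term, bound its absolute value by $\sup_{\omega\in\Omega}|r_n(\omega)-r(\omega)|$. Lemma~\ref{ap:lem_sup_r_n_r} gives that this supremum tends to zero almost surely, so the first term vanishes almost surely. No uniformity over the random evaluation points $X_i$ is needed beyond this supremum bound.

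For the second term, apply the strong law of large numbers to the iid variables $r(X_i)$. Measurability of $r$ follows from Lemma~\ref{ap:lem_continuity_r}, since $r$ is continuous and hence Borel. Integrability is immediate because $|r(\omega)|\le\|g\|_\infty$, as each $F^{\mu_0}_\omega$ is a probability distribution function. Therefore $n^{-1}\sum_i r(X_i)\to \mathsf{E}_{\mu_0} r(X) = \int_\Omega r\,\rd\mu_0$ almost surely. By Fubini, this limit equals the iterated integral $\int g\,\rd F_\omega^{\mu_0}\,\rd\mu_0$ on the right-hand side of the claim.

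The substantive work, uniform control of $r_n-r$, is already done in Lemma~\ref{ap:lem_sup_r_n_r}. The remaining step that needs care is measurability: $\omega\mapsto r(\omega)$ must be Borel for the SLLN, which continuity supplies. One also needs the first term to be a genuine random variable, or to be handled via outer almost-sure convergence. I expect this to hold because $r_n(X_i)=n^{-1}\sum_j g(X_i,d(X_i,X_j))$ is a measurable function of the sample, given that $g$ is continuous and $d$ is continuous. Intersecting the two almost-sure events then completes the proof.
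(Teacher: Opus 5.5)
Your proof is correct and uses the same decomposition as the paper. The inner error is bounded by $\sup_{\omega}|r_n(\omega)-r(\omega)|$ and handled by Lemma~\ref{ap:lem_sup_r_n_r}, which leaves the outer error $\int_\Omega r\,\rd(P_n-\mu_0)$.

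The only difference is how you treat that outer term. The paper uses almost sure weak convergence $P_n\to\mu_0$ (Varadarajan's theorem), together with the boundedness and continuity of $r$ from Lemma~\ref{ap:lem_continuity_r}. You apply the strong law of large numbers to the single bounded function $r$.

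Your route is more elementary, and it needs only Borel measurability of $r$, not continuity. Measurability also follows directly from joint measurability of $(\omega,x)\mapsto g(\omega,d(\omega,x))$ on the separable space, plus Fubini.

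What the paper's route buys is a single null set, independent of $g$. Off that set, $\int f\,\rd P_n\to\int f\,\rd\mu_0$ holds for every bounded continuous $f$ simultaneously, and nothing about $P_n$ is used beyond weak convergence.

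This matters for how the lemma is reused. In the proof of Theorem~\ref{thm:test_statistics_simp}, the paper applies ``the argument of the proof'' to two things. One is the Skorohod-coupled $P_n^\star$, which equals $P_n$ in law only for each fixed $n$ and so is not the empirical measure of one iid sequence. The other is the random integrand $(\G_0^\star)^2$. The proof of Theorem~\ref{thm:bootstrap_test_statistics} similarly uses the lemma on a fixed data realization with an integrand that is random. A $g$-specific SLLN null set does not cover these situations, whereas the weak-convergence argument works pathwise. For the lemma as stated, with a fixed deterministic $g$, your argument is complete.
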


Lemma \ref{ap:lem_relation_L2_prod} connects $d_{\Omega \times \R_{\ge 0}}$ with the $L^2(\mu)$ pseudometric induced by $\G_0$, which we denote by $\rho_{2,\G_0}$. This allows obtaining uniform continuity with respect to $d_{\Omega \times \R_{\ge 0}}$ from uniform continuity with respect to $\rho_{2,\G_0}$, which is required in Lemma \ref{ap:lem_continuity_sample_paths_G_rho2}.
\begin{lemma}\label{ap:lem_relation_L2_prod}
Let $\rho_{2,y}$ and $\rho_{2,\G_0}$ be the $L^2(\mu)$ pseudometrics induced by the class of functions $\cY$ and by the Gaussian process $\G_0$, respectively. 
That is, for $p\ge1$ and $(\omega,t), (\omega^\prime, t^\prime) \in \Omega \times \R_{\ge 0}$, we define
\begin{align}
\rho_{p,y}\big((\omega,t), (\omega^\prime, t^\prime)\big)
&\defin
\Big(\mathsf{E}\big|y_{\omega,t}(X) - y_{\omega^\prime, t^\prime}(X)\big|^p\Big)^{1/p}, \notag
\\
\rho_{p,\G_0}\big((\omega,t),(\omega^\prime,t^\prime)\big)
&\defin
\Big(
\mathsf E\big|
\G_0(\omega,t)-\G_0(\omega^\prime,t^\prime)
\big|^p
\Big)^{1/p}.\label{eq:ap_distances}
\end{align}
It holds that 
\begin{align}\label{eq:rho2_product_bound_simple}
\rho_{2,\G_0}\big((\omega,t),(\omega^\prime,t^\prime)\big)
&\le \rho_{2,y}\big((\omega,t),(\omega^\prime,t^\prime)\big)
\le \sqrt{2K_\mu
d_{\Omega\times\R_{\ge0}}
\big((\omega,t),(\omega^\prime,t^\prime)\big)
},
\end{align}
where $K_\mu$ is the constant in condition \ref{bp2}. In particular, any function that is uniformly continuous with respect to $\rho_{2,\G_0}$ is also uniformly continuous with respect to $d_{\Omega\times\R_{\ge0}}$.
\end{lemma}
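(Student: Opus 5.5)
The plan is to prove the two inequalities in \eqref{eq:rho2_product_bound_simple} separately and then deduce the continuity statement.

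\emph{First inequality.} Since $\G_0$ is the limit of the empirical process indexed by $\cY$, it is a centered Gaussian process with covariance \eqref{eq:cov_simp}. Hence
\[
\rho_{2,\G_0}\big((\omega,t),(\omega^\prime,t^\prime)\big)^2=\Var{y_{\omega,t}(X)-y_{\omega^\prime,t^\prime}(X)}.
\]
A variance is bounded by the corresponding second moment, which gives
\[
\Var{y_{\omega,t}(X)-y_{\omega^\prime,t^\prime}(X)}\le \mathsf E\big(y_{\omega,t}(X)-y_{\omega^\prime,t^\prime}(X)\big)^2=\rho_{2,y}\big((\omega,t),(\omega^\prime,t^\prime)\big)^2 .
\]

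\emph{Second inequality.} The difference of indicators takes values in $\{-1,0,1\}$. Therefore
\[
\rho_{2,y}^2=\mathsf E|y_{\omega,t}(X)-y_{\omega^\prime,t^\prime}(X)|=\Prob{\text{exactly one of } d(\omega,X)\le t,\ d(\omega^\prime,X)\le t^\prime \text{ holds}}.
\]
I would insert the intermediate point $(\omega^\prime,t)$ and use the triangle inequality in $L^1(\mu)$:
\[
\rho_{2,y}^2\le \mathsf E|y_{\omega,t}-y_{\omega^\prime,t}|+\mathsf E|y_{\omega^\prime,t}-y_{\omega^\prime,t^\prime}| .
\]

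The second term equals $|F^\mu_{\omega^\prime}(t)-F^\mu_{\omega^\prime}(t^\prime)|$. By \ref{bp2} it is at most $K_\mu|t-t^\prime|$.

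For the first term, write $\delta\defin d(\omega,\omega^\prime)$. The triangle inequality $|d(\omega,X)-d(\omega^\prime,X)|\le\delta$ gives two inclusions. If $d(\omega,X)\le t<d(\omega^\prime,X)$, then $d(\omega^\prime,X)\in(t,t+\delta]$. In the symmetric case $d(\omega^\prime,X)\le t<d(\omega,X)$, we get $d(\omega^\prime,X)\in(t-\delta,t]$. Combining the two cases,
\[
\mathsf E|y_{\omega,t}-y_{\omega^\prime,t}|\le F^\mu_{\omega^\prime}(t+\delta)-F^\mu_{\omega^\prime}\big((t-\delta)_+\big)\le 2K_\mu\delta,
\]
again by \ref{bp2}. (One could use one inclusion per ball and get $K_\mu\delta$, but the cruder bound suffices.)

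Adding the two terms,
\[
\rho_{2,y}^2\le 2K_\mu d(\omega,\omega^\prime)+K_\mu|t-t^\prime|\le 2K_\mu\, d_{\Omega\times\R_{\ge0}}\big((\omega,t),(\omega^\prime,t^\prime)\big),
\]
and taking square roots gives the second inequality.

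\emph{Continuity statement.} Let $\phi$ be uniformly continuous with respect to $\rho_{2,\G_0}$, and fix $\varepsilon>0$. Choose $\eta>0$ such that $\rho_{2,\G_0}<\eta$ implies $|\phi(\cdot)-\phi(\cdot)|<\varepsilon$. Now set $\delta=\eta^2/(2K_\mu)$. If $d_{\Omega\times\R_{\ge0}}<\delta$, then \eqref{eq:rho2_product_bound_simple} gives $\rho_{2,\G_0}<\eta$, so $\phi$ is uniformly continuous with respect to $d_{\Omega\times\R_{\ge0}}$.

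The only delicate step is the annulus inclusion used for the first term. It needs care with closed versus open endpoints and with truncation at $0$, but the Lipschitz bound \ref{bp2}, which holds uniformly in the center, makes this routine. The identification of the variance of $\G_0$ via \eqref{eq:cov_simp} is also immediate, since $\mu=\mu_0$ under $H_0$.
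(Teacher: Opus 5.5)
Your proof is correct and is essentially the paper's argument. The first inequality is handled identically. For the second, the paper sets $r=d_{\Omega\times\R_{\ge0}}((\omega,t),(\omega',t'))$ and bounds $\mathsf P\big(B(\omega,t)\triangle B(\omega',t')\big)$ in one step, via $B(\omega,t)\setminus B(\omega',t')\subset B(\omega',t'+r)\setminus B(\omega',t')$ and the symmetric inclusion. You instead split off the radius shift through the intermediate point $(\omega',t)$, which even yields the slightly sharper bound $2K_\mu d(\omega,\omega')+K_\mu|t-t'|$.

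Two small remarks concern the center-shift term.

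\begin{itemize}
\item When $t<\delta$, your two-sided annulus $(t-\delta,t+\delta]$ around $\omega'$ has probability $F^\mu_{\omega'}(t+\delta)$, not $F^\mu_{\omega'}(t+\delta)-F^\mu_{\omega'}(0)$. Your bound with $F^\mu_{\omega'}((t-\delta)_+)$ therefore silently uses $\mu(\{\omega'\})=0$. This does hold: for $\delta>0$, an atom at $\omega'$ would make $F^\mu_\omega$ jump at $\delta$, contradicting \ref{bp2}. The one-sided annuli $(t,t+\delta]$ around $\omega'$ and around $\omega$ (your parenthetical alternative, and the paper's choice) avoid the issue altogether.
\item Those one-sided annuli give $K_\mu\delta$ for each of the two pieces, so $2K_\mu\delta$ in total. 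The parenthetical's claim of an overall $K_\mu\delta$ is therefore off, though harmless since you do not use it.
\end{itemize}
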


Lemma \ref{ap:lem_continuity_sample_paths_G_rho2} shows that the Gaussian process $\G_{0}(\omega, t)$ has almost surely uniformly continuous sample paths on $(\Omega \times \R_{\ge 0}, d_{\Omega \times \R_{\ge 0}})$. This result is used, combined with Lemma \ref{ap:lem_DCT_P_hat}, in the proof of Theorem \ref{thm:test_statistics_simp}.

\begin{lemma}\label{ap:lem_continuity_sample_paths_G_rho2}
The process $\G_0(\omega,t)$ has almost surely uniformly continuous sample paths on the metric space $(\Omega\times\R_{\ge0}, d_{\Omega\times\R_{\ge0}})$, considering the sample paths as functions from $(\Omega\times\R_{\ge0}, d_{\Omega\times\R_{\ge0}})$ to $(\R,|\cdot|)$.
\end{lemma}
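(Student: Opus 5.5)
Since $\cY$ is $\mu_0$-Donsker by \eqref{eq:thm_3_chendubey}, the limit $\G_0$ is tight in $\ell^\infty(\cY)$. The plan is to use the standard characterization of tight Gaussian limits of Donsker classes: $\G_0$ has a version whose sample paths are almost surely uniformly continuous with respect to the intrinsic $L^2(\mu_0)$ pseudometric $\rho_{2,y}$ of \eqref{eq:ap_distances}, and the index set is totally bounded for it (e.g., Section 1.5 and Example 1.5.10 in \citealp{VanderVaartWellner2023}, or Kosorok's characterization of tight Brownian bridges). Concretely, the Donsker property gives asymptotic uniform $\rho_{2,y}$-equicontinuity of $\G_{n,\mu}^{\mu_0}$ in probability. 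Hence the limit $\G_0$ is concentrated on $UC(\cY,\rho_{2,y})$, the space of bounded $\rho_{2,y}$-uniformly continuous functions, which is closed in $\ell^\infty(\cY)$. Since $\rho_{2,\G_0}\le\rho_{2,y}$, one could alternatively phrase this directly with $\rho_{2,\G_0}$; either choice works for the next step.

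Next I will transfer continuity from the pseudometric to the product metric using Lemma~\ref{ap:lem_relation_L2_prod}. That lemma gives $\rho_{2,y}\big((\omega,t),(\omega',t')\big)\le \sqrt{2K_\mu\, d_{\Omega\times\R_{\ge0}}((\omega,t),(\omega',t'))}$. Fix a sample path $g=\G_0(\cdot)(\varpi)$ that is $\rho_{2,y}$-uniformly continuous, and fix $\varepsilon>0$. Take $\eta>0$ with $|g(a)-g(b)|<\varepsilon$ whenever $\rho_{2,y}(a,b)<\eta$. Then $d_{\Omega\times\R_{\ge0}}(a,b)<\eta^2/(2K_\mu)$ implies $\rho_{2,y}(a,b)<\eta$, and therefore $|g(a)-g(b)|<\varepsilon$. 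This shows uniform continuity of $g$ on $(\Omega\times\R_{\ge0},d_{\Omega\times\R_{\ge0}})$ on a probability-one event.

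The main obstacle is the first step: making rigorous that the process $\G_0$ itself, rather than some version, has uniformly continuous paths. We also need to handle that $\rho_{2,y}$ is only a pseudometric, so distinct $(\omega,t)$ can index the same function $y_{\omega,t}$ $\mu_0$-a.s. I would address this as follows. Identify $\G_0$ with its separable version (Remark~\ref{rem:separability_G_0}), which is the tight Borel measurable limit in $\ell^\infty$. Then invoke the result that a tight Borel limit of an asymptotically $\rho$-equicontinuous sequence has $P(\G_0\in UC(\cY,\rho_{2,y}))=1$. Finally, observe that points at $\rho_{2,y}$-distance zero receive identical values on that event, so the pseudometric issue causes no difficulty.
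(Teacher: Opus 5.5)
Your proof is correct. It has the same two-step skeleton as the paper's: first, a.s. uniform continuity of the paths with respect to an intrinsic $L^2$ pseudometric; second, the transfer to $d_{\Omega\times\R_{\ge0}}$ via Lemma~\ref{ap:lem_relation_L2_prod}. The two proofs differ only in how the first step is obtained.

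The paper uses only tightness of the Gaussian limit. Lemma~1.5.9 of van der Vaart and Wellner supplies some abstract semimetric $\rho$ under which almost all paths are uniformly continuous. A Gaussian argument then gives uniform $\rho$-continuity in mean square: if $\G_0(y_n)-\G_0(y_n')\to0$ a.s., the variances of these centered normals must vanish. Through the equivalence (i)$\Leftrightarrow$(iii) of that lemma, this upgrades to uniform continuity with respect to $\rho_{2,\G_0}$.

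You instead use the Donsker characterization directly: $\G_{n,\mu}^{\mu_0}$ is asymptotically equicontinuous with respect to the uncentered $L^2(\mu_0)$ semimetric $\rho_{2,y}$. This is legitimate because $\cY$ is uniformly bounded, and it is the same fact the paper invokes in the proof of Proposition~\ref{prop:sup_max_ks}. You then pass to the limit by the portmanteau theorem, since $\{z:\sup_{\rho_{2,y}(a,b)<\delta}|z(a)-z(b)|>\varepsilon\}$ is open in $\ell^\infty(\cY)$; equivalently, $UC(\cY,\rho_{2,y})$ is closed.

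Your route is shorter. It avoids the intermediate abstract semimetric and the variance argument, and it does not use Gaussianity at all. The paper's route needs only tightness of $\G_0$, not the equicontinuity characterization, and it lands on the process's own semimetric $\rho_{2,\G_0}$.

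Two minor remarks. First, the inequality $\rho_{2,\G_0}\le\rho_{2,y}$ only turns $\rho_{2,\G_0}$-continuity into $\rho_{2,y}$-continuity, not the reverse. So your "alternative phrasing" with $\rho_{2,\G_0}$ would need the $\rho_P$ form of the Donsker characterization; this is harmless, since you only use $\rho_{2,y}$. Second, the discussion of versions is not a real obstacle. The set of paths that are uniformly $\rho_{2,y}$-continuous is Borel (indeed closed) in $\ell^\infty(\cY)$, so any Borel random element with the limiting law lies in it with probability one. Points at $\rho_{2,y}$-distance zero then automatically receive equal values.
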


Using Lemmas \ref{ap:lem_bound_int_g_epsilon}--\ref{ap:lem_continuity_sample_paths_G_rho2}, Theorem \ref{thm:test_statistics_simp} is proved.

\begin{proof}[Proof of Theorem \ref{thm:test_statistics_simp}]
First, consider the KS statistic. The continuous mapping theorem gives the asymptotics of $T_n^{\mathrm{KS}}$. 
Since we can identify $\G_{n,\mu}^{\mu_0}(\omega,t) = \G_{n,\mu}^{\mu_0}(y_{\omega,t})$ (see the definition of $\G_{n,\mu}^{\mu_0}$ in Section \ref{sec:null_asymptotics_simple}), it holds that
$$
\sup_{(\omega,t)\in\Omega\times\R_{\ge0}} |\G_{n,\mu}^{\mu_0}(\omega,t)|=\sup_{y\in\cY}|\G_{n,\mu}^{\mu_0}(y)|.
$$
The mapping $\sup_{\cY}: \ell^\infty(\cY) \to \R$ defined by $f \mapsto \sup_{\cY} |f(y)|$ for every $f \in \ell^\infty(\cY)$ is in fact $1$-Lipschitz, hence it is continuous (with respect to the sup-norm).

Focus now on the CM statistic. By Theorems 1 and 3 of \citeSM{Varadarajan1958SM}, $P_n$ converges weakly to $\mu_0$ almost surely. Slutsky's theorem gives $(\G_{n,\mu}^{\mu_0},P_n)\convl(\G_0,\mu_0)$. We know that $\G_0(\omega, t)$ is separable by Remark \ref{rem:separability_G_0}. Therefore, Skorohod's construction \citepSM[see Theorem 1.10.3 in][p. 58]{VanderVaartWellner2023SM} applies to this joint convergence, yielding almost uniform convergence of the coupled pair. Egorov's theorem \citepSM[Lemma 1.9.2 in][]{VanderVaartWellner2023SM} states the equivalence of almost uniform and outer almost sure convergence for sequences of Borel measurable random variables. Therefore, there exist random elements $((\G_{n,\mu}^{\mu_0})^\star,P_n^\star)$ and $\G_0^\star$ defined on the same probability space such that, for every $n$, $ ((\G_{n,\mu}^{\mu_0})^\star,P_n^\star)\overset{d}{=}(\G_{n,\mu}^{\mu_0},P_n)$, $\G_0^\star\overset{d}{=}\G_0,$
and
\begin{equation}\label{eq:aux_CM_sup_convas}
 \sup_{(\omega,t)\in \Omega \times \R_{\ge 0}} \left| (\G_{n,\mu}^{\mu_0})^\star(\omega, t) - \G_0^\star(\omega, t) \right| \convas 0,
 \qquad
 P_n^\star \convas \mu_0 \text{ weakly}.
\end{equation}
The first almost sure convergence in \eqref{eq:aux_CM_sup_convas} follows from the definition of outer almost sure convergence, using the supremum distance. See Definition 1.9.1 in \citetSM{VanderVaartWellner2023SM} for almost uniform and outer almost sure convergence.

For every $\omega\in\Omega$ and $t\ge0$, let $(F_{n,\omega}^{\mu_0})^\star(t) \defin P_n^\star y_{\omega,t}.$ Now, observe that under $H_0$,
\begin{equation}\label{eq:CM_split}
\begin{aligned}
\Bigg| \int_{\Omega \times \R_{\geq 0}} &(\G_{n,\mu}^{\mu_0})^{\star}(\omega, t)^2 \, \rd(F_{n,\omega}^{\mu_0})^\star(t)\,\rd P_n^\star(\omega) - \int_{\Omega \times \R_{\geq 0}} \G_{0}^{\star}(\omega, t)^2 \, \rd F_{\omega}^{\mu_0}(t) \, \rd \mu_0(\omega) \Bigg| \\
&\le
\bigg| \int_{\Omega \times \R_{\ge0}} \left((\G_{n,\mu}^{\mu_0})^{\star}(\omega, t)^2 - \G_{0}^{\star}(\omega, t)^2  \right) \, \rd (F_{n,\omega}^{\mu_0})^\star(t)\,\rd P_n^\star(\omega) \bigg|
\\ &+
\left| \int_{\Omega \times \R_{\ge0}}  \G_{0}^{\star}(\omega, t)^2 \, \rd \left((F_{n,\omega}^{\mu_0})^\star(t)\, P_n^\star(\omega) - F_{\omega}^{\mu_0}(t) \, \mu_0 (\omega)\right) \right|.
\end{aligned}
\end{equation}
For the first term of \eqref{eq:CM_split}, one has
$$
\begin{aligned}
\Bigl| \int_{\Omega \times \R_{\ge0}} \big(&(\G_{n,\mu}^{\mu_0})^{\star}(\omega, t)^2 - \G_{0}^{\star}(\omega, t)^2 \big) \,\rd (F_{n,\omega}^{\mu_0})^\star(t)\,\rd P_n^\star(\omega) \Bigr| \\
&\le 
\int_{\Omega \times \R_{\ge0}} \left| (\G_{n,\mu}^{\mu_0})^{\star}(\omega, t)^2 - \G_{0}^{\star}(\omega, t)^2 \right| \,\rd (F_{n,\omega}^{\mu_0})^\star(t)\,\rd P_n^\star(\omega)\\
&\le 
\sup_{(\omega, t) \in \Omega\times \R_{\ge0}} \left| (\G_{n,\mu}^{\mu_0})^{\star}(\omega, t)^2 - \G_{0}^{\star}(\omega, t)^2 \right| \int_{\Omega \times \R_{\ge0}} \,\rd (F_{n,\omega}^{\mu_0})^\star(t)\,\rd P_n^\star(\omega)\\
&=
\sup_{(\omega, t) \in \Omega\times \R_{\ge0}} \left| (\G_{n,\mu}^{\mu_0})^{\star}(\omega, t)^2 - \G_{0}^{\star}(\omega, t)^2 \right|.
\end{aligned}
$$
Now, observe that
\begin{align*}
\left\|
\big((\G_{n,\mu}^{\mu_0})^{\star}\big)^2-\big(\G_0^{\star}\big)^2
\right\|_\infty
&\le
\left\|
(\G_{n,\mu}^{\mu_0})^{\star}-\G_0^{\star}
\right\|_\infty^2
+
2\left\|\G_0^{\star}\right\|_\infty
\left\|
(\G_{n,\mu}^{\mu_0})^{\star}-\G_0^{\star}
\right\|_\infty
\convas 0,
\end{align*}
by \eqref{eq:aux_CM_sup_convas} and the fact that $\G_{0}^{\star}$ has almost surely bounded sample paths.
Boundedness holds because $\G_0^{\star}$ is a map into $\ell^\infty(\cY)$.

Now, focus on the second term on the RHS of \eqref{eq:CM_split}. The sample paths of the Gaussian process $\G_{0}^{\star}(\omega, t)$ are bounded and uniformly continuous almost surely. Uniform continuity holds by Lemma \ref{ap:lem_continuity_sample_paths_G_rho2}.
The argument of the proof of Lemma \ref{ap:lem_DCT_P_hat} applies to the coupled versions, since \eqref{eq:aux_CM_sup_convas} yields the almost sure weak convergence of $P_n^\star$ and uniform convergence of $(F_{n,\omega}^{\mu_0})^\star$. This gives
\begin{align*}
\int_{\Omega \times \R_{\ge0}} \G_{0}^{\star}(\omega, t)^2 \,
\rd \left((F_{n,\omega}^{\mu_0})^\star(t)\, P_n^\star(\omega)
- F_{\omega}^{\mu_0}(t)\, \mu_0(\omega)\right)
\convas 0 \text{ as } n\to \infty,
\end{align*}
which is the second term on the RHS of \eqref{eq:CM_split}. It follows that
$$
T_n^{\mathrm{CM}} \convl \int_{\Omega \times \R_{\geq 0}} \G_{0}(\omega, t)^2 \,\rd F_{\omega}^{\mu_0}(t) \, \rd \mu_0(\omega) \text{ as } n \to \infty.
$$
This concludes the proof.
\end{proof}

\subsection{Proof of Theorem \ref{thm:test_statistics_comp}}\label{ap:sec_proofs_null_asymptotics_thm_test_statistics_comp}

This section contains the proof of Theorem \ref{thm:test_statistics_comp}, together with the lemmas needed in the proof. 
Observe that Lemmas \ref{ap:lem_bound_int_g_epsilon}--\ref{ap:lem_DCT_P_hat} also hold when $\mu_{0}$ is replaced by $\mu_{\btheta_0}$.

Lemma \ref{ap:lem_relation_L2_prod_composite} extends Lemma \ref{ap:lem_relation_L2_prod} for the composite null hypothesis.
\begin{lemma}\label{ap:lem_relation_L2_prod_composite}
Let $\rho_{2,f}$ and $\rho_{2,\tilde{\G}_0}$ be the $L^2(P)$ pseudometrics induced by the class $\cF$ and by the Gaussian process $\tilde{\G}_0$, respectively. That is, for $p\ge1$ and $(\omega,t),(\omega^\prime,t^\prime)\in\Omega\times\R_{\ge0}$, define
\begin{align*}
\rho_{p,f}\big((\omega,t),(\omega^\prime,t^\prime)\big)
&\defin
\Big(
\Ebig{\big|f_{\omega,t}(X)-f_{\omega^\prime,t^\prime}(X)\big|^p}
\Big)^{1/p}
\\
\rho_{p,\tilde{\G}_0}\big((\omega,t),(\omega^\prime,t^\prime)\big)
&\defin
\Big(
\Ebig{\big|\tilde{\G}_0(\omega,t)-\tilde{\G}_0(\omega^\prime,t^\prime)\big|^p}
\Big)^{1/p}.
\end{align*}
Assume that the map $(\omega,t)\mapsto \dot F_{\omega}^{\btheta_0}(t)$ is uniformly continuous with respect to $d_{\Omega\times\R_{\ge0}}$. Assume also conditions~\ref{cp2}, \ref{cp3}, and \ref{d}. Then, there exists a nondecreasing function $\phi:[0,\infty)\to[0,\infty)$ with $\phi(r)\to0$ as $r\to0$ such that, for all $(\omega,t),(\omega^\prime,t^\prime)\in\Omega\times\R_{\ge0}$,
\begin{align}\label{eq:rho_2_tilde_G_0_rho_2_f_composite}
\rho_{2,\tilde{\G}_0}\big((\omega,t),(\omega^\prime,t^\prime)\big)
= 
\rho_{2,f}\big((\omega,t),(\omega^\prime,t^\prime)\big)
\le
\phi\!\left(
d_{\Omega\times\R_{\ge0}}
\big((\omega,t),(\omega^\prime,t^\prime)\big)
\right).
\end{align}
In particular, any function that is uniformly continuous with respect to $\rho_{2,\tilde{\G}_0}$ is also uniformly continuous with respect to $d_{\Omega\times\R_{\ge0}}$.
\end{lemma}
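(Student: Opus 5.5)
The equality $\rho_{2,\tilde{\G}_0}=\rho_{2,f}$ follows from the covariance structure of $\tilde{\G}_0$ obtained in Theorem~\ref{thm:convergence_G_n_composite}. There, $\tilde{\G}_0$ is the limit of the empirical process indexed by $\cF$, and $\mathsf{E}f_{\omega,t}(X)=0$, so its covariance is $\mathsf{E}(f_{\omega_1,t_1}(X)f_{\omega_2,t_2}(X))$. Expanding the square therefore gives
\[
\mathsf{E}\big|\tilde{\G}_0(\omega,t)-\tilde{\G}_0(\omega^\prime,t^\prime)\big|^2=\mathsf{E}\big|f_{\omega,t}(X)-f_{\omega^\prime,t^\prime}(X)\big|^2 .
\]
The real work is the bound by $\phi(d_{\Omega\times\R_{\ge0}})$.

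To get that bound, I will split $f_{\omega,t}-f_{\omega^\prime,t^\prime}$ into three pieces and apply Minkowski's inequality in $L^2(P)$. The three pieces are:
\begin{enumerate}[label=(\roman*)]
\item the indicator difference $y_{\omega,t}-y_{\omega^\prime,t^\prime}$;
\item the constant $F_{\omega^\prime}^{\mu_{\btheta_0}}(t^\prime)-F_\omega^{\mu_{\btheta_0}}(t)$;
\item $\big(\dot F_\omega^{\btheta_0}(t)-\dot F_{\omega^\prime}^{\btheta_0}(t^\prime)\big)^\top\bV_{\btheta_0}^{-1}\bpsi_{\btheta_0}$.
\end{enumerate}

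For piece (i), Lemma~\ref{ap:lem_relation_L2_prod}, applied with $\mu=\mu_{\btheta_0}$, gives the bound $\sqrt{2K_\mu d}$, writing $d$ for the product distance.

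For piece (ii), Jensen's inequality shows $|F_\omega(t)-F_{\omega^\prime}(t^\prime)|=|\mathsf{E}(y_{\omega,t}-y_{\omega^\prime,t^\prime})|\le\rho_{2,y}$. This is again at most $\sqrt{2K_\mu d}$.

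For piece (iii), Cauchy--Schwarz bounds it by $\|\dot F_\omega^{\btheta_0}(t)-\dot F_{\omega^\prime}^{\btheta_0}(t^\prime)\|\,\|\bV_{\btheta_0}^{-1}\|\,(\mathsf{E}\|\bpsi_{\btheta_0}(X)\|^2)^{1/2}$. The operator norm is finite by \ref{cp3}, which gives non-singularity, and the moment is finite by \ref{cp2}.

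Next I set $\omega_{\dot F}(r)\defin\sup\{\|\dot F_\omega^{\btheta_0}(t)-\dot F_{\omega^\prime}^{\btheta_0}(t^\prime)\|: d_{\Omega\times\R_{\ge0}}((\omega,t),(\omega^\prime,t^\prime))\le r\}$. This modulus is nondecreasing in $r$. It tends to $0$ as $r\to0$ by the assumed uniform continuity, and it is finite because \ref{d} makes $\sup_{\omega,t}\|\dot F_\omega^{\btheta_0}(t)\|$ finite. Taking
\[
\phi(r)\defin 2\sqrt{2K_\mu r}+\|\bV_{\btheta_0}^{-1}\|\big(\mathsf{E}\|\bpsi_{\btheta_0}(X)\|^2\big)^{1/2}\omega_{\dot F}(r)
\]
gives a nondecreasing function with $\phi(r)\to0$, which is what \eqref{eq:rho_2_tilde_G_0_rho_2_f_composite} requires.

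The final claim is then immediate. Suppose $h$ is uniformly continuous with respect to $\rho_{2,\tilde{\G}_0}$, and for a given $\varepsilon$ let $\eta$ be the corresponding tolerance. Choose $r$ small enough that $\phi(r)<\eta$. Any two points at product distance less than $r$ are then within $\eta$ in $\rho_{2,\tilde{\G}_0}$, so $h$ is uniformly continuous with respect to $d_{\Omega\times\R_{\ge0}}$.

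The only mildly delicate step is checking that the modulus $\omega_{\dot F}$ is finite and vanishes at $0$; this is exactly where uniform continuity and \ref{d} enter. Everything else is direct.
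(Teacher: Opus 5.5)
Your proposal is correct and follows essentially the paper's proof: the same covariance identity gives the equality, Lemma~\ref{ap:lem_relation_L2_prod} handles the indicator part, and Cauchy--Schwarz plus the modulus of continuity of $\dot F^{\btheta_0}$ handles the score part. The only difference is cosmetic: the paper merges the indicator difference and its centering constant into one variance term bounded by $\rho_{2,y}$, while you bound them separately at the cost of a harmless factor $2$; you also use condition~\ref{d} explicitly to guarantee that the modulus is finite for all $r$, which the paper leaves implicit.
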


Lemma \ref{ap:lem_continuity_sample_paths_G_rho2_comp} is the analogue of Lemma \ref{ap:lem_continuity_sample_paths_G_rho2} for the composite null hypothesis.

\begin{lemma}\label{ap:lem_continuity_sample_paths_G_rho2_comp}
    Under the conditions of Theorem~\ref{thm:convergence_G_n_composite} and Lemma~\ref{ap:lem_relation_L2_prod_composite}, the process $\tilde{\G}_0(\omega,t)$ has almost surely uniformly continuous sample paths on the metric space $(\Omega\times\R_{\ge0}, d_{\Omega\times\R_{\ge0}})$, considering the sample paths as functions from $(\Omega\times\R_{\ge0}, d_{\Omega\times\R_{\ge0}})$ to $(\R,|\cdot|)$.
\end{lemma}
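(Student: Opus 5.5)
Since $\tilde{\G}_0$ is the weak limit of the empirical process indexed by the Donsker class $\cF$ (Theorem~\ref{thm:convergence_G_n_composite}), it is a tight Gaussian process in $\ell^\infty(\cF)$. We use the standard characterization (e.g., Example 1.5.10 and Section 2.1 in \citeSM{VanderVaartWellner2023SM}): a Donsker class yields a limit whose sample paths are almost surely uniformly continuous with respect to the $L^2(P)$ standard-deviation semimetric $\rho_{2,f}$. In fact $\rho_{2,f}$ coincides with $\rho_{2,\tilde{\G}_0}$, since the functions in $\cF$ are centered: $\mathsf E f_{\omega,t}(X)=0$, as noted in the proof of Theorem~\ref{thm:convergence_G_n_composite}. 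Consequently, there is a version of $\tilde{\G}_0$ whose paths $(\omega,t)\mapsto \tilde{\G}_0(\omega,t)=\tilde{\G}_0(f_{\omega,t})$ are a.s. uniformly continuous on $(\Omega\times\R_{\ge0},\rho_{2,\tilde{\G}_0})$. Here we work with the separable version discussed in Remark~\ref{rem:separability_G_0}, adapted to $\cF$.

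Next we transfer this continuity to $d_{\Omega\times\R_{\ge0}}$ via Lemma~\ref{ap:lem_relation_L2_prod_composite}. Fix a sample path that is uniformly continuous for $\rho_{2,\tilde{\G}_0}$, and take $\varepsilon>0$. Choose $\eta>0$ such that $\rho_{2,\tilde{\G}_0}<\eta$ implies $|\tilde{\G}_0(\omega,t)-\tilde{\G}_0(\omega',t')|<\varepsilon$. Because $\phi(r)\to0$ as $r\to0$ and $\phi$ is nondecreasing, there is $\delta>0$ with $\phi(\delta)<\eta$. Then $d_{\Omega\times\R_{\ge0}}<\delta$ gives, by \eqref{eq:rho_2_tilde_G_0_rho_2_f_composite}, $\rho_{2,\tilde{\G}_0}\le\phi(d)\le\phi(\delta)<\eta$, and therefore the increment is below $\varepsilon$. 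This is exactly the ``in particular'' clause of Lemma~\ref{ap:lem_relation_L2_prod_composite}, applied pathwise on a probability-one event.

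The main subtlety is the first step: the parametrization $(\omega,t)\mapsto f_{\omega,t}$ may fail to be injective, and $\rho_{2,f}$ is only a pseudometric. Neither causes trouble. Uniform continuity for a pseudometric means that points at pseudodistance zero take equal path values a.s., and the composition $(\omega,t)\mapsto f_{\omega,t}\mapsto\tilde{\G}_0(f_{\omega,t})$ inherits uniform continuity because $\rho_{2,f}$ is defined directly on the index pairs.

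As a backup, in case one prefers not to rely on the abstract characterization, there is a more hands-on route. Write $\tilde{\G}_0=\G_{\btheta_0}+\dot F^{\btheta_0}_\omega(t)^\top\bV_{\btheta_0}^{-1}\bZ$ with $\bZ$ a Gaussian vector jointly distributed with $\G_{\btheta_0}$. The first term has uniformly continuous paths by Lemma~\ref{ap:lem_continuity_sample_paths_G_rho2} with $\mu_0$ replaced by $\mu_{\btheta_0}$. The second term is uniformly continuous because $(\omega,t)\mapsto\dot F_\omega^{\btheta_0}(t)$ is uniformly continuous by assumption and $\bZ$ is a.s. finite.
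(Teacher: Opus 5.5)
Your proposal is correct and follows essentially the same route as the paper. The paper's proof works by analogy with Lemma~\ref{ap:lem_continuity_sample_paths_G_rho2}: it uses tightness of the Donsker limit $\tilde{\G}_0$ to get a.s.\ uniform continuity of its paths with respect to the intrinsic $L^2$ semimetric $\rho_{2,\tilde{\G}_0}$, and then transfers this to $d_{\Omega\times\R_{\ge0}}$ via Lemma~\ref{ap:lem_relation_L2_prod_composite}. The only difference is that the paper reaches $\rho_{2,\tilde{\G}_0}$ through Lemma~1.5.9 of van der Vaart and Wellner plus an explicit Gaussian-variance argument, whereas you cite the packaged Gaussian-process characterization directly; your backup decomposition is also a valid alternative.
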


Using Lemmas \ref{ap:lem_relation_L2_prod_composite} and \ref{ap:lem_continuity_sample_paths_G_rho2_comp}, Theorem \ref{thm:test_statistics_comp} can be proved.

\begin{proof}[Proof of Theorem \ref{thm:test_statistics_comp}]
    
We divide the proof into its two statements.

\emph{Proof of \ref{thm:test_statistics_comp_i}.} As explained in the proof of Theorem \ref{thm:convergence_G_n_composite}, $\G_{n,\mu}^{\mu_{\hat{\boldsymbol\theta}}}$ is, up to an $o_{\mathsf{P}}(1)$ remainder (that converges uniformly in $\omega$ and $t$), the empirical process indexed by $\mathcal F$. Therefore, the continuous mapping theorem gives
$$
\tilde T_n^{\mathrm{KS}} \convl \sup_{\omega\in \Omega, t\ge 0}\big|\tilde{\G}_0(\omega,t)\big|.
$$
Using condition \ref{e}, one has
\begin{align*}
\left|
\widehat T_n^{\mathrm{KS}}
-
\tilde T_n^{\mathrm{KS}}
\right|
&\le
\sup_{\omega\in\Omega,\, t\ge0}
\left|
\widehat{\G}_{n,\mu}(\omega,t)
-
\G_{n,\mu}^{\mu_{\hat{\btheta}}}(\omega,t)
\right|
=
o_{\mathsf P}(1),
\end{align*}
so $\widehat T_n^{\mathrm{KS}}$ has the same null limit as $\tilde T_n^{\mathrm{KS}}$.

\emph{Proof of \ref{thm:test_statistics_comp_ii}.} The proof for $\tilde{T}_n^{\mathrm{CM}}$ is completely analogous to that of $T_n^{\mathrm{CM}}$ in Theorem~\ref{thm:test_statistics_simp}. 
It suffices to replace the class $\cY$ by $\cF$, the Gaussian process $\G_0$ by $\tilde{\G}_0$, and Lemmas~\ref{ap:lem_relation_L2_prod} and \ref{ap:lem_continuity_sample_paths_G_rho2} by Lemmas~\ref{ap:lem_relation_L2_prod_composite} and \ref{ap:lem_continuity_sample_paths_G_rho2_comp}, respectively. 

For $\widehat T_n^{\mathrm{CM}}$, one has 
\begin{align*}
\left|
\widehat T_n^{\mathrm{CM}}
-
\tilde T_n^{\mathrm{CM}}
\right|
&\le
\sup_{\omega, t}
\big|
\widehat{\G}_{n,\mu}(\omega,t)
-
\G_{n,\mu}^{\mu_{\hat{\btheta}}}(\omega,t)
\big|
\Big(
2\tilde T_n^{\mathrm{KS}}
+
\sup_{\omega, t}
\big|
\widehat{\G}_{n,\mu}(\omega,t)
-
\G_{n,\mu}^{\mu_{\hat{\btheta}}}(\omega,t)
\big|
\Big) = o_{\mathsf P}(1).
\end{align*}
This concludes the proof.
\end{proof}

\subsection{Proof of Proposition \ref{prop:sup_max_ks}}\label{ap:sec_proofs_null_asymptotics_prop_sup_max_ks}
This section contains the proof of Proposition \ref{prop:sup_max_ks}, as well as technical lemmas required to prove this result.

\begin{lemma}[The sample forms an asymptotic $\delta$-net]
\label{lem:sample-delta-net}
Let $\mathcal{L}_n = \{X_1,X_2, \allowbreak \dots, X_n\}$ be iid draws from a   distribution $\mu$ such that $\Omega \subset \supp{\mu}$.
Then, the sample $\mathcal{L}_n$ is a.s. dense in $\Omega$ as $n$ diverges to infinity, i.e.,
$$
\sup_{\omega\in\Omega}\min_{1\le i\le n} d(\omega,X_i) \to 0 \ \text{ a.s. as } n\to\infty. 
$$
In particular, for every $\varepsilon>0$ the random set $\mathcal{L}_n$ is a.s. a $2\varepsilon$-net of $\Omega$, for $n$ sufficiently large.
\end{lemma}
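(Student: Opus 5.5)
The plan is to combine total boundedness of $(\Omega,d)$, which follows from condition~\ref{b1} as noted in the remark after it, with the fact that every open ball centered in $\Omega\subset\supp{\mu}$ has positive $\mu$-mass. A finite net then reduces the uniform statement to finitely many ``hitting'' events, each controlled by Borel--Cantelli.

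\emph{Step 1: a finite net.} Fix $\varepsilon>0$. By total boundedness, choose finitely many centers $\omega_1,\ldots,\omega_N\in\Omega$, with $N=N(\varepsilon,\Omega,d)<\infty$, such that the open balls $B(\omega_k,\varepsilon)$ cover $\Omega$.

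\emph{Step 2: each ball gets hit.} Since $\omega_k\in\Omega\subset\supp{\mu}$ and $B(\omega_k,\varepsilon)$ is an open set containing $\omega_k$, we have $p_k\defin\mu(B(\omega_k,\varepsilon))>0$. The sets here are the closure of the positive-mass points, but any point of the closure still has positive mass in each of its open neighborhoods: such a neighborhood contains a positive-mass point together with a small open ball around that point. By independence,
$$
\Prob{X_i\notin B(\omega_k,\varepsilon)\ \forall i\le n}=(1-p_k)^n .
$$
These probabilities are summable in $n$, so Borel--Cantelli shows that, almost surely, for all large $n$ some $X_i$ lies in $B(\omega_k,\varepsilon)$. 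Monotonicity in $n$ gives this directly as well. Taking the intersection over the finitely many $k$, almost surely for all $n\ge n_0(\varepsilon,\text{sample})$ every ball $B(\omega_k,\varepsilon)$ contains a sample point.

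\emph{Step 3: uniform bound.} For any $\omega\in\Omega$, pick $k$ with $d(\omega,\omega_k)<\varepsilon$ and a sample point $X_i\in B(\omega_k,\varepsilon)$. Then $d(\omega,X_i)<2\varepsilon$, so
$$
\sup_{\omega\in\Omega}\min_{1\le i\le n}d(\omega,X_i)\le 2\varepsilon
$$
for all $n\ge n_0$ almost surely. This is precisely the $2\varepsilon$-net statement. Applying it along $\varepsilon=1/m$, $m\in\mathbb N$, and intersecting the countably many probability-one events yields the almost sure convergence to zero. Note that $\sup_{\omega}\min_i d(\omega,X_i)$ is nonincreasing in $n$.

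\emph{Main obstacle.} The only delicate point is measurability of $\sup_{\omega\in\Omega}\min_i d(\omega,X_i)$ over the uncountable set $\Omega$. I would handle it by working on the event from Step 2, which is a finite intersection of measurable events and on which the bound holds deterministically. Alternatively, one can note that $\omega\mapsto\min_i d(\omega,X_i)$ is $1$-Lipschitz, so by separability the supremum can be taken over a countable dense subset.
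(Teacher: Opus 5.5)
Your proposal is correct and follows essentially the same route as the paper's proof. Both use a finite $\varepsilon$-net from total boundedness under condition~\ref{b1}, positivity of $\mu(B(\omega_j,\varepsilon))$ because the centers lie in $\supp{\mu}$, Borel--Cantelli with the summable bound $(1-c_\varepsilon)^n$, the triangle inequality to get $2\varepsilon$, and a countable intersection over $\varepsilon=1/k$ combined with monotonicity in $n$. Your extra remarks are correct refinements that the paper leaves implicit: points in the closure of the positive-mass set still have positive mass in every open neighborhood, and the supremum is measurable because $\omega\mapsto\min_i d(\omega,X_i)$ is $1$-Lipschitz and $\Omega$ is separable.
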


Using Lemmas \ref{ap:lem_relation_L2_prod} and \ref{lem:sample-delta-net}, Proposition \ref{prop:sup_max_ks} can be proved.
\begin{proof}[Proof of Proposition \ref{prop:sup_max_ks}]
We divide the proof into its two statements.

\emph{Proof of \ref{prop:sup_max_ks_i}.} Consider the class $\cY = \{y_{\omega,t} : \omega \in \Omega,\ t\ge0\}$. Under $H_0$, the class $\cY$ is Donsker.
Throughout the proof, we use the $L^2(\mu)$ pseudometric $\rho_{2,y}$ defined in Lemma~\ref{ap:lem_relation_L2_prod}, identifying $\cY$ with its index set $\Omega\times\R_{\ge0}$.

Since $\cY$ is Donsker, the empirical process is asymptotically equicontinuous with respect to $\rho_{2,y}$ \citepSM[p. 139 in][]%
{VanderVaartWellner2023SM}: for every $\eta >0$,
$$
\lim_{\delta \to 0^+} \limsup_{n\to \infty} \mathsf{P}^*\Bigg\{ \sup_{\substack{f,g\in\cY\\ \rho_{2,y}(f,g) < \delta}}  \left|\G_{n,\mu}^{\mu_0}(f-g)\right| > \eta \Bigg\} = 0.
$$

Fix $\tau>0$ and $\gamma>0$. By asymptotic equicontinuity, there exists $\delta = \delta(\tau,\gamma) > 0$ such that
$$
\limsup_{n\to\infty} \mathsf{P}^*\Bigg\{\sup_{\substack{f,g\in\cY\\ \rho_{2,y}(f,g)<\delta}} \left| \G_{n,\mu}^{\mu_0}(f) - \G_{n,\mu}^{\mu_0}(g)\right| > \tau \Bigg\} < \gamma.
$$
Hence, for $n$ sufficiently large, 
\begin{align}\label{eq:eta_equicont}
\mathsf{P}^*\Bigg\{\sup_{\substack{f,g\in\cY\\ \rho_{2,y}(f,g) < \delta}} \left| \G_{n,\mu}^{\mu_0}(f) - \G_{n,\mu}^{\mu_0}(g)\right| > \tau \Bigg\} < \gamma.
\end{align}

For any function $y_{\omega,t} \in \cY$, Lemma~\ref{ap:lem_relation_L2_prod} shows that the map $\omega \mapsto y_{\omega,t}$ is uniformly continuous with respect to $\rho_{2,y}$, uniformly in $t$. Indeed,
\begin{align*}
\rho_{2,y}(y_{\omega,t}, y_{\omega^\prime,t})
&\le \sqrt{2K_{\mu_0}\,d(\omega,\omega^\prime)},
\end{align*}
for all $\omega, \omega^\prime \in \Omega$ and all $t\ge0$. Therefore, choosing $r > 0$ such that $r < \delta^2/(2K_{\mu_0}),$
it follows that for all $\omega, \omega^\prime \in \Omega$ and all $t\ge0$, $d(\omega, \omega^\prime) < r$ implies $\rho_{2,y}(y_{\omega,t}, y_{\omega^\prime,t}) < \delta$.

By Lemma~\ref{lem:sample-delta-net}, the sample $\mathcal{L}_n$ is dense in $\Omega$, so for $n$ sufficiently large:
$$
\Prob{\sup_{\omega \in \Omega} \min_{1 \le i \le n} d(\omega, X_i) < r} \geq 1 - \gamma.
$$
Thus, with probability at least $1 - \gamma$, for every $\omega \in \Omega$ there exists $X_i \in \mathcal{L}_n$ such that $d(\omega, X_i) < r$, and hence by uniform continuity $\rho_{2,y}(y_{\omega,t}, y_{X_i,t}) < \delta$ for all $t\ge0$. 
This means that $\cY_n \defin \{y_{X_i,t} : i=1,\ldots,n,\ t\ge0\}$ forms a $\delta$-net for $\cY$ with probability at least $1-\gamma$ when $n$ is sufficiently large, i.e., for every $y_{\omega,t} \in \cY$ there exists $y_{X_i,t} \in \cY_n$ such that $\rho_{2,y}(y_{\omega,t}, y_{X_i,t}) < \delta$.

By the triangle inequality, one has
$$
\left| \G_{n,\mu}^{\mu_0}(y_{\omega,t}) \right| \le \left|\G_{n,\mu}^{\mu_0}(y_{X_i,t})\right| + \left| \G_{n,\mu}^{\mu_0}(y_{\omega,t}) - \G_{n,\mu}^{\mu_0}(y_{X_i,t}) \right|.
$$
Therefore, if $\cY_n$ is a $\delta$-net of $\cY$, then
\begin{align*}
    \sup_{y\in \cY} \left| \G_{n,\mu}^{\mu_0}(y) \right| \le \max_{1 \le i \le n}\sup_{t\ge0} \left|\G_{n,\mu}^{\mu_0}(y_{X_i,t})\right| + \sup_{\substack{f,g\in\cY\\ \rho_{2,y}(f,g) < \delta}} \left| \G_{n,\mu}^{\mu_0}(f) - \G_{n,\mu}^{\mu_0}(g) \right|.
\end{align*}
The last term on the right is less than or equal to $\tau$ with (outer) probability at least $1-\gamma$ by \eqref{eq:eta_equicont}. Trivially, $\max_{1 \le i \le n}\sup_{t\ge0} \left|\G_{n,\mu}^{\mu_0}(y_{X_i,t})\right| \le \sup_{y\in \cY} \left| \G_{n,\mu}^{\mu_0}(y) \right|$. 
Hence, for every $\tau>0$ and $\gamma>0$, it holds that
$$
\mathsf{P}^*\Bigg\{\max_{1 \le i \le n}\sup_{t\ge0} \left|\G_{n,\mu}^{\mu_0}(y_{X_i,t})\right| \le \sup_{y\in \cY} \left| \G_{n,\mu}^{\mu_0}(y) \right| \le \max_{1 \le i \le n}\sup_{t\ge0} \left|\G_{n,\mu}^{\mu_0}(y_{X_i,t})\right| + \tau \Bigg\} \ge 1 - 2\gamma.
$$
This means that 
$$
\mathsf{P}^*\Bigg\{\Big|\sup_{y\in\cY} \left|\G_{n,\mu}^{\mu_0}(y)\right| - \max_{1\le i \le n}\sup_{t\ge0} \left| \G_{n,\mu}^{\mu_0}(y_{X_i,t}) \right| \Big| > \tau \Bigg\} < 2\gamma
$$
for $n$ sufficiently large. Therefore, 
$$
\sup_{y\in\cY} \left| \G_{n,\mu}^{\mu_0}(y)\right| - \max_{1\le i \le n}\sup_{t\ge0} \left| \G_{n,\mu}^{\mu_0}(y_{X_i,t}) \right| \convprob{\mathsf{P}^*} 0 \text{ as } n \to \infty,
$$
that is,
$$
\sup_{\omega\in \Omega,\ t\ge0} \left|\G_{n,\mu}^{\mu_0}(\omega,t)\right| = \max_{1\le i \le n}\sup_{t\ge0} \left|\G_{n,\mu}^{\mu_0}(X_i,t)\right| + o_{\mathsf{P}^*}(1).
$$
Condition~\ref{bp2} implies that, conditionally on each $X_i$, the positive distances $d(X_i,X_j)$, $j\ne i$, are almost surely distinct. Since $F_{X_i}^{\mu_0}$ is continuous on $(0,\infty)$, the standard argument for the KS statistic discretizes the supremum over $t$ into a maximum over the observed distances.

\emph{Proof of \ref{prop:sup_max_ks_ii}.}
By \eqref{eq:G_n_Thm_4.1}, it holds that
$$
\sup_{\omega\in\Omega,\,t\ge0}
\left|\G_{n,\mu}^{\mu_{\hat{\btheta}}}(\omega,t)\right|
=
\sup_{\omega\in\Omega,\,t\ge0}
\left|
\frac1{\sqrt n}\sum_{k=1}^n f_{\omega,t}(X_k)
\right|
+o_{\mathsf P^*}(1).
$$
The reduction of the supremum of the empirical process indexed by $\cF$ over $\omega$ to a maximum over the sample centers is analogous to that of \eqref{eq:sup_max_ks}. One replaces $\cY$ by $\cF$, and Lemma~\ref{ap:lem_relation_L2_prod} by Lemma~\ref{ap:lem_relation_L2_prod_composite}. Since the remainder in \eqref{eq:G_n_Thm_4.1} is uniform, it also follows that
$$
\max_{1\le i\le n}\sup_{t\ge0}
\left|\G_{n,\mu}^{\mu_{\hat{\btheta}}}(X_i,t)\right|
=
\max_{1\le i\le n}\sup_{t\ge0}
\left|
\frac1{\sqrt n}\sum_{k=1}^n f_{X_i,t}(X_k)
\right|
+o_{\mathsf P^*}(1).
$$
Therefore,
$$
\sup_{\omega\in\Omega,\,t\ge0}
\left|\G_{n,\mu}^{\mu_{\hat{\btheta}}}(\omega,t)\right|
=
\max_{1\le i\le n}\sup_{t\ge0}
\left|\G_{n,\mu}^{\mu_{\hat{\btheta}}}(X_i,t)\right|
+o_{\mathsf P^*}(1).
$$
It remains to discretize the supremum over $t$ at each sample center. Unlike in the simple case, condition \ref{bp2} does not directly imply continuity of the fitted profile $F_\omega^{\mu_{\hat\btheta}}$. We show instead that any jumps of $F_\omega^{\mu_{\hat\btheta}}$ are asymptotically negligible. Condition~\ref{d} and the Bahadur representation give
\begin{align*}
\sqrt{n}\sup_{\omega\in\Omega,\,t\ge0}
\left|
F_\omega^{\mu_{\hat{\btheta}}}(t)
-F_\omega^{\mu_{\btheta_0}}(t)
-\dot F_\omega^{\btheta_0}(t)^\top(\hat{\btheta}-\btheta_0)
\right|
=o_{\mathsf P}(1).
\end{align*}
The first-order approximation to $F_\omega^{\mu_{\hat\btheta}}$ is continuous in $t$ by condition~\ref{bp2} and the assumed uniform continuity of $\dot F^{\btheta_0}$. This yields $\sqrt{n}\sup_{\omega\in\Omega, t>0} \left\{ F_\omega^{\mu_{\hat\btheta}}(t) - F_\omega^{\mu_{\hat\btheta}}(t-) \right\} = o_{\mathsf P}(1)$, which, together with the standard KS argument, gives $\max_{1\le i\le n}\sup_{t\ge0} \left|\G_{n,\mu}^{\mu_{\hat\btheta}}(X_i,t)\right| = \max_{1\le i,j\le n} \left| \G_{n,\mu}^{\mu_{\hat\btheta}} (X_i,d(X_i,X_j)) \right| + o_{\mathsf P}(1)$. This concludes the proof.
\end{proof}

\section{Proofs of Section \ref{sec:non_null_asymptotics}} \label{ap:sec_proofs_non_null_asymptotics}

\subsection{Fixed alternatives} \label{ap:sec_proofs_fixed_alternatives}

\subsubsection{Simple null hypothesis} \label{ap:sec_proofs_fixed_alternatives_simple}

Theorem \ref{thm:fixed_alternative_divergence_simple} states the consistency of the KS and CM test statistics under fixed alternatives. We now turn to the CM test statistic. To prove the result for the CM test statistic, Lemmas \ref{lem:delta_regularity}, \ref{lem:robust_separation} and \ref{lem:positive_mass_region} are needed.

\begin{lemma}\label{lem:delta_regularity}
Let $M\defin\mathrm{diam}(\Omega)$ and define $\Delta(\omega,t)\defin F^\mu_\omega(t)-F^{\mu_0}_\omega(t)$ for $(\omega,t)\in\Omega\times[0,M]$.
Condition~\ref{bp2} gives, for any
$\omega,\omega^\prime\in\Omega$ and each $t\in[0,M]$,
$$
|\Delta(\omega,t)-\Delta(\omega^\prime,t)|
\le
(K_\mu+K_{\mu_0})\,d(\omega,\omega^\prime).
$$
That is, for each $t\in[0,M]$ the map $\omega\mapsto \Delta(\omega,t)$ is Lipschitz on $\Omega$ with Lipschitz constant bounded uniformly over $t\in[0,M]$.
\end{lemma}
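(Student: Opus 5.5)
The plan is to show first that, for any Borel probability measure $\nu$ satisfying condition \ref{bp2} with constant $K_\nu$, the map $\omega\mapsto F^\nu_\omega(t)$ is Lipschitz with constant $K_\nu$, uniformly in $t$. The lemma then follows by applying this to $\nu=\mu$ and $\nu=\mu_0$ and using the triangle inequality. The key idea is to trade a displacement of the center for an enlargement of the radius via a ball inclusion, and then use the Lipschitz continuity in the radius supplied by condition \ref{bp2}.

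Fix $\omega,\omega^\prime\in\Omega$ and $t\in[0,M]$, and set $\delta\defin d(\omega,\omega^\prime)$. By the triangle inequality, if $d(\omega,x)\le t$ then $d(\omega^\prime,x)\le t+\delta$. Hence $\overline B(\omega,t)\subset\overline B(\omega^\prime,t+\delta)$, and therefore
\begin{align*}
F^\nu_\omega(t)\le F^\nu_{\omega^\prime}(t+\delta)\le F^\nu_{\omega^\prime}(t)+K_\nu\,\delta .
\end{align*}
The second inequality is condition \ref{bp2} applied at the center $\omega^\prime$ with $r=t+\delta$ and $s=t$. This is legitimate even when $t+\delta>M$, because \ref{bp2} holds for all $r,s\in\R_{\ge0}$ and is uniform over centers. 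Exchanging the roles of $\omega$ and $\omega^\prime$ gives the reverse inequality, so
\begin{align*}
|F^\nu_\omega(t)-F^\nu_{\omega^\prime}(t)|\le K_\nu\, d(\omega,\omega^\prime).
\end{align*}

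Applying this with $\nu=\mu$ and with $\nu=\mu_0$, we get
\begin{align*}
|\Delta(\omega,t)-\Delta(\omega^\prime,t)|\le|F^\mu_\omega(t)-F^\mu_{\omega^\prime}(t)|+|F^{\mu_0}_\omega(t)-F^{\mu_0}_{\omega^\prime}(t)|\le(K_\mu+K_{\mu_0})\,d(\omega,\omega^\prime),
\end{align*}
which is the claim. The constant does not depend on $t$, so the Lipschitz bound is uniform over $t\in[0,M]$.

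The only point needing care is that \ref{bp2} must be available for both measures. For $\mu$ this is the standing assumption. For $\mu_0$ it is implicit in the setting, which assumes \ref{bp2} throughout for the measures under consideration; I would state this explicitly at the start of the proof. There is no genuine obstacle beyond this: the ball-inclusion step is elementary, and the uniformity over centers in \ref{bp2} is exactly what makes the constant independent of $\omega^\prime$.
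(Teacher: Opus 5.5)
Your proposal is correct and follows essentially the same route as the paper: the ball inclusion $\overline B(\omega,t)\subset\overline B(\omega^\prime,t+d(\omega,\omega^\prime))$ in both directions, then condition~\ref{bp2} to bound the increment in the radius by $K_\nu\,d(\omega,\omega^\prime)$, and finally the triangle inequality over $\mu$ and $\mu_0$. Your explicit remark that \ref{bp2} must also hold for $\mu_0$ is a fair clarification of an assumption the paper uses implicitly through $K_{\mu_0}$.
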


\begin{lemma}\label{lem:robust_separation}
Assume that condition \ref{bp2} holds.
If there exist $\omega_0\in\Omega$ and $t_0\in[0,M]$ such that $\Delta(\omega_0,t_0)\neq 0$, then there exist $\varepsilon_\omega>0$, $\varepsilon_t>0$, and $c_0>0$ such that
$$
|\Delta(\omega,t)|\ge c_0
\qquad\forall\,\omega\in B(\omega_0,\varepsilon_\omega),\ \forall\,t\in (t_0-\varepsilon_t,t_0+\varepsilon_t)\cap[0,M].
$$
\end{lemma}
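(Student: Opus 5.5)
The idea is to combine the joint Lipschitz regularity of $\Delta$ in each argument. In $\omega$, Lemma~\ref{lem:delta_regularity} gives that $\omega\mapsto\Delta(\omega,t)$ is Lipschitz with constant $K_\mu+K_{\mu_0}$, uniformly in $t\in[0,M]$. In $t$, condition~\ref{bp2} is applied to both $\mu$ and $\mu_0$ (as the paper assumes throughout), so for every $\omega$ and $s,t\in[0,M]$,
\[
|\Delta(\omega,t)-\Delta(\omega,s)|\le |F^\mu_\omega(t)-F^\mu_\omega(s)|+|F^{\mu_0}_\omega(t)-F^{\mu_0}_\omega(s)|\le (K_\mu+K_{\mu_0})|t-s|.
\]
Together these make $\Delta$ Lipschitz on $\Omega\times[0,M]$ with respect to $d_{\Omega\times\R_{\ge0}}$, with constant $K\defin K_\mu+K_{\mu_0}$.

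Set $a\defin|\Delta(\omega_0,t_0)|>0$ and $c_0\defin a/2$. Choose $\varepsilon_\omega=\varepsilon_t\defin a/(4K)$, or any positive values if $K=0$, in which case $\Delta$ is constant in both arguments and the claim is immediate. For $\omega\in B(\omega_0,\varepsilon_\omega)$ and $t\in(t_0-\varepsilon_t,t_0+\varepsilon_t)\cap[0,M]$, pass through the intermediate point $(\omega_0,t)$ using the triangle inequality:
\[
|\Delta(\omega,t)-\Delta(\omega_0,t_0)|\le|\Delta(\omega,t)-\Delta(\omega_0,t)|+|\Delta(\omega_0,t)-\Delta(\omega_0,t_0)|\le K\,d(\omega,\omega_0)+K|t-t_0|<\tfrac a4+\tfrac a4=\tfrac a2 .
\]
Hence $|\Delta(\omega,t)|\ge a-a/2=c_0$.

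There is no real obstacle. The only point requiring care is checking that the $t$-Lipschitz bound holds for $\Delta$, which needs \ref{bp2} for both measures. It is also worth noting that $t$ is restricted to $[0,M]$, the domain on which $\Delta$ was defined in Lemma~\ref{lem:delta_regularity}; the intersection with $[0,M]$ in the statement handles points $t_0$ near the endpoints.
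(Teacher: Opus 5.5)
Your proof is correct and follows essentially the same route as the paper. The paper likewise uses the $t$-Lipschitz bound from \ref{bp2}, applied to both $\mu$ and $\mu_0$, at the fixed center $\omega_0$, and then Lemma~\ref{lem:delta_regularity} (uniform in $t$) to pass from $(\omega_0,t)$ to $(\omega,t)$; it differs only in the constants ($\varepsilon_t=\delta_0/(2K)$, $\varepsilon_\omega=\delta_0/(4K)$, $c_0=\delta_0/4$ with $K=K_\mu+K_{\mu_0}$), and your separate treatment of $K=0$ is harmless but unnecessary since \ref{bp2} requires $K_\mu>0$.
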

\begin{lemma}\label{lem:F_H_differ_at_t}
If two continuous cdfs $F$ and $H$ on $[0,M]$ differ, then they differ at some $t\in\supp F$. 
\end{lemma}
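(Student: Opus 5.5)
Let $t^\ast\in[0,M]$ be a point with $F(t^\ast)\neq H(t^\ast)$; such a point exists because the two cdfs differ. If $t^\ast\in\supp F$ we are done, so assume $t^\ast\notin\supp F$. Since $\supp F$ is closed, there is a maximal open interval $(a,b)$ containing $t^\ast$ that does not meet $\supp F$. By maximality, each endpoint lying inside $[0,M]$ belongs to $\supp F$.

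On the gap $F$ carries no mass, so $F$ is constant on $[a,b]$ (using continuity at the endpoints). If $F(t^\ast)-H(t^\ast)$ keeps its sign, one endpoint should give a witness, and the plan is to show this by cases.

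\emph{Case 1: $a>0$.} Then $a\in\supp F$ and $F(a)=F(t^\ast)$. If $H(a)\neq F(a)$, we are done.

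\emph{Case 2: $H(a)=F(a)$.} Then $H(t^\ast)\ge H(a)=F(a)=F(t^\ast)$, and since $H(t^\ast)\neq F(t^\ast)$ this forces $H(t^\ast)>F(t^\ast)$. Now look at the right endpoint $b$. If $b\le M$ and $b\in\supp F$, then $H(b)\ge H(t^\ast)>F(t^\ast)=F(b)$, so $b$ is a witness.

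\emph{Remaining edge cases.} The main obstacle is when $b$ is not an admissible support point, because the gap runs to $M$ or starts at $0$:
\begin{itemize}
\item If the gap extends to $M$, then $F(t^\ast)=F(M)=1$. This contradicts $H(t^\ast)>F(t^\ast)$, since $H\le 1$.
\item If $a=0$ (the gap starts at the origin), then $F(t^\ast)=F(0)$. For continuous cdfs on $[0,M]$, $F(0)=0$, so $F(t^\ast)=0$, and therefore $H(t^\ast)\neq 0$ means $H(t^\ast)>0$. In this situation the witness must come from the right endpoint $b\in\supp F$, where $F(b)=0<H(t^\ast)\le H(b)$. Here $b<M$, since otherwise $F$ would be $0$ on the whole interval, which is impossible for a cdf.
\end{itemize}

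Collecting the cases, some point of $\supp F$ always separates $F$ and $H$. The delicate part is this endpoint bookkeeping. The argument uses monotonicity of $H$, constancy of $F$ off its support, and continuity of $F$ to handle the boundary values; where the support's endpoints fail to be admissible, the values $F=0$ or $F=1$ at the ends of $[0,M]$ close the remaining cases.
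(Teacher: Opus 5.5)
Your proof is correct and uses the same ingredients as the paper's: maximal gaps of $[0,M]\setminus\supp F$, constancy of $F$ across a gap (continuity of $F$ at the endpoints), and monotonicity of $H$, with the edge gaps settled by $F=0$ or $F=1$. The paper argues by contradiction: agreement on $\supp F$ forces $H(a)=F(a)=F(b)=H(b)$ and hence $H=F$ on each gap, whereas you argue directly from a discrepancy point $t^\ast$ and also work out the boundary cases that the paper only calls analogous.
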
 
\begin{lemma}\label{lem:positive_mass_region}
Let $\omega_0\in\supp{\mu}$ and $t_0\in\supp{F_{\omega_0}^\mu}$. For $\varepsilon_\omega,\varepsilon_t>0$, set
$$
B_0\defin B(\omega_0,\varepsilon_\omega),
\qquad
I_0\defin(t_0-\varepsilon_t,t_0+\varepsilon_t)\cap[0,M].
$$
If $X,Y$ are iid with law $\mu$, then
$$
\Probbig{X\in B_0,\ d(X,Y)\in I_0}>0.
$$
\end{lemma}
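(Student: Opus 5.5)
Our plan is to condition on $X$ and write the target probability as an integral over $B_0$:
\[
\Probbig{X\in B_0,\ d(X,Y)\in I_0}
=
\int_{B_0}\mu\big(\{y: d(\omega,y)\in I_0\}\big)\,\rd\mu(\omega)
=
\int_{B_0}\Big(F_\omega^\mu(t_0+\varepsilon_t)-F_\omega^\mu\big((t_0-\varepsilon_t)-\big)\Big)\,\rd\mu(\omega),
\]
with the obvious modification when $I_0$ is truncated at $0$ or $M$. This uses Fubini and the independence of $X$ and $Y$. Since $\omega_0\in\supp{\mu}$, we have $\mu(B_0)>0$. It therefore suffices to find a smaller ball $B(\omega_0,\eta)\subset B_0$ on which the integrand is bounded below by a positive constant.

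First, we show that the integrand is positive at $\omega_0$. Because $t_0\in\supp{F^\mu_{\omega_0}}$, every open interval around $t_0$ has positive $F^\mu_{\omega_0}$-mass. If $t_0=0$ or $t_0=M$, the relative neighbourhood $I_0$ still contains an interval of the form $[0,\varepsilon_t)$ or $(M-\varepsilon_t,M]$. Hence
\[
c\defin\mu\big(\{y:d(\omega_0,y)\in I_0'\}\big)>0
\]
for the shrunk interval $I_0'\defin(t_0-\varepsilon_t/2,\,t_0+\varepsilon_t/2)\cap[0,M]$. By condition \ref{bp2}, $F_{\omega_0}^\mu$ is continuous, so open and closed endpoints make no difference.

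Second, we transfer this mass to nearby centres by the triangle inequality. If $d(\omega,\omega_0)<\eta\le\varepsilon_t/2$ and $d(\omega_0,y)\in I_0'$, then $|d(\omega,y)-d(\omega_0,y)|<\eta$, so $d(\omega,y)\in(t_0-\varepsilon_t,t_0+\varepsilon_t)$. This interval lies in $[0,M]$ after intersection, because distances always lie in $[0,M]$. Consequently, for every $\omega\in B(\omega_0,\eta)$ with $\eta\le\min\{\varepsilon_\omega,\varepsilon_t/2\}$,
\[
\mu\big(\{y:d(\omega,y)\in I_0\}\big)\ge\mu\big(\{y:d(\omega_0,y)\in I_0'\}\big)=c.
\]
Alternatively, one could use the Lipschitz bound of Lemma~\ref{lem:delta_regularity}-type for $F_\omega^\mu$ in $\omega$, but the set inclusion above is cleaner and needs no extra constants.

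Finally, we combine the two steps:
\[
\Probbig{X\in B_0,\ d(X,Y)\in I_0}\ge c\,\mu\big(B(\omega_0,\eta)\big)>0,
\]
since $\omega_0\in\supp{\mu}$ gives positive mass to every open ball around it. Measurability of $\omega\mapsto\mu(\{y:d(\omega,y)\in I_0\})$ is needed for Fubini, and it follows from the joint measurability of $(x,y)\mapsto d(x,y)$ on the separable space $\Omega\times\Omega$. The only mildly delicate step is the boundary case $t_0\in\{0,M\}$ in the first step, where the support must be read relative to $[0,M]$. Apart from that, no step should pose a real obstacle.
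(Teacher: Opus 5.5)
Your proof is correct and is essentially the paper's argument. It uses the same choice $\eta\le\min\{\varepsilon_\omega,\varepsilon_t/2\}$, the same triangle-inequality inclusion, and arrives at the same lower bound $\mu(B(\omega_0,\eta))\cdot\Prob{|d(\omega_0,Y)-t_0|<\varepsilon_t/2}$. The paper gets this product directly from the independence of the events $\{X\in B(\omega_0,\eta)\}$ and $\{|d(\omega_0,Y)-t_0|<\varepsilon_t/2\}$, whose intersection lies in the target event. So your Fubini detour, the appeal to continuity from \ref{bp2}, and the boundary-case discussion are not needed, since distances lie in $[0,M]$ anyway.
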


\begin{proof}[Proof of Theorem \ref{thm:fixed_alternative_divergence_simple}]
Begin with the KS test statistic. For $(\omega,t)\in\Omega\times\R_{\ge 0}$, write
$$
F_{n,\omega}^\mu(t)-F^{\mu_0}_\omega(t)
=
\bigl(F_{n,\omega}^\mu(t)-F^\mu_\omega(t)\bigr)
+
\bigl(F^\mu_\omega(t)-F^{\mu_0}_\omega(t)\bigr),
$$
and define the empirical process under $\mu$ ($\mu \neq \mu_0$) by
\begin{align}\label{eq:G_n_nu}
\G_{n,\mu}^{\mu}(\omega,t)
\defin
\sqrt{n}\,\bigl(F_{n,\omega}^\mu(t)-F^\mu_\omega(t)\bigr).    
\end{align}
Next, define $\Delta(\omega,t)\defin F^\mu_\omega(t)-F^{\mu_0}_\omega(t)$.
Since $\mu\neq\mu_{0}$, there exists at least one pair $(\omega_0,t_0)\in\Omega\times\R_{\ge 0}$ such that $\Delta(\omega_0,t_0)\neq 0$. Therefore,
$$
\delta
\defin
\sup_{\omega\in\Omega, t\ge 0}
|\Delta(\omega,t)|
\ \ge\
|\Delta(\omega_0,t_0)|
\ >\ 0.
$$

Since $\cY$ is $\mu$-Donsker, it is also $\mu$-Glivenko--Cantelli. Therefore,
\begin{align}\label{eq:e_n}
e_n \defin \sup_{\omega\in\Omega, t\ge0}\bigl|F_{n,\omega}^\mu(t)-F_\omega^\mu(t)\bigr|
=
o_{\mathsf{P}^*}(1).
\end{align}

Now, write
\begin{align*}
\frac{T_n^{\mathrm{KS}}}{\sqrt n}
=
\sup_{\omega\in\Omega, t\ge0}\left|\Delta(\omega,t)+\bigl(F_{n,\omega}^\mu(t)-F_\omega^\mu(t)\bigr)\right|.
\end{align*}
Using the inequality $\bigl|\sup |a+b|-\sup |a|\bigr|\le \sup |b|$, we obtain
$$
\left|\frac{T_n^{\mathrm{KS}}}{\sqrt n}-\delta\right|
\le
e_n
=
\frac{\|\G_{n,\mu}^{\mu}\|_\infty}{\sqrt n}
=
o_{\mathsf{P}^*}(1).
$$
Therefore, $T_n^{\mathrm{KS}}/{\sqrt n}\convprob{\mathsf{P}^*}\delta,$ and in particular, $T_n^{\mathrm{KS}}\convprob{\mathsf{P}^*}\infty$.

Focus now on the CM test statistic. Since either condition \ref{a2} holds, or condition \ref{a1} holds and $\supp{\mu_{0}} \subset \supp{\mu}$, under $H_1$, by Lemmas \ref{lem:F_H_differ_at_t} and \ref{lem:positive_mass_region} there exist $\omega_0\in\supp{\mu}$ and $t_0\in\supp{F_{\omega_0}^\mu}$ such that
$F^\mu_{\omega_0}(t_0)\neq F^{\mu_{0}}_{\omega_0}(t_0)$, i.e.  $\Delta(\omega_0,t_0)\neq 0$.
Apply Lemma~\ref{lem:robust_separation} to obtain $\varepsilon_\omega>0$, $\varepsilon_t>0$, and $c_0>0$ such that
$$
|\Delta(\omega,t)|\ge c_0
\quad\forall\,(\omega,t)\in B_0\times I_0,
$$
where $B_0=B(\omega_0,\varepsilon_\omega)$ and $I_0=(t_0-\varepsilon_t,t_0+\varepsilon_t)\cap[0,M]$.
Since $\omega_0\in\supp{\mu}$, Lemma~\ref{lem:positive_mass_region} yields
\begin{equation}\label{eq:positive_mass_event}
p_0 \defin \Probbig{X\in B_0,\ d(X,Y)\in I_0}>0,
\end{equation}
for iid $X,Y\sim\mu$.

Define
$$
q(x,y)
\defin
1_{\{x\in B_0,\ d(x,y)\in I_0\}}
\quad 
\text{and}
\quad
A_n
\defin
\frac{1}{n^2}
\sum_{i=1}^n\sum_{j=1}^n
q(X_i,X_j).
$$
We first show that $A_n\convas p_0$.
Define the symmetrized kernel
$$
\widetilde q(x,y)
\defin
\frac12\{q(x,y)+q(y,x)\},
$$
and the corresponding $U$-statistic
$$
U_n
\defin
\frac{1}{\binom n2}
\sum_{1\le i<j\le n}
\widetilde q(X_i,X_j).
$$
By the strong law of large numbers for $U$-statistics of order two
applied to $\widetilde q$
\citepSM[see, e.g., Theorem 5.4A of][]{Serfling1980SM},
$
U_n
\xrightarrow{\mathrm{a.s.}}
\Ebig{\widetilde q(X,Y)}.
$
Since $X,Y$ are iid,
$$
\Ebig{\widetilde q(X,Y)}
=
\Ebig{q(X,Y)}
=
p_0.
$$
Moreover,
$$
A_n
=
\left(1-\frac1n\right)U_n
+
\frac{1}{n^2}\sum_{i=1}^n q(X_i,X_i),
$$
and the second term is bounded by $1/n$. Hence, $A_n \convas p_0.$

If $e_n<c_0/2$, for every pair $(i,j)$ such that
$q(X_i,X_j)=1$, Lemma~\ref{lem:robust_separation} gives
$$
\begin{aligned}
&
\big|
F_{n,X_i}^{\mu}(d(X_i,X_j))
-
F_{X_i}^{\mu_0}(d(X_i,X_j))
\big|
\ge
c_0-e_n
>
\frac{c_0}{2},
\end{aligned}
$$
with $c_0>0$ as defined in Lemma~\ref{lem:robust_separation}.

Observe that one can write
\begin{align*}%
\frac{1}{n}T_n^{\mathrm{CM}}
=
\frac{1}{n^2}
\sum_{i=1}^n\sum_{j=1}^n
\left(
F_{n,X_i}^{\mu}(d(X_i,X_j))
-
F_{X_i}^{\mu_0}(d(X_i,X_j))
\right)^2.
\end{align*}
Since all the terms %
are nonnegative, if $e_n<c_0/2$, it follows that,
$
T_n^{\mathrm{CM}}
\ge
c_0^2 n A_n /4.
$
Since $A_n\convas p_0>0$,
$
\Prob{A_n<p_0/2}\to0,
$
and therefore,
$$
\mathsf P^*\left(
\frac{T_n^{\mathrm{CM}}}{n}
\ge
\frac{c_0^2p_0}{8}
\right)\to1.
$$
Hence,
$T_n^{\mathrm{CM}}\convprob{\mathsf P^*}\infty.$

\end{proof}

\subsubsection{Composite null hypothesis} \label{ap:sec_proofs_fixed_alternatives_composite}

Theorem \ref{thm:fixed_alt_divergence_composite} states the consistency of the KS and CM test statistics under fixed alternatives, when the null hypothesis is composite and a parameter is estimated.

\begin{proof}[Proof of Theorem~\ref{thm:fixed_alt_divergence_composite}]
Begin with the KS test statistic. Define
\begin{align*}
\Delta^\star(\omega,t)\defin F^\mu_\omega(t)-F^{\mu_{\btheta_1}}_\omega(t),
\qquad
\delta^\star\defin
\sup_{(\omega,t)\in\Omega\times\R_{\ge 0}}
\bigl|\Delta^\star(\omega,t)\bigr|.
\end{align*}
Since $\mu\notin\{\mu_{\btheta}:\btheta\in\bTheta\}$, in particular
$\mu\neq\mu_{\btheta_1}$, and hence there exists
$(\omega_0,t_0)\in\Omega\times\R_{\ge 0}$ such that
$\Delta^\star(\omega_0,t_0)\neq 0$. Therefore,
$\delta^\star\ge|\Delta^\star(\omega_0,t_0)|>0$.

Define
\begin{align*}
r_n \defin \sup_{\omega\in\Omega, t\ge 0}
\bigl|F^{\mu_{\hat{\btheta}}}_\omega(t)-F^{\mu_{\btheta_1}}_\omega(t)\bigr|.
\end{align*}
By \eqref{eq:theta_star_profile_continuity}, for every $\varepsilon>0$,
there exists $\eta>0$ such that if $
\|\btheta-\btheta_1\|<\eta$, then
$
\sup_{\omega \in\Omega, t\ge 0}
\bigl|F^{\mu_{\btheta}}_\omega(t)-F^{\mu_{\btheta_1}}_\omega(t)\bigr|
<\varepsilon.$
Since $\hat{\btheta}\convp\btheta_1$, it follows that
$$
\mathsf{P}^*(r_n>\varepsilon)
\le
\Prob{\|\hat{\btheta}-\btheta_1\|\ge\eta}
\to0.
$$
Thus, $r_n=o_{\mathsf P^*}(1)$.

It holds that
$e_n = o_{\mathsf P^*}(1)$ by \eqref{eq:e_n}. Now, write
$$
\frac{\tilde T_n^{\mathrm{KS}}}{\sqrt n}
=
\sup_{\omega\in\Omega,t\ge0}
\left|
\Delta^\star(\omega,t)
+
\bigl(F_{n,\omega}^\mu(t)-F_\omega^\mu(t)\bigr)
-
\bigl(F_\omega^{\mu_{\hat{\btheta}}}(t)
      -F_\omega^{\mu_{\btheta_1}}(t)\bigr)
\right|.
$$
Using the inequality
$\bigl|\sup |a+b|-\sup |a|\bigr|\le\sup|b|$, we obtain
$$
\bigg|
\frac{\tilde T_n^{\mathrm{KS}}}{\sqrt n}
-\delta^\star
\bigg|
\le
e_n+r_n
=
o_{\mathsf P^*}(1).
$$
Therefore,
$
\tilde T_n^{\mathrm{KS}}/{\sqrt n}
\convprob{\mathsf P^*}
\delta^\star,
$
and in particular,
$\tilde T_n^{\mathrm{KS}}\convprob{\mathsf P^*}\infty$.

Finally, we show that $\widehat T_n^{\mathrm{KS}}\convprob{\mathsf P^*}\infty$.
Using condition \ref{e} and $P_n\bpsi_{\hat{\btheta}}=o_{\mathsf P}(1)$, one has
\begin{align}\label{eq:sup_G_n_G_o_P_n12}
\sup_{\omega\in\Omega, t\ge0}
\big|
\widehat{\G}_{n,\mu}(\omega,t)
-
\G_{n,\mu}^{\mu_{\hat{\btheta}}}(\omega,t)
\big|
&\le
\sqrt{n}
\sup_{\omega\in\Omega, t\ge0}
\|\dot F_\omega^{\hat{\btheta}}(t)\|\,\|\widehat{\bV}^{-1}\|\|P_n\bpsi_{\hat{\btheta}}\|
=
o_{\mathsf P}(\sqrt n).
\end{align}

Therefore,
$$
\big|
\widehat T_n^{\mathrm{KS}}
-
\tilde T_n^{\mathrm{KS}}
\big|
\le
\sup_{\omega\in\Omega,\,t\ge0}
\Big|
\widehat{\G}_{n,\mu}(\omega,t)
-
\G_{n,\mu}^{\mu_{\hat{\btheta}}}(\omega,t)
\Big|
=
o_{\mathsf P}(\sqrt{n}).
$$
This concludes the result for the KS test statistic.

Focus now on the CM test statistic. By Lemma \ref{lem:F_H_differ_at_t}, there exist $\omega_1\in\supp{\mu}$ and $t_1\in\supp{F_{\omega_1}^\mu}$ such that $\Delta^\star(\omega_1,t_1)\neq0$. Applying Lemma~\ref{lem:robust_separation}, with $\mu_0$ replaced by $\mu_{\btheta_1}$, we obtain $\varepsilon_\omega>0$, $\varepsilon_t>0$, and $c_1>0$ such that $|\Delta^\star(\omega,t)|\ge c_1$ for every $(\omega,t)\in B_1\times I_1,$ where $B_1=B(\omega_1,\varepsilon_\omega)$ and $I_1=(t_1-\varepsilon_t,t_1+\varepsilon_t)\cap[0,M]$. Since $\omega_1\in\supp{\mu}$, Lemma~\ref{lem:positive_mass_region} yields
$$
p_1
\defin
\Probbig{X\in B_1,\ d(X,Y)\in I_1}>0,
$$
for iid $X,Y\sim\mu$. Define
$$
q(x,y)
\defin
1_{\{x\in B_1,\ d(x,y)\in I_1\}},
\qquad
A_n
\defin
\frac{1}{n^2}
\sum_{i=1}^n\sum_{j=1}^n q(X_i,X_j).
$$
By the same $U$-statistic argument as in the proof of
Theorem~\ref{thm:fixed_alternative_divergence_simple}, $A_n\convas p_1.$

Recall that $e_n=o_{\mathsf P^*}(1)$ and $r_n=o_{\mathsf P^*}(1)$.
If $e_n+r_n<c_1/2$, for every pair $(i,j)$ such that
$q(X_i,X_j)=1$,
$$
\begin{aligned}
\big|
F_{n,X_i}^{\mu}(d(X_i,X_j))
-
F_{X_i}^{\mu_{\hat{\btheta}}}(d(X_i,X_j))
\big|
\ge
\big|\Delta^\star(X_i,d(X_i,X_j))\big|
-e_n-r_n
>
\frac{c_1}{2}.
\end{aligned}
$$
Since the CM statistic can be expressed as
$$
\frac{1}{n}\tilde T_n^{\mathrm{CM}}
=
\frac{1}{n^2}
\sum_{i=1}^n\sum_{j=1}^n
\left(
F_{n,X_i}^{\mu}(d(X_i,X_j))
-
F_{X_i}^{\mu_{\hat{\btheta}}}(d(X_i,X_j))
\right)^2,
$$
one has that $\tilde T_n^{\mathrm{CM}}\ge n c_1^2A_n/4.$
Hence,
$
\mathsf P^*\big(
\tilde T_n^{\mathrm{CM}}
\ge
n c_1^2p_1/8
\big)\to1,
$
and therefore
$\tilde T_n^{\mathrm{CM}}\convprob{\mathsf P^*}\infty$.

Finally, it holds that
$
\sup_{\omega\in\Omega,\,t\ge0}
\big|
\G_{n,\mu}^{\mu_{\hat{\btheta}}}(\omega,t)
\big|
\le \sqrt{n}
$
which together with \eqref{eq:sup_G_n_G_o_P_n12}, gives
$
\sup_{\omega\in\Omega,\,t\ge0}
\big|
\widehat{\G}_{n,\mu}(\omega,t)
\big|
=
O_{\mathsf P}(\sqrt{n}).
$
Therefore,
$$
\begin{aligned}
\frac{1}{n}
\big|
\widehat T_n^{\mathrm{CM}}
-
\tilde T_n^{\mathrm{CM}}
\big|
&\le
\frac{1}{\sqrt n}
\sup_{\omega,t}
\big|
\widehat{\G}_{n,\mu}(\omega,t)
-
\G_{n,\mu}^{\mu_{\hat{\btheta}}}(\omega,t)
\big|
\\
&\quad\times
\bigg(
\frac{1}{\sqrt n}\sup_{\omega,t}
|\widehat{\G}_{n,\mu}(\omega,t)|
+
\frac{1}{\sqrt n}\sup_{\omega,t}
|\G_{n,\mu}^{\mu_{\hat{\btheta}}}(\omega,t)|
\bigg)
\\
&=
o_{\mathsf P}(1).
\end{aligned}
$$
This yields
$\mathsf P^*\big(
\widehat T_n^\mathrm{CM}
\ge
n c_1^2p_1/16
\big)\to1,$
and hence
$\widehat T_n^{\mathrm{CM}}\convprob{\mathsf P^*}\infty$.
\end{proof}

\subsection{Local alternatives}\label{ap:sec_local_alternatives}

\begin{proof}[Proof of Proposition~\ref{prop:bernoulli}]
Define the empirical process under $Q_n$ by $\G_{n,Q_n}^{Q_n}(f) \defin \sqrt{n}(P_n-Q_n)f$, $f\in \cF$. Consider two mutually independent iid sequences $U_i$ and $V_i$, $i\ge1$, where $U_i\sim P$ and $V_i\sim G$. For each $n\in\mathbb{N}$, consider an iid sequence $B_{n,i}$, $i\ge1$, independent of $(U_i)_{i\ge1}$ and $(V_i)_{i\ge1}$, where $B_{n,i}\sim\mathrm{Bernoulli}(n^{-1/2})$. Define $X_{n,i} \defin U_i$ if $B_{n,i}=0$ and $X_{n,i} \defin V_i$ if $B_{n,i}=1$. 
Then $X_{n,1},\ldots,X_{n,n}$ are iid with common distribution $Q_n=\big(1-1/\sqrt{n}\big)P+(1/\sqrt{n}) G$. Observe that for any $f \in \cF$, it holds that $f(X_{n,i})=(1-B_{n,i})f(U_i)+B_{n,i} f(V_i)$. Write $N_{n,1}\defin\sum_{i=1}^n B_{n,i}$ and $N_{n,0}\defin n-N_{n,1}$. One has that
\begin{align}
\G_{n,Q_n}^{Q_n}(f)
&=\frac{1}{\sqrt{n}}\sum_{i=1}^n\Bigl((1-B_{n,i})f(U_i)+B_{n,i} f(V_i)-Q_n f\Bigr)\nonumber\\
&=\frac{1}{\sqrt{n}}\sum_{i=1}^n(1-B_{n,i})\Bigl(f(U_i)-Pf\Bigr)\nonumber
+\frac{1}{\sqrt{n}}\sum_{i=1}^n B_{n,i}\Bigl(f(V_i)-Gf\Bigr)
\\
& \quad +\frac{1}{\sqrt{n}}\Bigl(N_{n,0}\, Pf+N_{n,1}\,Gf-nQ_n f\Bigr)\nonumber\\
&=\frac{1}{\sqrt{n}}\sum_{i=1}^n(1-B_{n,i})\Bigl(f(U_i)-Pf\Bigr) \label{eq:local_alt_split_1}
+\frac{1}{\sqrt{n}}\sum_{i=1}^n B_{n,i}\Bigl(f(V_i)-Gf\Bigr)\\ 
&\quad
+\sqrt{n}\left(\frac{N_{n,0}}{n}-\Big(1-\frac{1}{\sqrt{n}}\Big)\right)Pf\label{eq:local_alt_split_2}
+\sqrt{n}\left(\frac{N_{n,1}}{n}-\frac{1}{\sqrt{n}}\right)Gf,
\end{align}
Since $N_{n,0}/n-\big(1-1/\sqrt{n}\big)=-\big(N_{n,1}/n-1/\sqrt{n}\big)$, \eqref{eq:local_alt_split_2} combines into
$$
R_n(f)
\defin
\frac{N_{n,1} - \sqrt{n}}{\sqrt{n}}\big(G-P\big)f.
$$
Since $N_{n,1}\sim\mathrm{Bin}(n,n^{-1/2})$, it follows that 
$\E{(N_{n,1}-\sqrt{n})^2}/n = \mathrm{Var}(N_{n,1})/n=(1/\sqrt{n})\allowbreak \left(1-1/\sqrt{n}\right)\to 0$. Thus, since $\sup_{f\in\cF}\big|(G-P)f\big|<\infty$ by assumption, it holds that
\begin{equation}\label{eq:local_alt_Rn_vanish}
\sup_{f\in\cF}|R_n(f)|\convp0.
\end{equation}

Next, consider the second sum in \eqref{eq:local_alt_split_1}. Define, for $n\ge0$, the empirical process based on $V_i\sim G$ by
$$
\G_{n,G}^{G}(f)\defin
\begin{cases}
\frac{1}{\sqrt{n}}\sum_{i=1}^{n}\Big(f(V_i)-Gf\Big), & n\ge1,\\
0, & n=0.
\end{cases}
$$
Let $I_{n,1}\defin\{i\in\{1,\ldots,n\}:B_{n,i}=1\}$ and, on the event $\{N_{n,1}\ge 1\}$, write $I_{n,1}=\{j_{n,1}<\cdots<j_{n,N_{n,1}}\}$. Then,
$$
\frac{1}{\sqrt{n}}\sum_{i=1}^n B_{n,i}\Big(f(V_i)-Gf\Big)
=
\frac{1}{\sqrt{n}}\sum_{\ell=1}^{N_{n,1}}\Big(f(V_{j_{n,\ell}})-Gf\Big).
$$
Since $(V_i)_{i\ge1}$ are iid and independent of $(B_{n,i})_{i=1}^n$ for every $n$, then conditional on $B_{n,1},\ldots,B_{n,n}$, if $N_{n,1}=m$, then $(V_{j_{n,\ell}})_{\ell=1}^{m}$ has the same law as $(V_\ell)_{\ell=1}^{m}$. Hence, the following equality in distribution also holds unconditionally:
$$
\frac{1}{\sqrt{n}}\sum_{i=1}^n B_{n,i}\Big(f(V_i)-Gf\Big)
\stackrel{d}{=}
\sqrt{\frac{N_{n,1}}{n}}\ \G_{N_{n,1},G}^{G}(f),
$$
as random elements in $\ell^\infty(\cF)$.
By the Donsker property, $\G_{n,G}^{G}\convl \G_{G}$ in $\ell^\infty(\cF)$ for some tight limit $\G_{G}$. Since $\cF$ is Donsker under $G$, the empirical process $\G_{n,G}^{G}$ is asymptotically tight in
$\ell^\infty(\cF)$; \citepSM[see, e.g., p. 139 in][]{VanderVaartWellner2023SM}.
By the continuous mapping theorem for asymptotic tightness
\citepSM[see, e.g., Problem~7(i) in Section~1.3 of][]{VanderVaartWellner2023SM}, the sequence $\sup_{f \in \cF}|\G_{n,G}^{G}(f)|$ is
asymptotically tight in $\R$. In particular, for every $\varepsilon>0$ there exists $M_\varepsilon<\infty$ such that
\begin{equation}\label{eq:local_alt_asympt_tight_G}
\limsup_{n\to\infty}\mathsf{P}\bigg(\sup_{f \in \cF}|\G_{n,G}^{G}(f)|>M_\varepsilon\bigg)\le \frac{\varepsilon}{2}.
\end{equation}

For every fixed $m>0$, one has
\begin{equation}\label{eq:local_alt_random_index_bound}
\mathsf{P}\bigg(\sup_{f \in \cF}|\G_{N_{n,1},G}^{G}(f)|>M_\varepsilon\bigg)
\le
\mathsf{P}(N_{n,1}\le m)+
\sup_{k\ge m}\mathsf{P}\bigg(\sup_{f \in \cF}|\G_{k,G}^{G}(f)|>M_\varepsilon\bigg).
\end{equation}
By \eqref{eq:local_alt_asympt_tight_G}, there exists $m_0\in\mathbb{N}$ such that
$$
\sup_{k\ge m_0}\mathsf{P}\bigg(\sup_{f \in \cF}|\G_{k,G}^{G}(f)|>M_\varepsilon\bigg)\le \varepsilon.
$$
Applying \eqref{eq:local_alt_random_index_bound} with $m=m_0$, and since $N_{n,1}\convprob{\mathsf{P}}\infty$, we obtain
$$
\limsup_{n\to\infty}\mathsf{P}\bigg(\sup_{f \in \cF}|\G_{N_{n,1},G}^{G}(f)|>M_\varepsilon\bigg)
\le
\limsup_{n\to\infty}\mathsf{P}(N_{n,1}\le m_0)+\varepsilon
=
\varepsilon.
$$
Therefore,
$$
\sup_{f \in \cF}|\G_{N_{n,1},G}^{G}(f)|=O_{\mathsf{P}}(1).
$$
Moreover, $N_{n,1}/n \convp 0$ because $\E{N_{n,1}/n} = n^{-1/2}$. Consequently,
\begin{align}\label{eq:local_alt_G_part_vanish}
\sup_{f \in \cF}
\bigg|
\frac{1}{\sqrt{n}}\sum_{i=1}^n B_{n,i}\big(f(V_i)-Gf\big)
\bigg|
\stackrel{d}{=}
\sqrt{\frac{N_{n,1}}{n}} \sup_{f \in \cF}|\G_{N_{n,1},G}^{G}(f)|
\convp 0.
\end{align}

It remains to handle the first term in \eqref{eq:local_alt_split_1}. Define, for $n\ge0$, the empirical process based on $U_i\sim P$ by
$$
\G_{n,P}^{P}(f)\defin
\begin{cases}
\frac{1}{\sqrt{n}}\sum_{i=1}^{n}\big(f(U_i)-Pf\big), & n\ge1,\\
0, & n=0.
\end{cases}
$$
Then, one has
$$
\frac{1}{\sqrt{n}}\sum_{i=1}^n (1-B_{n,i})\Big(f(U_i)-Pf\Big)
=
\G_{n,P}^{P}(f)
-
\frac{1}{\sqrt{n}}\sum_{i=1}^n B_{n,i}\Big(f(U_i)-Pf\Big).
$$
For the second term on the RHS of the equality, by the definition of $j_{n,\ell}$, one has
$$
\frac{1}{\sqrt{n}}\sum_{i=1}^n B_{n,i}\Big(f(U_i)-Pf\Big)
=
\frac{1}{\sqrt{n}}\sum_{\ell=1}^{N_{n,1}}\Big(f(U_{j_{n,\ell}})-Pf\Big).
$$
Since $(U_i)_{i\ge1}$ are iid and independent of $(B_{n,i})_{i=1}^n$, then conditional on $B_{n,1},\ldots,B_{n,n}$, $(U_{j_{n,\ell}})_{\ell=1}^{m}$ has the same law as $(U_\ell)_{\ell=1}^{m}$, if $N_{n,1}=m$. Hence, the following equality in distribution also holds unconditionally:
$$
\frac{1}{\sqrt{n}}\sum_{i=1}^n B_{n,i}\Big(f(U_i)-Pf\Big)
\stackrel{d}{=}
\sqrt{\frac{N_{n,1}}{n}}\ \G_{N_{n,1}, P}^{P}(f)
$$
as random elements in $\ell^\infty(\cF)$.
Since $\cF$ is $P$-Donsker, the same argument as above yields
$\sup_{f \in \cF}|\G_{N_{n,1},P}^{P}(f)|=O_{\mathsf{P}}(1)$, and hence
\begin{equation}\label{eq:local_alt_P_part_correction_vanish}
\sup_{f \in \cF}
\Big|
\frac{1}{\sqrt{n}}\sum_{i=1}^n B_{n,i}\Big(f(U_i)-Pf\Big)
\Big|
\stackrel{d}{=}
\sqrt{\frac{N_{n,1}}{n}} \sup_{f \in \cF}|\G_{N_{n,1},P}^{P}(f)|
\convp 0.
\end{equation}

Finally, observe that applying the triangle inequality and taking suprema, it follows from \eqref{eq:local_alt_split_1}--\eqref{eq:local_alt_split_2}  that 
\begin{align*}
\sup_{f \in \cF}\big|\G_{n,Q_n}^{Q_n}(f)-\G_{n,P}^{P}(f)\big|
&\le
\sup_{f \in \cF}
\Big|\frac{1}{\sqrt n}\sum_{i=1}^n B_{n,i}\Big(f(U_i)-Pf\Big)\Big|
\\
&\quad +
\sup_{f \in \cF}
\Big|\frac{1}{\sqrt n}\sum_{i=1}^n B_{n,i}\Big(f(V_i)-Gf\Big)\Big|
\\
&\quad + \sup_{f \in \cF}|R_n(f)|.
\end{align*}
Combining \eqref{eq:local_alt_Rn_vanish}, \eqref{eq:local_alt_G_part_vanish}, and \eqref{eq:local_alt_P_part_correction_vanish} gives \eqref{eq:sup_bernoulli_prob}.
\end{proof}

\begin{proof}[Proof of Corollary \ref{cor:bernoulli}]
Using the coupled versions provided by Proposition~\ref{prop:bernoulli}, one has $\sup_{f\in\cF}\big|\G_{n,Q_n}^{Q_n}(f)-\G_{n,P}^{P}(f)\big|\convp0$. Since $\cF$ is $P$-Donsker, $\G_{n,P}^{P}$ converges weakly in
$\ell^\infty(\cF)$. Hence, by Slutsky's theorem, $\G_{n,Q_n}^{Q_n}$ has the same weak limit as $\G_{n,P}^{P}$.
\end{proof}

Lemma \ref{ap:lem_DCT_P_hat_local} is the analogue of Lemma~\ref{ap:lem_DCT_P_hat} for the local alternative.
Let $M=\operatorname{diam}(\Omega)$. For $0<B<\infty$ and a modulus of continuity $w:[0,\infty)\to[0,\infty)$, let $\mathcal G_{B,w}$ denote the class of functions on $\Omega\times[0,M]$ that are uniformly bounded by $B$ and have modulus of continuity $w$, that is,
$
\mathcal G_{B,w}
\defin
\{
g:\Omega\times[0,M]\to\R:
\|g\|_\infty\le B,\ 
|g(z)-g(z')|\le w(d_{\Omega\times\mathbb R_{\ge0}}(z,z'))
\text{ for all }  z,z'\in \Omega\times[0,M]
\}.
$
For this reason, Lemma~\ref{ap:lem_DCT_P_hat_local} gives a uniform version, which is the extra ingredient needed in the proof of Theorem~\ref{thm:local_alt_simple}.
\begin{lemma}\label{ap:lem_DCT_P_hat_local}
Under the local alternative \eqref{eq:local_alt_Qn_def}, it holds that
$$
\sup_{g\in\mathcal G_{B,w}}
\bigg|
\int_{\Omega\times[0,M]} g(\omega,t)\,\rd F_{n,\omega}^{Q_n}(t)\,\rd P_n(\omega)
-
\int_{\Omega\times[0,M]} g(\omega,t)\,\rd F_{\omega}^{\mu_0}(t)\,\rd \mu_0(\omega)
\bigg|
=o_{\mathsf{P}}(1).
$$
\end{lemma}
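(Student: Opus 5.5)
The plan is to reduce to the null case through the Bernoulli coupling of Proposition~\ref{prop:bernoulli}, and then upgrade Lemma~\ref{ap:lem_DCT_P_hat} to hold uniformly over $\mathcal G_{B,w}$ by a finite-net argument. Throughout I take $w(r)\to0$ as $r\to0^+$.

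\emph{Step 1 (rewrite as a double sum).} Since $d(X_i,X_j)\le M$ for all $i,j$, the integral against the empirical measures is
\[
\int g\,\rd F_{n,\omega}^{Q_n}\,\rd P_n=\frac1{n^2}\sum_{i,j}g\bigl(X_{n,i},d(X_{n,i},X_{n,j})\bigr).
\]
Here $X_{n,i}=(1-B_{n,i})U_i+B_{n,i}V_i$ is the coupled sample of the proof of Proposition~\ref{prop:bernoulli}. This realizes the law of the statistic, so proving $o_{\mathsf P}(1)$ for this version suffices.

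\emph{Step 2 (remove the contamination).} Compare the double sum with the same double sum computed from $U_1,\ldots,U_n$. Only pairs $(i,j)$ with $B_{n,i}=1$ or $B_{n,j}=1$ contribute to the difference. Each such term is at most $2B$ in absolute value, since every $g\in\mathcal G_{B,w}$ satisfies $\|g\|_\infty\le B$. There are at most $2nN_{n,1}$ such pairs, so the difference is bounded by $4BN_{n,1}/n$, uniformly in $g\in\mathcal G_{B,w}$. Because $\mathsf E(N_{n,1}/n)=n^{-1/2}$, this bound is $o_{\mathsf P}(1)$. It therefore suffices to prove the claim for an iid sample $U_1,\ldots,U_n$ from $\mu_0$, with $F_{n,\omega}^{\mu_0}$ and $P_n$ built from the $U_i$.

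\emph{Step 3 (uniformity via total boundedness).} By \ref{b1}, $\Omega$ is totally bounded, hence so is $\Omega\times[0,M]$ under $d_{\Omega\times\R_{\ge0}}$. I will show that $\mathcal G_{B,w}$ is totally bounded in sup norm, by an Arzelà--Ascoli-type argument that uses only total boundedness of the domain.
\begin{itemize}[label={}]
\item Given $\varepsilon>0$, pick $\eta>0$ with $w(\eta)<\varepsilon$, and a finite $\eta$-net $z_1,\ldots,z_K$ of the domain.
\item Each $g\in\mathcal G_{B,w}$ is within $2\varepsilon$ in sup norm of any other member whose values at $z_1,\ldots,z_K$ agree with those of $g$ up to $\varepsilon$.
\item Discretizing the vector $(g(z_k))_{k\le K}\in[-B,B]^K$ on an $\varepsilon$-grid therefore yields finitely many functions $g_1,\ldots,g_L\in\mathcal G_{B,w}$ forming a $3\varepsilon$-net.
\end{itemize}
Each $g_\ell$ is bounded and uniformly continuous. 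Extend each $g_\ell$ to $\Omega\times\R_{\ge0}$ by $g_\ell(\omega,t)=g_\ell(\omega,M)$ for $t>M$; this does not change the integrals, since all distances and all of $\supp{F_\omega^{\mu_0}}$ lie in $[0,M]$. Lemma~\ref{ap:lem_DCT_P_hat} then gives almost sure convergence of the integral for each of the finitely many $g_\ell$. Both integrating measures are probability measures, so for arbitrary $g$ the error is at most $2\cdot 3\varepsilon$ plus $\max_\ell$ of the errors for $g_\ell$. Hence the $\limsup$ of the supremum is at most $6\varepsilon$ almost surely, and letting $\varepsilon\downarrow0$ concludes the proof.

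The main obstacle is Step 3: Lemma~\ref{ap:lem_DCT_P_hat} is only pointwise in $g$, and uniformity has to come from the equicontinuity encoded in $w$. The net construction has to be carried out carefully because the domain is totally bounded but not necessarily compact. Step 2 is routine once the coupling is in hand.
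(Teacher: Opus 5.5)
Your proof is correct, but it takes a different route from the paper. The paper works directly under $Q_n$. It bounds the error by $A_n+B_n$, where $A_n=\sup_g\sup_\omega|r_{n,g}(\omega)-r_g(\omega)|$ with $r_{n,g}(\omega)=\int g(\omega,t)\,\rd F^{Q_n}_{n,\omega}(t)$, and $B_n=\sup_g|\int r_g\,\rd(P_n-\mu_0)|$.
\begin{itemize}
\item For $A_n$, it redoes the Lipschitz regularization of Lemma~\ref{ap:lem_sup_r_n_r} uniformly over $\mathcal G_{B,w}$. It combines this with $\sup_{\omega,t}|F^{Q_n}_{n,\omega}(t)-F^{\mu_0}_\omega(t)|\convp0$, which it gets from Corollary~\ref{cor:bernoulli}.
\item For $B_n$, it uses the common modulus $u\mapsto w(2u)$ of the family $\{r_g\}$, a finite sup-norm net of that family, and Chebyshev.
\end{itemize}
You instead strip the contamination at the level of the double sum, with the $g$-free bound $4BN_{n,1}/n$. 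You then get uniformity from two facts: both sides are integrals of $g$ against probability measures, hence $1$-Lipschitz in $\|g\|_\infty$, and $\mathcal G_{B,w}$ is totally bounded. This lets you use Lemma~\ref{ap:lem_DCT_P_hat} as a black box for finitely many $g_\ell$.

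Your argument is shorter and needs less. Step 2 only uses $N_{n,1}/n\convp0$, so you need neither Corollary~\ref{cor:bernoulli}, nor the $G$-Donsker property of $\cY$, nor condition~\ref{bp2} for $G$. The null part even converges almost surely. The paper's argument does not need to re-realize the sample through a coupling. It also produces, as by-products, the uniform convergence of the empirical profiles and of the inner integrals under $Q_n$.

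Two minor points.
\begin{itemize}
\item In Step 3, two members of $\mathcal G_{B,w}$ whose values at the net agree up to $\varepsilon$ differ by at most $\varepsilon+2w(\eta)<3\varepsilon$, not $2\varepsilon$. This uses that $w$ is nondecreasing, or else choose $\eta$ with $\sup_{r\le\eta}w(r)<\varepsilon$. It matches the $3\varepsilon$-net you conclude, so nothing downstream changes.
\item The transfer of $o_{\mathsf P}(1)$ through the coupling is legitimate because the supremum is measurable. $\mathcal G_{B,w}$ is separable in sup norm and the double sum is Lipschitz in $g$, so the supremum reduces to a countable one.
\end{itemize}
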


\subsubsection{Simple null hypothesis}\label{ap:sec_local_alternatives_simple}

This section contains the proof of Theorem \ref{thm:local_alt_simple}, which establishes the weak limit of the KS and CM statistics under local alternatives, when the null hypothesis is simple.
\begin{proof}[Proof of Theorem \ref{thm:local_alt_simple}]
Begin with the KS statistic. Observe that since $0\le y_{\omega,t}\le 1$ for all $(\omega,t)\in\Omega\times\R_{\ge0}$, it holds that $\sup_{\omega,t}\big|(G-\mu_{0})y_{\omega,t}\big|\le 2$, so the conditions of Proposition \ref{prop:bernoulli} are satisfied for $\cY$. Under the local alternative defined in \eqref{eq:local_alt_Qn_def}, an application of Corollary \ref{cor:bernoulli} to the class of functions $\cY$ yields $\G_{n,Q_n}^{Q_n} \convl  \G_{0}$ in $\ell^\infty(\cY)$.
Combined with \eqref{eq:local_alt_process_decomp}, this gives
\begin{equation}\label{eq:local_alt_weak_limit_process}
\sqrt{n}\big(F_{n,\omega}^{Q_n}(t)-F^{\mu_0}_\omega(t)\big)
\convl
\G_{0}(\omega,t)+h(\omega,t)
\qquad\text{in }\ell^\infty(\cY).
\end{equation}
Finally, by an application of the continuous mapping theorem, we obtain
\begin{equation*}
T_n^{\mathrm{KS}}
\convl
\sup_{\omega\in\Omega, t \ge 0}
\left|\G_{0}(\omega,t)+h(\omega,t)\right|.
\end{equation*}

Focus now on the CM statistic.  The proof is analog to that of Theorem~\ref{thm:test_statistics_simp}. From \eqref{eq:local_alt_process_decomp}, we have
$$
T_n^{\mathrm{CM}}
=
\int_{\Omega\times\mathbb R_{\ge0}}
\big(\G_{n,Q_n}^{Q_n}(\omega,t)+h(\omega,t)\big)^2\,
\rd F_{n,\omega}^{Q_n}(t)\,\rd P_n(\omega).
$$
By Corollary \ref{cor:bernoulli},
$$
\G_{n,Q_n}^{Q_n}+h \convl  \G_0+h
\quad\text{in }\ell^\infty(\cY).
$$
In the same way as in the proof of Theorem \ref{thm:test_statistics_simp}, it can be shown that $\G_0+h$ is separable, and therefore the Skorohod representation theorem yields a probability space
on which it holds that
\begin{align}\label{eq:aux_CM_sup_convas_2}
\sup_{\omega\in\Omega, t \ge 0}
\big|\G_{n,Q_n}^{Q_n}(\omega,t)-\G_0(\omega,t)\big|
\convas 0.
\end{align}
Now, write 
\begin{align}
&\left| \notag
\int \big(\G_{n,Q_n}^{Q_n}+h\big)^2\,\rd F_{n,\omega}^{Q_n}\,\rd P_n
-
\int \big(\G_0+h\big)^2\,\rd F^{\mu_0}_\omega\,\rd\mu_{0}
\right|
\\[0.15cm]
&\le \label{eq:local_CM_aux_ineq_1}
\left|
\int \big(\G_{n,Q_n}^{Q_n}+h\big)^2-\big(\G_0+h\big)^2 \,
\rd F_{n,\omega}^{Q_n}(t)\,\rd P_n(\omega)
\right| 
\\
&\quad+ \label{eq:local_CM_aux_ineq_2}
\bigg|
\int \big(\G_0(\omega,t)+h(\omega,t)\big)^2\,
\rd F_{n,\omega}^{Q_n}(t)\,\rd P_n(\omega)
\\
&\qquad\quad
-
\int \big(\G_0(\omega,t)+h(\omega,t)\big)^2\,
\rd F^{\mu_0}_\omega(t)\,\rd\mu_{0}(\omega)
\bigg|.
\notag
\end{align}
For \eqref{eq:local_CM_aux_ineq_1}, one has:
\begin{align*}
\bigg|
\int
\Big(
\big(\G_{n,Q_n}^{Q_n}&+h\big)^2-\big(\G_0+h\big)^2
\Big)
\,
\rd F_{n,\omega}^{Q_n}(t)\,\rd P_n(\omega)
\bigg|
\\
&\le
\sup_{\omega,t}
\bigg|
\big(\G_{n,Q_n}^{Q_n}(\omega,t)+h(\omega,t)\big)^2 -
\big(\G_0(\omega,t)+h(\omega,t)\big)^2
\bigg| \convas 0.
\end{align*}
The convergence to $0$ follows from the identity $(a+h)^2-(b+h)^2=(a-b)(a+b+2h)$, the almost sure convergence
\eqref{eq:aux_CM_sup_convas_2}, and the facts that $|h|\le 1$ and
$\sup_{\omega,t}|\G_0(\omega,t)|<\infty$ almost surely. %

To control \eqref{eq:local_CM_aux_ineq_2}, observe first that $\G_0$ has bounded and uniformly continuous sample paths almost surely on $\Omega\times[0,M]$, by Lemma~\ref{ap:lem_continuity_sample_paths_G_rho2}. Moreover, $h$ is deterministic and Lipschitz on $\Omega\times[0,M]$.

Fix $\eta>0$. Since $\|\G_0\|_\infty<\infty$ almost surely, there exists $L<\infty$ such that
$$
\mathsf{P}\big(\|\G_0\|_\infty\le L\big)\ge 1-\eta/2.
$$
For $\delta>0$, define
$$
W(\delta)
:=
\sup_{\substack{
d_{\Omega\times\R_{\ge0}}((\omega,t),(\omega^\prime,t^\prime))\le\delta}}
|\G_0(\omega,t)-\G_0(\omega^\prime,t^\prime)|.
$$
Since $\G_0$ has uniformly continuous sample paths almost surely, $W(\delta)\to0$ in probability as $\delta\to 0^+$. Hence, one may choose a decreasing sequence $(\delta_m)_{m\ge1}$ such that
$$
\mathsf{P}\big(W(\delta_m)>2^{-m}\big)\le \eta\,2^{-m-1},
\qquad m\ge1.
$$
Define $w_0:[0,\infty)\to[0,\infty)$ by
$$
w_0(u):=
\begin{cases}
0, & u=0,\\
2^{-m}, & \delta_{m+1}<u\le\delta_m,\\
\max\{2L,2^{-1}\}, & u>\delta_1.
\end{cases}
$$
Then $w_0$ is nondecreasing and $w_0(u)\to0$ as $u\to 0^+$. Let
$$
A_\eta
:=
\left\{\|\G_0\|_\infty\le L\right\}
\cap
\bigcap_{m\ge1}\left\{W(\delta_m)\le 2^{-m}\right\},
$$
which satisfies $\mathsf{P}(A_\eta)\ge 1-\eta$. On $A_\eta$, one has
$$
|\G_0(\omega,t)-\G_0(\omega^\prime,t^\prime)|
\le
w_0\!\left(
d_{\Omega\times\mathbb R_{\ge0}}((\omega,t),(\omega^\prime,t^\prime))
\right)
$$
for all $(\omega,t),(\omega^\prime,t^\prime)\in\Omega\times[0,M]$.

By \eqref{eq:rho2y_prod_bound} applied with the measure $\nu\in\{\mu_0,G\}$, there exists $C_\nu<\infty$ such that
$$
\left|F^\nu_\omega(t)-F^\nu_{\omega^\prime}(t^\prime)\right|
=
\left|\nu\big(y_{\omega,t}-y_{\omega^\prime,t^\prime}\big)\right|
\le
\nu\big(\big(y_{\omega,t}-y_{\omega^\prime,t^\prime}\big)^2\big)
\le
C_\nu\, d_{\Omega\times\mathbb R_{\ge0}}\big((\omega,t),(\omega^\prime,t^\prime)\big).
$$
Hence, $(\omega,t)\mapsto F^\nu_\omega(t)$ is Lipschitz for each $\nu\in\{\mu_0,G\}$, and therefore $h(\omega,t)=F_{\omega}^G(t)-F_{\omega}^{\mu_0}(t)$ is Lipschitz. Let $L_h$ be a Lipschitz constant of $h$. On $A_\eta$, $\|(\G_0+h)^2\|_\infty\le (L+\|h\|_\infty)^2,$
and, using $|a^2-b^2|\le(|a|+|b|)|a-b|$, it holds that
\begin{align*}
|(\G_0+h)^2(\omega,t)-(\G_0+h)^2(\omega^\prime,t^\prime)|
&\le
2(L+\|h\|_\infty)\Big(
w_0\!\left(d_{\Omega\times\mathbb R_{\ge0}}((\omega,t),(\omega^\prime,t^\prime))\right)
\nonumber
\\
&\quad +
L_h\,d_{\Omega\times\mathbb R_{\ge0}}((\omega,t),(\omega^\prime,t^\prime))
\Big)
\end{align*}
for every $(\omega,t), (\omega',t')\in\Omega\times[0,M].$
Therefore, defining
$$
B:=(L+\|h\|_\infty)^2,
\qquad
w(u):=2(L+\|h\|_\infty)\big(w_0(u)+L_hu\big),
$$
then
$$
\mathsf{P}\big((\G_0+h)^2\in\mathcal G_{B,w}\big)\ge1-\eta.
$$
Therefore, for every $\varepsilon>0$,
\begin{align*}
&\mathsf{P}\Bigg(
\left|
\int (\G_0+h)^2\,\rd F_{n,\omega}^{Q_n}(t)\,\rd P_n(\omega)
-
\int (\G_0+h)^2\,\rd F_\omega^{\mu_0}(t)\,\rd\mu_0(\omega)
\right|
>\varepsilon
\Bigg)
\\
&\;\le
\mathsf{P}\big((\G_0+h)^2\notin\mathcal G_{B,w}\big)
\\
&\qquad+
\mathsf{P}\Bigg(
\sup_{g\in\mathcal G_{B,w}}
\left|
\int g(\omega,t)\,\rd F_{n,\omega}^{Q_n}(t)\,\rd P_n(\omega)
-
\int g(\omega,t)\,\rd F_\omega^{\mu_0}(t)\,\rd\mu_0(\omega)
\right|
>\varepsilon
\Bigg).
\end{align*}
The first term is bounded by $\eta$, and the second converges to $0$ by Lemma~\ref{ap:lem_DCT_P_hat_local}. Since $\eta>0$ is arbitrary, the quantity in \eqref{eq:local_CM_aux_ineq_2} is $o_{\mathsf{P}}(1)$. This concludes the proof.
\end{proof}

\subsubsection{Composite null hypothesis}\label{ap:sec_local_alternatives_composite}

This section contains the proof of Proposition \ref{prop:local_conv_roots_Qn}, which establishes the local convergence of $\hat{\btheta}_n$ and $\tilde{\btheta}_n$ to $\btheta_0$. Remark \ref{rem:local_conv_roots_Qn} shows a useful fact that will be applied in the proof.
\begin{remark}\label{rem:local_conv_roots_Qn}
By condition \ref{cp3}, as $\btheta\to\btheta_0$, one has
\begin{equation}\label{eq:U1_taylor_mu}
\mu_{\btheta_0}\bpsi_{\btheta} = \bV_{\btheta_0}(\btheta-\btheta_0)+o(\|\btheta-\btheta_0\|).
\end{equation}
Let $c\defin \|\bV_{\btheta_0}^{-1}\|_2^{-1}/2$, with $\bV_{\btheta_0}$ as defined in condition \ref{cp3} and $\|\cdot\|_2$ the spectral norm. By condition \ref{cp3}, $\bV_{\btheta_0}$ is nonsingular, hence $c>0$. It holds that $\|\bV_{\btheta_0}\bv\|\ge 2c\|\bv\|$ for all $\bv\in\R^p$.

In addition, by definition of $o(\|\btheta-\btheta_0\|)$, taking $U$ sufficiently small, it holds that
\begin{equation*}
\big\|\mu_{\btheta_0}\bpsi_{\btheta} - \bV_{\btheta_0}(\btheta-\btheta_0)\big\|\le c\,\|\btheta-\btheta_0\|.
\end{equation*}
\end{remark}
\begin{proof}[Proof of Proposition~\ref{prop:local_conv_roots_Qn}]
We divide the proof into its two statements.

\emph{Proof of \ref{prop:local_conv_roots_Qn_i}.} By the reverse triangle inequality, one has
\begin{equation}\label{eq:mu_separation}
\|\mu_{\btheta_0}\bpsi_{\btheta}\|
\ge
\|\bV_{\btheta_0}(\btheta-\btheta_0)\|-\big\|\mu_{\btheta_0}\bpsi_{\btheta} - \bV_{\btheta_0}(\btheta-\btheta_0)\big\|
\ge
2c\|\btheta-\btheta_0\|-c\|\btheta-\btheta_0\|
=
c\|\btheta-\btheta_0\|
\end{equation}
in a neighborhood of $\btheta_0$.

By \eqref{eq:local_cp1_prop_local_conv}, for every $\btheta\in U$,
\begin{align*}
\|G\bpsi_{\btheta}\|
\le
\|G\bpsi_{\btheta_0}\| + \|G(\bpsi_{\btheta}-\bpsi_{\btheta_0})\| 
\le
\|G\bpsi_{\btheta_0}\| + G\|\bpsi_{\btheta}-\bpsi_{\btheta_0}\| 
\le
\|G\bpsi_{\btheta_0}\| + GL\|\btheta-\btheta_0\|.
\end{align*}
Since $GL<\infty$ by Cauchy--Schwarz and $U$ is bounded, it follows that
\begin{equation*}
M_G \defin \sup_{\btheta\in U}\|G\bpsi_{\btheta}\| < \infty.
\end{equation*}

Since $Q_n\bpsi_{\tilde\btheta_n}=0$,
by definition of $Q_n$ one has
$$
\left(1-\frac{1}{\sqrt{n}}\right)\,\|\mu_{\btheta_0}\bpsi_{\tilde\btheta_n}\|
=
\frac{\|G\bpsi_{\tilde\btheta_n}\|}{\sqrt{n}}
\le
\frac{M_G}{\sqrt{n}}.
$$
Hence, for $n$ sufficiently large,
$$
\|\mu_{\btheta_0}\bpsi_{\tilde\btheta_n}\|\le \frac{M_G}{\sqrt{n}\left(1-1/\sqrt{n}\right)} \le \frac{2M_G}{\sqrt{n}},
$$
and combining with \eqref{eq:mu_separation} at $\btheta=\tilde\btheta_n$ gives $
\|\tilde\btheta_n-\btheta_0\| \to 0 $ as $n\to \infty$. 

\emph{Proof of \ref{prop:local_conv_roots_Qn_ii}.} For each $j=1,\ldots,p$, let $\cF_j\defin\{\psi_{\btheta,j} : \btheta \in U\}$. We show that $\cF_j$ satisfies the conditions of Corollary~\ref{cor:bernoulli}. Each class $\cF_j$ is $\mu_{\btheta_0}$- and $G$-Donsker by \eqref{eq:local_cp1_prop_local_conv} and condition \ref{cp2}, because $\mu_{\btheta_0}L^2<\infty$, $\mu_{\btheta_0}\|\bpsi_{\btheta_0}\|^2<\infty$, $GL^2<\infty$, and $G\|\bpsi_{\btheta_0}\|^2<\infty$ (see Example 19.7 in \citealpSM{vandervaart1998SM}). Moreover, for every $\btheta\in U$,
\begin{align*}
\big|(G-\mu_{\btheta_0})\psi_{\btheta,j}\big|
\le
\big\|(G-\mu_{\btheta_0})\bpsi_{\btheta}\big\|
\le
\|G\bpsi_{\btheta}\| + \mu_{\btheta_0}\|\bpsi_{\btheta}-\bpsi_{\btheta_0}\|
\le
M_G + \mu_{\btheta_0}L\|\btheta-\btheta_0\|,
\end{align*}
so $\sup_{\btheta\in U}\big|(G-\mu_{\btheta_0})\psi_{\btheta,j}\big|<\infty$ because $U$ is bounded.

An application of Corollary~\ref{cor:bernoulli} to each $\cF_j$ and the continuous mapping theorem gives $\sup_{\btheta\in U}|(P_n-Q_n)\psi_{\btheta,j}|=O_{Q_n}(n^{-1/2})$. Since $p<\infty$,
\begin{equation}\label{eq:application_bernoulli}
\sup_{\btheta\in U}\|(P_n-Q_n)\bpsi_{\btheta}\|
\le
\sqrt{p}\,\max_{1\le j\le p}\sup_{\btheta\in U}|(P_n-Q_n)\psi_{\btheta,j}|
=O_{Q_n}(n^{-1/2}).
\end{equation}
Since $P_n\bpsi_{\hat\btheta}=0$, \eqref{eq:application_bernoulli} implies that, on the event $\{\hat\btheta\in U\}$,
\begin{equation}\label{eq:Qn_psi_hat_bound}
\|Q_n\bpsi_{\hat\btheta}\|
=
\|(P_n-Q_n)\bpsi_{\hat\btheta}\|
\le
\sup_{\btheta\in U}\|(P_n-Q_n)\bpsi_{\btheta}\|
=
O_{Q_n}(n^{-1/2}),
\end{equation}
and in particular $\|Q_n\bpsi_{\hat\btheta}\|=o_{Q_n}(1)$ on $\{\hat\btheta\in U\}$. 

By \eqref{eq:mu_separation},
\begin{equation}\label{eq:sep_ring_mu}
\inf_{\btheta \in U, \;\|\btheta-\btheta_0\|\ge\varepsilon}\|\mu_{\btheta_0}\bpsi_{\btheta}\|
\ge
c\varepsilon.
\end{equation}
Moreover,
$$
\|(Q_n-\mu_{\btheta_0})\bpsi_{\btheta}\|
=
\frac{1}{\sqrt{n}}\|(G-\mu_{\btheta_0})\bpsi_{\btheta}\|
\le
\frac{1}{\sqrt{n}}\big(\|G\bpsi_{\btheta}\|+\|\mu_{\btheta_0}\bpsi_{\btheta}\|\big)
\le
\frac{1}{\sqrt{n}}\Big(M_G+\sup_{\btheta\in U}\|\mu_{\btheta_0}\bpsi_{\btheta}\|\Big),
$$
so $\sup_{\btheta\in U}\|(Q_n-\mu_{\btheta_0})\bpsi_{\btheta}\|=O(n^{-1/2})=o(1)$. The fact that $\sup_{\btheta\in U}\|\mu_{\btheta_0}\bpsi_{\btheta}\|<\infty$ follows from \eqref{eq:local_cp1_prop_local_conv}.
Therefore, for $n$ sufficiently large,
\begin{equation}\label{eq:drift_small_ring}
\sup_{\btheta\in U}\|(Q_n-\mu_{\btheta_0})\bpsi_{\btheta}\| < \frac{c\varepsilon}{2}.
\end{equation}

Combining \eqref{eq:sep_ring_mu} and \eqref{eq:drift_small_ring}, for $n$ sufficiently large and all $\btheta\in U$ with $\|\btheta-\btheta_0\|\ge\varepsilon$,
\begin{equation*}
\|Q_n\bpsi_{\btheta}\|
\ge
\|\mu_{\btheta_0}\bpsi_{\btheta}\|-\|(Q_n-\mu_{\btheta_0})\bpsi_{\btheta}\|
>
c\varepsilon-\frac{c\varepsilon}{2}
=
\frac{c\varepsilon}{2}.
\end{equation*}
The preceding bound applied at $\btheta=\hat\btheta$ on the event $\{\hat\btheta\in U\}$ gives
$$
\mathsf P\big(\|\hat\btheta-\btheta_0\|\ge\varepsilon\big)
\le
\mathsf P(\hat\btheta\notin U)
+
\mathsf P\Big(\|Q_n\bpsi_{\hat\btheta}\|\ge \frac{c\varepsilon}{2},\ \hat\btheta\in U\Big)
\to0,
$$
because $\mathsf P(\hat\btheta\notin U)\to0$ and $\|Q_n\bpsi_{\hat\btheta}\|=O_{Q_n}(n^{-1/2})$ by \eqref{eq:Qn_psi_hat_bound} if $\hat\btheta\in U$. This implies $\mathsf P(\|\hat\btheta-\btheta_0\|\ge\varepsilon)\to0$ for every $\varepsilon>0$, i.e. $\hat\btheta \convp \btheta_0$.
\end{proof}

We introduce Lemmas \ref{lem:aux_P_n_Q_n_tilde} and \ref{lem:aux_P_n_Q_n_tilde_hat}, which are auxiliary results used in the proof of Theorem \ref{thm:local_alt_conv_process_correction}.

\begin{lemma}\label{lem:aux_P_n_Q_n_tilde_hat}
Assume conditions \ref{cp1} and \ref{cp2}, $GL^2<\infty$, and $G\|\bpsi_{\btheta_0}\|^2<\infty$.
Let $\tilde\btheta_n\in\Theta$ be such that $Q_n\bpsi_{\tilde\btheta_n}=0$ and $\tilde\btheta_n\to\btheta_0$.  
Let also $\hat\btheta\in\Theta$ be such that $P_n\bpsi_{\hat\btheta}=o_{\mathsf{P}}(n^{-1/2})$ and $\hat\btheta\convp\btheta_0$.
Under the local alternative defined in \eqref{eq:local_alt_Qn_def}, it holds that 
$$
\left(P_n - Q_n\right)\big(\bpsi_{\hat{\btheta}} - \bpsi_{\tilde\btheta_n}\big) = o_{\mathsf{P}}(n^{-1/2})
$$
\end{lemma}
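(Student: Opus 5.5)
The proof is a stochastic equicontinuity argument for the triangular-array empirical process indexed by the Lipschitz-in-parameter class $\{\bpsi_{\btheta}:\btheta\in U\}$. We transfer that equicontinuity from the fixed law $\mu_{\btheta_0}$ to $Q_n$ via the coupling of Proposition~\ref{prop:bernoulli}.

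Fix a small bounded neighborhood $U$ of $\btheta_0$ on which condition \ref{cp1} holds. For each coordinate $j$, let $\cF_j\defin\{\psi_{\btheta,j}:\btheta\in U\}$.

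\textbf{Step 1: each $\cF_j$ is Donsker under both laws.} Condition \ref{cp1} gives the Lipschitz bound $|\psi_{\btheta_1,j}-\psi_{\btheta_2,j}|\le L\|\btheta_1-\btheta_2\|$. We have $\mu_{\btheta_0}L^2<\infty$ and $GL^2<\infty$, and the envelope $\|\bpsi_{\btheta_0}\|+L\,\mathrm{diam}(U)$ is square integrable under both $\mu_{\btheta_0}$ and $G$. By Example 19.7 of van der Vaart (1998), $\cF_j$ is therefore both $\mu_{\btheta_0}$- and $G$-Donsker, exactly as in the proof of Proposition~\ref{prop:local_conv_roots_Qn}\ref{prop:local_conv_roots_Qn_ii}. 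The same bounds give $\sup_{\btheta\in U}|(G-\mu_{\btheta_0})\psi_{\btheta,j}|<\infty$. So Proposition~\ref{prop:bernoulli} applies with $P=\mu_{\btheta_0}$: on a suitable probability space, $\sup_{\btheta\in U}|\G^{Q_n}_{n,Q_n}(\psi_{\btheta,j})-\G^{P}_{n,P}(\psi_{\btheta,j})|\convp0$.

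\textbf{Step 2: asymptotic equicontinuity of the $Q_n$-process.} Since $\G^P_{n,P}$ is asymptotically equicontinuous in the $L^2(P)$ seminorm $\rho_P$, the coupled bound from Step 1 transfers this to $\G^{Q_n}_{n,Q_n}$. For every $\eta>0$,
\[
\lim_{\delta\to0^+}\limsup_{n\to\infty}\mathsf P^*\Big(\sup_{\btheta_1,\btheta_2\in U,\ \rho_P(\btheta_1,\btheta_2)<\delta}\big|\G^{Q_n}_{n,Q_n}(\psi_{\btheta_1,j}-\psi_{\btheta_2,j})\big|>\eta\Big)=0.
\]
Because the target statement concerns only distributions, working on the coupled space is harmless. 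The seminorm is controlled by the parameter distance: $\rho_P(\btheta_1,\btheta_2)\le (PL^2)^{1/2}\|\btheta_1-\btheta_2\|$.

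\textbf{Step 3: plugging in the random parameters.} It remains to place the pair $(\hat\btheta,\tilde\btheta_n)$ in the set where the supremum above is small. We know $\tilde\btheta_n\to\btheta_0$ deterministically and $\hat\btheta\convp\btheta_0$ under $Q_n$. Hence, with probability tending to one, both lie in $U$ and satisfy $\rho_P(\hat\btheta,\tilde\btheta_n)\le (PL^2)^{1/2}\|\hat\btheta-\tilde\btheta_n\|<\delta$, for any fixed $\delta$. Combining this with Step 2 and letting $\delta\downarrow0$ gives $\G^{Q_n}_{n,Q_n}(\psi_{\hat\btheta,j}-\psi_{\tilde\btheta_n,j})=o_{\mathsf P}(1)$ for each $j$. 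Taking the maximum over the finitely many $j$ and dividing by $\sqrt n$ yields the claim.

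\textbf{Main obstacle.} The delicate point is Step 2: the equicontinuity holds for the triangular array only after the coupling, and the modulus is measured in $L^2(P)$ rather than $L^2(Q_n)$. I would avoid comparing $L^2(Q_n)$ and $L^2(P)$ altogether. Instead I would apply the $\delta$-sup functional to both processes and use the uniform closeness from Proposition~\ref{prop:bernoulli}, which differs by at most twice the coupling error. Note that the hypothesis $P_n\bpsi_{\hat\btheta}=o_{\mathsf P}(n^{-1/2})$ is not needed for this step beyond guaranteeing the setting in which $\hat\btheta\convp\btheta_0$.
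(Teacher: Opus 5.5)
Your proposal is correct and follows essentially the same route as the paper. Both arguments couple the $Q_n$-process with the $\mu_{\btheta_0}$-process via Proposition~\ref{prop:bernoulli}, applied to the Lipschitz score classes, which are Donsker under both laws. Both then transfer asymptotic equicontinuity at the cost of twice the uniform coupling error, and plug in $(\hat\btheta,\tilde\btheta_n)$, which lie in the ball and are close with probability tending to one.

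The only difference is cosmetic. You measure closeness directly in $L^2(\mu_{\btheta_0})$ through $(\mu_{\btheta_0}L^2)^{1/2}\|\hat\btheta-\tilde\btheta_n\|$. The paper instead first bounds $\|\bpsi_{\hat\btheta}-\bpsi_{\tilde\btheta_n}\|_{Q_n,2}$ and then converts to the $\mu_{\btheta_0}$-seminorm via $(1-n^{-1/2})\|f\|_{\mu_{\btheta_0},2}^2\le\|f\|_{Q_n,2}^2$, a detour your version avoids.
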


\begin{lemma}\label{lem:aux_P_n_Q_n_tilde}
Assume conditions \ref{cp1} and \ref{cp2}, $GL^2<\infty$, and $G\|\bpsi_{\btheta_0}\|^2<\infty$.
Let $\tilde\btheta_n\in\Theta$ be such that $Q_n\bpsi_{\tilde\btheta_n}=0$ and $\tilde\btheta_n\to\btheta_0$.  
Under the local alternative defined in \eqref{eq:local_alt_Qn_def}, it holds that 
\begin{align*}
\left(P_n - Q_n\right)\bpsi_{\tilde{\btheta}_n} = O_{\mathsf{P}}(n^{-1/2}).
\end{align*}
\end{lemma}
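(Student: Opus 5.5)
The argument rests on one observation: $\tilde\btheta_n$ is a deterministic sequence, so $\sqrt n\,(P_n-Q_n)\bpsi_{\tilde\btheta_n}$ is, for each $n$, a normalized sum of $n$ iid mean-zero random vectors under $Q_n$. A second-moment (Chebyshev) bound should therefore suffice, with no empirical-process machinery. The identity $\mathsf E_{Q_n}\big(\bpsi_{\tilde\btheta_n}(X)\big)=0$ is not even needed, since we center by $Q_n$ in any case.

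I will split
\[
(P_n-Q_n)\bpsi_{\tilde\btheta_n}=(P_n-Q_n)\bpsi_{\btheta_0}+(P_n-Q_n)\big(\bpsi_{\tilde\btheta_n}-\bpsi_{\btheta_0}\big).
\]
For the first term, the components of $\sqrt n\,(P_n-Q_n)\bpsi_{\btheta_0}$ have covariance trace at most
\[
Q_n\|\bpsi_{\btheta_0}\|^2=(1-n^{-1/2})\,\mu_{\btheta_0}\|\bpsi_{\btheta_0}\|^2+n^{-1/2}\,G\|\bpsi_{\btheta_0}\|^2 .
\]
By condition \ref{cp2} and the assumption $G\|\bpsi_{\btheta_0}\|^2<\infty$, this is bounded uniformly in $n$. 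Markov's inequality applied to $\|\sqrt n\,(P_n-Q_n)\bpsi_{\btheta_0}\|^2$ then gives $O_{\mathsf P}(1)$.

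For the second term, since $\tilde\btheta_n\to\btheta_0$, for all $n$ sufficiently large $\tilde\btheta_n$ lies in the neighborhood of $\btheta_0$ where condition \ref{cp1} holds. On that neighborhood, $\|\bpsi_{\tilde\btheta_n}(x)-\bpsi_{\btheta_0}(x)\|\le L(x)\|\tilde\btheta_n-\btheta_0\|$. The second moment of $\sqrt n\,(P_n-Q_n)(\bpsi_{\tilde\btheta_n}-\bpsi_{\btheta_0})$ is therefore bounded by
\[
Q_n L^2\,\|\tilde\btheta_n-\btheta_0\|^2\le\big(\mu_{\btheta_0}L^2+G L^2\big)\|\tilde\btheta_n-\btheta_0\|^2\to0,
\]
using $\mathsf E(L(X)^2)<\infty$ under $\mu_{\btheta_0}$ and $GL^2<\infty$. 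Chebyshev then shows this term is $o_{\mathsf P}(n^{-1/2})$. Combining the two terms yields $O_{\mathsf P}(n^{-1/2})$.

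The only delicate point is that the moment bounds must hold under the moving law $Q_n$ rather than under $\mu_{\btheta_0}$. I handle this by the convex-combination bound $Q_n f\le \mu_{\btheta_0}f+Gf$ for nonnegative $f$. This is exactly why the hypotheses $GL^2<\infty$ and $G\|\bpsi_{\btheta_0}\|^2<\infty$ are imposed.
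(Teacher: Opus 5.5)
Your proof is correct, and it is more elementary than the paper's. The paper fixes a ball $B(\btheta_0,\rho)$ on which \ref{cp1} holds and bounds $\|\sqrt n(P_n-Q_n)\bpsi_{\tilde\btheta_n}\|$ by $\sup_{\btheta\in B(\btheta_0,\rho)}\|\sqrt n(P_n-Q_n)\bpsi_{\btheta}\|$. It then controls this supremum by showing that each coordinate class $\{\psi_{\btheta,j}:\btheta\in B(\btheta_0,\rho)\}$ is $\mu_{\btheta_0}$- and $G$-Donsker, since these classes are Lipschitz in the parameter. It finishes with the Bernoulli coupling of Corollary~\ref{cor:bernoulli} and the continuous mapping theorem.

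You instead use that $\tilde\btheta_n$ is nonrandom, so one Chebyshev bound under $Q_n$ suffices. The moving law is handled by $Q_nf\le\mu_{\btheta_0}f+Gf$ for $f\ge0$. Your route needs only second moments and the Lipschitz bound at the single pair $(\tilde\btheta_n,\btheta_0)$. Like the paper, it reads \ref{cp1} as holding for every $x$, since the bound is used under $G$ as well.

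Your argument also gives the slightly sharper statement $\sqrt n(P_n-Q_n)\bpsi_{\tilde\btheta_n}=\sqrt n(P_n-Q_n)\bpsi_{\btheta_0}+o_{\mathsf P}(1)$. Your second-term estimate is exactly the Markov argument the paper itself uses later to get \eqref{eq:proof_E5_sum_to_theta0} in the proof of Theorem~\ref{thm:local_alt_Gn_composite}.

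The paper's uniform-in-$\btheta$ bound is heavier than this lemma requires. It is, however, what is needed when the parameter is random, as for $\hat\btheta$ in Lemma~\ref{lem:aux_P_n_Q_n_tilde_hat} and Proposition~\ref{prop:local_conv_roots_Qn}. That is likely why the paper reuses the same machinery here.
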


Using Lemmas \ref{lem:aux_P_n_Q_n_tilde} and \ref{lem:aux_P_n_Q_n_tilde_hat}, we can now prove Theorem \ref{thm:local_alt_conv_process_correction}.

\begin{proof}[Proof of Theorem \ref{thm:local_alt_conv_process_correction}]
We divide the proof into its two statements.

    \emph{Proof of \ref{thm:local_alt_conv_process_correction_i}.} Since $\btheta\mapsto \mu_{\btheta_0}\bpsi_{\btheta}$ is differentiable at $\btheta_0$ by condition \ref{cp3}, it holds that
\begin{align*}
    \mu_{\btheta_0}\bpsi_{\btheta} = \mu_{\btheta_0}\bpsi_{\btheta_0} + \bV_{\btheta_0}(\btheta-\btheta_0) + o(\|\btheta-\btheta_0\|).
\end{align*}
Since $\tilde\btheta_n\to\btheta_0$, evaluating this expansion at $\btheta=\tilde\btheta_n$ gives
$$
\mu_{\btheta_0}\bpsi_{\tilde{\btheta}_n} = \mu_{\btheta_0}\bpsi_{\btheta_0} + \bV_{\btheta_0}(\tilde{\btheta}_n-\btheta_0) + o(\|\tilde{\btheta}_n-\btheta_0\|).
$$
Together with $Q_n\bpsi_{\tilde{\btheta}_n}=0$, this gives
\begin{align*}
    0= \Big(1-\frac{1}{\sqrt{n}}\Big)\left(\bV_{\btheta_0}(\tilde{\btheta}_n-\btheta_0) + o(\|\tilde{\btheta}_n-\btheta_0\|)\right) + \frac{1}{\sqrt{n}} G \bpsi_{\tilde{\btheta}_n},
\end{align*}
and thus
\begin{align}\label{eq:tilde_theta_n_theta_0}
    \bV_{\btheta_0}(\tilde{\btheta}_n - \btheta_0) = - \frac{1}{\sqrt{n}\left(1-1/\sqrt{n}\right)} G \bpsi_{\tilde{\btheta}_n} + o(\|\tilde{\btheta}_n-\btheta_0\|).
\end{align}

Now, by condition \ref{cp1} and the assumption that $\tilde{\btheta}_n\to\btheta_0$, it holds that 
$$
\|G\bpsi_{\tilde{\btheta}_n}\|\le \|G\bpsi_{\btheta_0}\| + GL \|\tilde{\btheta}_n-\btheta_0\|
$$ 
for $n$ sufficiently large, where $GL<\infty$ because $GL^2<\infty$.
Plugging into \eqref{eq:tilde_theta_n_theta_0} gives
\begin{align*}
    \Big(1-\frac{1}{\sqrt{n}}\Big)\|\bV_{\btheta_0}(\tilde{\btheta}_n-\btheta_0)\| \le \frac{1}{\sqrt{n}}\|G\bpsi_{\btheta_0}\| + \frac{GL}{\sqrt{n}}\|\tilde{\btheta}_n-\btheta_0\| + o(\|\tilde{\btheta}_n-\btheta_0\|).
\end{align*}
By Remark~\ref{rem:local_conv_roots_Qn}, $\|\bV_{\btheta_0}(\tilde{\btheta}_n-\btheta_0)\|\ge 2c\|\tilde{\btheta}_n-\btheta_0\|$. Therefore, for every $\varepsilon>0$, there exists $n(\varepsilon)>0$ such that, for every $n>n(\varepsilon)$, one has
\begin{align*}
    2c\left(1-\frac{1}{\sqrt{n}}\right)\|\tilde{\btheta}_n-\btheta_0\|  \le \frac{1}{\sqrt{n}}\|G\bpsi_{\btheta_0}\| + \frac{GL}{\sqrt{n}}\|\tilde{\btheta}_n-\btheta_0\| + \varepsilon\|\tilde{\btheta}_n-\btheta_0\|.
\end{align*}
Thus,
\begin{align*}
    \left(2c\Big(1-\frac{1}{\sqrt{n}}\Big) - \frac{GL}{\sqrt{n}} - \varepsilon \right)\|\tilde{\btheta}_n-\btheta_0\| \le \frac{1}{\sqrt{n}}\|G\bpsi_{\btheta_0}\|.
\end{align*} 
Take $\varepsilon=c/2$ to obtain
\begin{align}\label{eq:tilde_theta_theta_0_On-12}
    \|\tilde{\btheta}_n-\btheta_0\| \le \frac{2}{c\sqrt{n}}\|G\bpsi_{\btheta_0}\| = O\big(n^{-1/2}\big),
\end{align}
for $n$ sufficiently large. The bound $\|G\bpsi_{\btheta_0}\|<\infty$ follows from $G\|\bpsi_{\btheta_0}\|^2<\infty$.

Now, rewrite \eqref{eq:tilde_theta_n_theta_0} as
\begin{align}\label{eq:tilde_theta_n_theta_0_rewrite}
   \bV_{\btheta_0} (\tilde{\btheta}_n - \btheta_0) = - \frac{1}{\sqrt{n}(1-1/\sqrt{n})} G \bpsi_{\btheta_0} - \frac{1}{\sqrt{n}(1-1/\sqrt{n})} G \bigl(\bpsi_{\tilde{\btheta}_n} - \bpsi_{\btheta_0}\bigr) + o(\|\tilde{\btheta}_n-\btheta_0\|).
\end{align}
By condition \ref{cp1}, one has
\begin{align*}
   \frac{1}{\sqrt{n}(1-1/\sqrt{n})} \|G(\bpsi_{\tilde{\btheta}_n} - \bpsi_{\btheta_0})\| \le \frac{1}{\sqrt{n}(1-1/\sqrt{n})} GL \|\tilde{\btheta}_n-\btheta_0\| = O(n^{-1})
\end{align*}
by \eqref{eq:tilde_theta_theta_0_On-12}. Thus, \eqref{eq:tilde_theta_n_theta_0_rewrite} yields
\begin{align*}
    \tilde{\btheta}_n - \btheta_0 = - \frac{1}{\sqrt{n}(1-1/\sqrt{n})} \bV_{\btheta_0}^{-1} G \bpsi_{\btheta_0} + o\big(n^{-1/2}\big),
\end{align*}
which gives \eqref{eq:thm_local_alt_conv_process_correction_i}, applying the assumption $\|G \bpsi_{\btheta_0}\|<\infty$.

\emph{Proof of \ref{thm:local_alt_conv_process_correction_ii}.} Now, we turn to \eqref{eq:thm_local_alt_conv_process_correction_ii}. Since $P_n\bpsi_{\hat\btheta}=o_{\mathsf{P}}(n^{-1/2})$ and $Q_n\bpsi_{\tilde\btheta_n}=0$, one has
\begin{align}\label{eq:hat_theta_split}
    0 = \left(P_n - Q_n\right)\bpsi_{\hat{\btheta}} + Q_n\big(\bpsi_{\hat{\btheta}} - \bpsi_{\tilde\btheta_n}\big) + o_{\mathsf{P}}(n^{-1/2}).
\end{align}
To control the first summand in \eqref{eq:hat_theta_split}, write 
\begin{align*}
    \left(P_n - Q_n\right)\bpsi_{\hat{\btheta}} = \left(P_n - Q_n\right)\bpsi_{\tilde{\btheta}_n} + \left(P_n - Q_n\right)\big(\bpsi_{\hat{\btheta}} - \bpsi_{\tilde{\btheta}_n}\big).
\end{align*}
By Lemma \ref{lem:aux_P_n_Q_n_tilde_hat}, it holds that $(P_n - Q_n)(\bpsi_{\hat{\btheta}} - \bpsi_{\tilde{\btheta}_n}) = o_{\mathsf{P}}(n^{-1/2})$. 
Therefore, 
\begin{align}\label{eq:first_RHS_hat_theta_split}
    \left(P_n - Q_n\right)\bpsi_{\hat{\btheta}} = \left(P_n - Q_n\right)\bpsi_{\tilde{\btheta}_n} + o_{\mathsf{P}}(n^{-1/2}).
\end{align}

Focus now on the second summand of \eqref{eq:hat_theta_split}, i.e, $Q_n(\bpsi_{\hat{\btheta}} - \bpsi_{\tilde\btheta_n})$. By definition of $Q_n$, one has
$$
Q_n\big(\bpsi_{\hat{\btheta}} - \bpsi_{\tilde\btheta_n}\big) = \left(1-\frac{1}{\sqrt{n}}\right)\mu_{\btheta_0} \left(\bpsi_{\hat{\btheta}} - \bpsi_{\tilde\btheta_n}\right) + \frac{1}{\sqrt{n}} G \left(\bpsi_{\hat{\btheta}} - \bpsi_{\tilde\btheta_n}\right).
$$
By \eqref{eq:U1_taylor_mu} applied at $\btheta=\hat{\btheta}$ and $\btheta=\tilde{\btheta}_n$, condition \ref{cp1}, and $GL<\infty$, one has
\begin{equation}\label{eq:second_RHS_hat_theta_split}
\begin{aligned}
   Q_n\left(\bpsi_{\hat{\btheta}} - \bpsi_{\tilde\btheta_n}\right) & = \left(1-\frac{1}{\sqrt{n}}\right)\left(\bV_{\btheta_0}(\hat{\btheta}-\tilde\btheta_n) + o_{\mathsf{P}}(\|\hat{\btheta}-\btheta_0\|) + o(\|\tilde{\btheta}_n-\btheta_0\|)\right) + O_{\mathsf{P}}\left(\frac{1}{\sqrt{n}}\|\hat{\btheta}-\tilde\btheta_n\|\right) \\
    &= \bV_{\btheta_0}(\hat{\btheta}-\tilde\btheta_n) +  O_{\mathsf{P}}\left(\frac{1}{\sqrt{n}} \|\hat{\btheta}-\tilde\btheta_n\|\right) + o_{\mathsf{P}}(\|\hat{\btheta}-\btheta_0\|) + o(\|\tilde{\btheta}_n-\btheta_0\|) \\
    &= \bV_{\btheta_0}(\hat{\btheta}-\tilde\btheta_n) + O_{\mathsf{P}}\left(\frac{1}{\sqrt{n}} \|\hat{\btheta}-\tilde\btheta_n\|\right) + o_{\mathsf{P}}(\|\hat{\btheta}-\btheta_0\|) + o_{\mathsf{P}}(n^{-1/2}),
\end{aligned}
\end{equation}
where the last equality follows from \eqref{eq:tilde_theta_theta_0_On-12}.

Combining \eqref{eq:first_RHS_hat_theta_split} and \eqref{eq:second_RHS_hat_theta_split} into \eqref{eq:hat_theta_split} gives
\begin{equation}\label{eq:pre_hat_tilde}
\begin{aligned}
    0 =& (P_n-Q_n) \bpsi_{\tilde{\btheta}_n} + \bV_{\btheta_0}(\hat{\btheta}-\tilde\btheta_n) +  O_{\mathsf{P}}\left(\frac{1}{\sqrt{n}} \|\hat{\btheta}-\tilde\btheta_n\|\right) 
    \\
    & \; + o_{\mathsf{P}}(\|\hat{\btheta}-\btheta_0\|) + o_{\mathsf{P}}(n^{-1/2})
\end{aligned}
\end{equation}
We show that $\|\hat{\btheta}-\tilde\btheta_n\| = O_{\mathsf{P}}(n^{-1/2})$. Using Lemma \ref{lem:aux_P_n_Q_n_tilde} in \eqref{eq:pre_hat_tilde}, we obtain
\begin{align*} 
    \left\|\bV_{\btheta_0}(\hat{\btheta}-\tilde\btheta_n)\right\| &\le O_{\mathsf{P}}(n^{-1/2}) + O_{\mathsf{P}}\left(\frac{1}{\sqrt{n}} \|\hat{\btheta}-\tilde\btheta_n\|\right) + o_{\mathsf{P}}(\|\hat{\btheta}-\btheta_0\|) + o_{\mathsf{P}}(n^{-1/2}) \\
    &= O_{\mathsf{P}}(n^{-1/2}) + o_{\mathsf{P}}(\|\hat{\btheta}-\btheta_0\|). 
\end{align*} 
The last equality uses $\|\hat{\btheta}-\tilde\btheta_n\|=o_{\mathsf{P}}(1)$. By Remark~\ref{rem:local_conv_roots_Qn}, $\|\bV_{\btheta_0}(\hat{\btheta}-\tilde{\btheta}_n)\|\ge 2c\|\hat{\btheta}-\tilde{\btheta}_n\|$. Then, applying the triangle inequality to $\|\hat{\btheta}-\btheta_0\|$ and using $\|\tilde{\btheta}_n-\btheta_0\|=O(n^{-1/2})$ from \eqref{eq:tilde_theta_theta_0_On-12} gives 
$$
2c\|\hat{\btheta}-\tilde{\btheta}_n\| \le O_{\mathsf{P}}(n^{-1/2}) + o_{\mathsf{P}}(\|\hat{\btheta}-\tilde{\btheta}_n\|).
$$
By Lemma \ref{lem:absorbtion_OP}, this implies that $\|\hat{\btheta}-\tilde\btheta_n\| = O_{\mathsf{P}}(n^{-1/2})$. Observe also that since $\|\hat{\btheta}-\tilde\btheta_n\|=O_{\mathsf{P}}(n^{-1/2})$ and $\|\tilde{\btheta}_n-\btheta_0\|=O(n^{-1/2})$, then the triangle inequality gives $\|\hat{\btheta}-\btheta_0\|=O_{\mathsf{P}}(n^{-1/2})$. 
Plugging back into \eqref{eq:pre_hat_tilde} and using $Q_n \bpsi_{\tilde{\btheta}_n}=0$ gives 
\begin{align*} 
    \hat{\btheta} - \tilde\btheta_n = -\bV_{\btheta_0}^{-1} P_n \bpsi_{\tilde{\btheta}_n} + o_{\mathsf{P}}(n^{-1/2}),
\end{align*} 
which combined with \eqref{eq:thm_local_alt_conv_process_correction_i} gives \eqref{eq:thm_local_alt_conv_process_correction_ii}.  
\end{proof}

\begin{proof}[Proof of Theorem \ref{thm:local_alt_Gn_composite}]
First, observe that under the local alternative \eqref{eq:local_alt_Qn_def} we can write, for every
$(\omega,t)\in\Omega\times\R_{\ge0}$,
\begin{align}
F_{n,\omega}^{Q_n}(t)-F^{\mu_{\hat\btheta}}_\omega(t)
&=
\Big(F_{n,\omega}^{Q_n}(t)-F^{\mu_{\btheta_0}}_\omega(t)\Big)
-
\Big(F^{\mu_{\hat\btheta}}_\omega(t)-F^{\mu_{\btheta_0}}_\omega(t)\Big).
\label{eq:proof_E5_start}
\end{align}
The asymptotic behavior of the first term on the RHS of \eqref{eq:proof_E5_start} was given in \eqref{eq:local_alt_weak_limit_process} with $\mu_0$ replaced by $\mu_{\btheta_0}$. 

It remains to analyze $F^{\mu_{\hat\btheta}}_\omega(t)-F^{\mu_{\btheta_0}}_\omega(t)$. By condition \ref{d}, it holds that
\begin{equation}
\sqrt{n}\Big(F^{\mu_{\hat\btheta}}_\omega(t)-F^{\mu_{\btheta_0}}_\omega(t)\Big)
=
\sqrt{n} \dot F^{\btheta_0}_\omega(t)^\top(\hat\btheta-\btheta_0)
+
o_{\mathsf{P}}\big(\sqrt{n}\|\hat\btheta-\btheta_0\|\big).
\label{eq:proof_E5_taylor_scaled}
\end{equation}
where the remainder is uniform in $(\omega,t)\in\Omega\times\R_{\ge0}$.

Recall that by Theorem \ref{thm:local_alt_conv_process_correction}, it holds that
\begin{equation}
\sqrt{n}(\hat\btheta-\btheta_0)
=
-\bV_{\btheta_0}^{-1}\sqrt{n}\,P_n\bpsi_{\tilde\btheta_n}
-\bV_{\btheta_0}^{-1}G\bpsi_{\btheta_0}
+o_{\mathsf{P}}(1),
\label{eq:proof_E5_use_95}
\end{equation}
Since $Q_n\bpsi_{\tilde\btheta_n}=0$, we have $\sqrt{n}\,P_n\bpsi_{\tilde\btheta_n}=\sqrt{n}(P_n-Q_n)\bpsi_{\tilde\btheta_n}=\G_{n,Q_n}^{Q_n}(\bpsi_{\tilde\btheta_n}).$
Moreover,
\begin{align*}
\mathsf E\big\|\G_{n,Q_n}^{Q_n}(\bpsi_{\tilde\btheta_n}-\bpsi_{\btheta_0})\big\|^2
\le
Q_n\big\|\bpsi_{\tilde\btheta_n}-\bpsi_{\btheta_0}\big\|^2 
\le
Q_n L^2\,\|\tilde\btheta_n-\btheta_0\|^2
\to 0,
\end{align*}
because $Q_nL^2$ is bounded and $\tilde\btheta_n\to\btheta_0$. Hence, by Markov's inequality, $\G_{n,Q_n}^{Q_n}(\bpsi_{\tilde\btheta_n}-\bpsi_{\btheta_0})=o_{\mathsf{P}}(1)$. Therefore,
\begin{equation}
\sqrt{n}P_n\bpsi_{\tilde\btheta_n}
=
\G_{n,Q_n}^{Q_n}(\bpsi_{\btheta_0})+o_{\mathsf{P}}(1).
\label{eq:proof_E5_sum_to_theta0}
\end{equation}
Plugging \eqref{eq:proof_E5_sum_to_theta0} into \eqref{eq:proof_E5_use_95} gives
\begin{equation}
\sqrt{n}(\hat\btheta-\btheta_0)
=
-\bV_{\btheta_0}^{-1}\,\G_{n,Q_n}^{Q_n}(\bpsi_{\btheta_0})
-\bV_{\btheta_0}^{-1}G\bpsi_{\btheta_0}
+o_{\mathsf{P}}(1).
\label{eq:proof_E5_theta_expansion_final}
\end{equation}
Since $Q_n\|\bpsi_{\btheta_0}\|^2$ is bounded, $\G_{n,Q_n}^{Q_n}(\bpsi_{\btheta_0})=O_{\mathsf{P}}(1)$; hence \eqref{eq:proof_E5_theta_expansion_final} gives $\sqrt{n}\|\hat\btheta-\btheta_0\|=O_{\mathsf{P}}(1)$, and the remainder in \eqref{eq:proof_E5_taylor_scaled} is $o_{\mathsf{P}}(1)$ uniformly in $(\omega,t)\in\Omega\times\R_{\ge0}$.
Substituting \eqref{eq:proof_E5_theta_expansion_final} into \eqref{eq:proof_E5_taylor_scaled}, we obtain
\begin{equation}\label{eq:proof_E5_Fhat_expansion_like_3p2}
\begin{aligned}
\sqrt{n}\Big(F^{\mu_{\hat\btheta}}_\omega(t)-F^{\mu_{\btheta_0}}_\omega(t)\Big)
=&
-\dot F^{\btheta_0}_\omega(t)^\top \bV_{\btheta_0}^{-1}\,\G_{n,Q_n}^{Q_n}(\bpsi_{\btheta_0})
\\
&\quad
-\dot F^{\btheta_0}_\omega(t)^\top \bV_{\btheta_0}^{-1}\,G\bpsi_{\btheta_0}
+o_{\mathsf{P}}(1).
\end{aligned}
\end{equation}

Combining \eqref{eq:proof_E5_start} with \eqref{eq:proof_E5_Fhat_expansion_like_3p2}, the rest of the proof follows analogously to that of Theorem~\ref{thm:convergence_G_n_composite}. Define
\begin{equation*}
g_{\omega,t}(x):=-\dot F^{\btheta_0}_\omega(t)^\top \bV_{\btheta_0}^{-1}\bpsi_{\btheta_0}(x),
\qquad
f_{\omega,t}(x):=y_{\omega,t}(x)-F^{\mu_{\btheta_0}}_\omega(t)-g_{\omega,t}(x),
\end{equation*}
so that $\mu_{\btheta_0}f_{\omega,t}=0$.

Combining \eqref{eq:proof_E5_start}, \eqref{eq:local_alt_Qn_def}, and \eqref{eq:proof_E5_Fhat_expansion_like_3p2},
and using that $\G_{n,Q_n}^{Q_n}(g_{\omega,t})=
-\dot F^{\btheta_0}_\omega(t)^\top \bV_{\btheta_0}^{-1}\G_{n,Q_n}^{Q_n}(\bpsi_{\btheta_0})$, one obtains
\begin{align}
\sqrt{n}\Big(F_{n,\omega}^{Q_n}(t)-F^{\mu_{\hat\btheta}}_\omega(t)\Big)
&=
\G_{n,Q_n}^{Q_n}(y_{\omega,t})
-\G_{n,Q_n}^{Q_n}(g_{\omega,t})
+h(\omega,t)
+\dot F^{\btheta_0}_\omega(t)^\top \bV_{\btheta_0}^{-1}G\bpsi_{\btheta_0}
+o_{\mathsf{P}}(1)
\nonumber\\
&=
\G_{n,Q_n}^{Q_n}(f_{\omega,t})
+h(\omega,t)
+\dot F^{\btheta_0}_\omega(t)^\top \bV_{\btheta_0}^{-1}G\bpsi_{\btheta_0}
+o_{\mathsf{P}}(1),
\label{eq:proof_E5_representation}
\end{align}
where we used that $\G_{n,Q_n}^{Q_n}(F^{\mu_{\btheta_0}}_\omega(t))=0$.

It remains to prove the weak convergence of $\G_{n,Q_n}^{Q_n}$ to $\tilde{\G}_{0}$ in $\ell^\infty(\cF)$, 
where $\cF:=\{f_{\omega,t}:(\omega,t)\in\Omega\times\R_{\ge0}\}$.
As in the proof of Theorem~\ref{thm:convergence_G_n_composite}, write $\cF\subset \cY-\mathcal{G}$, where
$$
\cY:=\{y_{\omega,t}-F^{\mu_{\btheta_0}}_\omega(t):(\omega,t)\in\Omega\times\R_{\ge0}\},
\qquad
\mathcal{G}:=\{g_{\omega,t}:(\omega,t)\in\Omega\times\R_{\ge0}\}.
$$
The class $\cY$ is $\mu_{\btheta_0}$- and $G$-Donsker by Theorem~3 of \citeSM{Chen2025SM}. For the class $\mathcal{G}$, note that
$$
g_{\omega,t}(x)=\sum_{j=1}^p \alpha_j(\omega,t)\,\psi_{\btheta_0,j}(x),
\qquad
\boldsymbol{\alpha}(\omega,t)^\top:=-\dot F^{\btheta_0}_\omega(t)^\top \bV_{\btheta_0}^{-1},
$$
so $\mathcal{G}\subset\mathrm{span}\{\psi_{\btheta_0,1},\dots,\psi_{\btheta_0,p}\}$ is finite-dimensional. Moreover, by conditions \ref{cp2}, \ref{cp3}, and \ref{d}, and by $G\|\bpsi_{\btheta_0}\|^2<\infty$, the envelope of $\mathcal{G}$ is in $L^2(\mu_{\btheta_0})$ and $L^2(G)$.
Thus, $\mathcal{G}$ is $\mu_{\btheta_0}$-Donsker and $G$-Donsker. Moreover, $\sup_{h\in \cY\cup \mathcal{G}}\mu_{\btheta_0}h^2<\infty$ and $\sup_{h\in \cY\cup \mathcal{G}}Gh^2<\infty$, hence $\cY-\mathcal{G}$ is $\mu_{\btheta_0}$-Donsker and $G$-Donsker, and therefore so is $\cF$.
If $\sup_{f\in\cF}|(G-\mu_{\btheta_0})f|<\infty$ holds, then Corollary~\ref{cor:bernoulli} with $\mu_0$ replaced by $\mu_{\btheta_0}$ applies and yields
\begin{equation}
\G_{n,Q_n}^{Q_n} \convl \tilde{\G}_{0}
\quad\text{in }\ell^\infty(\cF).
\label{eq:proof_E5_donsker_apply}
\end{equation}
Combining \eqref{eq:proof_E5_representation} and \eqref{eq:proof_E5_donsker_apply} and using Slutsky's theorem gives \eqref{eq:local_alt_composite_consistency}.

It is only left to verify the condition $\sup_{f\in\cF}|(G-\mu_{\btheta_0})f|<\infty$, in order to apply Corollary~\ref{cor:bernoulli}.
Since $0\le y_{\omega,t}\le 1$, one has $\sup_{\omega,t}|(G-\mu_{\btheta_0})y_{\omega,t}|\le 2$.
Moreover, $(G-\mu_{\btheta_0})F^{\mu_{\btheta_0}}_\omega(t)=0$ and $\mu_{\btheta_0}\bpsi_{\btheta_0}=0$, hence
$$
|(G-\mu_{\btheta_0})g_{\omega,t}|
=
\big|\dot F^{\btheta_0}_\omega(t)^\top \bV_{\btheta_0}^{-1}G\bpsi_{\btheta_0}\big|
\le
\sup_{\omega,t}\|\dot F^{\btheta_0}_\omega(t)\|\,\|\bV_{\btheta_0}^{-1}\|\,\|G\bpsi_{\btheta_0}\|.
$$
by conditions \ref{d} and \ref{cp3}, and by $G\|\bpsi_{\btheta_0}\|^2<\infty$, it holds that $\sup_{f\in\cF}|(G-\mu_{\btheta_0})f|<\infty$.
\end{proof}

\begin{proof}[Proof of Theorem \ref{thm:consistency_local_composite}]
The convergence of the KS statistic follows from Theorem \ref{thm:local_alt_Gn_composite} and the continuous mapping theorem.

For the CM statistic, the result follows by the same argument as in Theorem~\ref{thm:local_alt_simple}, replacing $\mu_0$ by $\mu_{\btheta_0}$ and $\G_0+h$ by $\tilde{\G}_0(\omega,t)+h(\omega,t)+\dot F^{\btheta_0}_\omega(t)^\top \bV_{\btheta_0}^{-1}G\bpsi_{\btheta_0}$.
As in Theorem \ref{thm:local_alt_Gn_composite}, the only caveat is to verify boundedness and uniform continuity of the deterministic drift, 
which in this case is given by $h(\omega,t)+\dot F^{\btheta_0}_\omega(t)^\top \bV_{\btheta_0}^{-1}G\bpsi_{\btheta_0}.$

The uniform continuity and boundedness of $h$ were established in the proof of Theorem~\ref{thm:local_alt_simple}. Uniform continuity of $(\omega,t)\mapsto\dot F^{\btheta_0}_\omega(t)^\top \bV_{\btheta_0}^{-1}G\bpsi_{\btheta_0}$ with respect to $d_{\Omega\times\mathbb R_{\ge0}}$
follows from the assumed uniform continuity of $(\omega,t)\mapsto \dot F^{\btheta_0}_\omega(t)$. Boundedness follows from condition \ref{d}, the invertibility of $\bV_{\btheta_0}$ by condition \ref{cp3}, and $G\|\bpsi_{\btheta_0}\|^2<\infty$.

Using the conditions of Theorem~\ref{thm:local_alt_Gn_composite} and condition \ref{e}, it holds that
\begin{align*}
\sup_{\omega\in\Omega,\, t\ge0}
\left|
\widehat{\G}_{n,\mu}(\omega,t)
-
\G_{n,Q_n}^{\mu_{\hat{\btheta}}}(\omega,t)
\right|
=
o_{\mathsf P}(1),
\end{align*}
under the local alternative \eqref{eq:local_alt_Qn_def}.
Consequently, $\widehat{\G}_{n,\mu}$ and $\G_{n,Q_n}^{\mu_{\hat{\btheta}}}$ have the same weak limit under the local alternative. Then, the asymptotics of $\widehat T_n^{\mathrm{KS}}$ and $\widehat T_n^{\mathrm{CM}}$ follow in the same way as for $\tilde T_n^{\mathrm{KS}}$ and $\tilde T_n^{\mathrm{CM}}$.
\end{proof}

\begin{lemma}\label{lem:absorbtion_OP}
For $n\ge1$, let $A_n$, $B_n$, and $C_n$ be nonnegative random variables such that $A_n \le B_n + C_n$ for all $n$. Assume that $C_n = o_{\mathsf{P}}(A_n)$, i.e., for every $\epsilon>0$, $\mathsf{P}(C_n>\epsilon A_n)\to0$. Then, $A_n = O_{\mathsf{P}}(B_n)$.
\end{lemma}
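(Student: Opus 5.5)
The statement to prove is Lemma~\ref{lem:absorbtion_OP}: if $A_n\le B_n+C_n$ with $C_n=o_{\mathsf P}(A_n)$, then $A_n=O_{\mathsf P}(B_n)$. The plan is to absorb $C_n$ into the left-hand side on a high-probability event.

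Fix $\epsilon=1/2$ and let $E_n\defin\{C_n\le A_n/2\}$. By assumption $\mathsf P(E_n^c)=\mathsf P(C_n>A_n/2)\to0$. On $E_n$ we have $A_n\le B_n+A_n/2$, hence $A_n\le 2B_n$. This is legitimate because all quantities are nonnegative and $A_n$ is finite, so subtracting $A_n/2$ is allowed.

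Now we verify the definition of $O_{\mathsf P}(B_n)$: for every $\eta>0$ we need $M$ and $n_0$ with $\mathsf P(A_n>MB_n)\le\eta$ for $n\ge n_0$. Take $M=2$. Then
\[
\mathsf P(A_n>2B_n)\le\mathsf P(E_n^c)\to0,
\]
so for $n$ large this probability is below $\eta$. The finitely many initial indices can be absorbed in the usual way if the definition requires a bound for all $n$.

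The only subtle point is the case $B_n=0$. On $E_n$ this forces $A_n=0$, so the ratio convention $A_n/B_n$ (with $0/0\defin0$) stays bounded. I would phrase the conclusion directly as $\mathsf P(A_n>2B_n)\to0$ to avoid any division.

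In the intended application, $A_n=2c\|\hat\btheta-\tilde\btheta_n\|$ and $B_n=O_{\mathsf P}(n^{-1/2})$, so the lemma gives $\|\hat\btheta-\tilde\btheta_n\|=O_{\mathsf P}(n^{-1/2})$, as used there. There is no real obstacle beyond this event bookkeeping.
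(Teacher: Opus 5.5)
Your proposal is correct and is essentially the paper's own proof: take $\epsilon=1/2$, note that on $\{C_n\le A_n/2\}$ the inequality $A_n\le B_n+C_n$ forces $A_n\le 2B_n$, and conclude $\mathsf P(A_n>2B_n)\le\mathsf P(C_n>A_n/2)\to0$. The paper also stops at this eventual bound. Your extra remark about the finitely many initial indices is not needed and should be dropped. If some early $B_n$ vanishes while $A_n>0$ with positive probability, no constant $M$ handles that index, so the eventual form is the right reading.
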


\section{Proofs of Section~\ref{sec:bootstrap}}\label{ap:sec_proofs_multiplier_bootstrap}

\begin{proof}[Proof of Theorem~\ref{thm:bootstrap_processes}]
We divide the proof into its two statements.

\emph{Proof of \ref{thm:bootstrap_processes_i}.} It was shown in Section~\ref{sec:bootstrap} that
$$
\G_n^{*}(\omega,t)
=
\frac{m}{\tau}\sqrt n(P_n^{*}-P_n)y_{\omega,t}.
$$
Since $\cY$ is $\mu_0$-Donsker, Theorem~2.6 in \citeSM{Kosorok2008SM} gives
$$
\G_n^{*}\convpboot{}\G_0
\quad\text{in }\ell^\infty(\cY).
$$
This concludes the result.

\emph{Proof of \ref{thm:bootstrap_processes_ii}.} First observe that condition \ref{cp2} and Theorem~2.6 in \citeSM{Kosorok2008SM}, applied to the finite-dimensional class generated by the components of $\bpsi_{\btheta_0}$, give
$$
\frac{m}{\tau}\frac{1}{\sqrt n}\sum_{i=1}^n
\left(\frac{\zeta_i}{\bar\zeta_n}-1\right)\bpsi_{\btheta_0}(X_i)
=O_{P_{*}}(1).
$$
Hence, by the bootstrap Bahadur representation \eqref{eq:weighted_bahadur_bootstrap}, it holds that $\sqrt n(\hat{\btheta}^{*}-\hat{\btheta})=O_{P_{*}}(1)$. Moreover, \eqref{eq:bahadur} gives $\sqrt n(\hat{\btheta}-\btheta_0)=O_{\mathsf{P}}(1)$, and hence $\sqrt n(\hat{\btheta}^{*}-\btheta_0)=O_{P_{*}}(1)$. Therefore, by condition \ref{d},
\begin{align*}
\sqrt n\left(F_\omega^{\mu_{\hat{\btheta}}}(t)-F_\omega^{\mu_{\btheta_0}}(t)\right)
&=
\dot F_\omega^{\btheta_0}(t)^\top\sqrt n(\hat{\btheta}-\btheta_0)
+o_{\mathsf{P}}(1),
\\
\sqrt n\left(F_\omega^{\mu_{\hat{\btheta}^{*}}}(t)-F_\omega^{\mu_{\btheta_0}}(t)\right)
&=
\dot F_\omega^{\btheta_0}(t)^\top\sqrt n(\hat{\btheta}^{*}-\btheta_0)
+o_{P_{*}}(1).
\end{align*}
The remainders $o_{\mathsf{P}}(1)$ and $o_{P_{*}}(1)$ are both uniform in $(\omega,t)\in\Omega\times\R_{\ge0}$.
Subtracting these two expansions and using \eqref{eq:weighted_bahadur_bootstrap} in the definition of $\tilde{\G}_n^{*}$ gives
\begin{equation}\label{eq:bootstrap_comp_process_remainder}
\sup_{\omega\in\Omega, t\ge0}
\left|
\tilde{\G}_n^{*}(\omega,t)
-
\frac{m}{\tau}\sqrt n(P_n^{*}-P_n)f_{\omega,t}
\right|
=o_{P_{*}}(1).
\end{equation}
Since $\cF$ is $\mu_{\btheta_0}$-Donsker, %
Theorem~2.6 in \citeSM{Kosorok2008SM} implies that
$$
\frac{m}{\tau}\sqrt n(P_n^{*}-P_n)
\convpboot{}\tilde{\G}_0
\quad\text{in }\ell^\infty(\cF).
$$

Let $\varphi\in \mathrm{BL}_1(\ell^\infty(\cF))$. Define
$$
D_n^{*}
\defin
\sup_{\omega\in\Omega, t\ge0}
\left|
\tilde{\G}_n^{*}(\omega,t)
-
\frac{m}{\tau}\sqrt n(P_n^{*}-P_n)f_{\omega,t}
\right|.
$$
By \eqref{eq:bootstrap_comp_process_remainder}, $D_n^{*}=o_{P_{*}}(1)$. Moreover,
for every $\varepsilon>0$,
\begin{align}\label{eq:bootstrap_comp_BL_comparison}
\left|
\mathsf{E}_{*}\varphi(\tilde{\G}_n^{*})
-
\mathsf{E}_{*}\varphi\left(\frac{m}{\tau}\sqrt n(P_n^{*}-P_n)\right)
\right|
&\le
\mathsf{E}_{*}\left|
\varphi(\tilde{\G}_n^{*})
-
\varphi\left(\frac{m}{\tau}\sqrt n(P_n^{*}-P_n)\right)
\right|
\\
&\le
\varepsilon\,\mathsf{P}_{*}(D_n^{*}\le\varepsilon)
+
2\,\mathsf{P}_{*}(D_n^{*}>\varepsilon)
\notag\\
&\le
\varepsilon+
2\,\mathsf{P}_{*}(D_n^{*}>\varepsilon).
\notag
\end{align}
Here, we have used that, if $D_n^{*}\le\varepsilon$, the Lipschitz property of $\varphi$ gives
$$
\left|
\varphi(\tilde{\G}_n^{*})
-
\varphi\left(\frac{m}{\tau}\sqrt n(P_n^{*}-P_n)\right)
\right|
\le D_n^{*}\le\varepsilon.
$$
Also, if $D_n^{*}>\varepsilon$, then using $\|\varphi\|_\infty\le1$, this absolute difference can be bounded by $2$. Since $\mathsf{P}_{*}(D_n^{*}>\varepsilon)\convp0$ by \eqref{eq:bootstrap_comp_process_remainder}, and $\varepsilon$ is arbitrary, the LHS of \eqref{eq:bootstrap_comp_BL_comparison} converges to zero in probability.

It remains to verify the asymptotic measurability condition. Since
$$
\frac{m}{\tau}\sqrt n(P_n^{*}-P_n)
\convpboot{}\tilde{\G}_0
\quad\text{in }\ell^\infty(\cF),
$$
Theorem~2.6 in \citeSM{Kosorok2008SM} gives, for every $\varphi\in\mathrm{BL}_1(\ell^\infty(\cF))$,
$$
\mathsf E_{*}\overline{
\varphi\left(\frac{m}{\tau}\sqrt n(P_n^{*}-P_n)\right)}
-
\mathsf E_{*}\underline{
\varphi\left(\frac{m}{\tau}\sqrt n(P_n^{*}-P_n)\right)}
\convp0.
$$
Since $\varphi$ is bounded by one and Lipschitz with constant at most one,
$$
\left|
\varphi(\tilde{\G}_n^{*})
-
\varphi\left(\frac{m}{\tau}\sqrt n(P_n^{*}-P_n)\right)
\right|
\le
\min\{D_n^{*},2\}.
$$
Taking majorants and minorants gives
\begin{align*}
\mathsf E_{*}\overline{\varphi(\tilde{\G}_n^{*})}
-
\mathsf E_{*}\underline{\varphi(\tilde{\G}_n^{*})}
\le\;&
\mathsf E_{*}\overline{
\varphi\left(\frac{m}{\tau}\sqrt n(P_n^{*}-P_n)\right)}
-
\mathsf E_{*}\underline{
\varphi\left(\frac{m}{\tau}\sqrt n(P_n^{*}-P_n)\right)}
\\
&+
2\mathsf E_{*}\overline{\min\{D_n^{*},2\}}.
\end{align*}
Moreover, for every $\varepsilon>0$,
$$
\mathsf E_{*}\overline{\min\{D_n^{*},2\}}
\le
\varepsilon+2\mathsf P_{*}(D_n^{*}>\varepsilon)
\convp \varepsilon
$$
by \eqref{eq:bootstrap_comp_process_remainder}. Since $\varepsilon$ is arbitrary,
$$
\mathsf E_{*}\overline{\varphi(\tilde{\G}_n^{*})}
-
\mathsf E_{*}\underline{\varphi(\tilde{\G}_n^{*})}
\convp0.
$$

We now study the asymptotics of $\widehat{\G}_{n,\mu}^{*}$. It holds that
\begin{align*}
\sup_{\omega\in\Omega,\, t\ge0}
\left|
\widehat{\G}_{n,\mu}^{*}(\omega,t)
-
\frac{m}{\tau}\sqrt n(P_n^{*}-P_n)f_{\omega,t}
\right|
&=
\sup_{\omega\in\Omega,\, t\ge0}
\left|
\frac{m}{\tau}\sqrt n(P_n^{*}-P_n)
\bigl(\hat{f}_{\omega,t}-f_{\omega,t}\bigr)
\right|
\\
&\le
A_n B_n + C_n D_n,
\end{align*}
where
\begin{align*}
A_n
&\defin
\sup_{\omega\in\Omega,\, t\ge0}
\|\dot F_\omega^{\hat{\btheta}}(t)\|\,\|\widehat{\bV}^{-1}\|,
\qquad
B_n
\defin
\left\|
\frac{m}{\tau}\sqrt n(P_n^{*}-P_n)
\bigl(\bpsi_{\hat{\btheta}}-\bpsi_{\btheta_0}\bigr)
\right\|,
\\
C_n
&\defin
\sup_{\omega\in\Omega,\, t\ge0}
\left\|
\dot F_\omega^{\hat{\btheta}}(t)^\top\widehat{\bV}^{-1}
-
\dot F_\omega^{\btheta_0}(t)^\top\bV_{\btheta_0}^{-1}
\right\|,
\qquad
D_n
\defin
\left\|
\frac{m}{\tau}\sqrt n(P_n^{*}-P_n)\bpsi_{\btheta_0}
\right\|.
\end{align*}
By condition~\ref{d}, $\sup_{\omega\in\Omega,\, t\ge0}\|\dot F_\omega^{\btheta_0}(t)\|<\infty$. Hence
\begin{align*}
\sup_{\omega\in\Omega,\, t\ge0}\|\dot F_\omega^{\hat{\btheta}}(t)\|
&\le
\sup_{\omega\in\Omega,\, t\ge0}\|\dot F_\omega^{\hat{\btheta}}(t)-\dot F_\omega^{\btheta_0}(t)\|
+
\sup_{\omega\in\Omega,\, t\ge0}\|\dot F_\omega^{\btheta_0}(t)\|
=
O_{\mathsf P}(1).
\end{align*}
Since $\widehat{\bV}$ is invertible with probability tending to one and $\widehat{\bV}\convp\bV_{\btheta_0}$, it follows that $A_n=O_{\mathsf P}(1)$. Moreover,
\begin{align*}
C_n
&\le
\sup_{\omega\in\Omega,\, t\ge0}
\|\dot F_\omega^{\hat{\btheta}}(t)-\dot F_\omega^{\btheta_0}(t)\|\,\|\widehat{\bV}^{-1}\|
+
\sup_{\omega\in\Omega,\, t\ge0}\|\dot F_\omega^{\btheta_0}(t)\|
\big\|
\widehat{\bV}^{-1}-\bV_{\btheta_0}^{-1}
\big\|
=
o_{\mathsf P}(1),
\end{align*}
so $C_n=o_{\mathsf P}(1)$.
Also, since the class $\{\psi_{\btheta_0,1},\dots,\psi_{\btheta_0,p}\}$ is finite, it is Donsker, it follows from Theorem~2.6 in \citeSM{Kosorok2008SM} that $D_n=O_{\mathsf P_*}(1)$. 

By conditions \ref{cp1} and \ref{cp4}, and the law of large numbers,
\begin{align*}
P_n\left\|\bpsi_{\hat{\btheta}}-\bpsi_{\btheta_0}\right\|^2
\le
\|\hat{\btheta}-\btheta_0\|^2 P_n L^2
=
o_{\mathsf P}(1)O_{\mathsf P}(1)
=
o_{\mathsf P}(1).
\end{align*}
Conditioning on the data, define
$$
h(x)
\defin
\bpsi_{\hat{\btheta}}(x)-\bpsi_{\btheta_0}(x),
\qquad
\bar h_n
\defin
P_n h.
$$
Let $E_n\defin\{\bar\zeta_n\ge m/2\}$. Since $\mathsf E_*(\bar\zeta_n)=m$ and $\mathsf{Var}_*(\bar\zeta_n)=\tau^2/n$, Chebyshev's inequality gives $\mathsf P_*(E_n^c) \le 4\tau^2/(m^2n).$
Moreover,
$$
\sum_{i=1}^n(\zeta_i-\bar\zeta_n)h(X_i)
=
\sum_{i=1}^n(\zeta_i-m)\bigl(h(X_i)-\bar h_n\bigr).
$$
Hence, conditional on the data,
\begin{align*}
\mathsf E_*\bigl(1_{E_n} B_n^2\bigr)
\le
\frac{4}{\tau^2}\,
\mathsf E_*\left\|
\frac{1}{\sqrt n}\sum_{i=1}^n
(\zeta_i-\bar\zeta_n)h(X_i)
\right\|^2
=
4P_n\|h-\bar h_n\|^2
\le
4P_n\|h\|^2.
\end{align*}
Therefore, for every $\varepsilon>0$,
\begin{align*}
\mathsf P_*(B_n>\varepsilon)
\le
\mathsf P_*(E_n^c)
+
\varepsilon^{-2}\mathsf E_*\bigl(1_{E_n} B_n^2\bigr)
\le
\frac{4\tau^2}{m^2n}
+
4\varepsilon^{-2}
P_n\|\bpsi_{\hat{\btheta}}-\bpsi_{\btheta_0}\|^2
=
o_{\mathsf P}(1).
\end{align*}

Since $A_n$ and $C_n$ depend only on the data, %
then for every $\varepsilon,M>0$,
\begin{align*}
\mathsf P_*(|A_nB_n|>\varepsilon)
&\le 1_{\{|A_n|>M\}}+\mathsf P_*(|B_n|>\varepsilon/M),
\\
\mathsf P_*(|C_nD_n|>\varepsilon)
&\le 1_{\{|C_n|>\varepsilon/M\}}+\mathsf P_*(|D_n|>M).
\end{align*}
Since $A_n=O_{\mathsf{P}}(1)$, $B_n=o_{P_*}(1)$, $C_n=o_{\mathsf{P}}(1)$, and $D_n=O_{P_*}(1)$, both right-hand sides converge to zero in probability, for $M$ sufficiently large.
Consequently,
\begin{align*}
\sup_{\omega\in\Omega,\, t\ge0}
\left|
\widehat{\G}_{n,\mu}^{*}(\omega,t)
-
\frac{m}{\tau}\sqrt n(P_n^{*}-P_n)f_{\omega,t}
\right|
=
o_{\mathsf P_*}(1),
\end{align*}
and therefore also $\widehat{\G}_{n,\mu}^{*}\convpboot{}\tilde{\G}_0$.
This concludes the proof.
\end{proof}

\begin{proof}[Proof of Theorem~\ref{thm:bootstrap_test_statistics}]
The proof is divided into \ref{thm:bootstrap_test_statistics_i} and \ref{thm:bootstrap_test_statistics_ii}.

\emph{Proof of \ref{thm:bootstrap_test_statistics_i}.} The asymptotic distribution of the KS statistic follows from Theorem~\ref{thm:bootstrap_processes}\ref{thm:bootstrap_processes_i} by the same argument used in the proof of Theorem~\ref{thm:test_statistics_simp}.

For the CM statistic under the simple null hypothesis, the difference with the proof of Theorem~\ref{thm:test_statistics_simp} is that the convergence $\G_{n,\mu}^{\mu_0}\convl\G_0$ is replaced by
$\G_n^{*}\convpboot{}\G_0$ in $\ell^\infty(\cY).$
The only point that is not verbatim is the application of Skorohod's construction. To justify it, consider the sequence $(\G_n^{*})_{n\ge1}$ as random elements in $\ell^\infty(\cY)$, and let $(n_j)_{j\ge1}$ be an arbitrary subsequence of positive integers. There exists a further subsequence $(n_{j_k})_{k\ge1}$ such that, for almost every realization of the data,
$$
\sup_{\varphi\in\mathrm{BL}_1(\ell^\infty(\cY))}
\left|
\mathsf E_{*}\varphi(\G_{n_{j_k}}^{*})
-
\mathsf E\varphi(\G_0)
\right|
\to0.
$$
Fix a realization of the data for which this convergence holds and for which Lemma~\ref{ap:lem_DCT_P_hat} applies along the same subsequence. Conditionally on this data sequence, the law of $\G_{n_{j_k}}^{*}$ converges weakly to the law of $\G_0$ in $\ell^\infty(\cY)$. Therefore, the same Skorohod construction used in the proof of Theorem~\ref{thm:test_statistics_simp} can be applied. From this point, the same argument as in the proof of Theorem~\ref{thm:test_statistics_simp} gives, conditionally on the fixed data realization, $T_{n_{j_k}}^{*\mathrm{CM}}\convl T_0$, where
$$
T_0
\defin
\int_{\Omega\times\R_{\ge0}}\G_0(\omega,t)^2\,\rd F_{\omega}^{\mu_0}(t)\,\rd\mu_0(\omega).
$$
Equivalently,
\begin{align*}
\sup_{\varphi\in\mathrm{BL}_1(\R)}
\left|
\mathsf E_{*}\varphi(T_{n_{j_k}}^{*\mathrm{CM}})
-
\mathsf E\varphi(T_0)
\right|
\to0.
\end{align*}
This result holds almost surely with respect to the data. Since the original subsequence $(n_{j})_{j\ge1}$ was arbitrary, every subsequence of the bounded Lipschitz distance has a further subsequence converging to zero almost surely. Therefore, 
\begin{align*}
\sup_{\varphi\in\mathrm{BL}_1(\R)}
\left|
\mathsf E_{*}\varphi(T_{n}^{*\mathrm{CM}})
-
\mathsf E\varphi(T_0)
\right|
\convp0.
\end{align*}
Additionally, observe that conditionally on the data, $T_{n}^{*\mathrm{CM}}$ is a finite sum of measurable functions of the multipliers. Hence, for every $\varphi\in\mathrm{BL}_1(\R)$, the asymptotic measurability condition 
$$
\mathsf E_{*}\overline{\varphi(T_{n}^{*\mathrm{CM}})}
-
\mathsf E_{*}\underline{\varphi(T_{n}^{*\mathrm{CM}})}
=0
$$
is automatically satisfied. Hence,
$$
T_n^{*\mathrm{CM}}
\convpboot{}
\int_{\Omega\times\R_{\ge0}}\G_0(\omega,t)^2\,\rd F_{\omega}^{\mu_0}(t)\,\rd\mu_0(\omega).
$$

\emph{Proof of \ref{thm:bootstrap_test_statistics_ii}.} For the KS statistic, the result follows from Theorem~\ref{thm:bootstrap_processes}\ref{thm:bootstrap_processes_ii} using the same argument as in the proof of Theorem~\ref{thm:test_statistics_comp}.

The asymptotic distribution of the CM statistic follows by the same argument as in the proof of \ref{thm:bootstrap_test_statistics_i}, replacing Theorem~\ref{thm:test_statistics_simp} by Theorem~\ref{thm:test_statistics_comp}, and using $\tilde{\G}_n^{*}\convpboot{}\tilde{\G}_0$ in $\ell^\infty(\cF),$ which follows from Theorem~\ref{thm:bootstrap_processes}\ref{thm:bootstrap_processes_ii}. Conditionally on the data, $\hat{\btheta}^{*}$ is measurable as a function of the multipliers and, for every $(\omega,t)$, the map $\btheta\mapsto F_\omega^{\mu_{\btheta}}(t)$ is Borel measurable. Hence, $\tilde T_n^{*\mathrm{CM}}$ is a finite sum of measurable functions of the multipliers. Therefore, the asymptotic measurability condition
$$
\mathsf E_{*}\overline{\varphi(\tilde T_n^{*\mathrm{CM}})} - \mathsf E_{*}\underline{\varphi(\tilde T_n^{*\mathrm{CM}})} =0
$$
for every $\varphi\in\mathrm{BL}_1(\R)$ holds.

The same argument as above gives the convergence of $\widehat T_n^{*\mathrm{KS}}$ and $\widehat T_n^{*\mathrm{CM}}$ from $\widehat{\G}_{n,\mu}^{*}\convpboot{}\tilde{\G}_0$.
\end{proof}

\section{Proofs for specific parametric models}\label{ap:sec_specific_parametric_models}

\subsection{Proof of Proposition \ref{prop:bahadur_exponential_family}}
\label{ap:sec_bahadur_exponential_family}

\begin{proof}[Proof of Proposition \ref{prop:bahadur_exponential_family}]
Since $\mathcal H$ is open, Theorem~5.8 and equations~(5.14)--(5.15) of  \citetSM{Lehmann1998SM} prove that $A(\boldsymbol{\eta})$ has derivatives of all orders with respect to $\boldsymbol{\eta}$, and 
$$
\frac{\partial A(\boldsymbol{\eta})}{\partial\boldsymbol{\eta}}
=
\E{\bT(X)},
\qquad
\frac{\partial^2 A(\boldsymbol{\eta})}
{\partial\boldsymbol{\eta}\,\partial\boldsymbol{\eta}^{\top}}
=
\Var{\bT(X)}
=
\mathcal I_{\boldsymbol{\eta}},
\qquad
\boldsymbol{\eta}\in\mathcal H,
$$
for $X\sim p_{\boldsymbol\eta}$. The score function is therefore $\bpsi_{\boldsymbol{\eta}}(x) = \bT(x)-\partial A(\boldsymbol{\eta})/\partial\boldsymbol{\eta}$.

We verify conditions \ref{cp1}--\ref{cp5}. Let $K\subset \mathcal H$ be a closed ball centered at
$\boldsymbol{\eta}_0$. Since $\boldsymbol{\eta}\mapsto\mathcal I_{\boldsymbol{\eta}}$ is continuous, then $C_K \defin
\sup_{\boldsymbol{\eta}\in K}
\|\mathcal I_{\boldsymbol{\eta}}\|
<
\infty.$
Thus, by the mean value theorem, for every
$\boldsymbol{\eta}_1,\boldsymbol{\eta}_2\in K$,
$$
\big\|
\bpsi_{\boldsymbol{\eta}_1}(x)
-
\bpsi_{\boldsymbol{\eta}_2}(x)
\big\|
=
\Bigg\|
\bigg.
\frac{\partial A(\boldsymbol\eta)}
{\partial\boldsymbol\eta}
\bigg|_{\boldsymbol\eta=\boldsymbol\eta_1}
-
\bigg.
\frac{\partial A(\boldsymbol\eta)}
{\partial\boldsymbol\eta}
\bigg|_{\boldsymbol\eta=\boldsymbol\eta_2}
\Bigg\|
\le
C_K\|\boldsymbol{\eta}_1-\boldsymbol{\eta}_2\|,
$$
so condition \ref{cp1} holds with $L(x)=C_K$. 

Moreover, if $X\sim p_{\boldsymbol{\eta}_0}$, then
$
\Ebig{\|\bpsi_{\boldsymbol{\eta}_0}(X)\|^2}
=
\sum_{j=1}^p
\Var{T_j(X)}
<\infty,
$
which verifies condition \ref{cp2}. Also,
$$
\E{
\bpsi_{\boldsymbol{\eta}}(X)
}
=
\left.
\frac{\partial A(\boldsymbol{\eta})}{\partial\boldsymbol{\eta}}
\right|_{\boldsymbol{\eta}=\boldsymbol{\eta}_0}
-
\frac{\partial A(\boldsymbol{\eta})}{\partial\boldsymbol{\eta}},
$$
which is zero at $\boldsymbol{\eta}_0$ and has derivative
$-\mathcal I_{\boldsymbol{\eta}_0}$ at $\boldsymbol{\eta}_0$. Hence,
Condition \ref{cp3} follows because
$\mathcal I_{\boldsymbol{\eta}_0}$ is invertible.

It remains to verify conditions \ref{cp4} and \ref{cp5}. First, show that the components of $\bT(X)$ are affinely independent a.s. If $\boldsymbol{a}^{\top}\bT(X)$ were constant a.s. for some $\boldsymbol{a}\neq\boldsymbol{0}$, then $\boldsymbol{a}^{\top}\Var{\bT(X)}\boldsymbol{a}%
=0,$ but $\mathcal I_{\boldsymbol{\eta_0}}=\Var{\bT(X)}$ is non-singular. Hence, Example~6.3 of \citetSM{Lehmann1998SM} applies, and so the conditions of Theorem~5.1 in ibid are satisfied. Therefore, with probability tending to one as $n \to \infty$, the likelihood equations admit a consistent solution. Moreover, Example~6.3 shows that this solution is unique and coincides with the MLE, since the log-likelihood is strictly concave in $\boldsymbol{\eta}$. Hence, conditions \ref{cp4} and \ref{cp5} hold.
\end{proof}

\subsection{Proof of Proposition \ref{prop:verification_assumptions_exponential_family}}

\begin{proof}[Proof of Proposition \ref{prop:verification_assumptions_exponential_family}]
Since $0\le y_{\omega,t}\le1$, for every $\boldsymbol{\eta}\in\mathcal H$, %
$$
\int
y_{\omega,t}(x)h(x)\exp\{\boldsymbol{\eta}^{\top}\bT(x)\}
\,\rd\nu(x)
\le
\int
h(x)\exp\{\boldsymbol{\eta}^{\top}\bT(x)\}
\,\rd\nu(x)
=
\exp\{A(\boldsymbol{\eta})\}
<\infty.
$$
Hence, Theorem~5.8 of \citetSM{Lehmann1998SM} allows differentiation under the integral sign. This gives
$$
\dot F_{\omega}^{\boldsymbol{\eta}}(t)
=
\E{
y_{\omega,t}(X)\bpsi_{\boldsymbol{\eta}}(X)
}, \quad
\ddot F_{\omega}^{\boldsymbol{\eta}}(t)
=
\E{
y_{\omega,t}(X)
\bpsi_{\boldsymbol{\eta}}(X)
\bpsi_{\boldsymbol{\eta}}(X)^\top
}
-
F_{\omega}^{\boldsymbol{\eta}}(t)\mathcal I_{\boldsymbol{\eta}}.
$$
Let $K\subset\mathcal H$ be a closed ball centered at $\boldsymbol{\eta}_0$. It holds that
$$
\sup_{\boldsymbol{\eta}\in K}
\sup_{\omega\in\Omega,\,t\ge0}
\big\|
\ddot F_{\omega}^{\boldsymbol{\eta}}(t)
\big\|
\le
\sup_{\boldsymbol{\eta}\in K}
\{
\operatorname{tr}(\mathcal I_{\boldsymbol{\eta}})
+
\|\mathcal I_{\boldsymbol{\eta}}\|
\}
<\infty,
$$
where the last inequality follows from the continuity of $\boldsymbol{\eta}\mapsto\mathcal I_{\boldsymbol{\eta}}$ and the compactness of $K$. Therefore, Taylor's expansion gives
$$
\sup_{\omega\in\Omega,\,t\ge0}
\left|
F_{\omega}^{\boldsymbol{\eta}}(t)
-
F_{\omega}^{\boldsymbol{\eta}_0}(t)
-
\dot F_{\omega}^{\boldsymbol{\eta}_0}(t)^\top
(\boldsymbol{\eta}-\boldsymbol{\eta}_0)
\right|
=
O\left(\|\boldsymbol{\eta}-\boldsymbol{\eta}_0\|^2\right),
$$
as $\boldsymbol{\eta}\to\boldsymbol{\eta}_0$, which proves condition~\ref{d}.
\end{proof}

\subsection{Proof of Proposition \ref{prop:covariance_exponential_family}}\label{ap:sec_covariance_exponential_family}

\begin{proof}[Proof of Proposition \ref{prop:covariance_exponential_family}]
The proof of Proposition~\ref{prop:bahadur_exponential_family} gives $\bV_{\boldsymbol{\eta}_0}
=
-\mathcal I_{\boldsymbol{\eta}_0}.
$
Additionally, since
$
\boldsymbol{\psi}_{\boldsymbol{\eta}_0}(X)
=
\boldsymbol{T}(X)
-
\mathbb E_{\boldsymbol{\eta}_0}
\left[\boldsymbol{T}(X)\right]
$
and
$
\operatorname{Var}_{\boldsymbol{\eta}_0}
\left[\boldsymbol{T}(X)\right]
=
\mathcal I_{\boldsymbol{\eta}_0},
$
it follows that
$
\Ebig{
\bpsi_{\boldsymbol{\eta}_0}(X)
\bpsi_{\boldsymbol{\eta}_0}(X)^\top
}
=
\mathcal I_{\boldsymbol{\eta}_0}.
$
Moreover, Proposition~\ref{prop:verification_assumptions_exponential_family} gives
$
\E{
y_{\omega,t}(X)\bpsi_{\boldsymbol{\eta}_0}(X)
}
=
\dot F_{\omega}^{\boldsymbol{\eta}_0}(t).
$
Substituting these identities into \eqref{eq:cov_comp}, and using that
$\mathcal I_{\boldsymbol{\eta}_0}$ is symmetric, gives
$$
\begin{aligned}
\widetilde{\mathcal C}_{(\omega_1,t_1),(\omega_2,t_2)}
&=
\mathcal C_{(\omega_1,t_1),(\omega_2,t_2)}
-
2\dot F_{\omega_1}^{\boldsymbol{\eta}_0}(t_1)^\top
\mathcal I_{\boldsymbol{\eta}_0}^{-1}
\dot F_{\omega_2}^{\boldsymbol{\eta}_0}(t_2)
+
\dot F_{\omega_1}^{\boldsymbol{\eta}_0}(t_1)^\top
\mathcal I_{\boldsymbol{\eta}_0}^{-1}
\dot F_{\omega_2}^{\boldsymbol{\eta}_0}(t_2),
\end{aligned}
$$
which yields the result.
\end{proof}

\section{Proofs of Lemmas}\label{ap:sec_proofs_lemmas}

\begin{proof}[Proof of Lemma \ref{ap:lem_bound_int_g_epsilon}]
    Observe that $M < \infty$, since $\Omega$ is totally bounded, which is implied by condition \ref{b1}.
    Thus, $F_{\omega}^{\mu_0}([0, M]) = F_{n,\omega}^{\mu_0}([0, M])=1$ and for any measurable function $\phi$,
    $\int_{\R_{\ge0}} \phi(t) \, \rd F_{\omega}^{\mu_0}(t) = \int_{[0, M]} \phi(t) \, \rd F_{\omega}^{\mu_0}(t)$ (similarly for $F_{n,\omega}^{\mu_0}$).
    Since $g$ is uniformly continuous on $\Omega \times \R_{\ge0}$, it is also uniformly continuous on $\Omega\times [0, M]$.

    We first show that $g_\varepsilon(\omega, \cdot)$ is Lipschitz with Lipschitz constant
    $\mathrm{Lip}(g_{\varepsilon}) = 2\|g\|_\infty/\delta$. Indeed, let $t,t^\prime\in[0,M]$ with $t<t^\prime$. For every $s\in[0,M]$, one has
    $$
    g(\omega,s)+\frac{2\|g\|_\infty}{\delta}|t-s|
    \le
    g(\omega,s)+\frac{2\|g\|_\infty}{\delta}|t^\prime-s|
    +\frac{2\|g\|_\infty}{\delta}|t-t^\prime|,
    $$
    and taking the infimum over $s$ yields
    $
    g_\varepsilon(\omega,t)
    \le
    g_\varepsilon(\omega,t^\prime)
    +2\|g\|_\infty/\delta|t-t^\prime|.
    $
    By symmetry, one also has $g_\varepsilon(\omega, t^\prime) \le g_\varepsilon(\omega, t) + 2\|g\|_\infty/\delta |t - t^\prime|$, so that $|g_\varepsilon(\omega, t) - g_\varepsilon(\omega, t^\prime)| \le 2\|g\|_\infty/\delta |t - t^\prime|$. 

    Fix $\omega\in\Omega$. Since $g_\varepsilon(\omega,\cdot)$ is Lipschitz on $[0,M]$, it is absolutely continuous and differentiable almost everywhere, with $|g_\varepsilon'(\omega,t)|\le\mathrm{Lip}(g_\varepsilon)$. An integration by parts argument gives
    \begin{align}\label{ap:eq_bound_d_BL_3}
    \left| \int_{0}^{M} g_\varepsilon(\omega,t)\, \rd(F_{n,\omega}^{\mu_0}-F_{\omega}^{\mu_0})(t) \right| \notag
    &=
    \left| -\int_{0}^{M} g_\varepsilon'(\omega,t)\,
    \big(F_{n,\omega}^{\mu_0}(t)-F_{\omega}^{\mu_0}(t)\big)\, \rd t \right| \\ \notag
    &\le
    \int_{0}^{M} |g_\varepsilon'(\omega,t)|\,
    |F_{n,\omega}^{\mu_0}(t)-F_{\omega}^{\mu_0}(t)|\, \rd t \\
    &\le
     \frac{2 M \|g\|_\infty}{\delta}
    \sup_{t\in[0,M]}|F_{n,\omega}^{\mu_0}(t)-F_{\omega}^{\mu_0}(t)|.
    \end{align}
    This holds almost surely and concludes the proof.
\end{proof}

\begin{proof}[Proof of Lemma \ref{ap:lem_sup_r_n_r}]
    Since $M=\operatorname{diam}(\Omega)$, observe that
    $$
    r(\omega) = \int_{0}^{M} g(\omega, t) \, \rd F_{\omega}^{\mu_0}(t), \quad r_n(\omega) = \int_{0}^{M} g(\omega, t) \, \rd F_{n,\omega}^{\mu_0}(t).
    $$
Also, one has
\begin{equation}\label{ap:eq_h_n}
    |r_n(\omega) - r(\omega)| \le \left| \int_{0}^{M} (g(\omega, t) - g_\varepsilon(\omega, t)) \, \rd (F_{n,\omega}^{\mu_0} - F_{\omega}^{\mu_0})(t) \right| + \left| \int_{0}^{M} g_\varepsilon(\omega, t) \, \rd (F_{n,\omega}^{\mu_0} - F_{\omega}^{\mu_0})(t) \right|.
\end{equation}
Begin with the first term on the RHS of \eqref{ap:eq_h_n}.    
    Fix $\varepsilon > 0$, and define $g_\varepsilon$ as in \eqref{ap:eq_g_epsilon}. 
    If one takes $s = t$ in the definition of $g_\varepsilon(\omega, t)$, one has $g_\varepsilon(\omega, t) \le g(\omega, t)$ for every $(\omega, t) \in \Omega \times [0, M]$. 
    To obtain a lower bound for $g_\varepsilon(\omega, t)$, we consider two cases. If $|t-s|<\delta$, by uniform continuity of $g$, one has
    $g(\omega,s)\ge g(\omega,t)-\varepsilon$, so,
    $$
    g(\omega,s)+\frac{2\|g\|_\infty}{\delta}|t-s|\ge g(\omega,t)-\varepsilon.
    $$
    If
    $|t-s|\ge\delta$, then
    $$
    g(\omega,s)+\frac{2\|g\|_\infty}{\delta}|t-s|
    \ge
    -\|g\|_\infty+2\|g\|_\infty
    =
    \|g\|_\infty
    \ge
    g(\omega,t)
    \ge
    g(\omega,t)-\varepsilon. 
    $$
Taking infima over $s$ yields $g_\varepsilon(\omega, t) \ge g(\omega, t) - \varepsilon$ for every $(\omega, t) \in \Omega \times [0, M]$.    
    Therefore, one has $|g_\varepsilon(\omega, t) - g(\omega, t)| \le \varepsilon \text{ for every } (\omega, t) \in \Omega \times [0, M]$. This provides an upper bound for the first term on the RHS of \eqref{ap:eq_h_n}:
    $$
    \left| \int_{0}^{M} (g(\omega, t) - g_{\varepsilon}(\omega, t)) \, \rd (F_{n,\omega}^{\mu_0} - F_{\omega}^{\mu_0})(t) \right| \le \varepsilon \left|\mu_{\omega,n} - \mu_{\omega,0}\right|([0,M])
    \le \varepsilon \left|\gamma\right|(\R) \le 2\varepsilon
    $$
    where $|\gamma|$ denotes the total variation of a finite signed measure $\gamma = \mu_{\omega,n} - \mu_{\omega,0}$. Here, $\mu_{\omega,n}$ and $\mu_{\omega,0}$ are the measures induced by $F_{n,\omega}$ and $F_{\omega}^{\mu_0}$, respectively.
     The total variation of a measurable subset $E$ is, in particular, 
    $$
    |\gamma|(E)=\sup_{\text{Partitions $E_1,\ldots,E_m$ of $E$}}\lrb{\sum_{i=1}^m|\gamma(E_i)|}
    $$
    \citepSM[Exercise 8 in][]{Folland1999SM}. That $|\gamma|(\R) \le 2$ follows easily from the calculation
\begin{align*}
    |\gamma|(\R)&=\sup \lrb{\sum_{i=1}^m|\gamma(E_i)|}=\sup \lrb{\sum_{i=1}^m|\mu_{\omega,n}(E_i)-\mu_{\omega,0}(E_i)|}\\
    &\leq \sup \lrb{\sum_{i=1}^m|\mu_{\omega,n}(E_i)+\mu_{\omega,0}(E_i)|}= \sup \lrb{\sum_{i=1}^m\mu_{\omega,n}(E_i)+\sum_{i=1}^m\mu_{\omega,0}(E_i)}\\
    &= \sup \lrb{1+1}=2.
\end{align*}
    Now, focus on the second term on the RHS of \eqref{ap:eq_h_n}. Since the class $\cY = \{ y_{\omega,t} : (\omega,t)\in\Omega\times\mathbb R_{\ge0} \}$ is Donsker and $\G_0$ is separable, %
we obtain the Glivenko--Cantelli property:
$$
\sup_{(\omega,t)\in\Omega\times\R_{\ge0}} \big|F_{n,\omega}^{\mu_0}(t)-F_\omega^{\mu_0}(t)\big|
=
\sup_{y\in\cY} \big|(P_n - \mu_0)y\big|
\convas 0,
$$
see p. 130 in \citeSM{VanderVaartWellner2023SM}.
Thus, applying Lemma \ref{ap:lem_bound_int_g_epsilon} and taking the supremum over $\omega \in \Omega$ in \eqref{ap:eq_bound_d_BL_3} shows that the second term on the RHS of \eqref{ap:eq_h_n} converges to zero as $n \to \infty$. Therefore, we can conclude that
    $$
    \sup_{\omega \in \Omega} |r_n(\omega) - r(\omega)| \le 3\varepsilon,
    $$ 
    for $n$ sufficiently large such that $2 M \|g\|_\infty/\delta \sup_{\omega\in\Omega}\sup_{t\in[0,M]} |F_{n,\omega}^{\mu_0}(t) - F_{\omega}^{\mu_0}(t)| \le \varepsilon$.  Since $\varepsilon > 0$ is arbitrary, this shows that $\sup_{\omega \in \Omega} |r_n(\omega) - r(\omega)| \to 0$ a.s. as $n \to \infty$.
\end{proof}

\begin{proof}[Proof of Lemma \ref{ap:lem_continuity_r}]
    By the definition of expectation, %
    one has 
    $$
    r(\omega) = \int_{0}^{M} g(\omega,t) \, \rd F_{\omega}^{\mu_0}(t) = \E{g(\omega, d(\omega, X))}.
    $$

    Consider a sequence $(\omega_k)_{k\ge 1}$ with $d(\omega_k, \omega_0) \to 0$ as $k \to \infty$. We want to prove $\lim_{k \to \infty} \E{g(\omega_k, d(\omega_k, X))} =  \E{g(\omega_0, d(\omega_0, X))}$. Since $d(\omega_k, x) \to d(\omega_0, x)$ for every $x \in \Omega$ and $g$ is continuous on $\Omega \times [0, M]$, it holds that $g(\omega_k, d(\omega_k, X)) \convas g(\omega_0, d(\omega_0, X))$ as $k\to \infty$. Now, since $g$ is bounded, an application of the dominated convergence theorem gives
    $$
    \lim_{k \to \infty} r(\omega_k) = \lim_{k \to \infty} \E{g(\omega_k, d(\omega_k, X))} = \E{g(\omega_0, d(\omega_0, X))} = r(\omega_0).
    $$
    Thus, $r$ is continuous on $\Omega$. 
\end{proof}

\begin{proof}[Proof of Lemma \ref{ap:lem_DCT_P_hat}]
    One has
    \begin{align*}
    \bigg|
    \int_\Omega r_n(\omega)\,\rd P_n(\omega)
    &-
    \int_\Omega r(\omega)\,\rd\mu_0(\omega)
    \bigg|
    \\
    &\le
    \bigg|
    \int_\Omega \big(r_n(\omega)-r(\omega)\big)\,\rd P_n(\omega)
    \bigg|
    +
    \bigg|
    \int_\Omega r(\omega)\,\rd\big(P_n(\omega)-\mu_0(\omega)\big)
    \bigg|
    \\
    &\le
    \sup_{\omega\in\Omega}|r_n(\omega)-r(\omega)|
    +
    \bigg|
    \int_\Omega r(\omega)\,\rd\big(P_n(\omega)-\mu_0(\omega)\big)
    \bigg|
    \to 0
    \end{align*}
    as $n \to \infty$. The convergence follows by Lemma \ref{ap:lem_sup_r_n_r} for the first term. For the second term, convergence follows by the weak convergence of $P_n$ to $\mu_0$ and the continuity of $r$ by Lemma \ref{ap:lem_continuity_r}. For the weak convergence of $P_n$ to $\mu_0$, see Theorems 1 and 3 of \citeSM{Varadarajan1958SM}.
\end{proof}

\begin{proof}[Proof of Lemma~\ref{ap:lem_relation_L2_prod}]

To prove the first inequality in \eqref{eq:rho2_product_bound_simple}, recall the covariance structure \eqref{eq:cov_G_theta_0}:
$$
\Cov{\G_0(\omega,t)}{\G_0(\omega^\prime,t^\prime)} 
=
\mu(y_{\omega,t}y_{\omega^\prime,t^\prime})-\mu y_{\omega,t}\,\mu y_{\omega^\prime,t^\prime}.
$$
One has
\begin{align*}
\rho_{2,\G_0}\big((\omega,t),(\omega^\prime,t^\prime)\big)^2
&=
\Ebig{
\big(\G_0(\omega,t)-\G_0(\omega^\prime,t^\prime)\big)^2
}
\\
&=
\mu\Big(
\big(y_{\omega,t}-y_{\omega^\prime,t^\prime}\big)^2
\Big)
-
\Big(
\mu\big(y_{\omega,t}-y_{\omega^\prime,t^\prime}\big)
\Big)^2
\\
&\le
\mu\Big(
\big(y_{\omega,t}-y_{\omega^\prime,t^\prime}\big)^2
\Big)
=
\rho_{2,y}\big((\omega,t),(\omega^\prime,t^\prime)\big)^2,
\end{align*}
which yields the desired inequality.

Now, we prove the second inequality in \eqref{eq:rho2_product_bound_simple}. 
Observe that since $y_{\omega, t}$ and $y_{\omega^\prime, t^\prime}$ are indicator functions, one has $(y_{\omega, t} - y_{\omega^\prime, t^\prime})^2 = 1_{B(\omega,t)\triangle B(\omega^\prime,t^\prime)}$, where $A\triangle B \defin (A\setminus B)\cup(B\setminus A)$ denotes the symmetric difference between two sets $A$ and $B$. 
Therefore, it is verified that
$$
\rho_{2,y}\big((\omega,t), (\omega^\prime,t^\prime)\big)^2 
= \E{(y_{\omega, t} - y_{\omega^\prime, t^\prime})^2} 
= \Probbig{ B(\omega,t) \triangle B(\omega^\prime,t^\prime) }.
$$

Define $r \defin d_{\Omega \times \R_{\ge 0}}((\omega,t),(\omega^\prime,t^\prime))$. Then,
The triangle inequality gives
$$
\begin{aligned}
B(\omega,t)\setminus B(\omega^\prime,t^\prime)
&\subset
B(\omega^\prime,t^\prime+r)\setminus B(\omega^\prime,t^\prime),
\\
B(\omega^\prime,t^\prime)\setminus B(\omega,t)
&\subset
B(\omega,t+r)\setminus B(\omega,t).
\end{aligned}
$$
Therefore, condition~\ref{bp2} yields
\begin{align*}
\Probbig{B(\omega,t)\triangle B(\omega^\prime,t^\prime)}
&\le
F_{\omega^\prime}^\mu(t^\prime+r)-F_{\omega^\prime}^\mu(t^\prime)
+F_\omega^\mu(t+r)-F_\omega^\mu(t) \le 2K_\mu r.
\end{align*}
Hence,
\begin{equation}\label{eq:rho2y_prod_bound}
\rho_{2,y}\big((\omega,t),(\omega^\prime,t^\prime)\big)^2
\le
2K_\mu\,d_{\Omega\times\R_{\ge0}}\big((\omega,t),(\omega^\prime,t^\prime)\big),
\end{equation}
which gives the second inequality in \eqref{eq:rho2_product_bound_simple}.
\end{proof}

\begin{proof}[Proof of Lemma~\ref{ap:lem_continuity_sample_paths_G_rho2}]
Under the null hypothesis, the class $\cY$ is $\mu$-Donsker. Hence, the limiting process $\G_0$ is a tight centered Gaussian process in $\ell^\infty(\cY)$.

By Lemma 1.5.9 in \citetSM{VanderVaartWellner2023SM}, there exists a semimetric $\rho$ on $\cY$ such that $\cY$ is totally bounded with respect to $\rho$, and almost all sample paths of $\G_0$ are uniformly $\rho$-continuous. 
If, in addition, the map $y\in \cY \mapsto \G_0(y)$ is uniformly $\rho$-continuous in $L_2$, then Lemma 1.5.9 in \citetSM{VanderVaartWellner2023SM} (equivalence between (i) and (iii)) can be applied (with $p=2$). Therefore, $\cY$ is totally bounded with respect to $\rho_{2,\G_0}$, and almost all sample paths of $\G_0$ are uniformly $\rho_{2,\G_0}$-continuous. Here, $\rho_{2,\G_0}(y,y^\prime) = (\mathsf E|\G_0(y)-\G_0(y^\prime)|^2)^{1/2}$, which is the same as \eqref{eq:ap_distances}, but as a function of $\cY\times\cY$, rather than $(\Omega\times\R_{\ge0})\times(\Omega\times\R_{\ge0})$.
However, for every $y,y^\prime \in \cY$, there exist $(\omega,t), (\omega^\prime,t^\prime) \in \Omega\times\R_{\ge0}$ such that $y=y_{\omega,t}$ and $y^\prime = y_{\omega^\prime,t^\prime}$. Hence, almost all sample paths of $\G_0(\omega,t)$ are uniformly $\rho_{2,\G_0}$-continuous as functions from $(\Omega\times\R_{\ge0}, \rho_{2,\G_0})$ to $(\R,|\cdot|)$. It follows from Lemma~\ref{ap:lem_relation_L2_prod} that any function that is uniformly continuous with respect to $\rho_{2,\G_0}$ is also uniformly continuous with respect to $d_{\Omega\times\R_{\ge0}}$. Therefore, $\G_0$ has almost surely uniformly continuous sample paths as a function from $(\Omega\times\R_{\ge0}, d_{\Omega\times\R_{\ge0}})$ to $(\R,|\cdot|)$.

It remains to show that the map $y \in \cY \mapsto \G_0(y)$ is uniformly $\rho$-continuous in second mean. Let $(y_n)_{n\ge0}$ and $(y_n^\prime)_{n\ge0}$ be sequences in $\cY$ such that $\rho(y_n,y_n^\prime)\to0$, as $n\to\infty$.
Since almost all sample paths of $\G_0$ are uniformly $\rho$-continuous, it follows that $\G_0(y_n)-\G_0(y_n^\prime) \convas 0$, and so
$\G_0(y_n)-\G_0(y_n^\prime) \convl 0$.

For every $n$, define
$$
Z_n:=\G_0(y_n)-\G_0(y_n^\prime).
$$  
Since $\G_0$ is a centered Gaussian process, one has that $Z_n\sim \mathcal{N}(0,\sigma_n^2)$, with $\sigma_n^2=\mathsf E(Z_n^2).$
Since $Z_n \convl 0$, it holds that $\exp(-\sigma_n^2u^2/2)\to1$ for every $u\in\R,$
which implies that $\sigma_n^2\to0$. Therefore,
$$
\mathsf E\big|
\G_0(y_n)-\G_0(y_n^\prime)
\big|^2
\to0.
$$
This proves that $y\mapsto \G_0(y)$ is uniformly $\rho$-continuous in second mean.
\end{proof}

\begin{proof}[Proof of Lemma~\ref{ap:lem_relation_L2_prod_composite}]
We first prove the equality
$$
\rho_{2,\tilde{\G}_0}\big((\omega,t),(\omega^\prime,t^\prime)\big)
=
\rho_{2,f}\big((\omega,t),(\omega^\prime,t^\prime)\big).
$$
Since $\tilde{\G}_0$ is the centered Gaussian process indexed by the class $\cF$, and $\Ebig{f_{\omega,t}(X)}=0$ for every $(\omega,t)\in\Omega\times\R_{\ge0}$, its covariance is given by
$$
\Cov{\tilde{\G}_0(\omega,t)}{\tilde{\G}_0(\omega^\prime,t^\prime)}
=
\Ebig{f_{\omega,t}(X)f_{\omega^\prime,t^\prime}(X)}.
$$
Hence,
\begin{align*}
\rho_{2,\tilde{\G}_0}\big((\omega,t),(\omega^\prime,t^\prime)\big)^2
&=
\Ebig{\big(\tilde{\G}_0(\omega,t)-\tilde{\G}_0(\omega^\prime,t^\prime)\big)^2}
\nonumber
\\
&=
\Ebig{\big(f_{\omega,t}(X)-f_{\omega^\prime,t^\prime}(X)\big)^2}
\nonumber
\\
&=
\rho_{2,f}\big((\omega,t),(\omega^\prime,t^\prime)\big)^2.
\end{align*}

Now, we prove the inequality in \eqref{eq:rho_2_tilde_G_0_rho_2_f_composite}. Recall that
$$
f_{\omega,t}(x)
=
y_{\omega,t}(x)-F_{\omega}^{\mu_{\btheta_0}}(t)+g_{\omega,t}(x),
\qquad
g_{\omega,t}(x)
=
\dot F_{\omega}^{\btheta_0}(t)^\top \bV_{\btheta_0}^{-1}\bpsi_{\btheta_0}(x).
$$
Fix $(\omega,t),(\omega^\prime,t^\prime)\in\Omega\times\R_{\ge0}$, and set
$$
r\defin d_{\Omega\times\R_{\ge0}}\big((\omega,t),(\omega^\prime,t^\prime)\big).
$$
Since
\[
F_{\omega}^{\mu_{\btheta_0}}(t)-F_{\omega^\prime}^{\mu_{\btheta_0}}(t^\prime)
=
\Ebig{y_{\omega,t}(X)-y_{\omega^\prime,t^\prime}(X)},
\]
by the triangle inequality,
\begin{align*}
\rho_{2,f}\big((\omega,t),(\omega^\prime,t^\prime)\big)
&=
\Big(
\Ebig{\big(f_{\omega,t}(X)-f_{\omega^\prime,t^\prime}(X)\big)^2}
\Big)^{1/2}
\\
&\le
\big(
\Var{y_{\omega,t}(X)-y_{\omega^\prime,t^\prime}(X)}
\big)^{1/2}
 +
\Big(
\Ebig{\big(g_{\omega,t}(X)-g_{\omega^\prime,t^\prime}(X)\big)^2}
\Big)^{1/2}
\\
&\le
\rho_{2,y}\big((\omega,t),(\omega^\prime,t^\prime)\big)
+
\Big(
\Ebig{\big(g_{\omega,t}(X)-g_{\omega^\prime,t^\prime}(X)\big)^2}
\Big)^{1/2}.
\end{align*}
For the first term, Lemma~\ref{ap:lem_relation_L2_prod} gives $
\rho_{2,y}\big((\omega,t),(\omega^\prime,t^\prime)\big)
\le \sqrt{2K_{\mu_{\btheta_0}}r}.$
Finally, for the second term, observe that
$$
g_{\omega,t}(x)-g_{\omega^\prime,t^\prime}(x)
=
\big(
\dot F_{\omega}^{\btheta_0}(t)-\dot F_{\omega^\prime}^{\btheta_0}(t^\prime)
\big)^\top
\bV_{\btheta_0}^{-1}\bpsi_{\btheta_0}(x).
$$
Hence, by Cauchy--Schwarz,
$$
\Big(
\Ebig{\big(g_{\omega,t}(X)-g_{\omega^\prime,t^\prime}(X)\big)^2}
\Big)^{1/2}
\le
\big\|
\dot F_{\omega}^{\btheta_0}(t)-\dot F_{\omega^\prime}^{\btheta_0}(t^\prime)
\big\|
\,
\|\bV_{\btheta_0}^{-1}\|
\,
\Big(\Ebig{\|\bpsi_{\btheta_0}(X)\|^2}\Big)^{1/2}.
$$

Recall that by condition~\ref{cp2}, $\mathsf{E}\|\bpsi_{\btheta_0}(X)\|^2<\infty$ and by condition~\ref{cp3} the matrix $\bV_{\btheta_0}$ is non-singular. 
Define the following modulus of continuity of the map $(\omega,t)\mapsto \dot F_{\omega}^{\btheta_0}(t)$:
$$
\eta_{\dot F}(r)
\defin
\sup\Big\{
\big\|\dot F_{\omega}^{\btheta_0}(t)-\dot F_{\omega^\prime}^{\btheta_0}(t^\prime)\big\| :
(\omega,t),(\omega^\prime,t^\prime) \in \Omega \times \R_{\ge0}, \, 
d_{\Omega\times\R_{\ge0}}\big((\omega,t),(\omega^\prime,t^\prime)\big)\le r
\Big\}.
$$
Thus,
$$
\Big(
\Ebig{\big(g_{\omega,t}(X)-g_{\omega^\prime,t^\prime}(X)\big)^2}
\Big)^{1/2}
\le
\eta_{\dot F}(r)\big\|\bV_{\btheta_0}^{-1}\big\|\Big(\mathsf{E}\big\|\bpsi_{\btheta_0}(X)\big\|^2\Big)^{1/2}
$$
and by the assumed uniform continuity of $(\omega,t)\mapsto \dot F_{\omega}^{\btheta_0}(t)$,
$\eta_{\dot F}(r)\to0$ as $r\to0$.

Combining the three bounds, we obtain
$$
\rho_{2,f}\big((\omega,t),(\omega^\prime,t^\prime)\big)
\le
\sqrt{2K_{\mu_{\btheta_0}}r}
+
\|\bV_{\btheta_0}^{-1}\|
\Big(\Ebig{\|\bpsi_{\btheta_0}(X)\|^2}\Big)^{1/2}
\eta_{\dot F}(r).
$$
Thus, the result holds with
$$
\phi(r)
\defin
\sqrt{2K_{\mu_{\btheta_0}}r}
+
\|\bV_{\btheta_0}^{-1}\|
\Big(\Ebig{\|\bpsi_{\btheta_0}(X)\|^2}\Big)^{1/2}
\eta_{\dot F}(r).
$$
It holds that $\phi$ is nondecreasing and $\phi(r)\to0$ as $r\to0$, which concludes the proof.
\end{proof}

\begin{proof}[Proof of Lemma~\ref{ap:lem_continuity_sample_paths_G_rho2_comp}]
The proof is completely analogous to that of Lemma~\ref{ap:lem_continuity_sample_paths_G_rho2}, replacing the class $\cY$ by $\cF$, the Gaussian process $\G_0$ by $\tilde{\G}_0$, and Lemma~\ref{ap:lem_relation_L2_prod} by Lemma~\ref{ap:lem_relation_L2_prod_composite}.
\end{proof}

\begin{proof}[Proof of Lemma~\ref{lem:sample-delta-net}]
Fix $\varepsilon>0$. By total boundedness of $\Omega$ (implied by condition \ref{b1}) there exists a finite $\varepsilon$-net
$$
\Omega \subset \bigcup_{j=1}^m B(\omega_j,\varepsilon)
$$
for some points $\omega_1,\dots,\omega_m\in\Omega$.

Define, for each $j=1,\dots,m$, the event
$$
A_{n,j} \defin \{B(\omega_j,\varepsilon)\cap \mathcal{L}_n = \emptyset\}.
$$
For each fixed $j$ one has $\mathsf{P}\big(A_{n,j}\big) = \big(1 - \mathsf{P}\left(B(\omega_j,\varepsilon)\right)\big)^n$.
Set $c_\varepsilon \defin \min_{1\le j\le m} \mathsf{P}\big(B(\omega_j,\varepsilon)\big)$. Since $\omega_j\in\Omega\subset \mathrm{supp}(\mu)$, one has $c_\varepsilon>0$.

Consider the event 
$$
A_n \defin \Big\{\exists j\in\{1,\dots,m\}\text{ such that }B(\omega_j,\varepsilon)\cap \mathcal{L}_n=\emptyset\Big\}
= \bigcup_{j=1}^m A_{n,j}.
$$
It holds that
$$
\sum_{n=1}^\infty \mathsf{P}(A_n) \le \sum_{n=1}^\infty \sum_{j=1}^m \mathsf{P}(A_{n,j})
= \sum_{n=1}^\infty \sum_{j=1}^m \big(1 - \mathsf{P}\big(B(\omega_j,\varepsilon))\big)^n
\le \sum_{n=1}^\infty m \big(1 - c_\varepsilon\big)^n < \infty,
$$
because $1-c_\varepsilon<1$.
By the Borel--Cantelli lemma, with probability one, only finitely many of the events $A_n$ occur; equivalently, with probability one there exists a (random) integer $N = N(\varepsilon)$ such that for all $n\ge N(\varepsilon)$ no $A_{n,j}$ occurs, i.e. every ball $B(\omega_j,\varepsilon)$ contains at least one sample point. Since $\{\omega_1, \ldots, \omega_m\}$ forms an $\varepsilon$-net of $\Omega$, for every $\omega\in\Omega$ we can pick $1\le j\le m$ with $d(\omega,\omega_j)<\varepsilon$, and hence for any sample element $X_i\in B(\omega_j,\varepsilon)$, the triangle inequality gives $d(\omega,X_i) \le 2\varepsilon$ for all $\omega \in \Omega$ with probability one. 
Therefore,
$$
\Prob{\exists N(\varepsilon):\ \forall n\ge N(\varepsilon),\ 
\sup_{\omega\in\Omega}\min_{1\le i\le n} d(\omega,X_i)\le 2\varepsilon}=1.
$$
Equivalently,
$$
\Prob{\liminf_{n\to\infty}\Big\{\sup_{\omega\in\Omega}\min_{1\le i\le n} d(\omega,X_i)\le 2\varepsilon\Big\}}=1.
$$
Observe that $\sup_{\omega\in\Omega}\min_{1\le i\le n} d(\omega,X_i)$ is nonnegative and nonincreasing as a function of $n$. 
Therefore,
\begin{align}\label{eq:as_lem_dense}
\Prob{\lim_{n\to\infty}\sup_{\omega\in\Omega}\min_{1\le i\le n} d(\omega,X_i)\le 2\varepsilon}=1.
\end{align}

Finally, let $\varepsilon_k = 1/k$ for $k\in\mathbb N$. The above argument can be repeated to obtain \eqref{eq:as_lem_dense} for every $\varepsilon_k$. Making $k\to \infty$, 
this implies
that $\sup_{\omega\in\Omega}\min_{1\le i\le n} d(\omega,X_i) \to 0$ a.s. as $n\to\infty$.
\end{proof}

\begin{proof}[Proof of Lemma \ref{lem:delta_regularity}]
Fix $t\in[0,M]$ and $\omega,\omega^\prime\in\Omega$, and define $r\defin d(\omega,\omega^\prime)$. For every $x\in\Omega$, the triangle inequality yields
$$
d(\omega^\prime,x) \le d(\omega^\prime,\omega) + d(\omega,x) = r + d(\omega,x).
$$
Hence, $\{d(\omega,x)\le t\}\subset\{d(\omega^\prime,x)\le t+r\}$, and so
$F^\mu_\omega(t)\le F^\mu_{\omega^\prime}(t+r)$. Similarly, $F^\mu_{\omega^\prime}(t)\le F^\mu_{\omega}(t+r)$, so
$$
|F^\mu_\omega(t)-F^\mu_{\omega^\prime}(t)|
\le \max\{F^\mu_{\omega^\prime}(t+r)-F^\mu_{\omega^\prime}(t),\,F^\mu_{\omega}(t+r)-F^\mu_{\omega}(t)\}.
$$
By condition~\ref{bp2},
$$
F^\mu_{\omega}(t+r)-F^\mu_{\omega}(t)
\le K_\mu r,
$$
and similarly for $\omega^\prime$. Therefore,
$$
|F^\mu_\omega(t)-F^\mu_{\omega^\prime}(t)|
\le K_\mu\,d(\omega,\omega^\prime).
$$
The same argument applied to $\mu_0$ yields
$|F^{\mu_0}_\omega(t)-F^{\mu_0}_{\omega^\prime}(t)|\le K_{\mu_0}\,d(\omega,\omega^\prime)$.
Finally,
\begin{align*}
|\Delta(\omega,t)-\Delta(\omega^\prime,t)|
&\le |F^\mu_\omega(t)-F^\mu_{\omega^\prime}(t)| + |F^{\mu_0}_\omega(t)-F^{\mu_0}_{\omega^\prime}(t)|\\
&\le (K_\mu+K_{\mu_0})\, d(\omega,\omega^\prime),
\end{align*}
which gives the result.
\end{proof}

\begin{proof}[Proof of Lemma \ref{lem:robust_separation}]
Let $\delta_0\defin|\Delta(\omega_0,t_0)|>0$. 
By condition~\ref{bp2},
$$
|\Delta(\omega_0,t)-\Delta(\omega_0,t_0)|
\le (K_\mu+K_{\mu_0})|t-t_0|.
$$
Thus, taking $\varepsilon_t\defin(\delta_0/2)(K_\mu+K_{\mu_0})^{-1}$ gives
$$
|\Delta(\omega_0,t)-\Delta(\omega_0,t_0)|<\delta_0/2
\quad\text{for all $t\in[0,M]$ such that } |t-t_0|<\varepsilon_t,
$$
which implies $|\Delta(\omega_0,t)|\ge \delta_0/2$. Next, by Lemma~\ref{lem:delta_regularity}, for all $t\in[0,M]$ one has
$$
|\Delta(\omega,t)-\Delta(\omega_0,t)|
\le (K_\mu+K_{\mu_0}) d(\omega,\omega_0).
$$
Observe that the right-hand side does not depend on $t$.

Choose $\varepsilon_\omega\defin (\delta_0/4)(K_\mu+K_{\mu_0})^{-1}$ and set $c_0\defin \delta_0/4$.
Then, for all $\omega\in B(\omega_0,\varepsilon_\omega)$ and all $t\in (t_0-\varepsilon_t,t_0+\varepsilon_t)\cap[0,M]$, it holds that
\begin{align*}
|\Delta(\omega,t)|
&\ge |\Delta(\omega_0,t)| - |\Delta(\omega,t)-\Delta(\omega_0,t)|\\
&\ge \delta_0/2-(K_\mu+K_{\mu_0})\varepsilon_\omega\\
&= \delta_0/4
\defin c_0 >0.
\end{align*}
This concludes the result.
\end{proof}

\begin{proof}[Proof of Lemma \ref{lem:F_H_differ_at_t}]
Suppose that they agree on $\supp F$, and take a maximal open interval $(a,b)$ of $[0,M]\setminus\supp F$, so that $a,b\in \supp{F}$. Continuity of $F$ gives $H(a)=F(a)=F(b)=H(b).$ Monotonicity of $H$ implies that $H=F$ in $(a,b)$. An analogous argument applies to maximal intervals of the form $[0,b)$ or $(a,M]$. Thus, $F=H$ on $[0,M]$, a contradiction.
\end{proof}

\begin{proof}[Proof of Lemma \ref{lem:positive_mass_region}]
Choose $0<\eta<\min\{\varepsilon_\omega,\varepsilon_t/2\}$. Since $\omega_0\in\supp{\mu}$ and $t_0\in\supp{F_{\omega_0}^\mu}$,
$$
\mu\big(B(\omega_0,\eta)\big)>0,
\qquad
\Probbig{|d(\omega_0,Y)-t_0|<\varepsilon_t/2}>0.
$$
Independence of $X$ and $Y$ therefore gives
$$
\Probbig{X\in B(\omega_0,\eta),\ |d(\omega_0,Y)-t_0|<\varepsilon_t/2}>0.
$$
On the intersection of these two events,
$$
|d(X,Y)-t_0|
\le d(X,\omega_0)+|d(\omega_0,Y)-t_0|
<\varepsilon_t.
$$
Moreover, $X\in B_0$, and hence
$$
\Probbig{X\in B_0,\ d(X,Y)\in I_0}>0.
$$
\end{proof}

\begin{proof}[Proof of Lemma~\ref{ap:lem_DCT_P_hat_local}]
For $g\in\mathcal G_{B,w}$, define
$$
r_{n,g}(\omega) \defin \int_{[0,M]} g(\omega, t) \, \rd F_{n,\omega}^{Q_n}(t),
\qquad
r_g(\omega) \defin \int_{[0,M]} g(\omega, t) \, \rd F_{\omega}^{\mu_0}(t).
$$
Then,
\begin{equation*}
\sup_{g\in\mathcal G_{B,w}}
\left|\int_{\Omega} r_{n,g}(\omega) \, \rd P_n(\omega) - \int_{\Omega} r_g(\omega) \, \rd \mu_0(\omega)\right|
\le
A_n+B_n,
\end{equation*}
where
$$
A_n \defin \sup_{g\in\mathcal G_{B,w}}\sup_{\omega\in\Omega}|r_{n,g}(\omega)-r_g(\omega)|
$$
and
$$
B_n \defin
\sup_{g\in\mathcal G_{B,w}}
\left|
\int_{\Omega} r_g(\omega) \, \rd(P_n-\mu_0)(\omega)
\right|.
$$

We first control $A_n$. Fix $\varepsilon>0$, and choose $\delta>0$ such that $w(\delta)\le \varepsilon$. Repeating the regularization argument in the proof of Lemma~\ref{ap:lem_sup_r_n_r}, with
$$
g_\varepsilon(\omega,t)
:=
\inf_{s\in[0,M]}
\left\{
g(\omega,s)+\frac{2B}{\delta}|t-s|
\right\},
$$
gives, uniformly over $g\in\mathcal G_{B,w}$,
$$
A_n
\le
C\varepsilon
+
C_{B,\delta,M}
\sup_{\omega\in\Omega, t\in[0,M]}
\left|
F_{n,\omega}^{Q_n}(t)-F_\omega^{\mu_0}(t)
\right|,
$$
for constants $C$ and $C_{B,\delta,M}$ independent of $n$. It remains to show that
\begin{equation}\label{eq:local_alt_DCT_uniform_profiles}
\sup_{\omega \in\Omega, t\in[0,M]}
\left|F_{n,\omega}^{Q_n}(t)-F_\omega^{\mu_0}(t)\right|
\convp 0.
\end{equation}
The decomposition
$$
F_{n,\omega}^{Q_n}(t)-F_\omega^{\mu_0}(t)
=
(P_n-Q_n)y_{\omega,t}+(Q_n-\mu_0)y_{\omega,t}
$$
gives
$$
\sup_{\omega \in\Omega, t\in[0,M]}
\left|F_{n,\omega}^{Q_n}(t)-F_\omega^{\mu_0}(t)\right|
\le
\frac{1}{\sqrt{n}}
\sup_{\omega \in\Omega, t\in[0,M]}
\left|\G_{n,Q_n}^{Q_n}(\omega,t)\right|
+
\frac{1}{\sqrt{n}}
\sup_{\omega \in\Omega, t\in[0,M]}
\left|(G-\mu_0)y_{\omega,t}\right|.
$$
By Corollary~\ref{cor:bernoulli} applied to $\cY$, one has $\G_{n,Q_n}^{Q_n}\convl \G_0$ in $\ell^\infty(\cY)$, and therefore
$$
\sup_{\omega \in\Omega, t\in[0,M]}
\left|\G_{n,Q_n}^{Q_n}(\omega,t)\right|
=
O_{\mathsf{P}}(1).
$$
Moreover, one has $\sup_{\omega,t}|(G-\mu_0)y_{\omega,t}|\le 2$. Thus, \eqref{eq:local_alt_DCT_uniform_profiles} holds and hence $A_n=o_{\mathsf{P}}(1)$.

We next control $B_n$. Write
\begin{align}\label{eq:local_alt_DCT_Bn_split}
B_n
\le
\sup_{g\in\mathcal G_{B,w}} |(P_n-Q_n)r_g|
+
\sup_{g\in\mathcal G_{B,w}} |(Q_n-\mu_0)r_g|.
\end{align}
Since $|r_g(\omega)|\le B$ for every $g\in\mathcal G_{B,w}$ and $\omega\in\Omega$, one has
$$
\sup_{g\in\mathcal G_{B,w}} |(Q_n-\mu_0)r_g|
\le
\frac{2B}{\sqrt{n}}.
$$

For the first term in \eqref{eq:local_alt_DCT_Bn_split}, note that
$$
r_g(\omega)=\int_\Omega g(\omega,d(\omega,x))\,\rd\mu_0(x).
$$
Hence, for $\omega,\omega^\prime\in\Omega$,
$$
|r_g(\omega)-r_g(\omega^\prime)|
\le
\int_\Omega
\left|
g(\omega,d(\omega,x))-g(\omega^\prime,d(\omega^\prime,x))
\right|\,\rd\mu_0(x).
$$
The triangle inequality gives
$$
d_{\Omega\times\mathbb R_{\ge0}}
\big((\omega,d(\omega,x)),(\omega^\prime,d(\omega^\prime,x))\big)
\le 2d(\omega,\omega^\prime),
$$
and therefore
$$
|r_g(\omega)-r_g(\omega^\prime)|
\le
w\big(2d(\omega,\omega^\prime)\big),
$$
uniformly in $g\in\mathcal G_{B,w}$. Thus, the family $\{r_g:g\in\mathcal G_{B,w}\}$ is uniformly bounded by $B$ and equicontinuous with common modulus $u\mapsto w(2u)$.

Fix $\eta>0$. Choose $\delta>0$ such that $w(2\delta)\le \eta/3$, and let $\omega_1,\dots,\omega_m$ be a finite $\delta$-net of $\Omega$. Consider the set
$$
S:=\left\{
\big(r_g(\omega_1),\dots,r_g(\omega_m)\big):g\in\mathcal G_{B,w}
\right\}\subset[-B,B]^m.
$$
Since $S$ is totally bounded, it admits a finite $\eta/3$-net with centers belonging to $S$. Hence, there exist $g_1,\dots,g_N\in\mathcal G_{B,w}$ such that, for every $g\in\mathcal G_{B,w}$, there exists $k\in\{1,\dots,N\}$ satisfying
$$
\max_{1\le j\le m}|r_g(\omega_j)-r_{g_k}(\omega_j)|\le \eta/3.
$$
If $d(\omega,\omega_j)<\delta$, then
$$
|r_g(\omega)-r_{g_k}(\omega)|
\le
|r_g(\omega)-r_g(\omega_j)|
+
|r_g(\omega_j)-r_{g_k}(\omega_j)|
+
|r_{g_k}(\omega_j)-r_{g_k}(\omega)|
\le \eta.
$$
Thus, $\|r_g-r_{g_k}\|_\infty\le \eta$, and so
$$
\sup_{g\in\mathcal G_{B,w}} |(P_n-Q_n)r_g|
\le
\max_{1\le k\le N}|(P_n-Q_n)r_{g_k}|+2\eta.
$$

For each fixed $k$,
$$
\Var{(P_n-Q_n)r_{g_k}}
=
\frac{\Var{r_{g_k}(X_1)}}{n}
\le
\frac{B^2}{n},
$$
so $(P_n-Q_n)r_{g_k}=o_{\mathsf{P}}(1)$ by Chebyshev's inequality. Since $N$ is finite,
$$
\max_{1\le k\le N}|(P_n-Q_n)r_{g_k}|=o_{\mathsf{P}}(1).
$$
Letting $\eta\to 0^+$, we obtain
$$
\sup_{g\in\mathcal G_{B,w}} |(P_n-Q_n)r_g|=o_{\mathsf{P}}(1).
$$
Therefore, $B_n=o_{\mathsf{P}}(1)$, and the result follows.
\end{proof}

\begin{proof}[Proof of Lemma~\ref{lem:aux_P_n_Q_n_tilde_hat}]
Define, for a law $P$ and a measurable function $f$ taking values in $\R^p$, the pseudo-norms
$$
\rho_{\mathsf{P}}(f) \defin \left(P\|f-Pf\|^2\right)^{1/2}, \quad \|f\|_{P,2} \defin \left(P\|f\|^2\right)^{1/2}.
$$
Fix $\rho>0$ such that $B(\btheta_0,\rho)$ is contained in a neighborhood of $\btheta_0$ on which condition \ref{cp1} holds, and define
$$
A_n\defin \{\hat\btheta\in B(\btheta_0,\rho),\ \tilde\btheta_n\in B(\btheta_0,\rho)\}.
$$
Since $\hat\btheta\convp\btheta_0$ and $\tilde\btheta_n\to\btheta_0$, $\Prob{A_n}\to1$.
Thus, it holds that
\begin{align}\label{eq:relation_pseudo_norms}
   \rho_{\mu_{\btheta_0}}(f) \le \|f\|_{\mu_{\btheta_0},2} \le \Big(1-\frac{1}{\sqrt{n}}\Big)^{-1/2}\|f\|_{Q_n,2}.
\end{align}

On $A_n$, by condition \ref{cp1}, one has 
\begin{align*}
    \|\bpsi_{\hat{\btheta}}-\bpsi_{\tilde{\btheta}_n}\|_{Q_n,2} \le \|L\|_{Q_n,2}\|\hat{\btheta}-\tilde{\btheta}_n\| \convp 0,
\end{align*}
Since $\Probbig{A_n}\to1$, this bound holds with probability increasing to one as $n\to\infty$. Hence, for every $\delta>0$, one has
\begin{equation}\label{eq:bound_asympt_equicont}
\begin{aligned}
\|\G_{n,Q_n}^{Q_n}\big(\bpsi_{\hat{\btheta}}-\bpsi_{\tilde\btheta_n}\big)\|
\le &    
\sup_{\substack{\btheta_1,\btheta_2\in B(\btheta_0,\rho)\\ \|\bpsi_{\btheta_1}-\bpsi_{\btheta_2}\|_{Q_n,2}<\delta}}
\|\G_{n,Q_n}^{Q_n}\big(\bpsi_{\btheta_1}-\bpsi_{\btheta_2}\big)\| + o_{\mathsf{P}}(1) \\
\le &
\sup_{\substack{\btheta_1,\btheta_2\in B(\btheta_0,\rho)\\ \|\bpsi_{\btheta_1}-\bpsi_{\btheta_2}\|_{Q_n,2}<\delta}}
\|\G_{n,\mu_{\btheta_0}}^{\mu_{\btheta_0}}\big(\bpsi_{\btheta_1}-\bpsi_{\btheta_2}\big)\|
\\
&+
2\sup_{\btheta\in B(\btheta_0,\rho)}
\left\|\G_{n,Q_n}^{Q_n}\bpsi_{\btheta} - \G_{n,\mu_{\btheta_0}}^{\mu_{\btheta_0}}\bpsi_{\btheta}\right\|
+o_{\mathsf{P}}(1).
\end{aligned}
\end{equation}

To show $\G_{n,Q_n}^{Q_n}(\bpsi_{\hat{\btheta}}-\bpsi_{\tilde\btheta_n})\convp0$, we first show that
\begin{align}\label{eq:coupling_score_bound}
\sup_{\btheta\in B(\btheta_0,\rho)}\left\|\G_{n,Q_n}^{Q_n}(\bpsi_{\btheta}) - \G_{n,\mu_{\btheta_0}}^{\mu_{\btheta_0}}(\bpsi_{\btheta})\right\|
\le
\sqrt{p}\,\max_{1\le j\le p}\sup_{\btheta\in B(\btheta_0,\rho)}
\left|\G_{n,Q_n}^{Q_n}(\psi_{\btheta,j}) - \G_{n,\mu_{\btheta_0}}^{\mu_{\btheta_0}}(\psi_{\btheta,j})\right|
\convp 0.
\end{align} 
For each $j=1,\ldots,p$, let $\cF_{\rho,j}\defin\{\psi_{\btheta,j} : \btheta \in B(\btheta_0,\rho)\}$. The assumptions of Proposition~\ref{prop:bernoulli} hold for each $\cF_{\rho,j}$ because each $\cF_{\rho,j}$ is $\mu_{\btheta_0}$- and $G$-Donsker, and
\begin{align*}
\sup_{f\in\cF_{\rho,j}}|(G-\mu_{\btheta_0})f|
\le
\sup_{\btheta\in B(\btheta_0,\rho)}\|(G-\mu_{\btheta_0})\bpsi_{\btheta}\|
\le
2\Big(\|G\bpsi_{\btheta_0}\|+\rho\,(GL+\mu_{\btheta_0}L)\Big)<\infty.
\end{align*}
The fact that each $\cF_{\rho,j}$ is $\mu_{\btheta_0}$- and $G$-Donsker follows from conditions \ref{cp1} and \ref{cp2}, $\mu_{\btheta_0}L^2<\infty$, $GL^2<\infty$, and $G\|\bpsi_{\btheta_0}\|^2<\infty$ \citepSM[see Example 19.7 in][]{vandervaart1998SM}.
By Proposition~\ref{prop:bernoulli} applied to the class $\bigcup_{j=1}^p \cF_{\rho,j}$, there exists a version of $\G_{n,\mu_{\btheta_0}}^{\mu_{\btheta_0}}$ such that the RHS of \eqref{eq:coupling_score_bound} converges to zero in probability. 

For the first term on the RHS of \eqref{eq:bound_asympt_equicont}, since each class $\cF_{\rho,j}$ is $\mu_{\btheta_0}$-Donsker, it is asymptotically equicontinuous under $\mu_{\btheta_0}$ \citepSM[see pp. 138--139 in][]{VanderVaartWellner2023SM}. Hence, for every $\varepsilon>0$ and every $j=1,\ldots,p$,
\begin{equation*}
\lim_{\delta \to 0^+} \limsup_{n \to \infty} \mathsf{P}\Bigg(\sup_{\substack{\btheta_1,\btheta_2\in B(\btheta_0,\rho)\\ \rho_{\mu_{\btheta_0}}(\psi_{\btheta_1,j}-\psi_{\btheta_2,j})<\delta}}\left|\G_{n,\mu_{\btheta_0}}^{\mu_{\btheta_0}}\big(\psi_{\btheta_1,j}-\psi_{\btheta_2,j}\big)\right|>\varepsilon\Bigg) = 0.
\end{equation*}
Moreover, applying \eqref{eq:relation_pseudo_norms}, one obtains
\begin{align*}
\sup_{\substack{\btheta_1,\btheta_2\in B(\btheta_0,\rho)\\ \|\bpsi_{\btheta_1}-\bpsi_{\btheta_2}\|_{Q_n,2}<\delta}}\left\|\G_{n,\mu_{\btheta_0}}^{\mu_{\btheta_0}}\big(\bpsi_{\btheta_1}-\bpsi_{\btheta_2}\big)\right\|
\le
\sqrt{p}\,\max_{1\le j\le p}\sup_{\substack{\btheta_1,\btheta_2\in B(\btheta_0,\rho)\\ \rho_{\mu_{\btheta_0}}(\psi_{\btheta_1,j}-\psi_{\btheta_2,j})<2\delta}}
\left|\G_{n,\mu_{\btheta_0}}^{\mu_{\btheta_0}}\big(\psi_{\btheta_1,j}-\psi_{\btheta_2,j}\big)\right|.
\end{align*}
Since \eqref{eq:bound_asympt_equicont} holds for every $\delta>0$, taking $\limsup_{n\to\infty}$ and then letting $\delta\to 0^+$ yields
$\G_{n,Q_n}^{Q_n}(\bpsi_{\hat{\btheta}}-\bpsi_{\tilde\btheta_n})\convp0$, and hence $(P_n - Q_n)(\bpsi_{\hat{\btheta}} - \bpsi_{\tilde\btheta_n}) = o_{\mathsf{P}}(n^{-1/2})$.
\end{proof}

\begin{proof}[Proof of Lemma~\ref{lem:aux_P_n_Q_n_tilde}]
Fix $\rho>0$ such that $B(\btheta_0,\rho)$ is contained in a neighborhood of $\btheta_0$ on which condition \ref{cp1} holds.
Since $\tilde{\btheta}_n\to\btheta_0$ by assumption, there exists $n(\rho)>0$ such that $\tilde{\btheta}_n\in B(\btheta_0,\rho)$ for all $n>n(\rho)$.
Observe that also $\|\sqrt{n}(P_n-Q_n)\bpsi_{\tilde{\btheta}_n}\| \le \sup_{\btheta\in B(\btheta_0,\rho)}\|\sqrt{n}(P_n-Q_n)\bpsi_{\btheta}\|$ for $n>n(\rho)$.

For each $j=1,\ldots,p$, let $\cF_{\rho,j}\defin \{\psi_{\btheta,j} : \btheta \in B(\btheta_0,\rho)\}$.  The assumptions of Corollary~\ref{cor:bernoulli} hold by the same arguments as in Lemma \ref{lem:aux_P_n_Q_n_tilde_hat}. An application of Corollary~\ref{cor:bernoulli} to each $\cF_{\rho,j}$ gives $\sup_{\btheta\in B(\btheta_0,\rho)}|\sqrt{n}(P_n-Q_n)\psi_{\btheta,j}|=O_{\mathsf{P}}(1)$. Hence, since $p<\infty$,
\begin{align*}
\sup_{\btheta\in B(\btheta_0,\rho)}\|\sqrt{n}(P_n-Q_n)\bpsi_{\btheta}\|
\le
\sqrt{p}\,\max_{1\le j\le p}\sup_{\btheta\in B(\btheta_0,\rho)}|\sqrt{n}(P_n-Q_n)\psi_{\btheta,j}|
=O_{\mathsf{P}}(1),
\end{align*}
and hence $\left(P_n - Q_n\right)\bpsi_{\tilde{\btheta}_n} = O_{\mathsf{P}}(n^{-1/2})$. This concludes the result.
\end{proof}

\begin{proof}[Proof of Lemma~\ref{lem:absorbtion_OP}]
Since $C_n=o_{\mathsf{P}}(A_n)$, one has $\mathsf{P}(C_n>A_n/2)\to0$, that is, $\mathsf{P}\left(C_n \le A_n/2\right)\to1$.
If $C_n \le A_n/2$, then
$A_n \le B_n + C_n \le B_n + A_n/2$, so
$A_n \le 2 B_n.$
Therefore, for every $n$,
$$
\mathsf{P}(A_n>2B_n)
\le
\mathsf{P}\left(A_n>2B_n,\ C_n \le \tfrac12 A_n\right)
+
\mathsf{P}\left(C_n > \tfrac12 A_n\right)
=
\mathsf{P}\left(C_n > \tfrac12 A_n\right),
$$
where the equality follows from $A_n>2B_n$. As $\mathsf{P}(C_n > A_n/2)\to0$, we conclude that $\mathsf{P}(A_n>2B_n)\to0$, hence $A_n = O_{\mathsf{P}}(B_n)$.
\end{proof}

\section{Computational details and model-specific derivations}\label{ap:sec_computational_details}

\subsection{Theoretical aspects for rotationally symmetric distributions}\label{subsec:additional_dp_s2}
We provide details on the representations of the distance profiles for the rotationally symmetric models on $\Sp^2$ used in the numerical evaluations presented in Sections~\ref{sec:numerical_experiments} and \ref{sec:real_data}.

For rotationally symmetric models on $\Sp^2$, the distance profile depends on $(\bmu,\bomega)$ only through $s=\bmu^\top\bomega$. We present the representations used in the numerical evaluations of distance profiles. Under the geodesic distance on $\Sp^2$, for every $\bomega\in\Sp^2$ and $0\le t\le \pi$,
the distance profile is determined by the distribution function of the projections
$$
H_s(x)=\Probbig{\bomega^\top \bX\le x},
\qquad
s=\bmu^\top \bomega.
$$
The models below differ only in the representation and numerical evaluation of $H_s$.

\noindent \textit{Uniform--Beta mixture}.
Let $\bmu\in\Sp^2$, $w\in(0,1)$, and $\alpha,\beta>0$. We refer to the model with density
$$
f_{\bX}(\bx)
=
\frac{1}{4\pi}
\left[
w+(1-w)\frac{y(\bx)^{\alpha-1}(1-y(\bx))^{\beta-1}}{B(\alpha,\beta)}
\right]
$$
as the uniform--beta mixture and write
$
\bX\sim \mathrm{UB}(\bmu,w,\alpha,\beta),
$
where $y(\bx)=(1+\bmu^\top \bx)/2$. Writing $Y=y(\bX)$, we have $Y\sim w \operatorname{Unif}(0,1) + (1 - w)\operatorname{Beta}(\alpha,\beta).$ The projected distribution is thus
$$
H_s(x)
=
\frac{x+1}{2}
+\sum_{\ell\ge 1}
a_\ell P_\ell(s)\,\frac{P_{\ell+1}(x)-P_{\ell-1}(x)}{2(2\ell+1)},
\qquad -1\le x\le 1,
$$
with coefficients
$$
a_\ell=(1-w)(2\ell+1)\,\E{P_\ell(2 Y_\beta-1)},\qquad \ell \ge 1,
$$
where $Y_\beta\sim\operatorname{Beta}(\alpha,\beta)$.
It holds that
$$
\E{P_\ell(2Y_\beta-1)}
=
\frac{1}{B(\alpha,\beta)}
\int_0^1
P_\ell(2y-1)\,y^{\alpha-1}(1-y)^{\beta-1}\,\rd y,
$$
which, after the change of variable $z=2y-1$, becomes
$$
\frac{1}{2^{\alpha+\beta-1}B(\alpha,\beta)}
\int_{-1}^1
P_\ell(z)\,(1-z)^{\beta-1}(1+z)^{\alpha-1}\,\rd z.
$$
We truncate the Legendre expansion at $\ell=150$ and compute the corresponding coefficients using $100$-point Gauss--Jacobi quadrature, which is exact for polynomials up to degree $199$.

\medskip
\noindent
\textit{Small-circle \citepSM{Bingham1978SM}.}
Let $\bX\sim \mathrm{SC}(\bmu,\kappa,\nu)$, where $\bmu\in\Sp^2$, $\kappa\ge 0$, and $\nu\in(-1,1)$. The model is rotationally symmetric around $\bmu$, conditionally uniform on the small circles $\{\bx\in\Sp^2:\bmu^\top \bx=z\}$, and its axial density is
$$
h_{\kappa,\nu}(z)
=
\frac{\exp\{-\kappa(z-\nu)^2\}}{c(\kappa,\nu)},
\qquad -1\le z\le 1,
$$
where
$$
c(\kappa,\nu)=\int_{-1}^1 \exp\{-\kappa(z-\nu)^2\}\, \rd z.
$$
Since $c(\kappa,\nu)=c(\kappa,-\nu),$ it follows that $\mathrm{SC}(\bmu,\kappa,-\nu) = \mathrm{SC}(-\bmu,\kappa,\nu).$ Hence, without loss of generality, one may restrict to $\nu\ge 0$. The distribution of the projections is
$$
H_s(x)
=
\frac{x+1}{2}
+\sum_{\ell\ge 1}
a_\ell P_\ell(s)\,\frac{P_{\ell+1}(x)-P_{\ell-1}(x)}{2(2\ell+1)},
$$
with
$$
a_\ell
=
(2\ell+1)\int_{-1}^1 P_\ell(z)\,h_{\kappa,\nu}(z)\, \rd z.
$$
The coefficients are computed using $400$-point Gauss--Legendre quadrature. 

\medskip
\noindent
\textit{Spherical Cardioid \citepSM{Garcia-Portugues2026SM}}
Let $\bX \sim C_k(\bmu,\rho)$, where $k\ge 1$ is an integer order, $\bmu\in\Sp^2$ is the symmetry axis, and $\rho\in[-1,1]$ controls concentration around $\bmu$. For this model, the projected distribution was derived in \citetSM[Theorem~3.2]{Garcia-Portugues2026SM}.

\medskip
\noindent

\medskip
\noindent
\textit{Hyperbolic von Mises–Fisher.} Consider $\bX\sim \mathrm{HvMF}(\bmu,\kappa)$ on $\mathbb{H}^2$, where $\bmu\in\mathbb{H}^2$ and $\kappa>0$. By Proposition~\ref{prop:dp_examples}\ref{prop:dp_examples_hvmf}, the distance profile is $F_{\bomega}(t)=\mathsf{P}(Z\le t)$, where $Z=d_{{\mathbb H}^2}(\bX,\bomega)$ has density given in \eqref{eq:density_z_hvmf}.

\subsection{Maximum likelihood estimators and scores in the simulation scenarios}

The scenario numbering in this section is that of Table~\ref{tab:simulation_scenarios}. Given a sample $\bX_1,\ldots,\bX_n$, let $a_i\geq 0$ with $\sum_{i=1}^n a_i=1$ to cover at once the ordinary fit and the weighted fit used by the reestimated multiplier bootstrap. The ordinary Maximum Likelihood Estimators (MLEs) follow by taking $a_i=1/n$. Only the model-specific steps are derived below; standard derivations of sample means are omitted.

For every score $\boldsymbol\psi_{\boldsymbol\vartheta}$ below, the convention in the paper is
$$
\bV_{\boldsymbol\vartheta}
=
\mathsf{E}_{\boldsymbol\vartheta}
\left[
\frac{\partial\boldsymbol\psi_{\boldsymbol\vartheta}(\bX)}
{\partial\boldsymbol\vartheta^\top}
\right]
=-\mathcal I_{\boldsymbol\vartheta},
$$
where $\mathcal I_{\boldsymbol\vartheta}$ is the Fisher information.

\subsubsection{Scenario 1}

The fitted family is
$$
\bX\sim\mathcal N_d\left(\btheta,
\bI_d+\lambda\bu(\btheta)\bu(\btheta)^\top\right),
\qquad
\bu(\btheta)=\frac{\btheta}{\|\btheta\|},
\qquad \lambda>0.
$$
Thus, the mean determines both the location and the direction of the covariance spike. The log-likelihood is, up to an additive constant,
$$
\ell(\btheta,\lambda;\bX)
=-\frac12\log(1+\lambda)
-\frac12\left(
\|\bX-\btheta\|^2
-\frac{\lambda}{1+\lambda}
\{\bu(\btheta)^\top(\bX-\btheta)\}^2
\right).
$$
The scores are:
\begin{align*}
\boldsymbol\psi_{\btheta}(\bX)
&=
\bX-\btheta
+\frac{\lambda}{1+\lambda}\{\bu(\btheta)^\top(\bX-\btheta)\}
\left\{
\frac{\{\bI_d-\bu(\btheta)\bu(\btheta)^\top\}(\bX-\btheta)}{\|\btheta\|}
-\bu(\btheta)
\right\},\\
\psi_\lambda(\bX)
&=
-\frac{1}{2(1+\lambda)}
+\frac{\{\bu(\btheta)^\top(\bX-\btheta)\}^2}{2(1+\lambda)^2}.
\end{align*}
The corresponding information matrix is block diagonal, with
\begin{align*}
\mathcal I_{\btheta\btheta}
=
\frac{\bu(\btheta)\bu(\btheta)^\top}{1+\lambda}
+\left\{
1+\frac{\lambda^2}{(1+\lambda)\|\btheta\|^2}
\right\}\{\bI_d-\bu(\btheta)\bu(\btheta)^\top\},
\quad 
\mathcal I_{\lambda\lambda}
=
\frac{1}{2(1+\lambda)^2}.
\end{align*}

The ordinary and weighted MLEs differ only in the weights $a_i$. Let
$$
\overline{\bX}=\sum_{i=1}^n a_i\bX_i,
\qquad
\widehat\bSigma_{\bX}
=\sum_{i=1}^n a_i(\bX_i-\overline{\bX})(\bX_i-\overline{\bX})^\top,
$$
Define
\[
\boldsymbol{A}_{\lambda}
\defin
\overline{\boldsymbol X}\overline{\boldsymbol X}^{\top}
+\frac{\lambda}{1+\lambda}\widehat{\boldsymbol\Sigma}_{\boldsymbol X}.
\]
The estimate $\widehat\lambda$ is obtained by maximizing
$$
L(\lambda)
=\operatorname{eig}_{\max}\left(
\boldsymbol{A}_{\lambda}
\right)-\log(1+\lambda),
\qquad \lambda\geq0.
$$
Here $\operatorname{eig}_{\max}(\boldsymbol A)$ denotes the largest eigenvalue of $\boldsymbol A$. Let $\widehat\bu$ be a unit eigenvector associated with the largest eigenvalue of $\boldsymbol A_{\widehat\lambda}$, with $\widehat\bu^\top\overline{\bX}\geq0$. Then, $\widehat\btheta=(\widehat\bu^\top\overline{\bX})\widehat\bu.$

\subsubsection{Scenario 2}
Here,
$ \bX\sim\mathcal N_d(\bmu,\sigma^2\bI_d)$, $\sigma>0$. The estimator of $\bmu$ is the usual weighted sample mean. The variance estimator is given by
$$
\widehat\sigma^2
=
\frac{1}{d}\sum_{i=1}^n a_i\|\bX_i-\widehat\bmu\|^2.
$$
For the parameterization $(\bmu,\sigma)$, the scores are given by
\begin{align*}
\boldsymbol\psi_{\bmu}(\bX)
=\frac{\bX-\bmu}{\sigma^2}, \qquad
\psi_\sigma(\bX)
=-\frac d\sigma+\frac{\|\bX-\bmu\|^2}{\sigma^3},
\end{align*}
and the Fisher information matrix is
$$
\mathcal I_{(\bmu,\sigma)}
=
\begin{pmatrix}
\sigma^{-2}\bI_d & \boldsymbol 0\\
\boldsymbol 0^\top & 2d\,\sigma^{-2}
\end{pmatrix}.
$$

\subsubsection{Scenarios 3 and 4}
For $i=1,\ldots,n$, set $\bZ_i=\mathrm{ilr}(\bX_i)\in\R^d$. All fitted quantities are computed from the transformed sample $\bZ_1,\ldots,\bZ_n$. This reduces the likelihood calculations to the corresponding Gaussian models and avoids introducing redundant simplex coordinates.

\noindent
\textit{Scenario 3.}
The random vector corresponding to these transformed observations satisfies $\bZ\sim\mathcal N_d(\bmu,\bI_d)$, with only $\bmu$ unknown. The fit and score are those of the standard Gaussian location model, so no additional derivation is required.

\medskip
\noindent
\textit{Scenario 4.}
The fitted covariance is the AR(1) correlation matrix
$$
\bR_d(\rho)=\left(\rho^{|j-k|}\right)_{j,k=1}^d,
\qquad -1<\rho<1,
$$
and both $\bmu$ and $\rho$ are unknown. Let $\boldsymbol E_i=\bZ_i-\widehat\bmu$, with $E_{ij}$ denoting its $j$th coordinate, and define
\begin{align*}
A\defin\sum_{i=1}^n a_i\|\boldsymbol E_i\|^2, \quad 
B\defin\sum_{i=1}^n a_i\sum_{j=1}^{d-1}E_{ij}E_{i,j+1}, \quad  
C\defin\sum_{i=1}^n a_i\sum_{j=2}^{d-1}E_{ij}^2,
\end{align*}
where $C=0$ for $d=2$. Since
$\det\{\bR_d(\rho)\}=(1-\rho^2)^{d-1}$ and $\bR_d(\rho)^{-1}$ is tridiagonal, the
log-likelihood is, up to an additive constant,
$$
\ell_p(\rho)
=-\frac12\left\{
(d-1)\log(1-\rho^2)
+\frac{A+\rho^2C-2\rho B}{1-\rho^2}
\right\}.
$$
Its interior stationary points are the real roots of
$$
-(d-1)\rho^3+B\rho^2
+\{(d-1)-(A+C)\}\rho+B=0.
$$
The implementation evaluates the profile at every real root in the admissible interval and at both endpoints, and then takes the global maximizer. For numerical stability the interval used in the simulations was $[-0.995,0.995]$.

Writing $\bR_d'(\rho)=\partial\bR_d(\rho)/\partial\rho$, the scores are given by $\boldsymbol\psi_{\bmu}(\bZ) =\bR_d(\rho)^{-1}(\bZ-\bmu),$
and
\begin{align*}
\psi_\rho(\bZ)
=-\frac12\operatorname{tr}\left\{\bR_d(\rho)^{-1}\bR_d'(\rho)\right\}
+\frac12(\bZ-\bmu)^\top\bR_d(\rho)^{-1}\bR_d'(\rho)
\bR_d(\rho)^{-1}(\bZ-\bmu).
\end{align*}
The off-diagonal entries of $\bR_d'(\rho)$ are $|j-k|\rho^{|j-k|-1}$, and its diagonal entries are zero. The information matrix is block diagonal, with
$$
\mathcal I_{\bmu\bmu}=\bR_d(\rho)^{-1},
\qquad
\mathcal I_{\rho\rho}
=\frac12\operatorname{tr}
\left[
\left\{\bR_d(\rho)^{-1}\bR_d'(\rho)\right\}^2
\right].
$$

\subsubsection{Scenario 5}
The concentration is fixed at $\kappa=2$ and only $\bmu\in\Sp^d$ is fitted. The MLE is the normalized weighted sample resultant and is not rederived here. The implementation-specific point is that the score is represented in the tangent space, rather than in singular ambient coordinates. If $\bB_{\bmu}$ is any $(d+1)\times d$ matrix satisfying $\bB_{\bmu}^\top \bB_{\bmu}=\bI_d$ and $\bB_{\bmu}^\top\bmu=0$, the local score and information are
$$
\boldsymbol\psi_{\bmu}(\bX)
=\kappa \bB_{\bmu}^\top \bX,
\qquad
\mathcal I_{\bmu\bmu}
=\kappa\mathsf E_{\bmu,\kappa}(\bmu^\top \bX)\bI_d.
$$
The code constructs $\bB_{\bmu}$ deterministically by projecting coordinate vectors onto the tangent space and applying a QR decomposition.

\subsubsection{Scenario 6}
Both location and concentration are fitted by the standard vMF MLE for the canonical parameter $\bxi=\kappa\bmu\in\R^{d+1}$. The non-weighted fit is computed using \texttt{movMF} \citepSM{Hornik2014SM}, whereas the weighted fit is computed directly from the weighted sample mean. The MLE and canonical score are standard and are already derived in the manuscript.

\subsubsection{Scenarios 7 and 8}
These scenarios differ only in the alternatives used to generate the data. The model is fitted using the canonical parameter $\boldsymbol\xi=\kappa\boldsymbol\mu$. \citetSM{Jensen1981SM} shows that the MLE on $\mathbb H^q$ exists if and only if $R>n$, where $\bS:=\sum_{i=1}^n\bX_i$ and $R:=\sqrt{-(\bS,\bS)}$, and is given by
\[
\widehat\bmu=\frac{\bS}{R},
\qquad
\frac{\mathcal K_{(q+1)/2}(\widehat\kappa)}
{\mathcal K_{(q-1)/2}(\widehat\kappa)}
=\frac{R}{n}.
\]
Here $\mathcal K_\nu$ denotes the modified Bessel function of the second kind. For the weighted fit, let $\bS_{\bzeta}:=\sum_{i=1}^n\zeta_i\bX_i$, $Z_{\bzeta}:=\sum_{i=1}^n\zeta_i$, and $R_{\bzeta}:=\sqrt{-(\bS_{\bzeta},\bS_{\bzeta})}$. The estimated parameters are defined by
\[
\widehat{\bmu}_{\bzeta}
=\frac{\bS_{\bzeta}}{R_{\bzeta}},
\qquad
\frac{\mathcal K_{(q+1)/2}(\widehat\kappa_{\bzeta})}
{\mathcal K_{(q-1)/2}(\widehat\kappa_{\bzeta})}
=\frac{R_{\bzeta}}{Z_{\bzeta}},
\]
provided that $R_{\bzeta}>Z_{\bzeta}$. For $q=2$, these equations reduce to $\widehat\kappa=n/(R-n)$ and $\widehat\kappa_{\bzeta}=Z_{\bzeta}/(R_{\bzeta}-Z_{\bzeta})$. The derivation of the score is standard since the HvMF model is an exponential family.

\subsection{Evaluation of the profile derivatives}

For the regular models considered here, differentiation under the integral sign gives
\begin{equation}\label{eq:profile_derivative_score}
\dot F_{\bomega}^{\boldsymbol\vartheta}(t)
=
\mathsf E_{\boldsymbol\vartheta}
\left[
1_{\{d(\bomega,\bX)\leq t\}}
\boldsymbol\psi_{\boldsymbol\vartheta}(\bX)
\right].
\end{equation}
In Scenarios 1--4, $\dot F_{\bomega}^{\boldsymbol\vartheta}(t)$ was approximated by Monte Carlo using \eqref{eq:profile_derivative_score} and an auxiliary sample of size $10,000$ from the fitted model. We next give the one-dimensional integrals used in Scenarios 5--8. 

For the vMF model on $\Sp^d$, let $\bxi=\kappa\bmu$ and $a=\bxi^\top\bomega$. The density of $\bomega^\top\bX$ is
$$
g_{\bomega,\bxi}(s)
=
\frac{c_d^{\mathrm{vMF}}(\kappa)}
{c_{d-1}^{\mathrm{vMF}}(\sqrt{\kappa^2-a^2}\sqrt{1-s^2})}
(1-s^2)^{(d-2)/2}e^{as},
\qquad -1<s<1.
$$
Conditional on $\bomega^\top\bX=s$, write
$$
\bX=s\bomega+\sqrt{1-s^2}\,\boldsymbol V.
$$
On the unit sphere in the orthogonal complement of $\bomega$, $\boldsymbol V$ follows a vMF distribution with mean direction along $\bxi-a\bomega$ and concentration $\sqrt{\kappa^2-a^2}\sqrt{1-s^2}$. Using its mean gives
\begin{align*}
\mathsf E_{\bxi}(\bX\mid\bomega^\top\bX=s)
&=
\{s-aC(s)\}\bomega+\kappa C(s)\bmu,\\
C(s)
&=
(1-s^2)
\frac{A_{d-1}(\sqrt{\kappa^2-a^2}\sqrt{1-s^2})}
{\sqrt{\kappa^2-a^2}\sqrt{1-s^2}},
\end{align*}
where
$$
A_j(z) \defin \frac{\mathcal I_{(j+1)/2}(z)}{\mathcal I_{(j-1)/2}(z)},
$$
where $\mathcal I_\nu$ is the modified Bessel function of the first kind.

Since the canonical score is $\bX-A_d(\kappa)\bmu$, it follows that
$$
\dot F_{\bomega}^{\bxi}(t)
=
\int_{\cos t}^{1}
\left[
\{s-aC(s)\}\bomega
+\{\kappa C(s)-A_d(\kappa)\}\bmu
\right]
g_{\bomega,\bxi}(s)\,\rd s.
$$
For Scenario 5, the derivative in the local coordinates of $\bmu$ is obtained as $\kappa\bB_{\bmu}^\top\dot F_{\bomega}^{\bxi}(t)$.

For the HvMF model, let $(\cdot,\cdot)$ denote the Minkowski inner product, and let
$$
\bJ=\operatorname{diag}(-1,1,\ldots,1),
\qquad
\bxi=\kappa\bmu,
\qquad
a=(\bxi,\bomega).
$$
The density of $d(\bomega,\bX)$ is
$$
h_{\bomega,\bxi}(r)
=
\frac{c_d^{\mathrm{HvMF}}(\kappa)}
{c_{d-1}^{\mathrm{vMF}}(\sqrt{a^2-\kappa^2}\sinh r)}
(\sinh r)^{d-1}e^{a\cosh r},
\qquad r\geq0.
$$
Writing
$$
B_d(\kappa)
=\frac{\mathcal K_{(d+1)/2}(\kappa)}{\mathcal K_{(d-1)/2}(\kappa)},
\qquad
D(r)
=(\sinh r)^2
\frac{A_{d-1}(\sqrt{a^2-\kappa^2}\sinh r)}
{\sqrt{a^2-\kappa^2}\sinh r},
$$
where $\mathcal K_\nu$ is the modified Bessel function of the second kind. Conditional on $d(\bomega,\bX)=r$, write $\bX=\cosh(r)\bomega+\sinh(r)\boldsymbol V.$ On the unit sphere in the tangent space at $\bomega$, $\boldsymbol V$ follows a vMF distribution with mean direction along $\bxi+a\bomega$ and concentration $\sqrt{a^2-\kappa^2}\sinh r$. Using its mean gives
$$
\mathsf E_{\bxi}(\bX\mid d(\bomega,\bX)=r)
=
(\cosh r+aD(r))\bomega+\kappa D(r)\bmu.
$$
The canonical score is $\bJ(\bX-B_d(\kappa)\bmu)$, and hence
$$
\dot F_{\bomega}^{\bxi}(t)
=
\int_0^t
\bJ\left(
(\cosh r+aD(r))\bomega
+(\kappa D(r)-B_d(\kappa))\bmu
\right)
h_{\bomega,\bxi}(r)\, \rd r.
$$
The ratios in $C(s)$ and $D(r)$ are evaluated by their continuous limits when their arguments are zero. In the reported computations, for each center $\bomega=\bX_i$, the values $\dot F_{\bomega}^{\bxi}(t_{ij})$ were evaluated at the observed distances $t_{ij}=d(\bX_i,\bX_j)$, $j=1,\ldots,n$. For vMF, the integral defining $\dot F_{\bomega}^{\bxi}(t)$ was evaluated by cumulative trapezoidal integration on a uniform $4097$-point grid on $[-1,1]$, followed by linear interpolation at $\cos(t_{ij})$. For HvMF, for each $i$, the integral defining $\dot F_{\bomega}^{\bxi}(t)$ was evaluated by cumulative trapezoidal integration on a uniform $4097$-point grid on $[0,\max_j t_{ij}]$, followed by linear interpolation at $t_{ij}$.

\section{Computational validation of the null asymptotic distributions}\label{ap:sec_computational_validation}

Figures~\ref{fig:convergence_process_vmf_simple_mu_100} and \ref{fig:convergence_process_vmf_comp_mu_100} illustrate the null convergence of the KS statistics for the vMF model, for simple and composite nulls. To approximate the limiting distribution, the covariance function was evaluated on the same fixed grid consisting of 10 directions and 10 radii, and a centred $100$-dimensional multivariate normal vector with this covariance matrix was simulated. For each setting, we simulate $M=10,000$ realizations of $\sup_{\omega, t}\big|\G_{0}(\omega, t)\big|$ (or $\sup_{\omega, t}\big|\tilde{\G}_{0}(\omega, t)\big|$) and of $T_n^{\mathrm{KS}}$ (or $\tilde{T}_n^{\mathrm{KS}}$) for each sample size $n=50,100,500$. In the composite setting, the unknown parameter $\bxi = \kappa \bmu$ was estimated by maximum likelihood in each simulated sample. The first row of each figure displays kernel density estimates of the simulated distributions, while the second row gives the corresponding quantile-quantile plots of the approximation errors with respect to the weak limits.

\ifincludeimages
\begin{figure}[!htbp]
\begin{subfigure}[htbp]{0.32\textwidth}
    \centering
    \includegraphics[width=\textwidth]{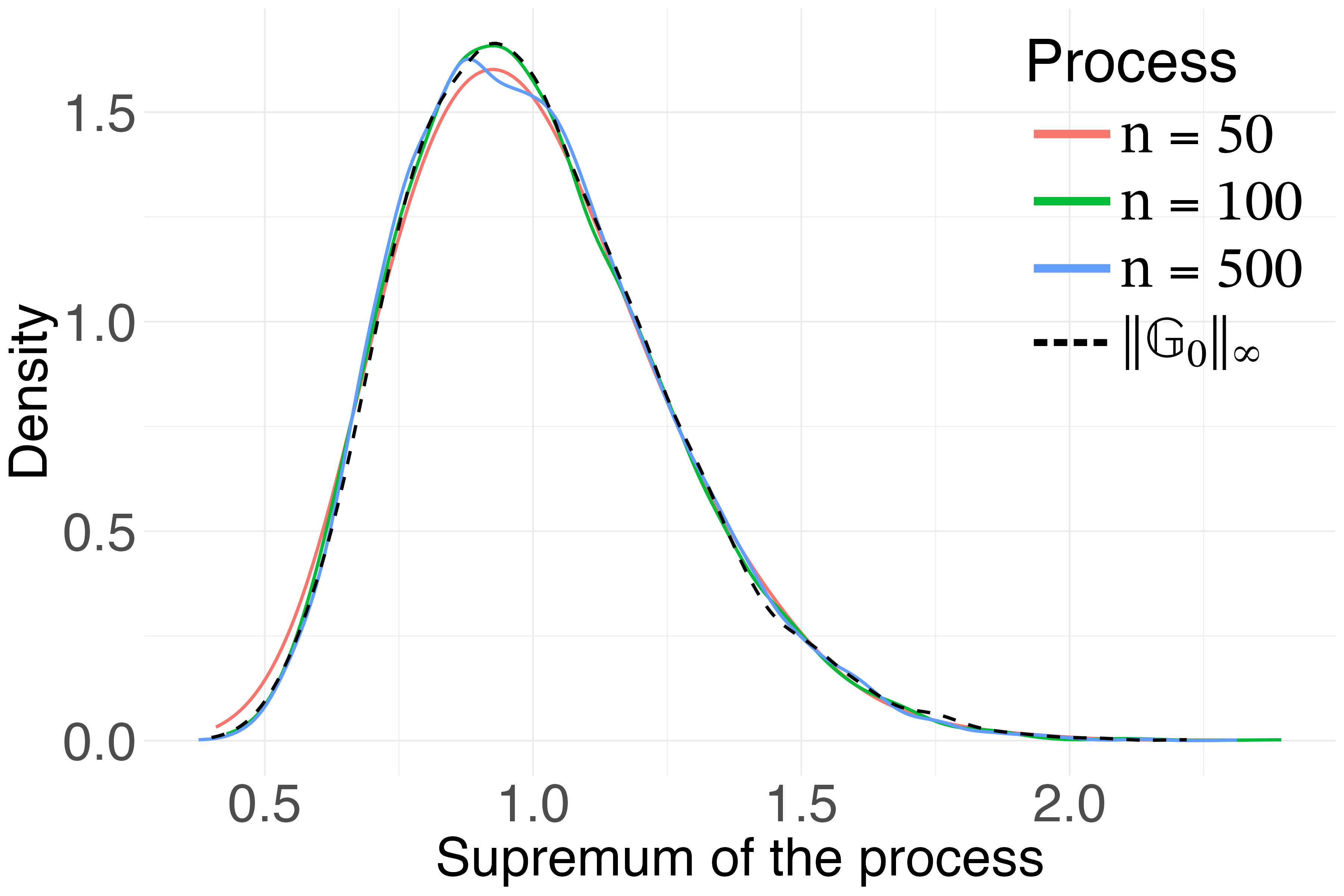}
    \caption{$\mathrm{vMF}(\bmu,1/2)$}
\end{subfigure}
\hfill
\begin{subfigure}[htbp]{0.32\textwidth}
    \centering
    \includegraphics[width=\textwidth]{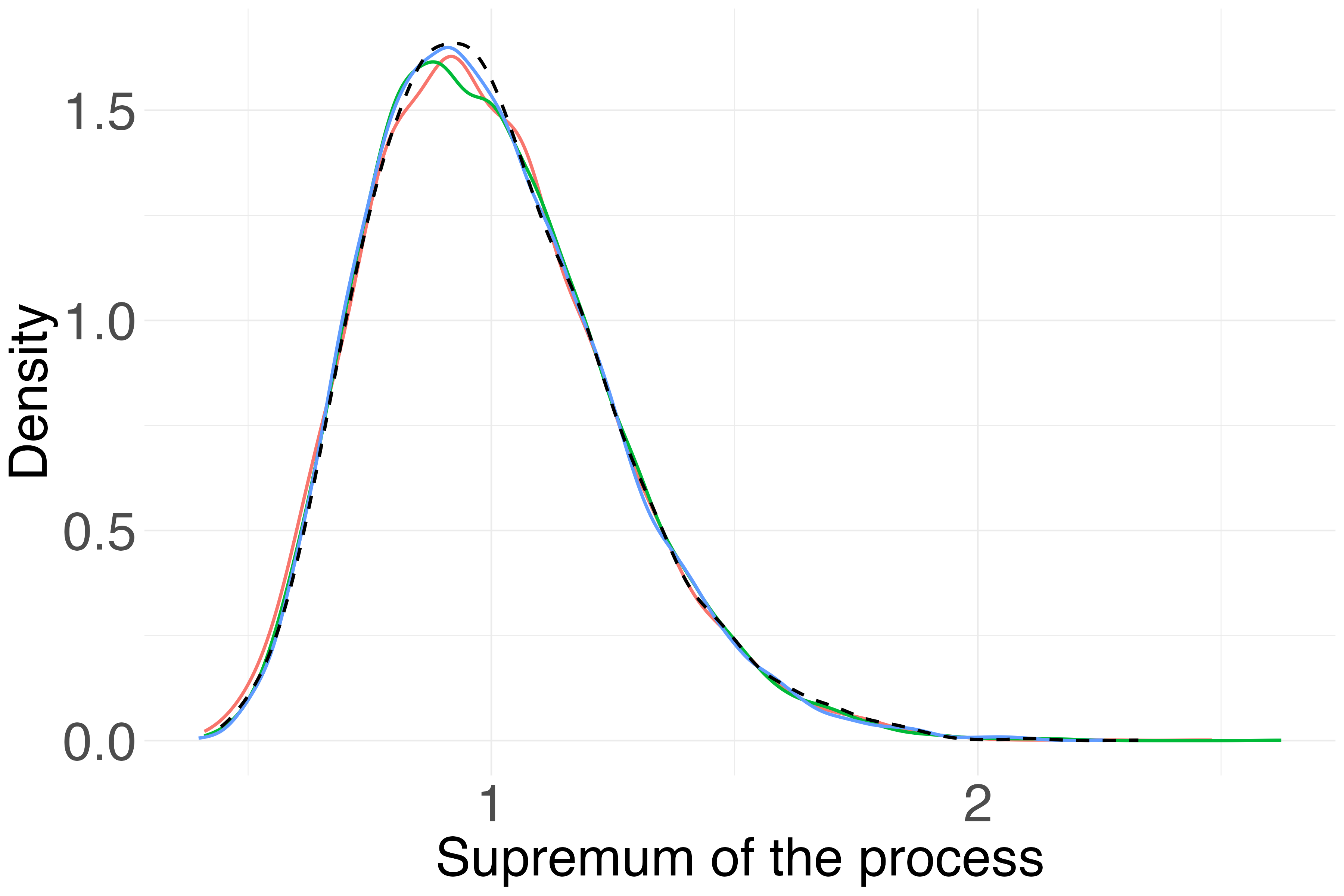}
    \caption{$\mathrm{vMF}(\bmu,1)$}
\end{subfigure}
\hfill
\begin{subfigure}[htbp]{0.32\textwidth}
    \centering
    \includegraphics[width=\textwidth]{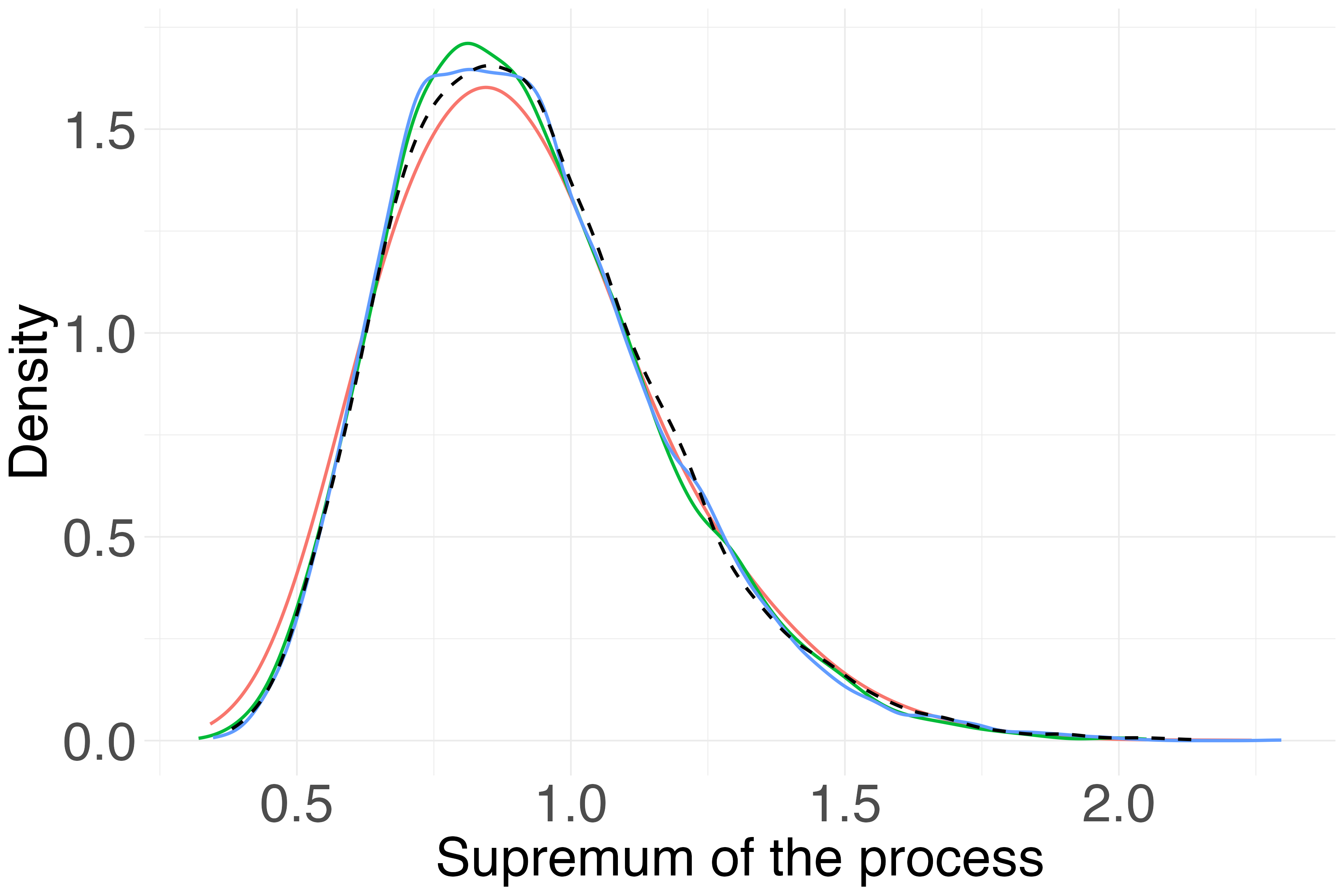}
    \caption{$\mathrm{vMF}(\bmu,5)$}
\end{subfigure}
\par\medskip
\begin{subfigure}[htbp]{0.32\textwidth}
    \centering
    \includegraphics[width=\textwidth]{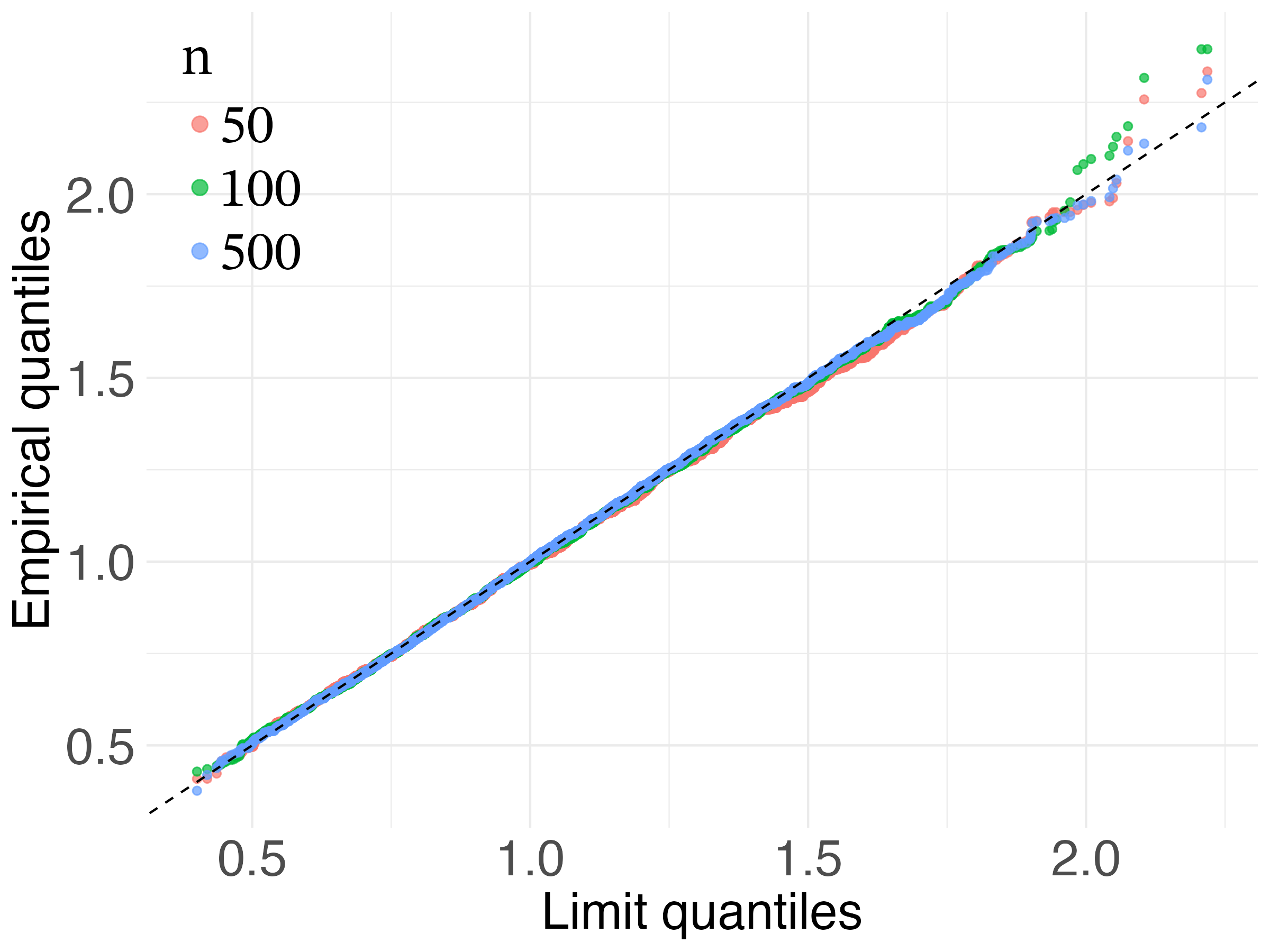}
    \caption{QQ plot for $\mathrm{vMF}(\bmu,1/2)$}
\end{subfigure}
\hfill
\begin{subfigure}[htbp]{0.32\textwidth}
    \centering
    \includegraphics[width=\textwidth]{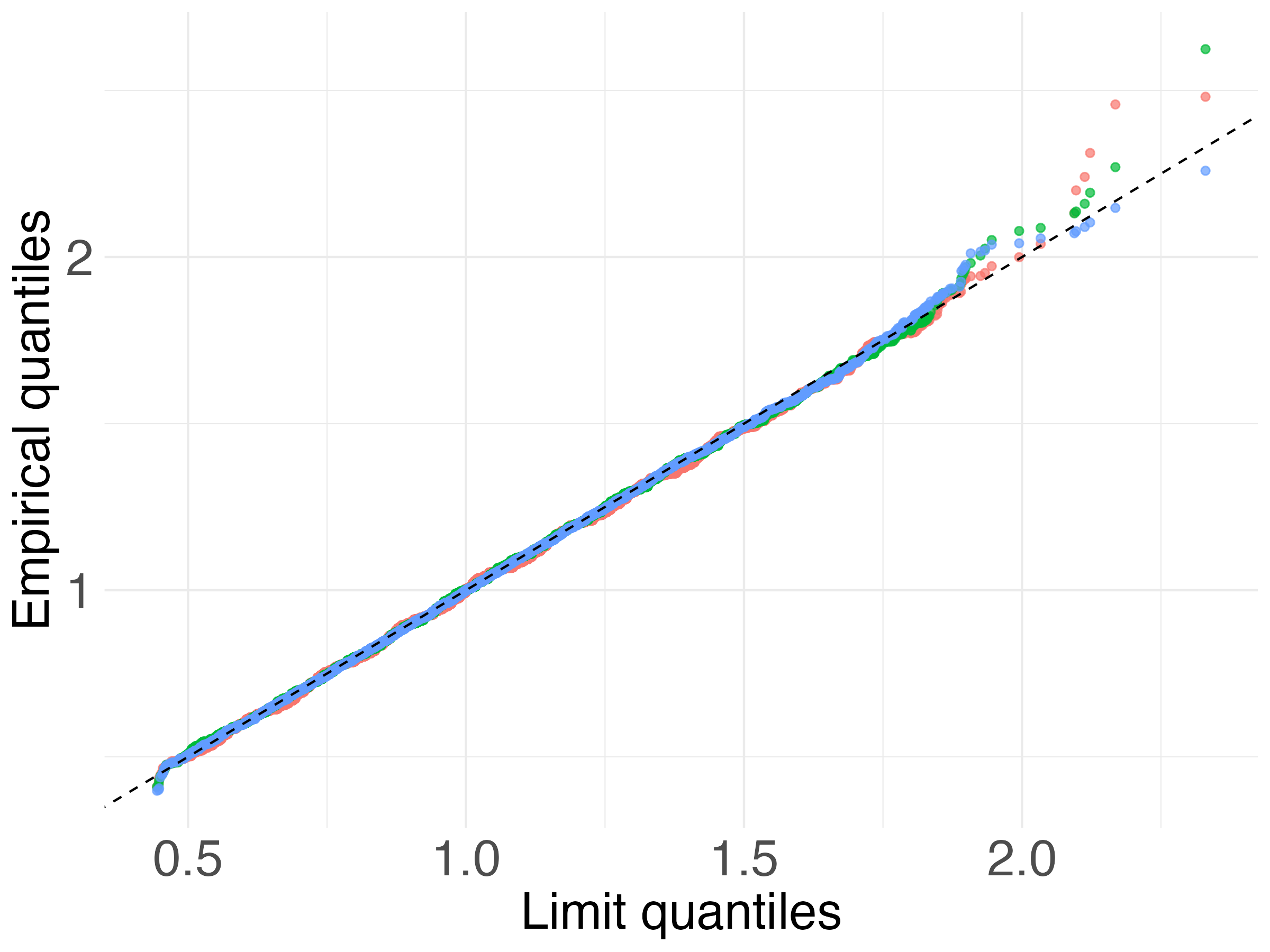}
    \caption{QQ plot for $\mathrm{vMF}(\bmu,1)$}
\end{subfigure}
\hfill
\begin{subfigure}[htbp]{0.32\textwidth}
    \centering
    \includegraphics[width=\textwidth]{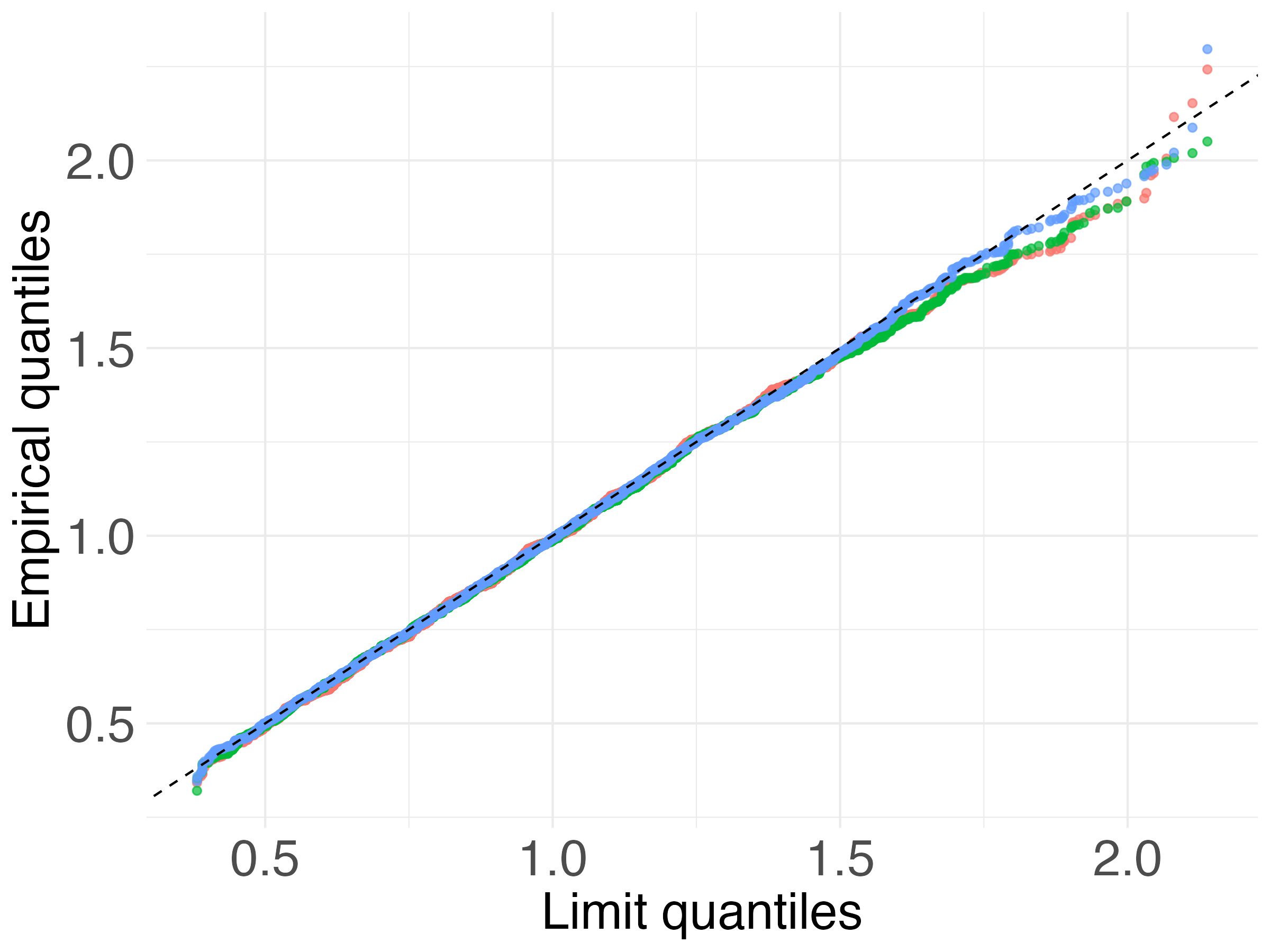}
    \caption{QQ plot for $\mathrm{vMF}(\bmu,5)$}
\end{subfigure}
\caption{Illustration of the convergence of the statistic $T_n^{\mathrm{KS}}$ defined in \eqref{eq:ks_test_statistic}. Specifically, using the geodesic distance, the test contrasts whether the data come from a $\mathrm{vMF}$ with canonical parameter $\bxi = \bmu/2$ (left), $\bxi = \bmu$ (center), and $\bxi = 5\bmu$ (right), with $\bmu = (1,0,0)^\top$.}
\label{fig:convergence_process_vmf_simple_mu_100}
\end{figure}
\fi
\ifincludeimages
\begin{figure}[!htbp]
\begin{subfigure}[htbp]{0.32\textwidth}
    \centering
    \includegraphics[width=\textwidth]{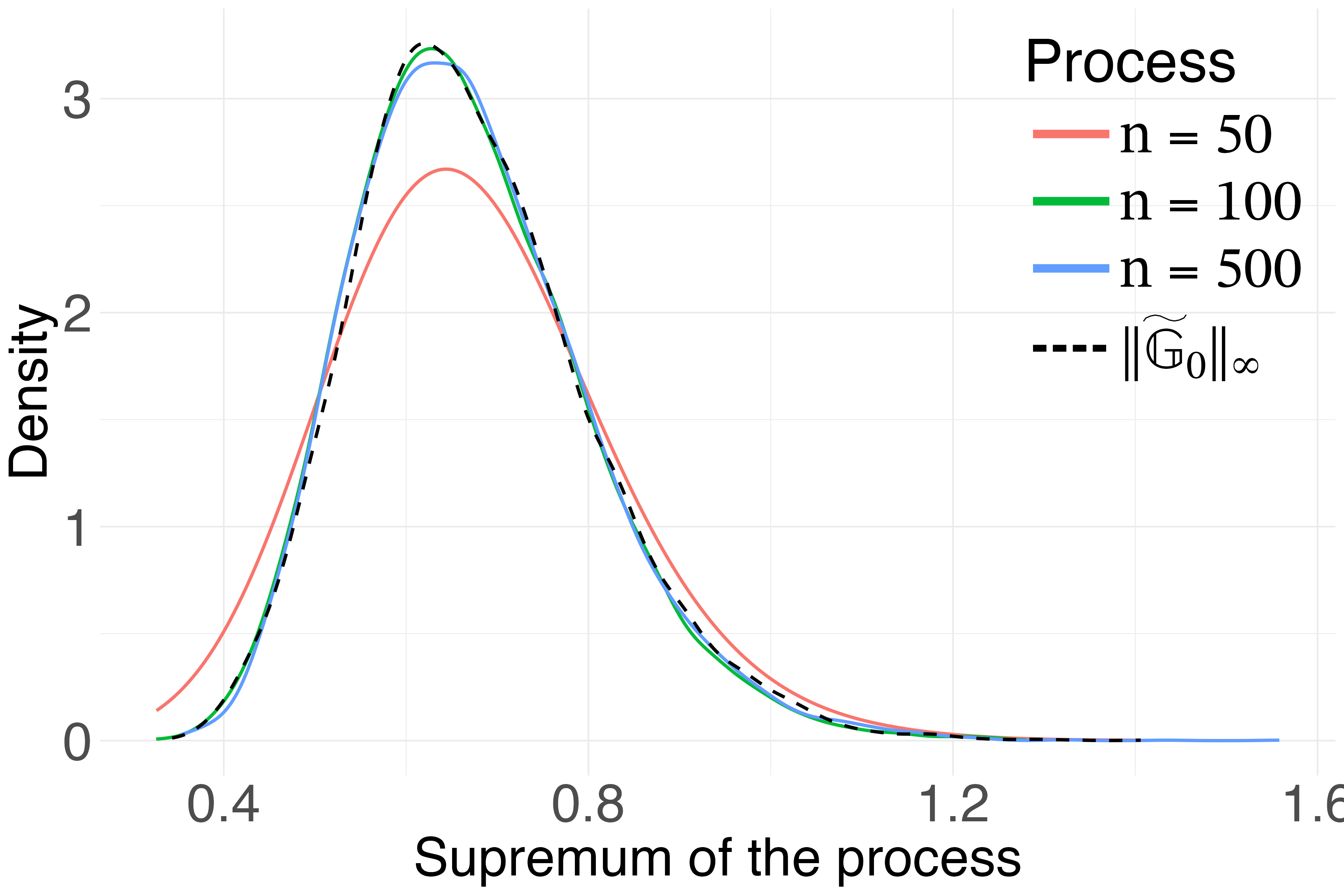}
    \caption{$\mathrm{vMF}(\bmu, 1/2)$}
\end{subfigure}
\hfill
\begin{subfigure}[htbp]{0.32\textwidth}
    \centering
    \includegraphics[width=\textwidth]{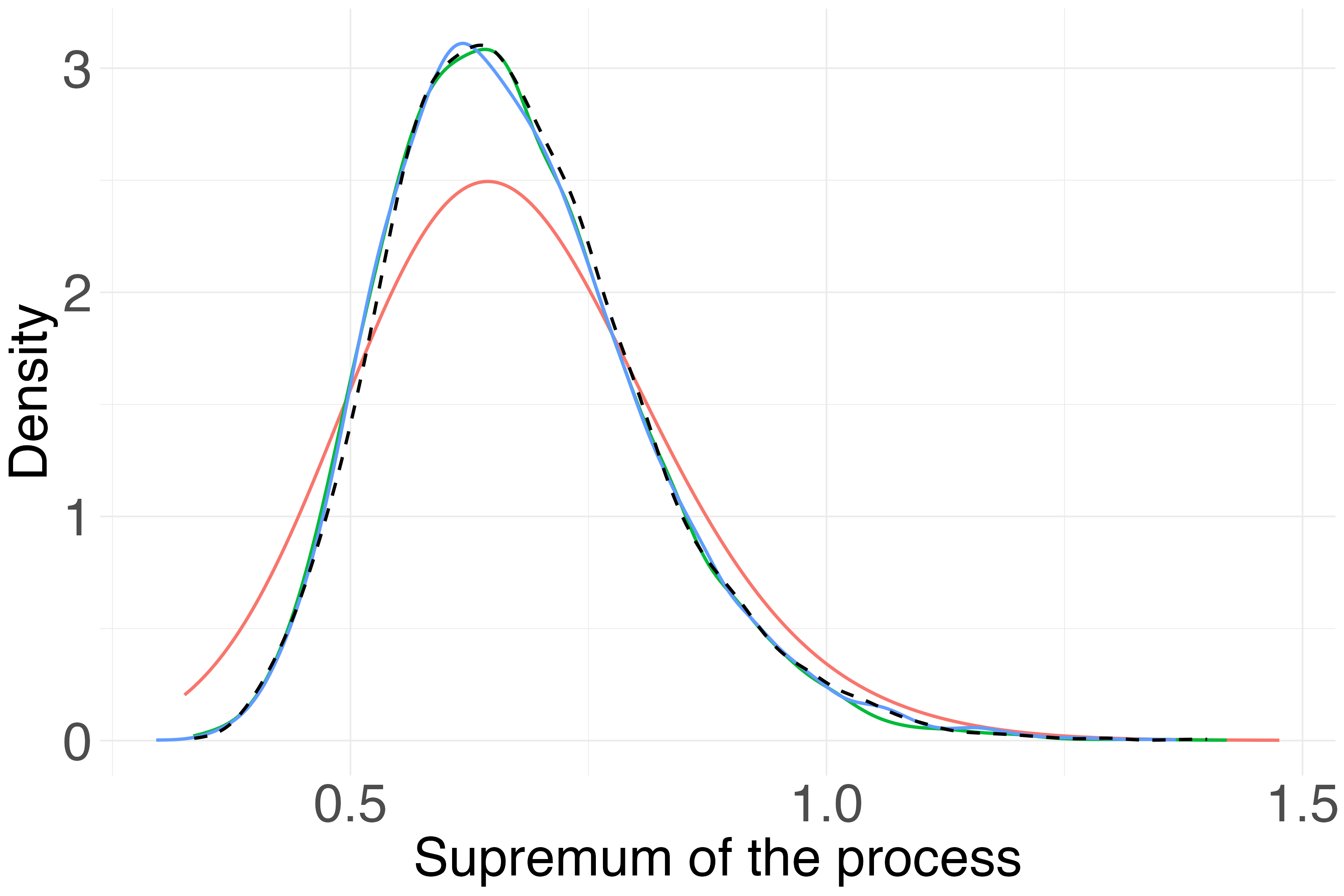}
    \caption{$\mathrm{vMF}(\bmu, 1)$}
\end{subfigure}
\hfill
\begin{subfigure}[htbp]{0.32\textwidth}
    \centering
    \includegraphics[width=\textwidth]{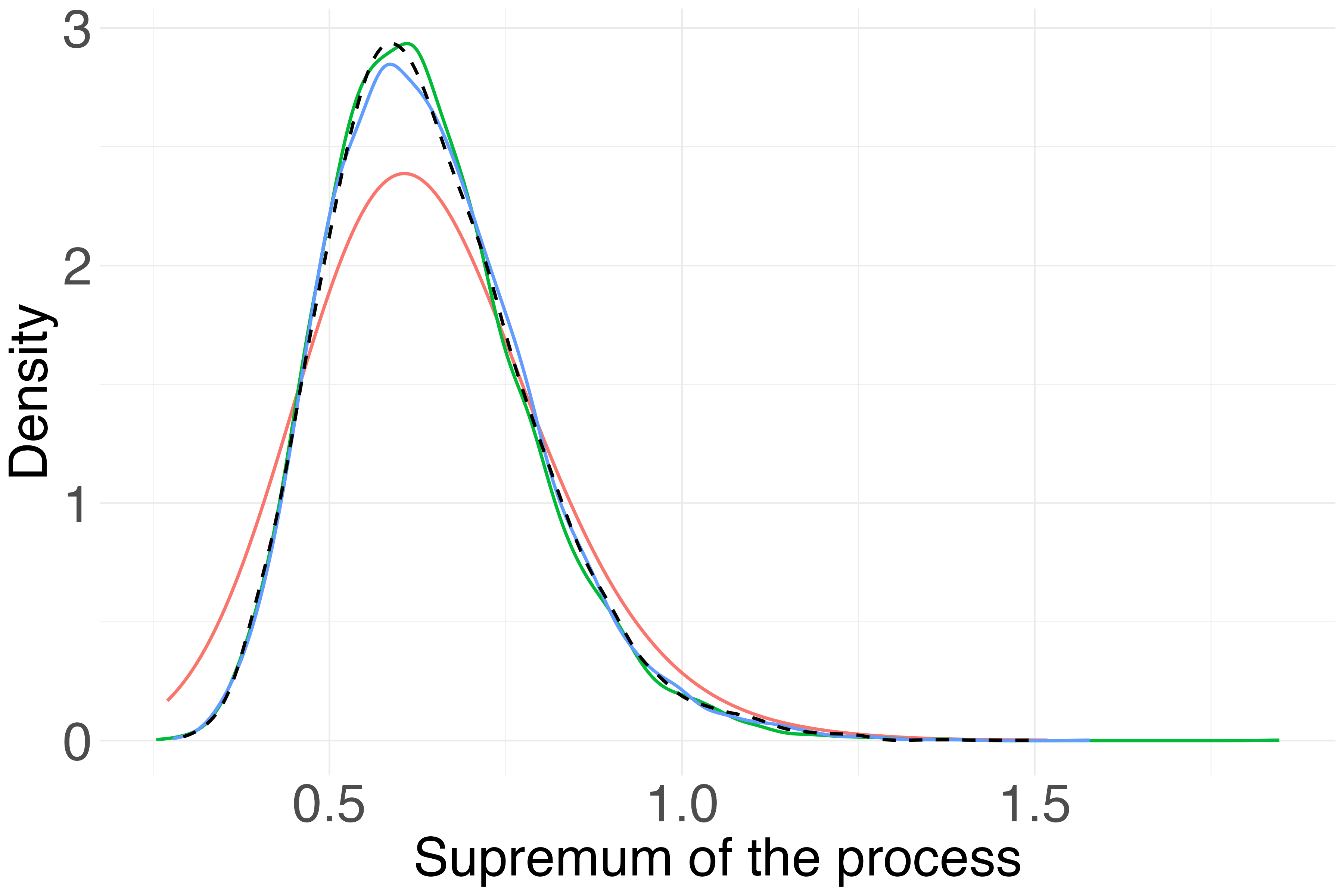}
    \caption{$\mathrm{vMF}(\bmu, 5)$}
\end{subfigure}
\par\medskip
\begin{subfigure}[htbp]{0.32\textwidth}
    \centering
    \includegraphics[width=\textwidth]{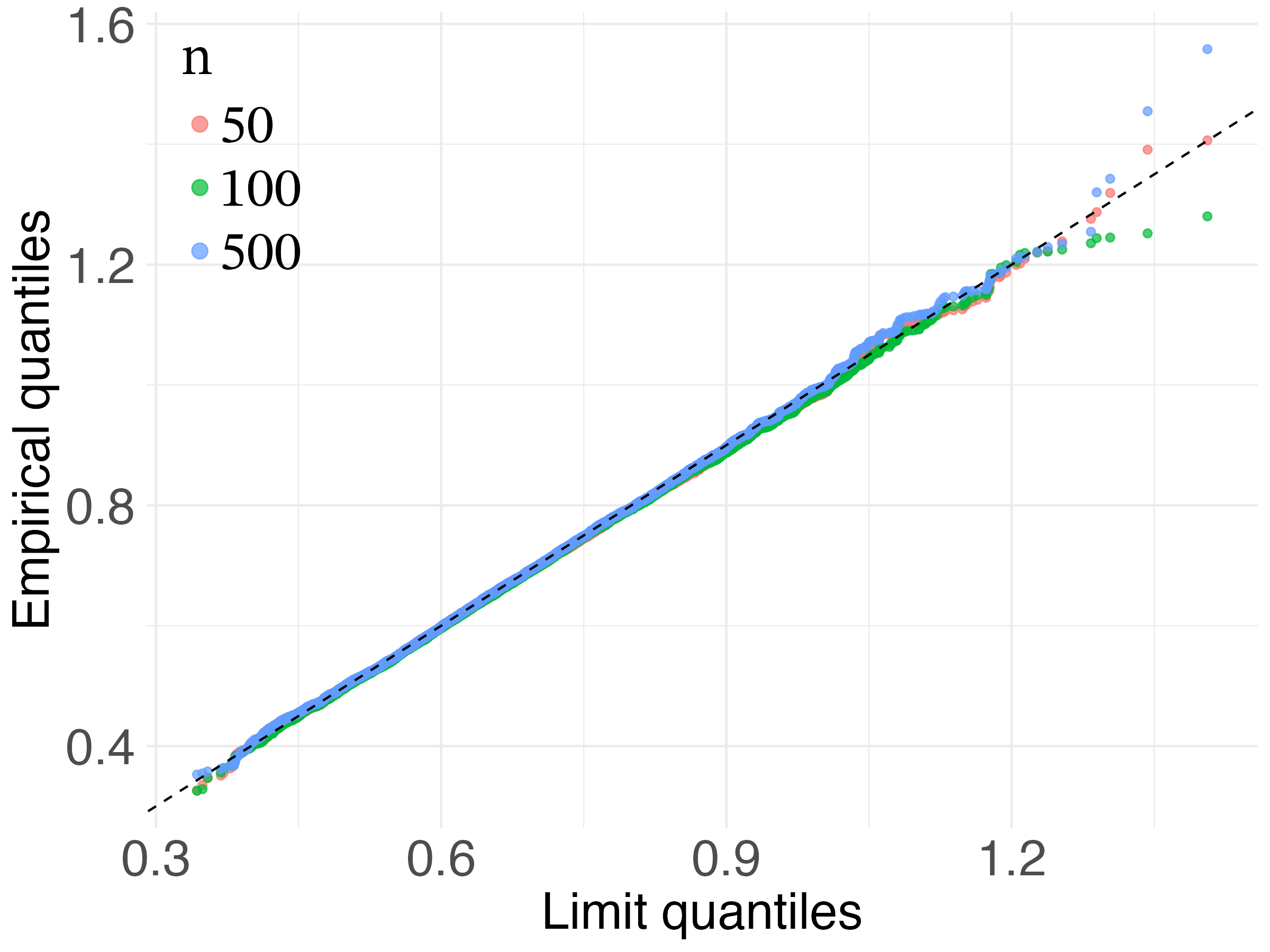}
    \caption{QQ plot for $\mathrm{vMF}(\bmu, 1/2)$}
\end{subfigure}
\hfill
\begin{subfigure}[htbp]{0.32\textwidth}
    \centering
    \includegraphics[width=\textwidth]{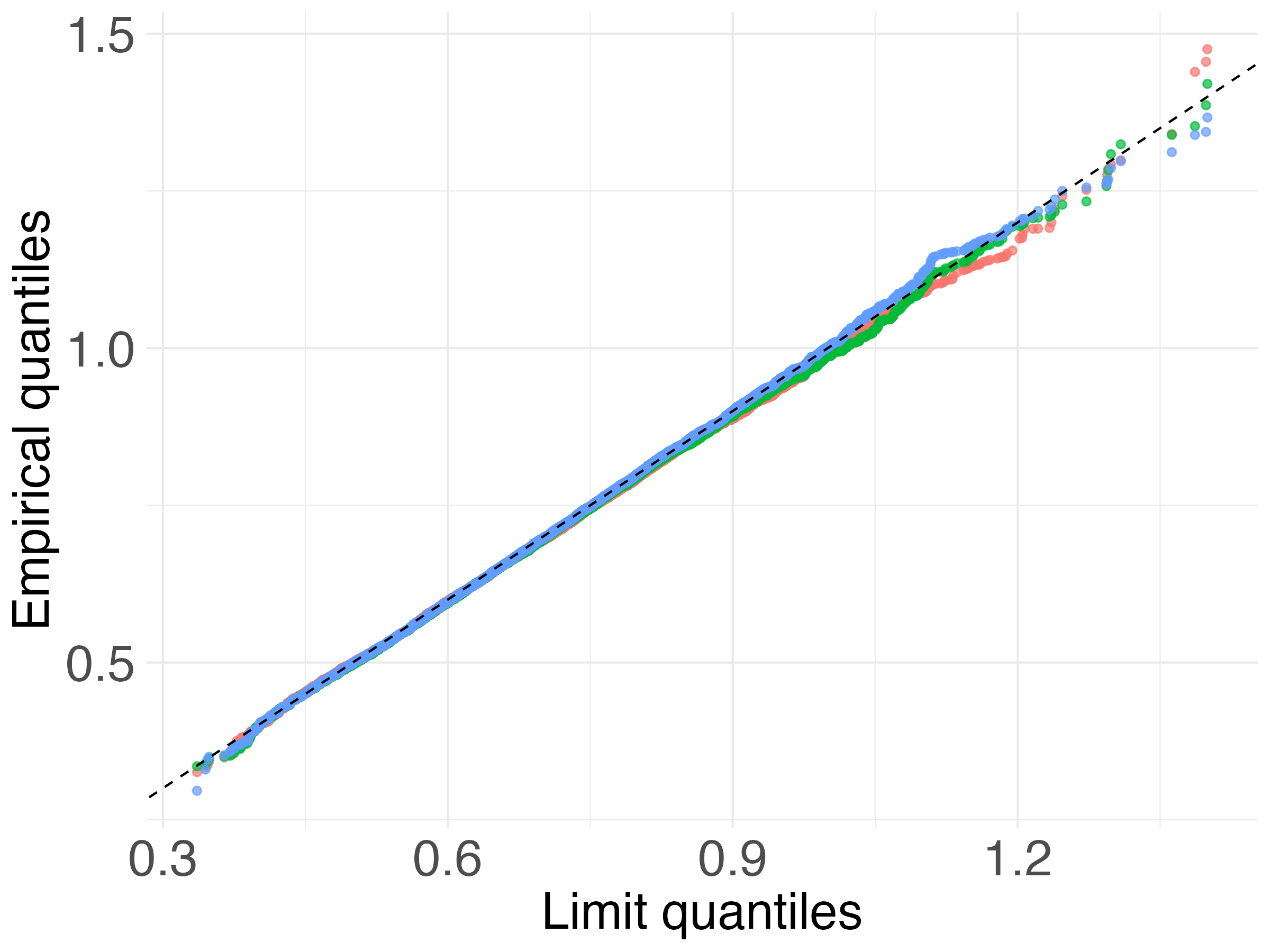}
    \caption{QQ plot for $\mathrm{vMF}(\bmu, 1)$}
\end{subfigure}
\hfill
\begin{subfigure}[htbp]{0.32\textwidth}
    \centering
    \includegraphics[width=\textwidth]{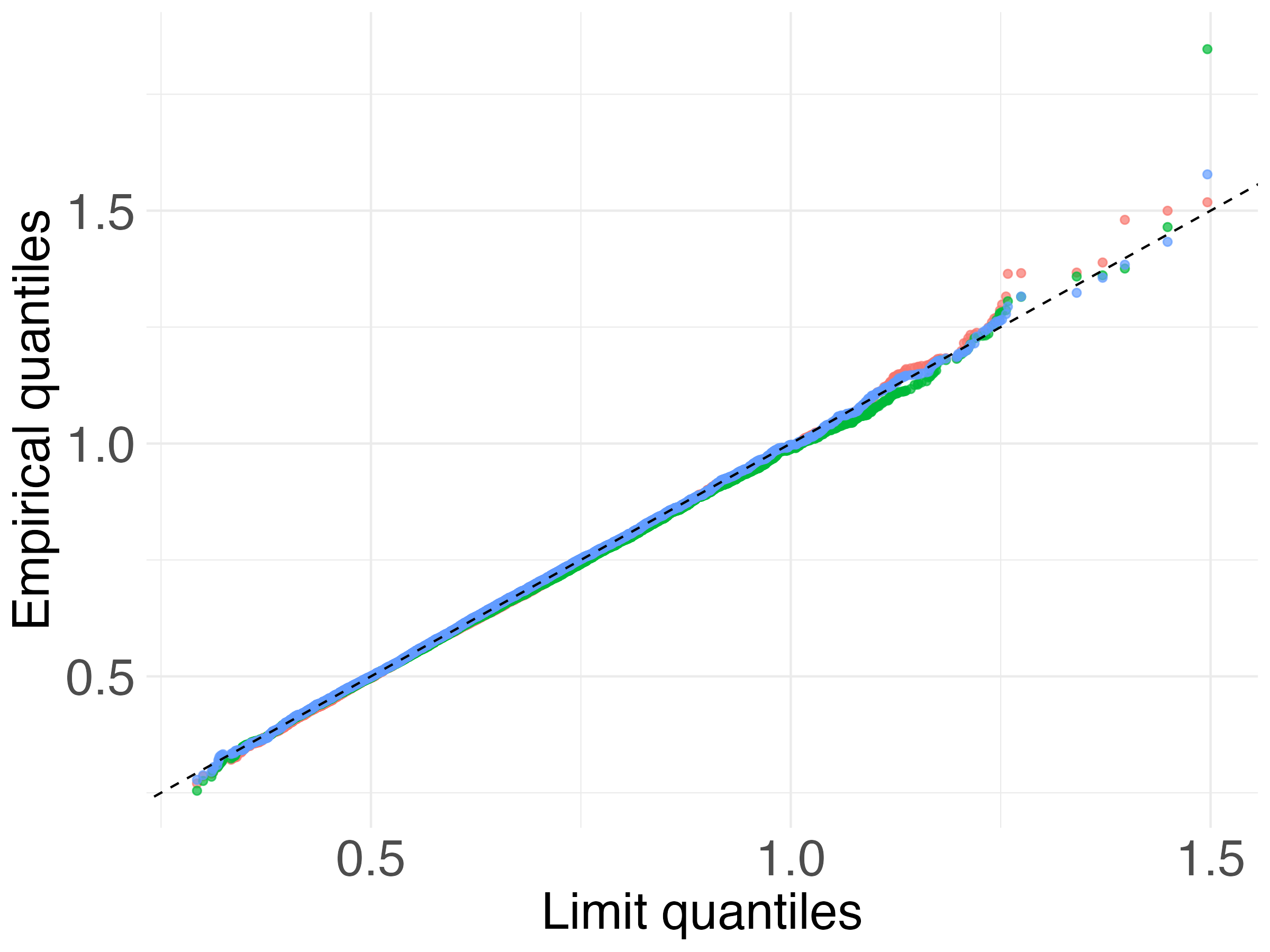}
    \caption{QQ plot for $\mathrm{vMF}(\bmu, 5)$}
\end{subfigure}
\caption{%
Same description as Figure~\ref{fig:convergence_process_vmf_simple_mu_100}, but for the statistic $\tilde{T}_n^{\mathrm{KS}}$ given in \eqref{eq:test_statistic_CM_tilde} (composite null).}
\label{fig:convergence_process_vmf_comp_mu_100}
\end{figure}
\fi

\bibliographystyleSM{apalike-custom}
\bibliographySM{bibliography_SM}

\end{document}